\newcommand\numberthis{\addtocounter{equation}{1}\tag{\theequation}}
\newtheorem{theorem}{Theorem}
\newtheorem{lemma}[theorem]{Lemma}
\newtheorem{proposition}[theorem]{Proposition}
\newtheorem{remark}[theorem]{Remark}
\newtheorem{corollary}[theorem]{Corollary}
\newtheorem{definition}[theorem]{Definition}
\newtheorem{assumption}[theorem]{Assumption}
\Crefname{assumption}{Assumption}{Assumptions}
\Crefname{prop}{Proposition}{Propositions}
\Crefname{equation}{Eq.}{}
\renewcommand{\ts}[1]{}
\renewcommand{\mm}[1]{}
\renewcommand{\bm}[1]{}
\icmltitlerunning{Dimension-free convergence rates for gradient Langevin dynamics in RKHS}
\begin{document}

\twocolumn[
\icmltitle{Dimension-free convergence rates for gradient Langevin dynamics in RKHS}



\icmlsetsymbol{equal}{*}

\begin{icmlauthorlist}
\icmlauthor{Boris Muzellec}{crest}
\icmlauthor{Kanji Sato}{uot}
\icmlauthor{Mathurin Massias}{inria}
\icmlauthor{Taiji Suzuki}{uot,riken}
\end{icmlauthorlist}

\icmlaffiliation{crest}{CREST, ENSAE, IP Paris, France}
\icmlaffiliation{uot}{The University of Tokyo}
\icmlaffiliation{riken}{Riken AIP}
\icmlaffiliation{inria}{INRIA, Université Paris-Saclay}



\vskip 0.3in
]



\printAffiliationsAndNotice{Work done while BM and MM were interning at RIKEN AIP.  TS was partially supported by JSPS Kakenhi (26280009, 15H05707 and
18H03201), Japan Digital Design and JST-CREST.}  

\begin{abstract}

Gradient Langevin dynamics (GLD) and stochastic GLD (SGLD) have attracted considerable attention lately, as a way to provide convergence guarantees in a non-convex setting. However, the known rates grow exponentially with the dimension of the space. In this work, we provide a convergence analysis of GLD and SGLD when the optimization space is an infinite dimensional Hilbert space.
More precisely, we derive non-asymptotic, dimension-free convergence rates for GLD/SGLD when performing regularized non-convex optimization in a reproducing kernel Hilbert space.
Amongst others, the convergence analysis relies on the properties of a stochastic differential equation, its discrete time Galerkin approximation and the geometric ergodicity of the associated Markov chains.

\end{abstract}


\section*{Introduction}
\label{sec:intro}

Convex, finite-dimensional optimization problems have been studied at length, and there exists a variety of well-understood algorithms to solve them efficiently \citep{Nesterov83,Nesterov04,Hiriart-Urruty_Lemarechal93,Boyd_Vandenberghe04,Nocedal_Wright06}.
In the non-convex case however, these methods are only guaranteed to converge to stationary points of the objective function.
This is to be contrasted with the ubiquity of the non-convex case, largely due to the successes of deep learning methods, for which optimization methods with good empirical behavior are widely used \citep{Robbins_Monro51,Duchi_Hazan_Singer12,Zeiler12,Kingma_Ba14}.
In a different perspective, stochastic gradient Langevin dynamics (SGLD), which can be seen as stochastic gradient descent methods with additive Gaussian noise injection at each iteration, was introduced by \citet{Welling_Teh11}. In the case of a strongly convex objective function $\cL$, recent studies \citep{Dalalyan17b} highlighted the connections between sampling from log-concave densities $f(x) \propto \exp(-\beta\cL(x))$ concentrated around the minimum of $\cL$, and minimizing $\cL$.
Such distributions can be obtained as the stationary distributions of first order Langevin dynamics
\begin{equation}\label{eq:first_order_langevin}
    \dX(t) = -\nabla \cL(X(t))\dt + \sqrt{2\beta^{-1}} \dd B(t) ,
\end{equation}
where $\{B(t)\}_{t \geq 0}$ is the standard Brownian motion in $\bbR^d$ and $\beta > 0$ is the inverse temperature.
\citet{Chiang_Hwang_Sheu87,Gelfand_Mitter91,Roberts_Tweedie96} studied the convergence of $X(t)$ to the stationary Gibbs distribution $\pi(\dx) \propto \exp(-\beta \cL (x))$, and the concentration of the samples around the global minimum, while more recently \citet{Dalalyan17a,Durmus_Moulines16,Durmus_Moulines17} analyzed the convergence rates of discrete-time Langevin updates for sampling from log-concave densities.

Recent studies have shown that Langevin dynamics based algorithms converge near a global minimum of $\cL$, even when $\cL$ is not convex, provided $\cL$ is \emph{dissipative} and has Lipschitz gradient.
The analysis relies on the connection between the iterates of Langevin dynamics based algorithms and the Markov chain solution of the continuous time Langevin equation, which admits the Gibbs measure as invariant distribution.
\citet{Raginsky_Rakhlin_Telgarsky2017} provided a non asymptotic convergence rate in expectation to an \textit{almost minimizer}\footnote{a point within distance $\cO(d \log(1 + \beta)/\beta)$ to the true minimizer, in finite dimension $d$} for stochastic gradient Langevin dynamics (SGLD), which \citet{Xu_Chen_Zou_Gu18} improved while also providing an extension to variance-reduced algorithms.
In an alternative approach, \citet{Zhang_Liang_Charikar17} provided bounds on the \emph{hitting time} of SGLD to neighborhoods of  local minima.

However, these results only apply to finite-dimensional optimization, with rates growing polynomially or even exponentially with the dimension. In this paper, we study the rate of convergence when applying Langevin dynamics algorithms in {\it infinite dimension}. More precisely, we bound the probability of GLD of reaching prescribed level sets of the objective functional $\cL$ at iteration $m$. To our knowledge, this is the first application of GLD for infinite-dimensional non-convex optimization. 

Our results rely on assumptions which are classical in the GLD/SGLD literature, and in the literature of approximation of invariant laws of stochastic partial differential equations (SPDE) in infinite dimension. 
In particular, we leverage the weak approximation error of the discrete time scheme of SPDEs analyzed by  \citet{Brehier14,Brehier16} for general inverse parameter $\beta > 1$, where \citet{debussche2011weak,WANG2013151,Andersson2015} gave discretization error non-uniformly over the time horizon, and utilize the geometric ergodicity of continuous time dynamics \citep{Jacquot+Gilles:1995,Goldys_Maslowski06}.
Compared with \Cref{eq:first_order_langevin}, results in the infinite-dimensional setting usually
involve a linear operator acting as a regularizer and whose spectrum ``replaces'' dimension in the convergence rates. More specifically, our contribution can be summarized as follows:
{   
\setlength{\leftmargini}{0.4cm} 
\vspace{-0.2cm}
\begin{itemize}[itemsep=-0cm]
    \item We give a non-asymptotic error bound of the infinite dimensional GLD/SGLD implemented with a spectral Galerkin method, which has an explicit dependency on $\beta$ and is uniform over all time horizons.
    \item For that purpose, the geometric ergodicity of the time discretized dynamics is proven, which is known to be non-trivial.
    \item We give an upper bound of the distance between the expected objective value under the invariant measure and the global optimal solution in the infinite dimensional setting.
\end{itemize}
}



\section{Notation and Framework}

\subsection{Notation and background on RKHS}

Let $(\cH, \scal{\cdot}{\cdot})$ be a Hilbert space. 
We will also use the notation $\|\cdot\|_{\cH}$ to explicitly indicate the norm $\|\cdot\|$ is of $\cH$.
If $\phi$ and $V$ are two functions from $\cH$ to $\bbR$ such that for all $x \in \cH$, $|\phi(x)| \leq |V(x)|$, we write $\norm{\phi}_V \leq 1$. $C_b^2$ is the set of bounded, twice continuously Fr\'echet differentiable functions with bounded first and second derivatives. 
We denote by $\mathcal{B}(\cH)$ the set of bounded linear operators from $\cH$ to $\cH$ and  $\|\cdot\|_{\mathcal{B}(\cH)}$ denotes the operator norm.
For a discrete or continuous Markov chain $\{X_t\}$, note $\bbE_x[\cdot] \eqdef \bbE[\cdot \mid X_0 = x]$.


Let $\cH_K \subset \cH$ be a Reproducing Kernel Hilbert Space (RKHS), with reproducing kernel $K$.
By Mercer's theorem 
\citep{Mercer1909, Steinwart2012}, $\cHK$ can be described as:
\begin{equation}\label{eq:mercer}
  \cHK = \condset{\sum_{k=0}^{\infty} \alpha_k f_k}{\sum_{k=0}^{\infty} \frac{\alpha_k^2}{\mu_k} < \infty}   ,
\end{equation}
where the $\mu_k$'s and $f_k$'s are the eigenvalues (in decreasing order) and corresponding eigenfunctions of $T_K$, the integral operator with kernel $K$ for a measure $\rho$:
%
\begin{equation}\label{eq:integral_operator}
  T_K f_k (x) \eqdef \int K(x, y) f_k(y) \dd\rho(y)  = \mu_k f_k (x)  ,
\end{equation}
 and the $f_k$'s form an orthonormal system in $\cH$. 
 Therefore, in general $\cHK \subset \cH = \condset{\sum_{k=0}^{\infty} \alpha_k f_k}{\sum_{k = 0}^{\infty} \alpha_k^2 < \infty}$.
 Hence, we are working in two different geometries: if $f = \sum_{k \geq 0} \alpha_k f_k$, and $g = \sum_{k \geq 0} \beta_k f_k$, then $\cH$ is equipped with the inner product $\scal{f}{g} = {\sum_{k=0}^{\infty} \alpha_k\beta_k}$, and $\cHK$ is equipped with the inner product $\scal{f}{g}_\cHK = \sum_{k=0}^{\infty} \frac{\alpha_k \beta_k}{\mu_k}$. 
The norm in $\cHK$ induced by the inner  product $\scal{\cdot}{\cdot}_{\cHK}$ is denoted by $\|\cdot\|_{\cHK}$.
Unless denoted by the $\cHK$ subscript, we will work in the geometry of $\cH$.

In the following, for $L: \cH \rightarrow \bbR$, the gradient $\nabla L (x)$ is defined as the Riesz representor of the Fr\'echet derivative of $L$, $DL(x)$ (i.e., the unique vector satisfying $\forall h, L(x + h) = L(x) + \scal{ \nabla L (x)}{h} + O(\norm{h}^2)$). We will identify $n$-order derivatives with $n$th-linear forms, and with vectors when there is no ambiguity (e.g., we write $D^3 L(x) \cdot (h, k)$ for the Riesz representor of $l\in \cH \mapsto D^3 L(x) \cdot (h, k, l)$).


\subsection{Algorithm: gradient Langevin dynamics} 
We consider the following optimization problem:
\begin{problem}\label{pb:rkhs}
 \min_{x \in \cH} \cL(x)
  \eqdef L(x) + \frac{\lambda}{2} \normin{x}_\cHK^2  ,
\end{problem}
where $\lambda > 0$ and $L$ is potentially non convex.
Assuming $L$ admits at least one global minimizer, we note
\begin{align}
  &x^* \eqdef \argmin_{x \in \cH} L(x)  ,\\
  &\tilde{x} \eqdef \argmin_{x \in \cH} L(x) + \frac{\lambda}{2} \normin{x}_\cHK^2 \label{eq:x_tilde}.
\end{align}
In this study, we treat $\lambda > 0$ as a constant and assume that $L(x^*)$ and $L(\tilde{x})$ are sufficiently close.
The difference between these two quantities is extensively studied, for example, in the least squares estimation problem in RKHS \cite{Caponnetto_DeVito07}. 

We study the gradient Langevin dynamics (GLD) iterations to solve \Cref{pb:rkhs}.
To define GLD, we need to make a heavy use of the infinite dimensional Brownian motion.
\begin{definition}[Cylindrical Brownian motion/Wiener process]\label{def:cylindrical}
  Given
  \begin{itemize}[parsep=0pt,partopsep=0pt,topsep=0pt,itemindent=-0.3cm]
    \item a complete orthonormal system of $\cH$, $(e_i)_{i \in I}$, where $I \subset \bbN$,
    \item a family $(\{W^i(t)\}_{t \geq 0})_{i \in I}$ of independent real Brownian motions,
  \end{itemize}
  then $\{W(t)\}_{t \geq 0} \eqdef \{\sum_{i \in I} W^i(t) e_i\}_{t \geq 0}$ is called a \emph{cylindrical Brownian motion}. 
\end{definition}
Then, GLD updates are defined as follows:
\begin{align}\label{eq:sgld}
  \begin{cases}
      X_0 = x^0 \in \cH  , \\
      X_{n + 1} = S_\eta X_n - \eta S_\eta \nabla L(X_n) + \sqrt{2 \tfrac{\eta}{\beta}}S_\eta \varepsilon_n,
  \end{cases}
\end{align}
where $\eta > 0$ is the stepsize, $\beta \geq \eta$ 
is the inverse temperature parameter, the variables $\varepsilon_k$ are i.i.d. cylindrical standard Gaussian and $S_\eta \eqdef (\Id + \eta\frac{\lambda}{2}\nabla\norm{\cdot}_\cHK^2)^{-1}$. 
A crucial analysis tool is to see \Cref{eq:sgld} as a time discretization of the following SPDE \cite{da_prato_zabczyk_1996}:
\begin{align}\label{eq:langevin_sde}
  \begin{cases}
      X(0) = x_0   , \\
      \dX(t) = -\nabla \cL (X(t)) + \sqrt{\tfrac{2}{\beta}} \dW(t)  ,
  \end{cases}
\end{align}
where $\{W(t)\}_{t \geq 0}$ is a cylindrical Brownian motion (\Cref{def:cylindrical}).
We refer to \citet{da_prato_zabczyk_1996} for the existence of solutions, its regularity conditions and related mathematical details.
Note that the scheme \Cref{eq:sgld} is semi-implicit: applying $(S_\eta)^{-1}$ reads
\begin{equation*}
    X_{n + 1} =  X_n - \eta( \nabla L(X_n) + \tfrac{\lambda}{2}\nabla\norm{X_{n + 1}}_\cHK^2 )+ \sqrt{2 \tfrac{\eta}{\beta}} \varepsilon_n  .
\end{equation*}
\paragraph{Approximated computation}
Strictly speaking, the infinite dimensional GLD scheme presented above is computationally intractable.
The {\it Galerkin approximation method} projects the dynamics to a finite dimensional subspace to make them computationally feasible. 
Let $\cH_N$ be an $N+1$-dimensional subspace of $\cH$ that is spanned by $(f_k)_{k=0}^{N-1}$: $\cH_N \triangleq \mathrm{Span}\{f_k \mid k=0,\dots,N\}$.
Let $P_N: \cH \to \cH_N$ be the orthogonal projection operator onto $\cH_N$: $P_N (\sum_{k=0}^\infty \alpha_k f_k) = \sum_{k=0}^N \alpha_k f_k$.
Then, the GLD with Galerkin approximation can be formulated as 
\begin{align}\label{eq:GalerkinTimeDiscrete}
    X_{n + 1}^N = S_\eta \left(X_n^N - \eta \nabla L_N(X_n^N) + \sqrt{2 \tfrac{\eta}{\beta}} P_N \varepsilon_n\right),
\end{align}
where $X_0^N = P_N x_0 \in \cH_N$ and $\nabla L_N(x) \eqdef P_N (\nabla L(P_N x))$.
Since this scheme is essentially finite dimensional, it can be implemented in practice. 

Next, we consider a stochastic gradient variant of GLD (stochastic GLD; SGLD).
Let us consider a finite sum risk minimization setting where
\begin{align*}
L(x) = \frac{1}{\ntr} \sum_{i=1}^{\ntr} \ell_i(x),
\end{align*}
for $\ell_i: \cH \to \bbR$ which is Fr\'echet differentiable\footnote{We may generalize the setting to a situation where $\nabla L(x) = \bbE_{\xi}[g(x,\xi)]$ with a stochastic gradient $g(\cdot,\xi)$ in a straightforward way.}.
SGLD makes use of a mini-batch of stochastic gradients \cite{Welling_Teh11} instead of the full gradient $\nabla L(x)$: 
$g_n(x) = \frac{1}{\nbch} \sum_{i \in I_n} \nabla \ell_i(x)$ where $I_n$ is a random subset of $\{1,\dots,N\}$ chosen uniformly at random and $\nbch = |I_n|$. Then, its update rule is given by 
\begin{align}
Y_{n + 1}^N = S_\eta \left(Y_n^N - \eta g_{n,N}(Y_n^N) + \sqrt{2 \tfrac{\eta}{\beta}} P_N \varepsilon_n\right),
\end{align}
where $g_{n,N}(x) \eqdef P_N (g_n(P_N x))$ and $Y_0^N = P_N x_0 \in \cH_N$. 
These approximation techniques significantly reduce the computational cost.

\subsection{Assumptions}
Our goal is to study the convergence of the iterations \Cref{eq:sgld}, i.e., to bound $L(X_n) - L(x^*)$ with high probability.
For this, we need to make assumptions on the RKHS $\cHK$ and on $L$.
We first make the following assumption on $\cHK$, independently of the objective $L$: 
 \begin{assumption}\label{assum:eigenvalue_cvg}
     It holds that:
     \begin{equation}\label{eq:mukDecayRate}
       \mu_k \sim \frac{1}{k^2}.
     \end{equation}
 \end{assumption}
 We note that a finite dimensional situation is also allowed, i.e., $\mu_k = 0~(\forall k \geq k_0)$ for some $k_0 \in \bbN$, as long as \Cref{eq:mukDecayRate} is satisfied for $k \leq k_0$.
 The weaker assumption $\mu_k \sim k^{-p}$ with $p > 1$ is sometimes made in the literature (\citealt[Definition 1.\emph{iii})]{Caponnetto_DeVito07}, \citealt{Steinwart_Christmann08}), but the numerical approximation result used in \Cref{sec:first_term} requires the more restrictive $p=2$ assumption (\citealt[Assumption 2.2 (2)]{Brehier16}). As an example, one can consider the case where $\cH$ itself is an RKHS for a kernel $K'$, with Mercer decomposition $K'(x, y) = \sum_k \nu_k g_k(x)g_k(y)$. Then, the ``rescaled'' kernel $K(x, y) = \sum_k \mu_k \nu_k g_k(x)g_k(y)$ with $\mu_k \sim \frac{1}{k^2}$ satisfies \Cref{assum:eigenvalue_cvg}. In fact, the role of \Cref{assum:eigenvalue_cvg} is to ensure that the trajectories \eqref{eq:sgld},  \eqref{eq:langevin_sde} will remain in the support of the Gaussian process corresponding to the kernel $K$. 
 %

Next, we put assumptions on the objective function $L$. The first one is classical for gradient-based optimization \cite{Nesterov04}.
\begin{assumption}[Smoothness]\label{assum:smoothness}
   $L$ is $M$-smooth:
  \begin{equation}\label{eq:smoothness}
     \forall x, y\in \cH,\quad \norm{\nabla L(x) - \nabla L(y)} \leq M \norm{x - y} .
  \end{equation}
\end{assumption}

In view of \Cref{eq:mercer}, we have that $A \eqdef - \frac{\lambda}{2} \nabla \normin{\cdot}_\cHK^2$ is a diagonal operator, characterized by $A f_k = - \frac{\lambda}{\mu_k} f_k$. 
The following assumptions enforce more smoothness on $L$ w.r.t. a norm induced by $A$ through its second and third order derivatives.
%
\ts{I added this new condition}
\begin{assumption}\label{assum:C1_boundedness}
There exists $\alpha \in (1/4,1)$ and $\lambda_0, C_{\alpha,2} \in (0,\infty)$ such that $\forall x,h,k \in \cH$, 
$$
|D^2 L(x) \cdot (h,k) | 
\leq C_{\alpha,2} \|h\|_{\cH} \|k\|_{\alpha},
$$
where $\norm{x}_\varepsilon \eqdef \left(\sum_{k \geq 0} (\mu_k)^{2\varepsilon} |\scal{x}{f_k}|^2\right)^{1/2}$.
\end{assumption}
This assumption is not standard in the previous works. 
However, we put this assumption so that the time discretized dynamics satisfies geometric ergodicity.
Fortunately, this assumption is not restrictive in machine learning applications
(see \Cref{sec:Examples} for details).

The next one is common in the SPDE discretization literature ({\citet[Assumption 2.7]{Brehier16}}, {\citet[Assumption (2.3)]{debussche2011weak}}). It is used in \Cref{sec:second_term} to obtain the convergence of the stationary distribution $\mu^\eta$ of \Cref{eq:sgld} to that of \Cref{eq:langevin_sde} as $\eta$ goes to zero.
\begin{assumption}[{\citet[Assumption 2.7, $M\rightarrow \infty$]{Brehier16}}]\label{assum:C2_boundedness}
Let $L_N: \cH_N \to \bbR$ be $L_N = L(P_N x)$. $L$ is three times differentiable, and there exists $\alpha' \in [0, 1), C_{\alpha'} \in (0, \infty)$ such that for all $N \in \bbN$ and $\forall x, h, k \in \cH_N,$
%
\begin{align*}
    & \norm{D^3 L_N(x) \cdot (h, k)}_{\alpha'} \leq C_{\alpha'} \norm{h}_0\norm{k}_0,\\
    & \norm{D^3 L_N(x) \cdot (h, k)}_{0} \leq C_{\alpha'} \norm{h}_{-\alpha'}\norm{k}_0.
\end{align*}
\end{assumption}
%
As an example, \Cref{assum:C2_boundedness} is satisfied with $\alpha = 0$ when $L$ is $C^3$ with bounded second and third-order derivatives.
Next, we assume the following condition to ensure the dissipativity (Proposition \ref{prop:dissipative}) which is essential to show geometric ergodicity.
\begin{assumption}\label{assum:dissipative}
    It either holds that
    \begin{assumenum}
        \item  $\lambda  > M\mu_0$ (Strict Dissipativity), or  \label{assum:strict_diss}
        \item  $\norm{\nabla L(\cdot)} \leq B, \quad B >0$ (Bounded gradients). \label{assum:bounded_grad}
    \end{assumenum}
\end{assumption}
The $C_0$-semigroup $(S_t)_{t \geq 0}$ generated by $A$ is the one of diagonal operators determined by $S_t f_k = e^{-\lambda t / \mu_k} f_k$.
It is easy to check that this semigroup is strongly continuous.
Therefore, the Langevin SDE \eqref{eq:langevin_sde} is an instance of the more general semilinear SDE:
\begin{equation}\label{eq:sde_goldys}
  \dX(t) = \Big(A X(t) + F(X(t)) \Big) \dt + \sqrt{Q} \dW(t),
\end{equation}
where $F$ is globally $M$-Lipschitz, $Q$ is bounded and symmetrical and $A$ is a linear unbounded operator on $\cH$ generating a strongly continuous semigroup.
For the SDE \Cref{eq:langevin_sde}, we have $F = - \nabla L $, $Q = 2\beta^{-1} \Id$ and $A = -\tfrac{\lambda}{2} \nabla \normin{\cdot}_\cHK^2$.\bm{emphasize on the fact that this makes $S_\eta$ diagonal in the $f_k$ basis}
The SDE \eqref{eq:sde_goldys} has been extensively studied in finite dimension \citep{Khasminskii11}; in the infinite dimensional case, several results have been shown such as the existence and uniqueness of its invariant measure \citep{DaPrato_Zabczyk92,Maslowski:1989,Sowers:1992}, the exponential convergence of the time $t$ distribution to this invariant measure \citep{Jacquot+Gilles:1995,Shardlow:1999,Hairer:2002} and its explicit convergence rate evaluation \citep{Goldys_Maslowski06}; the invariant measure $\pi$ is given by 
$$
\textstyle 
\frac{\dd \pi}{\dd \nu_\beta}(x) \propto \exp(- \beta L(x)), 
$$
where $\nu_\beta$ is the Gaussian measure in $\cH$ with mean 0 and covariance $(- \beta A)^{-1}$ (see \citet{da_prato_zabczyk_1996} for the precise definition of infinite dimensional Gaussian measures).
If these assumptions are verified, we have a weaker condition than strong convexity: dissipativity.\bm{Explain what dissipativity implies. Namely, if we are far from the origin, then on average the gradients will bring us towards the origin.}\bm{add references to where the dissipativity is used}
\begin{proposition}[Dissipativity]\label{prop:dissipative}
    Under \Cref{assum:smoothness,assum:eigenvalue_cvg} and \Cref{assum:strict_diss} or \Cref{assum:bounded_grad}, there exists constants $m, c > 0$ verifying
 \begin{equation}\label{eq:dissipative}
  \forall x\in \cH,  \scal{Ax - \nabla L(x)}{x} \leq - m \norm{x}^2 + c
   .
\end{equation}
\end{proposition}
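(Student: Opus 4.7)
The plan is to exploit that $A$ is diagonal in the eigenbasis $(f_k)$, combine this with the spectral decay $\mu_k \leq \mu_0$ to get a strongly negative quadratic bound on $\langle Ax,x\rangle$, and then control $-\langle \nabla L(x),x\rangle$ by Cauchy--Schwarz plus Young's inequality, separately under each of the two alternatives in \Cref{assum:dissipative}.

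\textbf{Step 1: The linear part.} Since $A f_k = -(\lambda/\mu_k)f_k$, writing $x = \sum_{k \geq 0} \alpha_k f_k$ gives
\begin{equation*}
\langle Ax, x \rangle_{\cH} \;=\; -\lambda \sum_{k \geq 0}\frac{\alpha_k^2}{\mu_k} \;=\; -\lambda \|x\|_{\cHK}^2.
\end{equation*}
Because $(\mu_k)$ is decreasing, $\mu_k \leq \mu_0$ for every $k$, hence $\|x\|_\cH^2 = \sum_k \alpha_k^2 \leq \mu_0 \|x\|_{\cHK}^2$, which yields
\begin{equation*}
\langle Ax, x\rangle \leq -\frac{\lambda}{\mu_0}\|x\|^2.
\end{equation*}

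\textbf{Step 2: The nonlinear part.} By Cauchy--Schwarz, $-\langle \nabla L(x), x\rangle \leq \|\nabla L(x)\|\,\|x\|$. Under \Cref{assum:strict_diss}, $M$-smoothness gives $\|\nabla L(x)\| \leq \|\nabla L(0)\| + M\|x\|$, so for any $\varepsilon>0$, Young's inequality produces
\begin{equation*}
\|\nabla L(x)\|\|x\| \leq (M+\varepsilon)\|x\|^2 + \frac{\|\nabla L(0)\|^2}{4\varepsilon}.
\end{equation*}
Combining with Step 1,
\begin{equation*}
\langle Ax-\nabla L(x),x\rangle \leq -\Bigl(\tfrac{\lambda}{\mu_0}-M-\varepsilon\Bigr)\|x\|^2 + \tfrac{\|\nabla L(0)\|^2}{4\varepsilon}.
\end{equation*}
Since $\lambda > M\mu_0$ by assumption, we may pick $\varepsilon \in (0, \lambda/\mu_0 - M)$ to get the desired dissipativity constants $m, c > 0$. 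Under \Cref{assum:bounded_grad}, the argument is even simpler: $\|\nabla L(x)\|\|x\| \leq B\|x\| \leq \varepsilon \|x\|^2 + B^2/(4\varepsilon)$, and choosing $\varepsilon < \lambda/\mu_0$ yields the bound.

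\textbf{Main obstacle.} There is no substantive obstacle; the argument is essentially a two-line spectral computation followed by Young's inequality. The only subtle point is making sure that $\|x\|_{\cHK}^2$ dominates $\|x\|_\cH^2$ up to the factor $\mu_0$, which is why $\mu_0 = \max_k \mu_k$ appears naturally in the strict-dissipativity threshold $\lambda > M\mu_0$. This also explains why assuming boundedness of $\nabla L$ removes the threshold entirely: the nonlinear term is then absorbed into the constant $c$ without needing to dominate an $M\|x\|^2$ contribution.
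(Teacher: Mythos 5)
Your proof is correct and follows essentially the same route as the paper: a spectral lower bound $-\langle Ax,x\rangle \geq (\lambda/\mu_0)\|x\|^2$ from the decay of $(\mu_k)$, followed by absorbing the gradient term via a quadratic/Young-type bound under each alternative in Assumption~\ref{assum:dissipative}. The only cosmetic difference is that you anchor the Lipschitz estimate at $0$ (writing $\|\nabla L(x)\| \leq \|\nabla L(0)\| + M\|x\|$) and make the Young step explicit, whereas the paper anchors at $x^*$ (using $\nabla L(x^*)=0$) and leaves the final absorption of the linear term implicit; both yield the same constants up to bookkeeping.
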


The dissipative condition proved in this proposition is quite standard to show the existence of the invariant law.
For example, \citet{Raginsky_Rakhlin_Telgarsky2017,Xu_Chen_Zou_Gu18} showed the convergence to the invariant law under the dissipative condition in the finite dimensional situation. This condition intuitively indicates that the dynamics stays inside a bounded domain in high probability.
If $X_n$ (or $X(t)$) is far away from the origin, then the dynamics are forced to get back around the origin. Thanks to this condition, the dynamics can possess finite moments, which is important to ensure the existence of an invariant law. 

In fact, a result of \Cref{assum:dissipative} is that there exits at least one invariant law.
\begin{proposition}
Under \Cref{assum:dissipative}, the processes $\{X(t)\}_{t \geq 0}$ and $\{X_n\}_{n \in \bbN_+}$ admit (at least) an invariant law.
\end{proposition}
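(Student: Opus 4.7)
The plan is to apply the Krylov--Bogoliubov theorem in both the continuous and discrete settings. In each case the strategy is to construct the family of time-averaged laws starting from a fixed initial condition, prove tightness of this family on $\cH$, and extract a weak limit point, which is then invariant by a Feller-type argument for the semigroup (resp.\ Markov kernel). Tightness is the only nontrivial ingredient, because in infinite dimensions uniform $\cH$-moment bounds do not imply relative compactness.

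For the SDE \eqref{eq:langevin_sde}, I would work with the mild formulation $X(t) = S_t x_0 + \int_0^t S_{t-s}(-\nabla L(X(s)))\,\dd s + W_A(t)$, where $W_A(t) \eqdef \int_0^t S_{t-s} \sqrt{Q}\,\dd W(s)$ is the stochastic convolution. Under \Cref{assum:eigenvalue_cvg}, a direct computation in the eigenbasis of $A$ yields $\bbE\|(-A)^{\theta} W_A(t)\|^2 \leq \lambda^{2\theta-1}\beta^{-1}\sum_k \mu_k^{1-2\theta}$, which is finite and uniform in $t$ for any $\theta \in (0, 1/4)$ (since $\mu_k^{1-2\theta}\sim k^{4\theta-2}$). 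Combining this with \Cref{prop:dissipative}, applied via It\^o's formula to $\|X(t)\|^2$ on the Galerkin truncations $\cH_N$ and then passed to the limit, gives $\sup_{t \geq 0}\bbE[V(X(t))] < \infty$ for the Lyapunov function $V(x) \eqdef \|(-A)^{\theta} x\|^2$. Because $(-A)^{-\theta}$ is a compact operator on $\cH$ (as $\mu_k \to 0$), the sublevel sets $\{V \leq R\}$ are relatively compact in $\cH$, so Markov's inequality delivers tightness of the Ces\`aro averages $\mu_T \eqdef \frac{1}{T}\int_0^T P_t(x_0, \cdot)\, \dd t$. A Feller argument based on continuous dependence of the mild solution on $x_0$ (Gronwall plus Lipschitzness of $\nabla L$ from \Cref{assum:smoothness}) then identifies any weak limit point of $\{\mu_T\}$ as an invariant law.

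For the discrete chain \eqref{eq:sgld}, the plan is parallel. The operator $S_\eta$ is diagonal in $(f_k)_k$ with eigenvalues $(1+\eta\lambda/\mu_k)^{-1} < 1$, which produces precisely the same smoothing effect as $S_t$. Since $\varepsilon_n$ is cylindrical standard Gaussian, the summability of $\sum_k (\lambda/\mu_k)^{2\theta}(1+\eta\lambda/\mu_k)^{-2}$ for $\theta < 1/4$ gives $\bbE\|(-A)^{\theta} S_\eta \varepsilon_n\|^2 < \infty$. Writing the one-step recursion for $\bbE[V(X_{n+1})\mid X_n]$, expanding $\|S_\eta X_n - \eta S_\eta \nabla L(X_n)\|^2$, and invoking \Cref{prop:dissipative} produces a contraction of the form $\bbE[V(X_{n+1})] \leq \rho\, \bbE[V(X_n)] + C$ with some $\rho \in (0,1)$, whence $\sup_n \bbE[V(X_n)] < \infty$. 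The discrete Krylov--Bogoliubov theorem applied to the Markov kernel of \eqref{eq:sgld} then yields an invariant law.

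The main technical obstacle will be making the It\^o/energy estimates rigorous in the cylindrical-noise setting, where $\mathrm{tr}(Q) = +\infty$: this requires Galerkin truncation and a passage to the limit that is compatible with the Lyapunov bound. The companion difficulty is transferring the dissipativity bound of \Cref{prop:dissipative}, which is stated in the ambient $\cH$-geometry, into a genuine Lyapunov inequality for the smoother functional $V$; it is this stronger bound, not the plain second-moment bound, that supplies the compactness needed in infinite dimensions.
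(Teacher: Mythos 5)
Your high-level plan — Krylov--Bogoliubov, tightness from uniform moment bounds in a fractional-power norm $\norm{(-A)^\theta\cdot}$ with $\theta<1/4$, compactness of $(-A)^{-\theta}$, and a Feller argument to identify the weak limit — is precisely the strategy of Proposition~4.1 in Br\'ehier (2016), which is what the paper cites for this statement, so the overall route is the right one. The gap is in the Lyapunov step. For the continuous chain you apply It\^o to $\norm{X(t)}^2$ and then claim a bound on $V(X(t)) = \norm{(-A)^\theta X(t)}^2$; the first does not imply the second, and applying It\^o directly to $V$ introduces the correction term $\beta^{-1}\mathrm{Tr}[(-A)^{2\theta}] = +\infty$, whose formal cancellation against $-\norm{(-A)^{\theta+1/2}X}^2$ is delicate to push through the Galerkin limit. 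For the discrete chain the claimed one-step contraction $\bbE V(X_{n+1}) \leq \rho\,\bbE V(X_n) + C$ fails under the strict-dissipativity branch of \Cref{assum:dissipative}: the operator $(-A)^\theta S_\eta$ has $\cB(\cH)$-norm of order $\eta^{-\theta}$, so the drift term contributes $\eta\,\eta^{-\theta}\,\norm{\nabla L(X_n)} = \eta^{1-\theta}\cdot O(\sqrt{V(X_n)})$, which for small $\eta$ dominates the gain $1 - \opnorm{S_\eta} = O(\eta)$ from the regularizer, and no $\rho<1$ is produced. (Under the bounded-gradient branch, where $\norm{\nabla L}\leq B$, your one-step recursion is fine.)

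The fix is to decouple the two estimates rather than trying to close a single Lyapunov inequality for $V$. First establish $\sup_n\bbE\norm{X_n}^p < \infty$ (respectively $\sup_t\bbE\norm{X(t)}^p < \infty$) in the ambient $\cH$-norm, where the dissipativity of \Cref{prop:dissipative} applies cleanly — this is exactly what \Cref{prop:z_boundedness,prop:norm_control} deliver via the splitting $X_n = Y_n + Z_n$. Then upgrade to the $(-A)^\theta$-norm by a fixed-horizon regularization from the variation-of-constants formula: the analyticity-type bound $\norm{(-A)^\theta S_\eta^m}_{\cB(\cH)} \lesssim \min\bigl((m\eta)^{-\theta},\opnorm{S_\eta}^{m}\bigr)$, summed against the uniform $\cH$-moment bound on $\nabla L(X_j)$ and combined with your stochastic-convolution estimate, gives $\sup_{n\geq 1}\bbE\norm{(-A)^\theta X_n} < \infty$ directly, with no one-step Lyapunov contraction on $V$ needed. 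With that substitution the remainder of your Krylov--Bogoliubov argument is correct and matches the intended proof.
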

The proof can be found for example Proposition 4.1 of \citet{Brehier16}, which utilizes the Krylov-Bogoliubov criterion \citep[Section 3.1]{da_prato_zabczyk_1996}.
This proposition does not indicates the {\it uniqueness} of an invariant law.
It is shown that the continuous time dynamics $X(t)$ has a unique invariant law and is geometrically ergodic.
As for the discrete time dynamics $X_n$, the uniqueness of the invariant law is already well-known under the strict dissipative condition (\Cref{assum:strict_diss}) (see \citet{Brehier16} for example).
However, the uniqueness has not been shown under the bounded gradient condition (\Cref{assum:bounded_grad}).
In \Cref{sec:first_term}, we will show that the uniqueness also holds under \Cref{assum:bounded_grad} if we assume \Cref{assum:C1_boundedness}, which has not been assumed in previous work.
%
%

%
Finally, in the SGLD setting we put the following stronger assumption on each $\ell_i$.
\begin{assumption}\label{ass:SGLD_cond}
Each $\ell_i$ satisfies \Cref{assum:smoothness,assum:C1_boundedness,assum:C2_boundedness} and \Cref{assum:bounded_grad} instead of $L$,
where the constants in each assumption are uniform over all $\ell_i~(i=1,\dots,\ntr)$.
\end{assumption}

\subsection{Motivation of problem settings}

\label{sec:Examples}
As examples, \Cref{assum:smoothness,assum:C2_boundedness} encompass classification cases (e.g., logistic regression) and ordinary least squares regression, among others.
In the non-convex setting, 
examples include deep learning, 
tensor factorization \citep{signoretto2013learning,suzuki2016} and 
robust classification using non-convex losses such as Savage~\citep{Masnadi2008}.

For the sake of instructive exposition, let us consider a situation where we observe $n$ input-output pairs $(z_i,y_i)_{i=1}^n$, where $z_i \in \cZ$ is an input and $y_i\in \cY$ is the corresponding label. 
Here, we let $\cH$ be a Hilbert space of functions on $\cZ$ (which could be a RKHS) with complete orthonormal system $(f_k)_{k=0}^\infty$.
Accordingly, we define a loss function $\ell(\cdot,y_i) = \ell_i(\cdot): \bbR \to \bbR$ for the $i$-th observation, and consider an empirical risk: $\tilde{L}(f) = \frac{1}{n} \sum_{i=1}^n \ell_i(f(z_i))$ for a function $f: \cZ \to \bbR$.
From the expression \eqref{eq:mercer}, the (sub-)RKHS $\cH_K$ can be expressed as an image of $T_K^{1/2}$, i.e., $\cH_{K} = \{f = T_K^{1/2} h\mid  h \in \cH\}$ and $\|f\|_{\cHK} = \inf_{h \in \cH: f = T_K^{1/2}h}\|h\|_{\cH}$.
More generally, we define an RKHS $\cH_{K^\gamma}$ for $0 < \gamma$ as an image of $T_K^{\frac{\gamma}{2}}$:
$\cH_{K^\gamma} = \{f = T_K^{\gamma/2} h\mid  h \in \cH\}$. We see that $\gamma=1$ corresponds to $\cH_K$.
We employ $\cH_{K^{\gamma}}$ as a model for $f$ and let the corresponding empirical risk be 
$
L(x) = \tilde{L}(T_K^{\frac{\gamma}{2}} x) 
$
(if needed, we may add a smooth regularization term).
Note that, for $x = \sum_{k=0}^\infty \alpha_k f_k \in \cH$, $T_K^{\frac{\gamma}{2}} x (z) = \sum_{k=0}^\infty \mu_k^{\frac{\gamma}{2}} \alpha_k f_k(z)$,
and thus we can obtain a reproducing formula $T_K^{\frac{\gamma}{2}} x (z) = \langle x, \psi_{{\gamma}}(z) \rangle_{\cH}$ where $\psi_{{\gamma}}(z) \eqdef \sum_{k=0}^\infty \mu_k^{\frac{\gamma}{2}} f_k(z)f_k$.
$\psi_\gamma$ defines the kernel function of $\cH_{K^\gamma}$ as $K_\gamma(z,z') = \langle \psi_\gamma(z), \psi_\gamma(z')\rangle_{\cH} = \sum_{k=0}^K  \mu_k^{\gamma} f_k(z)f_k(z')$. Using this, we see that $\|\psi_\gamma(z)\|_\cH^2 = \sum_{k=0}^\infty \mu_k^{\gamma} f_k^2(z) = K_\gamma(z,z)$ and $\|\psi_\gamma(z)\|_\epsilon^2 = \sum_{k=0}^\infty \mu_k^{\gamma + 2\epsilon} f_k^2(z) = K_{\gamma + 2\epsilon}(z,z)$.
In this situation, if we have $\max_{i}\sup_{u} |\ell''_i(u)| \leq G$ and $\sup_{z \in \cZ}K_\gamma(z,z) \leq R_\gamma$ for $G,R_\gamma > 0$, then 
\begin{align}
& \|\nabla L(x) - \nabla L(x')\| 
\leq GR_\gamma  \| x - x' \|_{\cH}, 
\label{eq:LipschitzGradBound}
\\
& \textstyle 
|D^2 L(x) \cdot (h,k)|
 \leq  G   \sqrt{R_\gamma \sum_{k=0}^\infty \mu_k^{\gamma - 2\alpha}},
\label{eq:D2bound}
\end{align}
for $x,h,k \in \cH$ with $\|h\|=1$ and $\|k\|_\alpha = 1$.
The proof of these inequalities is given in Appendix \ref{sec:ProofOfExampleSmoothBound}.
Therefore, Assumptions \ref{assum:smoothness} and \ref{assum:C1_boundedness} are satisfied as long as 
$R_\gamma < \infty$ for $\gamma > 1$ because the condition $\mu_k \lesssim 1/k^2$ makes the right hand of \Cref{eq:D2bound} finite by setting $\alpha = (\gamma -1)/2 + 1/4 > 1/4$. 
\Cref{assum:C2_boundedness} is also verified in the same manner.

Finally, if we let $f = T_{K}^{\frac{\gamma}{2}} x$, then $\|x\|_{\cHK} = \|f\|_{\cH_{K^{1+\gamma}}}$ holds. 
Then, it follows that 
$$
  L(x) + \lambda \|x\|_{\cHK}^2 = \tilde{L}(f)  + \lambda \|f\|^2_{\cH_{K^{1+\gamma}}}.
$$
Therefore, we see that our formulation covers a wide range of kernel regularization learning by adjusting $\gamma$ appropriately.

We would like to remark that we may deal with a situation where $L(x)$ contains a regularization term $\frac{\lambda_0}{2}\|x\|^2$ like $L(x)=\hat{L}(x) + \frac{\lambda_0}{2}\|x\|^2$.
To deal with this situation, we should change the algorithm and analysis a little bit because it could violate \Cref{assum:dissipative} (especially, the bounded gradient condition). 
See \Cref{sec:RemarkRegularization} for more details about how to deal with this setting.

\section{Main Result}

Here, we give our main result on the the non-asymptotic error bound of the GLD algorithm.
Define a constant $\cbeta$ as 
\begin{align*}
\cbeta = 
\begin{cases} 1~~~& (\text{strict dissipativity: \Cref{assum:strict_diss})}), \\ \sqrt{\beta}~~~& (\text{bounded gradient: \Cref{assum:bounded_grad}}). 
\end{cases}
\end{align*}

\begin{theorem}[Main Result, GLD convergence rate]\label{thm:informal_main_result}
Let \Cref{assum:eigenvalue_cvg,assum:smoothness,assum:C2_boundedness,assum:dissipative} hold.
We also assume \Cref{assum:C1_boundedness} under the bounded gradient condition (\Cref{assum:bounded_grad}).
Suppose the initial solution satisfies $\|x_0\| \leq 1$.
Then, 
there exits $\Lambda^*_\eta > 0$ for $\eta \geq 0$ such that for any $0 < \kappa < 1/4$ and $\delta \in (0,1)$, it holds that, 
    \begin{align*}\label{eq:informal_final_rate}
        &\bbP(L(X_n) - L(x^*) > \delta) 
 \lesssim   \frac{1}{\delta}\bigg\{  L(\tilde{x}) - L(x^*)  
\\ & 
~~+ \exp(- \Lambda_\eta^*(\eta n - 1))   + 
\frac{\cbeta}{\Lambda^*_0}\eta^{1/2-\kappa} \\ 
& ~~+ \!\! \textstyle
\Bigg[\frac{1}{\beta}\left(\!\sqrt{\frac{2M}{\lambda}} \!+\! 1\right)\! +\!
     \lambda\left( \frac{\|\tilde{x}\|_{\cHK}}{\sqrt{\beta}} + \|\tilde{x}\|_{\cHK}^2\!\! \right)\Bigg] \Bigg\}.
\numberthis
    \end{align*}
\end{theorem}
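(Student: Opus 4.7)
The plan is to prove the bound via a four-term decomposition after applying Markov's inequality, so I first reduce the probability bound to a bound on $\mathbb{E}[L(X_n)] - L(x^*)$: Markov's inequality gives
\begin{equation*}
\mathbb{P}(L(X_n) - L(x^*) > \delta) \leq \tfrac{1}{\delta}\,\mathbb{E}[L(X_n) - L(x^*)],
\end{equation*}
and it suffices to bound the right-hand side by the bracketed quantity in the theorem. Writing $\mu^\eta$ for the invariant law of the discrete GLD iterates~\eqref{eq:sgld}, $\pi$ for the invariant law of the continuous-time SDE~\eqref{eq:langevin_sde}, and $\tilde x$ as in~\eqref{eq:x_tilde}, I split
\begin{align*}
\mathbb{E}[L(X_n)] - L(x^*)
&= \bigl(\mathbb{E}[L(X_n)] - \mathbb{E}_{\mu^\eta}[L]\bigr) \\
&\quad + \bigl(\mathbb{E}_{\mu^\eta}[L] - \mathbb{E}_{\pi}[L]\bigr) \\
&\quad + \bigl(\mathbb{E}_{\pi}[L] - L(\tilde x)\bigr) \\
&\quad + \bigl(L(\tilde x) - L(x^*)\bigr),
\end{align*}
and match each summand with one of the four terms in the target bound.

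For the first summand I use geometric ergodicity of the discrete-time Markov chain $\{X_n\}$. Under Assumption~\ref{assum:dissipative}, Proposition~\ref{prop:dissipative} yields a Lyapunov/dissipativity condition of the form $\langle Ax - \nabla L(x), x\rangle \leq -m\|x\|^2 + c$, which I combine with a minorization condition on compact sets to apply a Harris-type theorem. In the strictly dissipative case this is essentially in \citet{Brehier16}; in the bounded-gradient case the uniqueness of the invariant law is new and requires Assumption~\ref{assum:C1_boundedness}. The resulting $V$-uniform ergodicity, with a Lyapunov function of the type $V(x) = 1 + \|x\|^2$, provides a rate $\exp(-\Lambda^*_\eta(\eta n - 1))$ and, together with $\|x_0\|\leq 1$, controls the first term. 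The constant $c_\beta$ tracks how the Lyapunov drift deteriorates with $\beta$ in the bounded-gradient regime.

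For the second summand I invoke the weak-error analysis of the Galerkin discretisation of the semilinear SPDE~\eqref{eq:sde_goldys} developed by \citet{Brehier14,Brehier16}: under Assumptions~\ref{assum:smoothness} and~\ref{assum:C2_boundedness}, the weak error between $\mu^\eta$ and $\pi$ tested against sufficiently smooth functionals is of order $\eta^{1/2-\kappa}$, uniformly in time thanks to the geometric ergodicity of the continuous dynamics (\citet{Goldys_Maslowski06}). The dependence on $\beta$ is tracked through $c_\beta$ and yields the $c_\beta \eta^{1/2-\kappa}/\Lambda^*_0$ term. For the third summand I use the explicit Gibbsian form $\pi \propto \exp(-\beta L)\,\dd\nu_\beta$: by a standard Laplace/Girsanov-style estimate, one compares integrals against $\pi$ with evaluation at the mode $\tilde x$, and the Gaussian reference measure $\nu_\beta$ with covariance $(-\beta A)^{-1}$ contributes the factor $\tfrac{1}{\beta}(\sqrt{2M/\lambda}+1)$ from the trace $\operatorname{tr}((-\beta A)^{-1})$ and the smoothness of $L$, while the regularization cross-term yields the $\lambda(\|\tilde x\|_{\cHK}/\sqrt\beta + \|\tilde x\|_{\cHK}^2)$ contribution. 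The fourth summand is exactly the $L(\tilde x) - L(x^*)$ bias left explicit in the bound.

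The main obstacle is the first step: geometric ergodicity of the discrete-time semi-implicit scheme in infinite dimension under Assumption~\ref{assum:bounded_grad}. Uniqueness of the invariant law and a genuine exponential contraction rate $\Lambda^*_\eta$ require both a sharp drift estimate (using $S_\eta$'s spectral diagonalisation in the $f_k$ basis and the eigenvalue decay of Assumption~\ref{assum:eigenvalue_cvg}) and a minorisation on bounded sets; the newly introduced Assumption~\ref{assum:C1_boundedness} is precisely what lets one control $D^2L$ in the $\|\cdot\|_\alpha$ norm finely enough to transfer the continuous-time minorisation to the discrete scheme. Packaging all constants carefully so that $\Lambda^*_\eta \to \Lambda^*_0 > 0$ as $\eta \downarrow 0$, and so that the $\beta$-dependence collapses to the stated $c_\beta$, is the delicate bookkeeping that makes the final bound uniform in the time horizon.
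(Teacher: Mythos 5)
Your decomposition reproduces the right four error terms, but the very first reduction breaks the proof. You apply Markov's inequality directly to the unbounded random variable $L(X_n) - L(x^*)$ and then plan to bound $\mathbb{E}[L(X_n)] - L(x^*)$ by splitting through $\mu^\eta$ and $\pi$. The obstacle, which the paper explicitly flags, is that the weak-error estimate between $\mu^\eta$ and $\pi$ (Proposition~\ref{prop:scheme_cvg_general}, built on \citet{Brehier14,Brehier16}) is only available for test functions $\phi \in C_b^2$, with the bound scaling as $\|\phi\|_{0,2} = \max\{\|\phi\|_\infty, \sup\|\nabla\phi\|, \sup\|D^2\phi\|_{\mathcal{B}(\cH)}\}$. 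Since $L$ is unbounded, $\|L\|_{0,2} = \infty$ and your second summand $\mathbb{E}_{\mu^\eta}[L] - \mathbb{E}_{\pi}[L]$ cannot be controlled by this machinery. This is not a bookkeeping issue: unlike the finite-dimensional analyses of \citet{Xu_Chen_Zou_Gu18}, in infinite dimension one cannot extend the weak-error bound to polynomially growing test functions, and the paper makes this exact point in its ``Proof scheme'' paragraph.

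The paper's fix, which your proposal misses, is to first compose with a bounded sigmoid: set $\phi(x) = \sigma(L(x) - L(x^*))$ with $\sigma(u) = (1+e^{-u})^{-1} - 1/2$. Then $\phi \in C_b^2$, and since $\sigma$ is increasing, $\mathbb{P}(L(X_n) - L(x^*) > \delta) \leq \mathbb{P}(\phi(X_n) > \sigma(\delta)) \leq \mathbb{E}[\phi(X_n)]/\sigma(\delta)$. One decomposes $\mathbb{E}[\phi(X_n)] = \mathbb{E}[\phi(X_n) - \phi(X^{\mu_\eta})] + \mathbb{E}[\phi(X^{\mu_\eta}) - \phi(X^\pi)] + \mathbb{E}[\phi(X^\pi)]$, applies Propositions~\ref{prop:geom_ergo} and~\ref{prop:scheme_cvg_general} to the bounded $\phi$ for the first two pieces, and only at the last piece converts back to $L$ via $\sigma(u) \leq u$ (valid since $u = L(X^\pi) - L(x^*) \geq 0$), so that $\mathbb{E}[\phi(X^\pi)] \leq (\mathbb{E}[L(X^\pi)] - L(\tilde x)) + (L(\tilde x) - L(x^*))$, which Proposition~\ref{prop:gibbs_concentration} handles. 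Finally $1/\sigma(\delta) \leq 5/\delta$ for $\delta \in (0,1)$ recovers the $1/\delta$ factor. Without the sigmoid truncation, your argument cannot be made rigorous in this infinite-dimensional setting; inserting it is exactly the missing idea.
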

The proof is in \Cref{sec:final_rate_proof}.
A precise description of the spectral gap $\Lambda^*_\eta$ is given in \Cref{prop:geom_ergo}.
$\Lambda^*_{\eta}$ could be dependent on $\beta$ and $\eta$, but is uniformly lower bounded with respect to $\eta > 0$.
As can be seen in \Cref{eq:informal_final_rate}, there is a competing effect between the regularization $\Lambda^*_\eta$ (ensuring faster convergence of the discrete chain) and the inverse temperature $\beta$ (ensuring better concentration of the Langevin stationary distribution $\pi$). 
We can see that, for fixed $\lambda$ and $\beta$, 
%
%
by setting $\eta \leq \frac{\log n}{\Lambda_\eta^* n}$, \Cref{eq:informal_final_rate} excluding the optimization unrelated term $L(\tilde{x}) - L(x^*)$ is of order
\begin{equation}
    O\left(\frac{1}{n} + \frac{\cbeta}{\Lambda_0^*} \left(\frac{\log n}{\Lambda_\eta^*  n}\right)^{1/2 - \kappa} + \frac{1}{\beta} + \lambda \right).
\end{equation}
Note also that contrary to the finite dimensional setting where $1$ order weak convergence is possible, the $1/2$ rate in $\eta$ is optimal \citep{Brehier14} -- see \Cref{sec:second_term}.

Next, the convergence rate of SGLD is given as follows.
\begin{theorem}[Main Result, SGLD convergence rate]\label{thm:main_result_SGLD}
Under \Cref{assum:eigenvalue_cvg,ass:SGLD_cond} and $\|x_0\| \leq 1$, 
SGLD has the following convergence rate:
 \begin{align*}
        &\bbP(L(Y_n^N) - L(x^*) > \delta)  \\
& \lesssim   \frac{1}{\delta}\Bigg\{  \Theta_n + 
\frac{\cbeta}{\Lambda_{0}^*}   \mu_{N+1}^{1/2 - \kappa} 
+ \left(\sqrt{r_n} +  \sqrt[4]{r_n}\right)
 \Bigg\},
    \end{align*}
where $\displaystyle r_{n} = \frac{n \beta \eta (\ntr - \nbch)}{\nbch(\ntr-1)}$ and $\Theta_n = \exp(- \Lambda_\eta^*(\eta n - 1)) +
\frac{\cbeta}{\Lambda^*_0}\eta^{1/2-\kappa} 
+\frac{1}{\beta}\left(\!\sqrt{\frac{2M}{\lambda}} \!+\! 1\right)\! +\!
     \lambda\left( \frac{\|\tilde{x}\|_{\cHK}}{\sqrt{\beta}} + \|\tilde{x}\|_{\cHK}^2 \right)
+ L(\tilde{x}) - L(x^*)
$ which is the convergence rate of GLD shown in \Cref{thm:informal_main_result}.
\end{theorem}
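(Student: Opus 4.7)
The argument uses a three-way triangle decomposition,
\begin{align*}
L(Y_n^N) - L(x^*)
&= [L(Y_n^N) - L(X_n^N)] + [L(X_n^N) - L(X_n)] \\
&\quad + [L(X_n) - L(x^*)],
\end{align*}
where $X_n^N$ is the Galerkin-GLD iterate coupled to $Y_n^N$ via identical cylindrical Brownian increments, and $X_n$ is the corresponding infinite-dimensional GLD iterate. I would bound the expected positive part of each summand and finish with Markov's inequality, which accounts for the shared $1/\delta$ prefactor in the target bound.

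The third summand is controlled directly by \Cref{thm:informal_main_result} and contributes $\Theta_n$. For the second, the Galerkin truncation error, the same SPDE weak-discretization argument that delivers the $\eta^{1/2-\kappa}$ rate inside $\Theta_n$ applies, now with $\mu_{N+1}$ replacing the stepsize (since Galerkin discards the modes with index $>N$, whose weights are at most $\mu_{N+1}$). This produces the $\cbeta \mu_{N+1}^{1/2-\kappa}/\Lambda_0^*$ contribution, the spectral-gap factor $\Lambda_0^*$ again arising from the geometric ergodicity used to pass from pointwise-in-time to uniform-in-time error bounds.

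The new step is the first summand. I would couple $Y_n^N$ and $X_n^N$ with identical noise so that only the mini-batch gradient noise distinguishes them, and then invoke Girsanov's theorem on $\cH_N$ to obtain
\[
\mathrm{KL}\bigl(\cL(Y_{0:n}^N) \,\big\|\, \cL(X_{0:n}^N)\bigr)
\lesssim \tfrac{\beta\eta}{4}\!\sum_{k=0}^{n-1}\! \bbE\|g_{k,N}(Y_k^N) - \nabla L_N(Y_k^N)\|^2,
\]
where the prefactor $\beta/(2\eta)$ is the inverse noise variance of the Langevin increment. The sampling-without-replacement identity gives per-step variance $\tfrac{\ntr-\nbch}{\nbch(\ntr-1)}\cdot B^2$, where $B$ is the uniform gradient bound from \Cref{ass:SGLD_cond}, hence $\mathrm{KL} \lesssim r_n$. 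The mixed rate $\sqrt{r_n}+\sqrt[4]{r_n}$ then emerges from combining two transportation consequences of this single KL bound: Pinsker's inequality yields a $\sqrt{r_n}$ total-variation gap, while a $T_2$-type transportation inequality against the Gaussian reference measure yields $\bbE\|Y_n^N - X_n^N\|^2 \lesssim r_n$. Threshold-splitting at $\|Y_n^N - X_n^N\| \sim r_n^{1/4}$ and using smoothness of $L$ on the good event controls the objective gap by $O(r_n^{1/4})$, while Chebyshev bounds the probability of the bad event by $\sqrt{r_n}$; summing gives the $\sqrt{r_n} + \sqrt[4]{r_n}$ rate.

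The main obstacle is making the Girsanov step rigorous in the infinite-dimensional SPDE setting while keeping all constants $N$-independent: one has to check the Novikov condition for the cylindrical Brownian motion so that the Radon--Nikodym derivative is a true martingale, propagate the dissipativity of \Cref{prop:dissipative} through the mini-batch recursion to obtain $N$-uniform moment bounds $\bbE\|Y_k^N\|^p$ (which are needed to bound the per-step gradient variance along the trajectory), and verify that $\Lambda_0^*$ and the associated ergodicity constants remain dimension-free. Once those uniform bounds are in place, assembling the three summands and applying Markov concludes the proof.
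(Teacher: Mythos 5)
Your high-level strategy — decompose the error into an SGLD-to-GLD gap, a Galerkin gap, and a GLD baseline, then apply Markov — is in the same spirit as the paper, but there are two genuine obstructions in the way you set it up.

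\paragraph{The decomposition acts on $L$, not on a bounded test function.} You decompose $L(Y_n^N) - L(x^*)$ directly and propose to bound the ``expected positive part of each summand.'' This cannot be carried out with the tools available: the weak-approximation error bounds (\Cref{prop:scheme_cvg_general}, \Cref{prop:TimeAndSpaceApproxError}) and the geometric-ergodicity bound (\Cref{prop:geom_ergo}) apply only to \emph{bounded} test functions $\phi\in C_b^2$, while $L$ is unbounded. The paper flags this explicitly and works instead with the sigmoidal surrogate $\phi(x) = \sigma(L(x) - L(x^*))$, decomposing $\bbE[\phi(Y_n^N)]$ and only invoking Markov once, at the very end. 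Relatedly, for the third summand you claim that \Cref{thm:informal_main_result} ``contributes $\Theta_n$,'' but that theorem yields only a tail bound $\bbP(L(X_n) - L(x^*) > \delta) \lesssim \Theta_n/\delta$, not a moment bound $\bbE[L(X_n) - L(x^*)] \lesssim \Theta_n$; a $1/\delta$ tail bound does not control a first moment. So the plan of summing three separate expectation bounds and then applying Markov does not close.

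\paragraph{The middle term compares finite-time laws, not stationary laws.} Your middle piece $L(X_n^N) - L(X_n)$ compares the finite-time laws of the Galerkin and full discrete chains. The paper's corresponding result, \Cref{prop:TimeAndSpaceApproxError}, compares the \emph{invariant} measures $\mu_{(N,\eta)}$ and $\pi$; the finite-time comparison is not established, and trying to reduce it to the stationary one brings you back precisely to the paper's four-term decomposition $\bbE[\phi(Y_n^N) - \phi(X_n^N)] + \bbE[\phi(X_n^N) - \phi(X^{\mu_{(N,\eta)}})] + \bbE[\phi(X^{\mu_{(N,\eta)}}) - \phi(X^\pi)] + \bbE[\phi(X^\pi)]$, where the geometric ergodicity of the Galerkin chain supplies the exponential term in $\Theta_n$.

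On the SGLD-to-GLD gap your instinct is sound, but the paper does not need a continuous-time Girsanov argument or a Novikov condition: since both chains are discrete with Gaussian one-step kernels, the paper computes the KL divergence $\mathrm{D}(Q_k\|P_k)$ via the chain rule for conditional KL between Gaussians, then passes to $\mathcal{W}_2$ via Bolley's transportation inequality (\Cref{kl-divergence-bound}), which already produces the $\sqrt{r_n} + \sqrt[4]{r_n}$ pair; the dimension-independence comes from the $N$-uniform bound $\log\bbE\exp\|X_k^N\|^2 \lesssim 1 + 1/\beta$ in \Cref{log-exp-bound}. Your threshold-splitting alternative is plausible, but it is extra machinery that the discrete-time KL/Bolley route avoids.
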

The approximation error induced by the Galerkin approximation corresponds to $\frac{\cbeta}{\Lambda_{0}^*}   \mu_{N+1}^{1/2 - \kappa}$.
Since $\mu_{N+1} \lesssim N^{-2}$, the approximation error decreases in a quadratic order as the dimension $N$ is increased.
The error induced by the stochastic gradient corresponds to $\sqrt{r_n} +  \sqrt[4]{r_n}$. 
As the minibatch size $\nbch$ increases, the stochastic gradient error converges to 0.
This rate is slightly better than finite dimensional counter part \cite{Raginsky_Rakhlin_Telgarsky2017,Xu_Chen_Zou_Gu18},
by a factor of $\sqrt{k\eta}$.
This is due to the regularization term $\lambda \|x\|_{\cHK}^2$. 

\paragraph{Proof scheme}

Applying GLD and SGLD for non-convex optimization in a finite dimensional space has been investigated extensively recently in
\citet{Raginsky_Rakhlin_Telgarsky2017,Xu_Chen_Zou_Gu18,NIPS2018_8175} to name a few.
Our analysis could be an infinite dimensional extension of \citet{Raginsky_Rakhlin_Telgarsky2017,Xu_Chen_Zou_Gu18}.
However, unlike in the proof of such existing analyses for the finite dimensional case, 
$\bbE[L(X_n) - L(x^*)]$ cannot be directly bounded as the results used in an infinite-dimensional setting only apply to bounded test functions,  Corollary 1.2 in \citet{Brehier14} in particular. This is to be contrasted with \citet{Xu_Chen_Zou_Gu18} where the finite-dimensional assumption allows to derive results for test functions bounded by a Lyapunov function of the type $C(\norm{x}^k + 1)$.
Instead, sigmoid functions of the form $\phi(x) = \sigma(L(x) - L(x^*))$ with  $\sigma(x) = 1 / ({1 + e^{-x}})-1/2$ are used to bound the probability of the $n$-th iterate $X_n$ of \Cref{eq:sgld} being in a certain level set of $L(x) - L(x^*)$, by bounding $\bbE[\phi(X_n)]$ and applying Markov's inequality.

The seminal paper \citet{Raginsky_Rakhlin_Telgarsky2017} derived the finite time error bound of SGLD for non-convex learning problem utilizing the decomposition 
\begin{align}
&\bbE[\phi(X_n) - \phi(x^*)] = \bbE[\phi(X_n) - \phi(X(n\eta)))] + \notag \\
&\quad  \bbE[\phi(X(n\eta)) - \phi(X^\pi)] + \bbE[\phi(X^\pi)- \phi(x^*)],
\label{eq:phiDecomposeRaginsky}
\end{align}
where $\pi$ is the stationary distributions of the continuous Markov chain $\{X(t)\}_{t \geq 0}$ and 
we denote by $X^\mu$ a random variable obeying a probability distribution $\mu$.
On the other hand, \citet{Xu_Chen_Zou_Gu18} observed that this decomposition could be improved by utilizing the geometric ergodicity of discrete time dynamics and proposed to use the following decomposition:  
 \begin{align}
&\bbE[\phi(X_n) - \phi(x^*)] = \bbE[\phi(X_n) - \phi(X^{\mu_\eta})] + \notag \\
&\quad  \bbE[\phi(X^{\mu_\eta}) - \phi(X^\pi)] + \bbE[\phi(X^\pi)- \phi(x^*)],
\label{eq:phiDecomposeXuChen}
\end{align}
where $\mu_\eta$ is the stationary distribution of the discrete Markov chain $\{X_n \}_{n \in \bbN}$ (the existence of which is not trivial).
By using this, it is shown that some polynomial order term with respect to $n$ can be dropped to obtain a faster rate.\footnote{We would like to point out that we have found some incorrect analysis of the error bound in \citet{Xu_Chen_Zou_Gu18}. In particular, there are several wrong evaluations about dependency of constants (including the spectral gap) on the inverse temperature parameter $\beta$.}
Our analysis employs this strategy.
That is, we control (i) the convergence of the discrete chain to its stationary distribution (whose existence we prove), (ii) the convergence of the GLD stationary distribution to that of the Langevin diffusion, and (iii) the concentration of the Langevin diffusion around the global minimum of $L$.

The extension to an infinite dimensional setting is not trivial. For example, the boundedness of the noise $\epsilon_n$ does no longer hold, and thus we need an additional regularization term $AX(t)$ to make the solution bounded in $\cH$ and hit a compact set with high probability.
The time discretization of the infinite dimensional Langevin dynamics \cite{da_prato_zabczyk_1996} has been studied especially as a numerical scheme of stochastic partial differential equation \cite{Kuksin2001,debussche2011weak,Brehier14,Brehier16,Andersson2016,Chen2017,chen2018fulldiscrete}.
\citet{Brehier14} and \citet{Brehier16} derived a weak approximation error of the time discretization scheme \eqref{eq:sgld} from the stationary distribution $\pi$. However, their proof strategy utilizes the decomposition \Cref{eq:phiDecomposeRaginsky} as in \citet{Raginsky_Rakhlin_Telgarsky2017}.
As we have pointed out above, the error bound could be improved by using the decomposition \Cref{eq:phiDecomposeXuChen} instead. 
Unfortunately, the geometric ergodicity of the discrete time dynamics has not been established so far.
Therefore, we have introduced \Cref{assum:C1_boundedness} so that the geometric ergodicity holds. 
Thanks to this, the decomposition \eqref{eq:phiDecomposeXuChen} analogous to \citet{Xu_Chen_Zou_Gu18} can be employed to yield a better rate.




\section{Bounding the First Term: Geometric Ergodicity of the Discrete Chain}
\label{sec:first_term}

The proof from this section is adapted from \citet{Goldys_Maslowski06} for the discrete chain, i.e. it is shown that the hypothesis of Theorem 2.3 in \citet{meyn1994} are satisfied.
Namely, we prove the existence of a Lyapunov function of the form $V(x) = \norm{x} + 1$, and that a \emph{minorization condition} is satisfied on a ball $\cB_r \subset \cH$.
These two properties act jointly in ensuring the geometric ergodicity (\citet[Ch.15]{meyn1993markov}) of \Cref{eq:sgld}.
Indeed, the Lyapunov condition ensures the attractiveness of $\cB_r$ for the chain $\{ X_n\}_{n\in\bbN}$, while the minorization condition lower bounds the probability of staying in $\cB_r$.

%
%
The following proposition controls the chain in the case when $\nabla L = 0$ and is used as an auxiliary result.
\begin{restatable}{proposition}{Zboundedness}\label{prop:z_boundedness}
  Let $\{Z_n\}_{n \in \bbN}$ solve: $Z_0 = 0$ and 
  \begin{equation}
        Z_{n+1} = S_\eta Z_n+ \sqrt{\frac{2\eta}{\beta}}S_\eta \varepsilon_n,
  \end{equation}
with $\beta > \eta$. Then, $\forall p > 0, k(p) \eqdef \sup_{n \geq 0}\bbE (\norm{Z_n}^p) < \infty.$ 
\end{restatable}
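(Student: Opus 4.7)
\textbf{Proof plan for \Cref{prop:z_boundedness}.}
The plan is to diagonalize in the Mercer basis, solve the recursion explicitly, and use the rapid decay of $\mu_k$ (\Cref{assum:eigenvalue_cvg}) to bound a single Gaussian quantity uniformly in $n$. Because $A = -\frac{\lambda}{2}\nabla\|\cdot\|_{\cHK}^2$ acts diagonally as $Af_k = -\frac{\lambda}{\mu_k}f_k$, the operator $S_\eta = (\mathrm{Id}+\eta\frac{\lambda}{2}\nabla\|\cdot\|_{\cHK}^2)^{-1}$ is diagonal too, with eigenvalues $s_k \eqdef \mu_k/(\mu_k+\eta\lambda) \in (0,1)$. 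Thus the recursion $Z_{n+1}=S_\eta Z_n+\sqrt{2\eta/\beta}\,S_\eta\varepsilon_n$ decouples into scalar Ornstein–Uhlenbeck–like recursions on each coordinate.

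Unrolling from $Z_0=0$ gives
\begin{equation*}
Z_n = \sqrt{\tfrac{2\eta}{\beta}}\sum_{j=1}^{n} S_\eta^{\,j}\varepsilon_{n-j},
\qquad \scal{Z_n}{f_k} = \sqrt{\tfrac{2\eta}{\beta}}\sum_{j=1}^{n} s_k^{\,j}\,\varepsilon_{n-j}^k,
\end{equation*}
where $\varepsilon_j^k \eqdef \scal{\varepsilon_j}{f_k}$ are i.i.d.\ $\mathcal{N}(0,1)$. Hence $Z_n$ is a centered Gaussian $\cH$-valued vector (once we verify summability below) whose covariance is diagonal with
\begin{equation*}
\sigma_{n,k}^2 \eqdef \mathrm{Var}(\scal{Z_n}{f_k}) = \tfrac{2\eta}{\beta}\sum_{j=1}^{n} s_k^{2j} \leq \tfrac{2\eta}{\beta}\cdot\tfrac{s_k^2}{1-s_k^2}.
\end{equation*}
A direct computation yields $\frac{s_k^2}{1-s_k^2}=\frac{\mu_k^2}{\eta\lambda(2\mu_k+\eta\lambda)}\leq \frac{\mu_k}{2\eta\lambda}$, so that, independently of $n$,
\begin{equation*}
\bbE\|Z_n\|^2 = \sum_{k\geq 0}\sigma_{n,k}^2 \;\leq\; \tfrac{1}{\beta\lambda}\sum_{k\geq 0}\mu_k \;<\;\infty,
\end{equation*}
where finiteness uses $\mu_k\sim k^{-2}$ from \Cref{assum:eigenvalue_cvg}. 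This simultaneously justifies that $Z_n$ is a genuine $\cH$-valued Gaussian (the cylindrical noise is tamed by $S_\eta$) and delivers the uniform $L^2$ bound.

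To upgrade to arbitrary $p>0$, I would invoke the fact that for any centered Gaussian random element $Z$ of a separable Hilbert space, $\bbE\|Z\|^p \leq C_p (\bbE\|Z\|^2)^{p/2}$ for a universal $C_p<\infty$ (a direct consequence of Fernique's theorem, or equivalently of Kahane–Khintchine in Hilbert space, where Gaussian moments are all equivalent). Combining this with the uniform bound on $\bbE\|Z_n\|^2$ gives $k(p) = \sup_n \bbE\|Z_n\|^p <\infty$, as desired. The only potential obstacle is the convergence of $\sum_k \mu_k$, which is precisely what the $\mu_k\sim k^{-2}$ assumption buys us; without it the iterates $Z_n$ would not even live in $\cH$. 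Note the condition $\beta>\eta$ of the statement is not actually needed in this scalar argument and only affects constants.
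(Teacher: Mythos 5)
Your proof is correct, but it takes a genuinely different route from the paper. The paper treats $\{Z_n\}$ as the semi-implicit discretization of the continuous Ornstein--Uhlenbeck SDE $\dZ(t)=AZ(t)\dt+\sqrt{2/\beta}\,\dW(t)$, cites \citet{Brehier14} for the uniform moment bound on $Z(t)$, and then invokes the strong order-$1/4$ convergence of the numerical scheme \citep[Theorem~3.2]{printems_2001} to transfer the bound to $Z_n$. Your argument bypasses the continuous-time detour entirely: since $S_\eta$ is diagonal in the Mercer basis with eigenvalues $s_k=\mu_k/(\mu_k+\eta\lambda)<1$, you unroll the recursion, compute the per-coordinate variance explicitly, bound $\sum_k\sigma_{n,k}^2\leq\frac{1}{\beta\lambda}\sum_k\mu_k<\infty$ uniformly in $n$ using $\mu_k\sim k^{-2}$, and then upgrade from $p=2$ to general $p$ by Gaussian moment equivalence in Hilbert space (Fernique). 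This is more elementary and self-contained, makes the dependence on $\beta,\eta,\lambda$ completely explicit (which the paper only asserts), and in particular makes transparent the paper's remark that $\beta>\eta$ is about controlling constants rather than about finiteness. The paper's route has the advantage of being uniform across the discretization literature it builds on, but for this particular lemma your direct computation is arguably cleaner.
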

The proof is rather straightforward and is deferred to \Cref{sec:ProofOfZbounded}.
\Cref{prop:norm_control} controls the decrease of $\norm{X_n}$ in expectation and is the key result towards proving the existence of a Lyapunov function.
It relies on the regularizing effect of $A$, through $S_\eta$: indeed, it holds that $\forall k,  S_\eta f_k = (\Id - \eta A)^{-1} f_k = \frac{1}{1 + \lambda\eta/\mu_k}f_k$, hence $S_\eta$ is a bounded linear operator of norm  $\opnorm{S_\eta} = \frac{1}{1 + \lambda\eta/\mu_0} < 1$.

\begin{proposition}\label{prop:norm_control}
    Let \Cref{assum:smoothness,assum:dissipative} hold. We have
    \begin{equation*}
    \bbE_{x_0}\norm{X_n} \leq \rho^n \norm{x_0} + b , \quad \forall n\in\bbN ,
    \end{equation*}
    with (i) (for Strict Dissipativity) $\rho = \frac{1 + \eta M}{1 + \lambda\eta/\mu_0} < 1$, $b = \norm{x^*} + 2 k(1)$, or (ii) (for Bounded gradients)  $\rho = \frac{1}{1 + \lambda\eta/\mu_0} < 1$, $b = \frac{\mu_0}{\lambda}B + k(1)$.
\end{proposition}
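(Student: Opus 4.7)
The plan is to decouple the noise from the deterministic part by writing $X_n = W_n + Z_n$ where $Z_n$ is the pure-noise chain of \Cref{prop:z_boundedness} and $W_n := X_n - Z_n$ is a residual signal term with $W_0=x_0$. Subtracting the two recursions, the Brownian increments cancel and one obtains
\begin{equation*}
W_{n+1} \;=\; S_\eta W_n \;-\; \eta S_\eta \nabla L(X_n),
\end{equation*}
which is now a random but noise-free recursion. This is advantageous because $S_\eta$ is diagonal in the $(f_k)$ basis with eigenvalues $1/(1+\lambda\eta/\mu_k)$, so $\opnorm{S_\eta} = 1/(1+\lambda\eta/\mu_0) < 1$, a fact already observed in the paragraph preceding the proposition.

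Next I would take norms and split into the two regimes of \Cref{assum:dissipative}. In the strictly dissipative case, \Cref{assum:smoothness} together with $\nabla L(x^*)=0$ gives $\|\nabla L(X_n)\| \le M\|X_n-x^*\| \le M(\|W_n\|+\|Z_n\|+\|x^*\|)$, leading to
\begin{equation*}
\|W_{n+1}\| \le \rho\,\|W_n\| + \eta\opnorm{S_\eta}M\bigl(\|Z_n\|+\|x^*\|\bigr),
\qquad \rho = \tfrac{1+\eta M}{1+\lambda\eta/\mu_0}.
\end{equation*}
The strict inequality $\lambda > M\mu_0$ is precisely what makes $\rho<1$, uniformly in $\eta$. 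In the bounded-gradient case one uses $\|\nabla L(X_n)\|\le B$ directly to get $\|W_{n+1}\| \le \opnorm{S_\eta}\|W_n\| + \eta\opnorm{S_\eta}B$, with the simpler contraction factor $\rho = \opnorm{S_\eta}$.

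Taking expectations, invoking Proposition \ref{prop:z_boundedness} to bound $\bbE\|Z_n\| \le k(1)$ uniformly in $n$, and iterating the resulting scalar recursion yields
\begin{equation*}
\bbE\|W_n\| \;\le\; \rho^n\|x_0\| \;+\; \frac{\eta\opnorm{S_\eta}\cdot C}{1-\rho},
\end{equation*}
where $C$ is $M(k(1)+\|x^*\|)$ in case (i) and $B$ in case (ii). A direct computation shows that the factor $\eta\opnorm{S_\eta}/(1-\rho)$ simplifies to the $\eta$-independent quantity $1/(\lambda/\mu_0 - M)$ in case (i), and to $\mu_0/\lambda$ in case (ii). Finally the triangle inequality $\bbE\|X_n\| \le \bbE\|W_n\| + \bbE\|Z_n\|$ together with $\bbE\|Z_n\|\le k(1)$ absorbs the noise contribution into the constant $b$, yielding the stated form.

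The main obstacle is arranging the estimate so that the constant $b$ is independent of $\eta$: naively one would get an $\eta$-dependent stationary offset, but the cancellation between the $\eta$ in the numerator and the $\eta$ hidden in $1-\rho$ is exactly what produces an $\eta$-free bound, which is essential for the later ergodicity argument where $\eta\to 0$ must remain safe. A secondary subtlety is that the recursion for $\|W_{n+1}\|$ involves the random quantity $\|Z_n\|$ (in case (i)); this is harmless because we only need expectations, and Proposition \ref{prop:z_boundedness} bounds every moment of $Z_n$ uniformly in $n$.
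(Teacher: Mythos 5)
Your proposal follows essentially the same route as the paper: the identical decomposition $X_n = W_n + Z_n$ (the paper names the residual $Y_n$), the operator-norm bound $\opnorm{S_\eta} = \tfrac{1}{1+\lambda\eta/\mu_0}$, the smoothness bound $\|\nabla L(X_n)\|\le M(\|W_n\|+\|Z_n\|+\|x^*\|)$ in case (i) versus $\|\nabla L\|\le B$ in case (ii), and the same geometric-series iteration followed by the triangle inequality to reabsorb $Z_n$. One incidental observation: in case (i) your computation of the stationary offset gives $\frac{M(\|x^*\|+k(1))}{\lambda/\mu_0 - M} + k(1)$, whereas the paper asserts $\frac{\eta M}{(1-\rho)(1+\eta\lambda/\mu_0)}=1$ (and hence $b=\|x^*\|+2k(1)$); that identity actually evaluates to $\frac{M}{\lambda/\mu_0-M}$, so your version of the constant is the one the argument genuinely produces.
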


The proof is given in \Cref{sec:ProofOfNormControl}.
Combining this Lyapunov condition with a ``minorization condition'', we can show the geometric ergodicity in the following proposition.
\begin{proposition}[Geometric ergodicity]\label{prop:geom_ergo}
Let \Cref{assum:eigenvalue_cvg,assum:smoothness,assum:C2_boundedness,assum:dissipative} hold.
We also assume \Cref{assum:C1_boundedness} under the bounded gradient condition (\Cref{assum:bounded_grad}).
Let $\eta >0, \beta > \eta$ and 
$V(x) = \|x\| + 1$.
Then, there exists a unique invariant measure $\mu_{{\eta}}$ and ${\Lambda_\eta^*} > 0$
such that for all $\phi: \cH \to \mathbb{R}$ with $|\phi(\cdot)| \leq V(\cdot)$ and $\|\phi(x) - \phi(y)\| \leq M' \|x - y\|~(x,y \in \cH)$, it holds that 
\begin{align}\label{eq:geom_ergo}
& 
{|\bbE_{x_0}[\phi(X_n)]  -  \bbE[\phi(X^{\mu_\eta})]|} \leq 
C_{x_0} \exp(- \Lambda^*_\eta (\eta n - 1)),
\end{align}
where $C_{x_0}$ and $\Lambda_{{\eta}}^* > 0$ are given by 
\vspace{-0.2cm}
\begin{enumerate}[parsep=0pt,partopsep=0pt,topsep=0pt,itemindent=-0.3cm,itemsep=-0.2cm,labelsep=0.1cm]
\item[i)] (Strict dissipativity, \Cref{assum:strict_diss})
\begin{align*}
& \Lambda^*_\eta =  \frac{\frac{\lambda}{\mu_0} - M}{1 + \eta \frac{\lambda}{\mu_0}},~C_{x_0} =  M'(\|x_0\|_{\cH} + b),
\end{align*}
\item[ii)] (Bounded Gradient, \Cref{assum:bounded_grad})
\begin{align*}
&\textstyle \Lambda^*_\eta = \frac{\min\left(\frac{\lambda}{2 \mu_0}, \frac{1}{2} \right)}{4 \log(\kappa (\bar{V} + 1)/(1-\delta)) } \delta, \\ &
\textstyle C_{x_0} = \kappa [\bar{V} + 1] + \frac{\sqrt{2} (V(x_0) + b)}{\sqrt{\delta}}, 
\end{align*}
for $0 < \delta< 1$ satisfying $\delta = \Omega(\exp(-O(\beta)))$, $\bar{b} = \max\{b,1\}$, $\kappa = \bar{b} + 1$ and $\bar{V} = \frac{4 \bar{b}}{\sqrt{(1+\rho^{1/\eta})/2} - \rho^{1/\eta}}$\footnote{More detailed evaluation of $\delta$ can be found in the proof.}.
\end{enumerate}

%
%
\end{proposition}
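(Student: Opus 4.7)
The plan is to apply Theorem 2.3 of \citet{meyn1994} as announced at the top of this section, which requires (i) a Foster--Lyapunov drift condition for $V(x)=\|x\|+1$ and (ii) a minorization condition on a sublevel set of $V$. The drift condition is essentially already established: \Cref{prop:norm_control}, read at the one-step level, gives $\bbE_x V(X_1)\leq \rho V(x)+\tilde b$ with $\rho<1$ in both regimes of \Cref{assum:dissipative}. The minorization, however, is handled very differently in the two cases, so I would treat them separately.

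For the strict dissipativity case, I would bypass the Meyn--Tweedie route entirely via a synchronous coupling. Letting $X_n, Y_n$ run with the same noise sequence and applying \Cref{assum:smoothness} together with $\opnorm{S_\eta}=1/(1+\eta\lambda/\mu_0)$ yields a one-step Lipschitz contraction
\[
\|X_{n+1}-Y_{n+1}\|\leq \tfrac{1+\eta M}{1+\eta\lambda/\mu_0}\|X_n-Y_n\|=\rho\|X_n-Y_n\|,
\]
with $\rho<1$ because $\lambda/\mu_0>M$. Iterating, taking $Y_0\sim\mu_\eta$ (existence via Krylov--Bogoliubov from the drift, uniqueness then forced by the contraction itself) and using $|\phi(x)-\phi(y)|\leq M'\|x-y\|$ would give $|\bbE_{x_0}\phi(X_n)-\bbE\phi(X^{\mu_\eta})|\leq M'(\|x_0\|+b)\rho^n$, where $\bbE\|Y_0\|\leq b$ in stationarity by \Cref{prop:norm_control}. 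Finally $\rho^n\leq \exp(-\eta n\Lambda^*_\eta)$ with $\Lambda^*_\eta=(\lambda/\mu_0-M)/(1+\eta\lambda/\mu_0)$ via $-\log\rho\geq 1-\rho$ recovers case (i).

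For the bounded gradient case the coupling argument degenerates, since $\rho=1/(1+\eta\lambda/\mu_0)$ alone is not a sufficient contraction without a negative drift from $\nabla L$, and I would go through the full minorization argument on the small set $C_r=\{V\leq r\}$ for $r$ large enough that $C_r$ is attracting under the drift. The one-step transition is Gaussian, $P(x,\cdot)=\mathcal N(m(x),\Sigma)$, with mean $m(x)=S_\eta(x-\eta\nabla L(x))$ and covariance $\Sigma=(2\eta/\beta)S_\eta^2$, which is trace-class by \Cref{assum:eigenvalue_cvg} (the eigenvalues of $S_\eta^2$ decay like $\mu_k^2\sim k^{-4}$). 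For $x_1,x_2\in C_r$, the mean difference $m(x_1)-m(x_2)$ lies in the Cameron--Martin space $S_\eta(\cH)$ of $\Sigma$, with Cameron--Martin norm $\sqrt{\beta/(2\eta)}\|(x_1-x_2)-\eta(\nabla L(x_1)-\nabla L(x_2))\|$, uniformly bounded on $C_r$ by \Cref{assum:smoothness}. A standard Gaussian total-variation estimate then yields $d_{\mathrm{TV}}(P(x_1,\cdot),P(x_2,\cdot))\leq 1-\delta$ with $\delta=\Omega(\exp(-O(\beta)))$, the exponential $\beta$-dependence reflecting the vanishing noise scale. Feeding $(\rho,\delta,r)$ into Theorem 2.3 of \citet{meyn1994} should reproduce $\Lambda^*_\eta$ and $C_{x_0}$ of (ii), with the $\sqrt 2(V(x_0)+b)/\sqrt\delta$ factor arising from the amplification of the initial Lyapunov excess by the inverse minorization constant.

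The main obstacle will be making the minorization step of the bounded gradient case quantitative in infinite dimensions: $\Sigma$ is a genuine trace-class operator, not boundedly invertible on $\cH$, so the Cameron--Martin calculation must be carried out carefully and the $\beta$-dependence of $\delta$ tracked explicitly in order to recover the exact form of $\Lambda^*_\eta$ stated in the proposition. \Cref{assum:C1_boundedness}, which is new relative to prior work, is exactly what is needed to keep $m(x_1)-m(x_2)$ inside the Cameron--Martin space with uniformly bounded norm over $C_r$, and is thus unavoidable for this route.
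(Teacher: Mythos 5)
Your strict-dissipativity case is exactly what the paper does: synchronous coupling, one-step contraction with factor $\rho = (1+\eta M)/(1+\eta\lambda/\mu_0)$, then $|\phi(x)-\phi(y)|\leq M'\|x-y\|$ and $\bbE\|Y_0\|\leq b$ in stationarity, and $\rho^n\leq e^{-\eta n\Lambda_\eta^*}$ via $1-\rho\leq -\log\rho$. No issues there.

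The bounded-gradient case contains a genuine gap. You propose a one-step minorization on a small set using the one-step Gaussian kernel $\mathcal N(m(x),\Sigma)$ with $\Sigma = (2\eta/\beta)S_\eta^2$. Your own calculation of the Cameron--Martin norm of the mean shift gives $\sqrt{\beta/(2\eta)}\,\|(x_1-x_2)-\eta(\nabla L(x_1)-\nabla L(x_2))\|$, which is $O(\sqrt{\beta/\eta})$ on any bounded set. The resulting minorization constant from a Gaussian total-variation bound is therefore $\delta = \Omega(\exp(-O(\beta/\eta)))$, \emph{not} $\Omega(\exp(-O(\beta)))$ as you claim and as the proposition states. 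Plugging that into Mattingly's (or Meyn--Tweedie's) machinery would give a spectral gap $\Lambda^*_\eta$ that vanishes as $\eta\to 0$, which is incompatible with the property, used downstream in \Cref{thm:informal_main_result}, that $\Lambda^*_\eta$ is uniformly lower bounded in $\eta$. The paper avoids this precisely by establishing the minorization for the $N$-step transition $P^\eta_N(x,\cdot)$ with $N=1/\eta$: over $N$ steps the effective noise covariance is $Q_N=\sum_{l<N}QS_\eta^{2(N-l)}\succeq NQS_\eta^{2N}$, so the Cameron--Martin norm of the shift becomes $\sqrt{\beta/(2\eta N)}=\sqrt{\beta/2}$ on the slow modes, which is how the $\eta$-dependence cancels and one lands on $\delta=\Omega(\exp(-O(\beta)))$. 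This $N$-step reduction is the main idea of the proof and is entirely absent from your proposal.

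Relatedly, your reading of \Cref{assum:C1_boundedness} is off. Since $m(x_1)-m(x_2)\in\mathrm{Im}(S_\eta)=\mathrm{Im}(\Sigma^{1/2})$ automatically, Cameron--Martin membership of the one-step mean shift follows from \Cref{assum:smoothness} alone and requires no new hypothesis. The paper needs \Cref{assum:C1_boundedness} at a different point: when lower-bounding the Girsanov density of the $N$-step scheme, one encounters sums of inner products of the form $\sum_{k=0}^{N-1}\langle D^2L(\cdot)\cdot\widehat Z^{x,y}_{k,\eta},\,B_1(k)z\rangle$, and the operators $B_1(k)=Q_{N-k}^{1/2}H_kQ_N^{-1}$ are \emph{not} trace-class in the ordinary $\cH$-norm. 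The bound $|D^2L(x)\cdot(h,k)|\leq C_{\alpha,2}\|h\|_\cH\|k\|_\alpha$ with $\alpha>1/4$ converts $\|B_1(k)z\|_\cH$ into $\|B_1(k)z\|_\alpha$, and only then does the series converge (using $\mu_k\lesssim k^{-2}$). That is the integrability problem the paper refers to as being specific to infinite dimensions and discrete time; a one-step minorization argument neither runs into it nor explains why the new assumption is imposed.
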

The proof is given in \Cref{sec:ProofOfGeometricErgodicity}.
Unlike existing work, this theorem asserts the geometric ergodicity of the discrete time dynamics, whilst the geometric ergodicity for ``continuous time'' dynamics (\Cref{eq:langevin_sde}) has been well known, see as an example \citet{debussche2011weak,debussche2013ergodicity}.
The proof follows a standard argument that utilizes the Lyapunov condition (\Cref{prop:norm_control}) and the minorization condition.
Here, the minorization condition asserts that the transition kernel with respect to the discrete time Markov process shares a common probability mass on a bounded region uniformly over initial state $x_0$ with some bounded norm. Once this condition is shown then the recurrence probability can be lower bounded combined with the Lyapunov condition, which yields the {\it coupling argument}.
To show a faster convergence, we employed the coupling technique of \citet{mattingly2002} and adopted it to the proof technique of \citet{Goldys_Maslowski06} developed for a continuous time dynamics. Hence, we obtained faster rates than \citet{Goldys_Maslowski06}. In particular, the dependency on $\beta$ is improved.

Transforming the continuous time argument to the discrete time setting is far from trivial because 
there appears a ``integrability'' problem in showing the minorization condition, which makes it difficult to show the geometric ergodicity.
Indeed, \citet{Brehier14,Brehier16} pointed out there has been no work that showed the geometric ergodicity of the time discretized dynamics.
This difficulty does not occur in the finite dimensional setting. We resolved this problem by imposing \Cref{assum:C1_boundedness}.
Thanks to this, we have exponential convergence $\exp(- \Lambda_\eta^* n \eta)$ improving the polynomial order rate $\frac{1}{\Lambda_0^*}(n\eta)^{-1}$ of existing work.





\section{Second Term: Weak Convergence of the Discrete Scheme}
\label{sec:second_term}
%
The second term is linked to the weak convergence of the numerical scheme, i.e., in our case the convergence of $\phi(X_n)$ to $\phi(X(n\eta))$ for any admissible test function $\phi\in C^2_b$.
We rely directly on the results of \citet{Brehier16}, who prove $1/2$ order weak convergence in time and $1$ order weak convergence in space for numerical schemes that have a semi-implicit discretization in time with $\beta = 1$, and a finite elements discretization in space;
that is, they showed
\begin{equation}\label{eq:phiconvergenceBetaOne}
 \left|\int \phi\mathrm{d}\mu^\eta - \int \phi \mathrm{d}\pi\right| \leq C\norm{\phi}_{0,2}\eta^{1/2 - \kappa},
\end{equation}
where $\|\phi\|_{0,2} \eqdef \max\{ \|\phi\|_{\infty}, \sup_{x \in \cH}\|\nabla \phi(x)\|_{\cH}, \sup_{x\in \cH} $\\ $\|D^2 \phi(x)\|_{\mathcal{B}(\cH)} \}$ for $\phi \in C_b^2$.
%
%

In the general setting, $\beta \neq 1$, we need to evaluate the effect of $\beta$.
To that purpose, we essentially consider a re-scaling argument, that is, 
we observe that if we replace $L$ with $L' \eqdef \beta L$, $\lambda$ with $\lambda' \eqdef \beta \lambda$ and $\eta$ with $\eta' \eqdef \frac{\eta}{\beta}$ in \Cref{eq:langevin_sde} and \Cref{eq:sgld}, then it holds that 
\begin{equation*}
\textstyle
    S_\eta \! = \! \left(\Id + \eta \frac{\lambda}{2} \nabla\|\!\cdot\!\|_{\cHK}^2\!\right)^{-1} \!\!\!\!=\!\! \left(\Id +\frac{\eta}{\beta} \frac{\beta\lambda}{2} \nabla\|\!\cdot\!\|_{\cHK}^2\!\right)^{-1} \!\!\!=: \tilde{S}_{\eta'},
\end{equation*}
and thus
\begin{align*}
     X_{n + 1} 
     &= \tilde{S}_{\eta'} X_n - \eta' \tilde{S}_{\eta'} \nabla L'(X_n) + \sqrt{2\eta'}\tilde{S}_{\eta'} \varepsilon_n,
\end{align*}
i.e., $\{X_n\}_{n \in \bbN}$ is the numerical approximation of 
\begin{align*}
      \dX(t) = -\nabla \cL' (X(t)) + \sqrt{2} \dW(t),
\end{align*}
with time step $\eta'$. 
We carefully evaluate how the constant $C$ is \Cref{eq:phiconvergenceBetaOne} will be changed after rescaling.
We can see that $\beta$ affects the rate through the spectral gap $\Lambda^*_0$, which corresponds to the continuous dynamics ($\eta = 0$).
%
Eventually, we get the following result:
\begin{proposition}[Case $\beta \neq 1$\label{prop:scheme_cvg_general}]
Under the same setting as \Cref{prop:geom_ergo}, for any $0<\kappa<1/2$, $0 < \eta_0$,
there exists a constant $C$ such that for any bounded test function $\phi\in C_b^2$ and $0<\eta <\eta_0$, it holds that
\begin{equation}
 \left|\int \phi\mathrm{d}\mu^\eta - \int \phi \mathrm{d}\pi\right| \leq C \frac{\norm{\phi}_{0,2}}{\Lambda^*_0} \cbeta \eta^{1/2 - \kappa}.
\end{equation}
\end{proposition}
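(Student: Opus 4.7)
The plan is to reduce to the inverse-temperature-one case already handled by \citet{Brehier16} via a rescaling, then trace $\beta$ through the resulting constants. Following the observation in the excerpt, set $L' \eqdef \beta L$, $\lambda' \eqdef \beta \lambda$, $\eta' \eqdef \eta/\beta$. The GLD recursion \Cref{eq:sgld} for $(\cL, \beta, \eta)$ then coincides exactly with the inverse-temperature-one GLD for $(\cL' := L' + \tfrac{\lambda'}{2}\|\cdot\|_{\cHK}^2, \eta')$, and since $\exp(-\beta\cL) \propto \exp(-\cL')$ the Gibbs measure $\pi$ is unchanged. The corresponding rescaled continuous-time SDE is the time-change $Y(s) = X(\beta s)$ of the original Langevin diffusion (a direct It\^o computation), so its invariant measure is again $\pi$ and its spectral gap equals $\beta \Lambda_0^*$.

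Once this reduction is in place, applying \Cref{eq:phiconvergenceBetaOne} to the rescaled system yields
\begin{equation*}
\left| \int \phi \,\mathrm{d}\mu^\eta - \int \phi \,\mathrm{d}\pi \right| \leq \widetilde{C} \, \|\phi\|_{0,2} \, \Big(\frac{\eta}{\beta}\Big)^{1/2-\kappa},
\end{equation*}
where $\widetilde{C}$ is Brehier's constant evaluated for the rescaled problem. The remaining task is to extract its $\beta$-dependence. This constant is composed of three ingredients: (a) Lipschitz and higher-derivative bounds on $\cL'$, which scale polynomially in $\beta$ since $L' = \beta L$; (b) $L^p$ moment bounds on the discrete chain, which via \Cref{prop:norm_control} behave differently in the two dissipativity regimes; (c) the spectral gap of the rescaled continuous dynamics, equal to $\beta \Lambda_0^*$. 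Multiplying $\widetilde{C}$ by $(\eta/\beta)^{1/2-\kappa}$ then produces the announced prefactor $C \cdot c_\beta / \Lambda_0^*$: under strict dissipativity the moment bounds are $\beta$-uniform and the residual $\beta$'s cancel, giving $c_\beta = 1$; under the bounded-gradient assumption the moment bound $b$ in \Cref{prop:norm_control} carries an extra $\sqrt{\beta}$ (inherited from the $\sqrt{2/\beta}$ noise scaling), producing $c_\beta = \sqrt{\beta}$.

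The main obstacle I expect is precisely this bookkeeping: one has to revisit Brehier's argument step by step -- in particular the $L^p$ moment estimates and the a priori bounds on the derivatives of the solution $u$ to the associated Poisson/Kolmogorov equation -- keeping every occurrence of $\beta$ explicit, and verify that after multiplying by $(\eta/\beta)^{1/2-\kappa}$ everything collapses to $c_\beta / \Lambda_0^*$. No new analytical ingredient beyond Brehier's original argument, the time-change identification, and the spectral gap evaluation from \Cref{prop:geom_ergo} at $\eta = 0$ is required.
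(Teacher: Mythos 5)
Your high-level strategy — reduce to the $\beta=1$ case of \citet{Brehier16} by the rescaling $L' = \beta L$, $\eta' = \eta/\beta$, and then trace $\beta$ through the constants that survive — is exactly what the paper does (via the change of variables $t' = 2t/\beta$ and the careful re-derivation in \Cref{sec:ProofOfTimeAndSpaceApprox}). However, your explanation of where the factor $\cbeta$ comes from is wrong in a way that would mislead the actual bookkeeping.

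You attribute $\cbeta = \sqrt{\beta}$ under the bounded-gradient assumption to the moment bound $b$ in \Cref{prop:norm_control} ``carrying an extra $\sqrt{\beta}$.'' It does not: under \Cref{assum:bounded_grad} one has $b = \frac{\mu_0}{\lambda}B + k(1)$, and $k(1)$ from \Cref{prop:z_boundedness} is bounded uniformly over $\beta > \eta$; the moment estimates in \Cref{moments1,moments2} are in fact $\beta$-uniform in \emph{both} regimes. The actual source of $\cbeta$ is the derivative bounds on the solution $u$ of the Kolmogorov equation in \Cref{u-bound}. Under bounded gradients the a priori control of $\eta^{h,x}(t)$ is only Gronwall (no contraction), so one must pass through the Bismut--Elworthy--Li formula (\Cref{long-time}) at the natural time scale $t = 1/\beta$, and the factor $1/\sqrt{t}|_{t=1/\beta} = \sqrt{\beta}$ is what produces $\cbeta = \sqrt{\beta}$. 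Under strict dissipativity the linearized process $\eta^{h,x}(t)$ decays exponentially directly (\Cref{lemm:StrongDissUconverge}), BEL is unnecessary, and this $\sqrt{\beta}$ never appears, giving $\cbeta = 1$. This distinction is precisely the non-trivial part of the bookkeeping, and your proposal identifies the wrong mechanism, so as written the ``verification'' that ``everything collapses to $c_\beta/\Lambda_0^*$'' would not go through along the route you describe. A secondary omission: the paper does not bound $|\int\phi\,\mathrm{d}\mu^\eta - \int\phi\,\mathrm{d}\pi|$ directly; it first proves the Ces\`aro-average estimate \Cref{lem:TimeAndSpaceErrorSum} via the Poisson-equation decomposition into $I_1,\dots,I_6$, and then passes to the limit $n \to \infty$, $N\to\infty$ using the geometric ergodicity of the discrete chain (\Cref{prop:geom_ergo}) — this structural step is needed to even make sense of $\mu^\eta$ uniformly and should appear in the argument.
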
 
The proof is given in \Cref{sec:ProofOfTimeAndSpaceApprox}.
Note that due to the infinite dimensional setting, the $1/2$ rate w.r.t the time discretization $\eta$ is optimal \citep{Brehier14}. This is to be contrasted with the finite-dimensional case, where $1$ order weak convergence is attainable.



\section{Third Term: Concentration of the Gibbs Distribution Around the Global Minimum}
\label{sec:third_term}
%
The last term corresponds to the concentration of the stationary Gibbs distribution around the global minimum of $L$. In this infinite-dimensional setting, the regularizing effect of operator $A$ is necessary to ensure good convergence properties of the discrete and continuous chains. Hence, even in the limit case $\beta\rightarrow 0$ one cannot expect to have arbitrary tight concentration around the global minimum. This is to be contrasted with the finite dimensional case (\citet{Chiang_Hwang_Sheu87,Gelfand_Mitter91,Roberts_Tweedie96}). 
In fact, $A$ constrains the chain to remain within the support of a Gaussian process which is compactly embedded in $\cH$.

\begin{proposition}\label{prop:gibbs_concentration}
Under \Cref{assum:smoothness,assum:eigenvalue_cvg}, it holds that 
\begin{equation*}
 \int L \dd \pi \!-\! L(\tilde{x}) \lesssim 
     \!\frac{1}{\beta}\!\left(\!\sqrt{\frac{2M}{\lambda}} + 1 \!\right) + 
     \lambda\left( \frac{\|\tilde{x}\|_{\cHK}}{\sqrt{\beta}} \!+\! \|\tilde{x}\|_{\cHK}^2\!\right).
\end{equation*}
\end{proposition}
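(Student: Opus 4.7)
I would bound $\int L\,\mathrm d\pi - L(\tilde x)$ through the Gibbs variational characterization of $\pi$: since $\pi$ minimizes the free energy $\mu \mapsto \int L\,\mathrm d\mu + \beta^{-1}H(\mu\,|\,\nu_\beta)$ over probability measures $\mu \ll \nu_\beta$, for any admissible $\mu$,
\begin{equation*}
\int L\,\mathrm d\pi - L(\tilde x) \le \Bigl(\int L\,\mathrm d\mu - L(\tilde x)\Bigr) + \frac{1}{\beta}H(\mu\,|\,\nu_\beta),
\end{equation*}
after discarding the nonnegative $\beta^{-1}H(\pi\,|\,\nu_\beta)$. The task then becomes picking a tractable $\mu$ concentrated near $\tilde x$ that trades off energy against entropy.

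The natural first choice is $\mu = \nu_\beta^{\tilde x}$, the translate of $\nu_\beta$ by $\tilde x$. Under \Cref{assum:eigenvalue_cvg}, the Cameron--Martin space of $\nu_\beta$ is exactly $\cHK$ with scaled norm $\|\cdot\|_{CM} = \sqrt{\beta\lambda}\|\cdot\|_\cHK$; since $\tilde x \in \cHK$ the shift is admissible and $H(\nu_\beta^{\tilde x}\,|\,\nu_\beta) = \tfrac12\|\tilde x\|_{CM}^2 = \tfrac{\beta\lambda}{2}\|\tilde x\|_\cHK^2$, immediately yielding the quadratic Cameron--Martin term $\lambda\|\tilde x\|_\cHK^2$ of the bound. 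Writing $X = \tilde x + Y$ with $Y \sim \nu_\beta$ and applying the $M$-smoothness of $L$, the linear-in-$Y$ term vanishes in expectation and one gets $\int L\,\mathrm d\mu - L(\tilde x) \le \tfrac{M}{2}\mathrm{tr}(C)$, with $\mathrm{tr}(C) = (\beta\lambda)^{-1}\sum_k\mu_k < \infty$ by $\mu_k \sim k^{-2}$.

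To recover the sharper $\tfrac{1}{\beta}\sqrt{M/\lambda}$ factor, which dominates $\tfrac{M}{\beta\lambda}$ when $M \gtrsim \lambda$, I would instead use a Laplace-type Gaussian $\nu^* = N(\tilde x, C^*)$ with $C^* = \beta^{-1}(M\Id - A)^{-1}$, matching the worst-case curvature of $\cL$ at $\tilde x$. The infinite-dimensional Gaussian-KL formula combined with the analogous smoothness bound on $\int L\,\mathrm d\nu^*$ largely cancel, leaving $\tfrac{1}{2\beta}\sum_k\log(1 + M\mu_k/\lambda) + \tfrac{\lambda}{2}\|\tilde x\|_\cHK^2$ up to lower-order terms. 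Splitting the sum at $k \sim \sqrt{M/\lambda}$ and using $\mu_k \sim k^{-2}$ bounds it by $O(\sqrt{M/\lambda})$, producing the $\tfrac{1}{\beta}\sqrt{M/\lambda}$ contribution. The residual $\lambda\|\tilde x\|_\cHK/\sqrt\beta$ and $\beta^{-1}$ terms come from Young's inequality / Cauchy--Schwarz cross terms between the mean shift and the trace of $C^*$.

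The main obstacle is the rigorous treatment of the infinite-dimensional Gaussian KL: one must check the Hilbert--Schmidt condition on $C^{-1/2}C^*C^{-1/2} - \Id$ (for the KL to be finite), the admissibility of the Cameron--Martin shift (i.e.\ $\tilde x \in \cHK$), and the correct interpretation of the linear pairing $\langle \nabla L(\tilde x), Y\rangle = \langle A\tilde x, Y\rangle$ since $A\tilde x \in \cH$ but typically not in $\cHK$. All of these ultimately reduce to the eigenvalue decay $\mu_k \sim k^{-2}$ of \Cref{assum:eigenvalue_cvg}.
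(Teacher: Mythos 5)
Your proposal is correct in spirit and takes a genuinely different route from the paper. The paper also reduces to lower-bounding $\Lambda \eqdef \int e^{-\beta(L - L(\tilde x))}\,\mathrm d\nu^{(\beta)}_\infty$ via the identity $\int L\,\mathrm d\pi - L(\tilde x) = -\beta^{-1}\mathrm{KL}(\pi\|\nu_\infty^{(\beta)}) - \beta^{-1}\log\Lambda$, but it then estimates $\nu^{(\beta)}_\infty$ of a translated intersection of a ball and a slab: Borell's inequality removes the $\tilde x$-shift at cost $\tfrac{\beta\lambda}{2}\|\tilde x\|_\cHK^2$, the Gaussian correlation inequality (proved as a lemma for infinite-dimensional centered convex sets) factors the ball-slab intersection, the small-ball probability theorem of Kuelbs--Li gives the $(\sqrt{\beta\lambda}\varepsilon)^{-2}$ rate for the ball, and Markov's inequality with $U = \sqrt{2/\beta}$ handles the slab; optimizing over $\varepsilon$ then yields $\beta^{-1}\sqrt{2M/\lambda}$. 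You instead attack $-\beta^{-1}\log\Lambda$ via the Donsker--Varadhan/Gibbs variational identity with a Laplace-type Gaussian $\nu^* = N(\tilde x, \beta^{-1}(M\Id - A)^{-1})$. This is elegant: the first-order term $\langle A\tilde x, \cdot\rangle$ is centered under $\nu^*$ and the traces in the energy and in $\mathrm{KL}(\nu^*\|\nu_\beta)$ cancel, leaving $\tfrac{1}{2\beta}\sum_k\log(1 + M\mu_k/\lambda) + \tfrac{\lambda}{2}\|\tilde x\|_\cHK^2$, and the sum is $O(\sqrt{M/\lambda})$ by $\mu_k \sim k^{-2}$ (the integral $\int_0^\infty\log(1+u^{-2})\,\mathrm d u = \pi$ kills the apparent $\log$ factor). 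Your route avoids the small-ball theorem and Gaussian correlation inequality entirely, at the cost of verifying the Feldman--H\'ajek conditions and the trace-class (not merely Hilbert--Schmidt, as you wrote) property of $C^{-1/2}C^*C^{-1/2} - \Id$ needed for the infinite-dimensional Gaussian KL formula; both hold under $\mu_k \sim k^{-2}$.

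One correction to your narrative: the terms $\lambda\|\tilde x\|_\cHK/\sqrt{\beta}$ and $\beta^{-1}$ in the paper's statement do \emph{not} arise in your method as cross terms — they are artifacts of the paper's slab/Markov step, which is absent here. Your Laplace-Gaussian calculation, carried through, yields the cleaner bound $\tfrac{1}{\beta}\sqrt{M/\lambda} + \tfrac{\lambda}{2}\|\tilde x\|_\cHK^2$, which is strictly stronger than, hence implies, the proposition. To present this as a proof of the stated proposition you should simply drop the speculation about Young/Cauchy--Schwarz cross terms and note that the extra terms are absorbed by $\lesssim$.
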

%
The proof can be found in \Cref{sec:ProofOfThirdTerm}.
The proposition can be shown by utilizing an analogous technique to the convergence rate analysis of Gaussian process regression \cite{JMLR:Vaart&Zanten:2011}. Along with this technique, the {\it Gaussian correlation inequality} \cite{royen2014simple,Latala2017} is used. This inequality gives a powerful tool to lower-bound the Gaussian probability measure of the intersection of two centered convex sets.

\section{Error Bound for the Galerkin Approximation and Stochastic Gradient}
\label{sec:approx_error}
The error induced by the Galerkin approximation can be evaluated as in the following proposition.
\begin{proposition}\label{prop:TimeAndSpaceApproxError}
Let \Cref{assum:eigenvalue_cvg,assum:smoothness,assum:C2_boundedness,assum:dissipative} hold and suppose $\|x\| \leq 1$.
Then, there exists an invariant measure $\mu_{(N,\eta)}$ for the discrete time Galerkin approximation scheme (\Cref{eq:GalerkinTimeDiscrete}), 
and for any $0 < \kappa < 1/2,~0 < \eta_0$, there exists a constant $C > 0$ such that, for any $N \in \bbN$ and $0 < \eta < \eta_0$, 
\begin{align*}
& \bbE[\phi(X^{\mu_{(N,\eta)}}) - \phi(X^\pi)]   \leq 
\frac{C\|\phi\|_{0,2}}{\Lambda_{0}^*} 
\cbeta \left( \mu_{N+1}^{1/2 - \kappa}
+  \eta^{1/2 - \kappa}\right).
\end{align*}
\end{proposition}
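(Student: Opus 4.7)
I would prove \Cref{prop:TimeAndSpaceApproxError} by a triangle inequality reducing it to (a) the purely temporal weak-error bound already established in \Cref{prop:scheme_cvg_general} and (b) a purely spatial Galerkin weak-error bound, after first checking that the Galerkin chain admits an invariant law. Concretely, decompose
\begin{align*}
\bigl|\bbE \phi(X^{\mu_{(N,\eta)}}) - \bbE \phi(X^\pi)\bigr|
&\leq \bigl|\bbE \phi(X^{\mu_{(N,\eta)}}) - \bbE \phi(X^{\mu_\eta})\bigr| \\
&\quad + \bigl|\bbE \phi(X^{\mu_\eta}) - \bbE \phi(X^\pi)\bigr| .
\end{align*}
The second summand is bounded by $C \|\phi\|_{0,2} \cbeta \eta^{1/2-\kappa}/\Lambda_0^*$ via \Cref{prop:scheme_cvg_general}, so the real work lies in the first summand.

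\textbf{Existence of $\mu_{(N,\eta)}$.} On $\cH_N$ the scheme \eqref{eq:GalerkinTimeDiscrete} is a finite-dimensional Markov chain whose drift inherits the global Lipschitz bound from \Cref{assum:smoothness} and the dissipativity from \Cref{assum:dissipative} (since $P_N$ commutes with $A$ and with $S_\eta$, the proof of \Cref{prop:norm_control} carries over after projection). Hence $\sup_n \bbE_{x_0}\|X_n^N\|$ is finite, and the Krylov--Bogoliubov criterion used in Proposition 4.1 of \citet{Brehier16} yields an invariant law $\mu_{(N,\eta)}$. In fact the argument of \Cref{prop:geom_ergo}, applied verbatim on $\cH_N$, gives uniqueness and geometric ergodicity with spectral gap lower bounded by the infinite-dimensional $\Lambda_\eta^*$ uniformly in $N$.

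\textbf{Bounding the Galerkin error.} For the first summand I would follow the spatial weak-error strategy of \citet{Brehier16}. The key observation is that, for the same time step $\eta$, the Galerkin chain $X_n^N$ is exactly the non-truncated chain $X_n$ restricted to the invariant subspace $\cH_N$, with the drift replaced by $P_N \nabla L \circ P_N$. The associated weak error on bounded $C_b^2$ test functions is controlled by analyzing the Kolmogorov equation of the continuous dynamics and splitting $X_n - X_n^N$ into contributions from the truncation of the noise, of the initial condition and of $\nabla L$. Each piece is controlled by an operator norm of $\Id - P_N$ against $(-A)^{-\alpha'}$-type weights, and under \Cref{assum:eigenvalue_cvg} together with the regularity of $L$ from \Cref{assum:C2_boundedness} these contribute a factor $\|(\Id-P_N)(-A)^{-(1/2-\kappa)}\|_{\mathcal B(\cH)} \lesssim \mu_{N+1}^{1/2-\kappa}$. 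Combined with \Cref{prop:geom_ergo} (which gives the geometric contraction in time needed to push the weak error from finite horizon to the stationary regime), this yields
\begin{equation*}
\bigl|\bbE \phi(X^{\mu_{(N,\eta)}}) - \bbE \phi(X^{\mu_\eta})\bigr|
\leq \frac{C\|\phi\|_{0,2}}{\Lambda_0^*} \mu_{N+1}^{1/2-\kappa}
\end{equation*}
for $\beta=1$. The rescaling argument used just before \Cref{prop:scheme_cvg_general} ($L' = \beta L$, $\lambda' = \beta\lambda$, $\eta' = \eta/\beta$) then converts this to the general $\beta$ setting, replacing the prefactor by $\cbeta/\Lambda_0^*$ exactly as in the temporal case.

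\textbf{Main obstacle.} The nontrivial step is the spatial weak error itself. One must carefully adapt Theorem 3.1 / Corollary 3.2 of \citet{Brehier16} to our setting, checking that (i) their finite-element projections can be replaced by the spectral Galerkin projection $P_N$ without loss, (ii) their regularity assumptions on $L$ match \Cref{assum:C1_boundedness,assum:C2_boundedness}, and (iii) the constants can be tracked through the $\beta$-rescaling so that only the factor $\cbeta/\Lambda_0^*$ appears in front. Putting the two summands together and using $a+b \lesssim a \vee b$ gives the stated bound.
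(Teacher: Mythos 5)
Your decomposition is genuinely different from the paper's, and the difference matters: there is a concrete gap in the first summand that your proposal does not close.

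The paper does not split the error through $\mu_\eta$. Instead, it proves \Cref{lem:TimeAndSpaceErrorSum}, which bounds the time-averaged quantity $\frac{1}{n}\sum_{k=0}^{n-1}\bbE[\phi(X_k^N) - \phi(X^\pi)]$ directly via the solution $\Psi^{N'}$ of a Poisson equation for the \emph{continuous-time} $N'$-dimensional Galerkin dynamics. That decomposition (into six terms $I_1,\dots,I_6$) handles spatial ($I_3$) and temporal ($I_4$, $I_5$) discretization simultaneously, with all regularity controlled by estimates on $\Psi^{N'}$ (\Cref{essential}); the proposition is then obtained in the limit $n\to\infty$, and \Cref{prop:scheme_cvg_general} is derived afterwards as the $N\to\infty$ corollary. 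Your proposal inverts this dependency, which is not fatal on its own since \Cref{prop:scheme_cvg_general} can be established independently from \citet{Brehier16}.

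The fatal issue is your first summand, $\bigl|\bbE\phi(X^{\mu_{(N,\eta)}}) - \bbE\phi(X^{\mu_\eta})\bigr|$. This asks for a pure spatial weak-error bound between two \emph{discrete-time} invariant measures at the same step size $\eta$. The Kolmogorov/Poisson machinery you invoke is tied to a continuous-time generator, so there is no off-the-shelf regularity result to apply; you cannot "split $X_n - X_n^N$" against the continuous Kolmogorov equation and obtain a bound with no $\eta$-dependence, because the time-discretization error re-enters the moment you touch the continuous solution. The natural fix is to route through the continuous-time Galerkin invariant measure $\pi^N$ rather than $\mu_\eta$ — giving a three-term split $\mu_{(N,\eta)} \to \pi^N \to \pi$ — but that is essentially what the paper does, just folded into a single Poisson-equation argument with auxiliary resolution $N'$ taken to infinity. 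As written, your first summand would require a new, non-trivial lemma of comparable difficulty to the whole proposition, and the brief appeal to Theorem 3.1 / Corollary 3.2 of \citet{Brehier16} does not supply it, since those results compare the fully discrete scheme to $\pi$, not two discrete invariant measures to each other.

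The remarks on existence of $\mu_{(N,\eta)}$ and on tracking $\cbeta/\Lambda_0^*$ through the $\beta$-rescaling are in line with what the paper does and are fine.
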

The proof is in \Cref{sec:ProofOfTimeAndSpaceApprox}.
We see that, by taking $N \to \infty$, we can replicate \Cref{prop:scheme_cvg_general}.
Moreover, the geometric ergodicity of the time discretized dynamics with the Garelkin approximation holds completely in the same manner as \Cref{prop:geom_ergo}.
The discrepancy between GLD and SGLD with the Garelkin approximation can be bounded as follows.
\begin{proposition}\label{prop:SGLDdiscrepancy}
Suppose $\|x_0\| \leq 1$. There exists a constant $C > 0$ such that, for any $n,N \in \bbN$, any $\beta > 1$ and sufficiently small $\eta > 0$, 
\begin{align*}
& \bbE[\phi(X_n^N) - \phi(Y_n^N)]  \leq 
C \left(\sqrt{r_n} +  \sqrt[4]{r_n}\right).
\end{align*}
where
$r_{n} = \frac{n \beta \eta (\ntr - \nbch)}{\nbch(\ntr-1)}.$
\end{proposition}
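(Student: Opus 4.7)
The plan is to bound $|\bbE[\phi(X_n^N) - \phi(Y_n^N)]|$ by first controlling the relative entropy between the path laws of GLD and SGLD and then converting this KL bound into a test-function bound via a Bolley--Villani-type inequality. The key observation enabling this Girsanov-style program is that the two chains share the same semi-implicit structure and the same Gaussian noise $\sqrt{2\eta/\beta}\, S_\eta P_N \varepsilon_k$, differing only in their drifts: $\nabla L_N$ versus the mini-batch estimator $g_{k,N}$.

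First I would compute the one-step conditional Kullback--Leibler divergence. Under the synchronous coupling that matches both chains at a common past state $y \in \cH_N$, the one-step transition kernels are Gaussian on $\cH_N$ with a common covariance proportional to $(2\eta/\beta)\, S_\eta P_N S_\eta^*$ and with means differing by $\eta\, S_\eta\bigl(\nabla L_N(y) - g_{k,N}(y)\bigr)$. The standard Gaussian KL identity then yields, after the $S_\eta$ factors cancel between the mean shift and the inverse covariance, a conditional KL of order $(\beta\eta/4)\,\|\nabla L_N(y) - g_{k,N}(y)\|^2$. Averaging over the mini-batch randomness and applying the finite-population sampling-without-replacement variance bound (valid because each $\nabla\ell_i$ is bounded by \Cref{ass:SGLD_cond}) gives $\bbE\|\nabla L_N(y) - g_{k,N}(y)\|^2 \lesssim (\ntr - \nbch)/[\nbch(\ntr-1)]$. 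Summing via the chain rule for relative entropy over $k=0,\dots,n-1$ and invoking the data-processing inequality to pass to the marginal at time $n$ produces $D_{\mathrm{KL}}(\mathrm{Law}(Y_n^N)\,\|\,\mathrm{Law}(X_n^N)) \lesssim r_n$.

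Next I would convert this KL estimate into a test-function bound via the Bolley--Villani inequality: for Lipschitz $\phi$ and measures with Gaussian-type moments, $|\bbE\phi(X_n^N) - \bbE\phi(Y_n^N)| \lesssim \mathrm{Lip}(\phi)\,\bigl(\sqrt{D_{\mathrm{KL}}} + D_{\mathrm{KL}}^{1/4}\bigr)$. The test function arising in the proof scheme of \Cref{thm:informal_main_result} is the centered sigmoid $\sigma \circ (L - L(x^*))$, which is bounded and Lipschitz with constants independent of $\beta$, $\eta$ and $n$. Substituting $D_{\mathrm{KL}} \lesssim r_n$ immediately yields the advertised $\sqrt{r_n} + \sqrt[4]{r_n}$ rate, with the $\sqrt[4]{r_n}$ term originating precisely from the $D_{\mathrm{KL}}^{1/4}$ piece of Bolley--Villani.

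The hard part will be twofold: (i) carrying out the one-step Gaussian KL computation rigorously in the presence of the projection $P_N$ and the diagonal contraction $S_\eta$, and (ii) verifying the exponential-moment hypothesis required to apply Bolley--Villani uniformly in $n$. For (i), I would work throughout on the finite-dimensional subspace $\cH_N$ in which both chains live; on $\cH_N$, $S_\eta$ is an invertible diagonal contraction and the common covariance is nondegenerate, so the Gaussian KL identity is well-defined and the $S_\eta$ factors cancel cleanly. For (ii), I would iterate the Lyapunov drift of \Cref{prop:norm_control} on $\|Y_n^N\|^2$, whose hypotheses are met under the bounded-gradient condition in \Cref{ass:SGLD_cond}; this yields sub-Gaussian concentration of $\|Y_n^N\|$ uniform in $n$, furnishing the exponential moments needed by Bolley--Villani with constants independent of $n$.
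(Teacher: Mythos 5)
Your proposal follows essentially the same route as the paper: a one-step conditional Gaussian KL between the GLD and SGLD transition kernels on $\cH_N$ (where $S_\eta$ is invertible and the covariances match), summed via the chain rule and data processing to bound the marginal relative entropy by $r_n$, followed by a Bolley--Villani argument to convert the KL bound into a test-function bound with the $\sqrt{r_n}+\sqrt[4]{r_n}$ shape. The paper makes the last step in two pieces (its Lemma~\ref{wasserstein-bound} to pass from the test function to $\mathcal{W}_2$ using a linear-growth-of-gradient bound, then its Lemma~\ref{kl-divergence-bound} to pass from $\mathcal{W}_2$ to KL), but that is the same device you invoke in one line. One small slip to fix: the Bolley--Villani bound $\mathcal{W}_2(\mu,\nu)\le C_\nu(\mathrm{D}(\mu\|\nu)^{1/2}+\mathrm{D}(\mu\|\nu)^{1/4})$ needs exponential moments of the \emph{reference} measure $\nu$, which in the natural direction $\mu=\mathrm{Law}(Y_n^N)$, $\nu=\mathrm{Law}(X_n^N)$ means you need $\log\bbE\exp\|X_n^N\|^2$, not the analogous quantity for the SGLD chain $Y_n^N$. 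Also, deriving this exponential-moment bound takes more than ``iterating the Lyapunov drift of \Cref{prop:norm_control} on $\|Y_n^N\|^2$'': a first-moment drift inequality does not directly yield a uniform-in-$n$ bound on the moment generating function, and the paper devotes a separate, fairly delicate recursive Gaussian-integral computation (its Lemma~\ref{log-exp-bound}) to this, where the regularizing operator $S_\eta$ is what makes the resulting constant dimension-free; you would need to reproduce something of that sort rather than rely on the drift alone.
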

The proof is given in \Cref{sec:ProofOfSGLD}. 
From these propositions, we can see that the SGLD with the Garelkin approximation also gives a reasonably good solution for sufficiently large $N \in \bbN$, sufficiently small $\eta > 0$ and sufficiently large mini-batch size.
\Cref{prop:SGLDdiscrepancy} is analogous to those given for finite dimensional situations \cite{Raginsky_Rakhlin_Telgarsky2017,Xu_Chen_Zou_Gu18}.
However, thanks to the regularization term (appearing as $S^\eta$), our rate is better by a factor of $\sqrt{k\eta}$.

%
\section{Other Related Work}
%
In this section, we mention other related work that have not been exposed above.
An analogous assumption to \Cref{assum:C1_boundedness} has already been introduced in the analysis of infinite dimensional dynamics with nonlinear diffusion term, that is, $\dW(t)$ is replaced by a nonlinear quantity $\sigma(X(t)) \dW(t)$ for $\sigma(X(t)) \in \mathcal{B}(\cH)$
\cite{conus2019,debussche2011weak,BREHIER2018193}.
These papers analyzed the existence of stationary distribution for continuous dynamics and discrete time approximation for finite time horizon.
\citet{Chen2017,chen2018fulldiscrete} analyzed linear/nonlinear Schr{\"o}dinger equations and derived geometric ergodicity, but they analyzed much more specific situations or stronger assumptions (e.g. the strong dissipativity condition).

The geometric ergodicity of infinite dimensional Markov processes for discrete time settings has been investigated by \citet{Kuksin2001} and 
infinite dimensional MCMC such as preconditioned Crank–Nicolson (pCN) \cite{hairer2014,eberle2014,Vollmer_2015,Rudolf2018}, 
and in particular the Metropolis-Adjusted Langevin Algorithm (MALA) \cite{Durmus2015,BESKOS2017327}.
Among them, MALA is the most related to our setting (discrete time Langevin dynamics). The biggest difference the presence of a rejection step. Since the purpose of our work is rather optimization than sampling, and since the rejection step is not compatible with stochastic gradient descent, we do not pursue this direction.
\section*{Conclusion and Future Work}
\label{sec:conclusion}
In this paper, we have presented a non-asymptotic analysis of the convergence of GLD and SGLD in a RKHS and for a non-convex objective function. The bounds obtained in this infinite-dimensional setting involve the spectrum of the associated integral operator and a regularization factor instead of the dimension $d$, which to the best of our knowledge is the first result on applying GLD in RKHS to infinite-dimensional nonconvex optimization. 
In future work, we hope to alleviate the somewhat strict \Cref{assum:eigenvalue_cvg} linked to current results from the numerical approximation literature. 
Drawing inspiration from \cite{Xu_Chen_Zou_Gu18}, we also plan to extend our analysis to variance-reduced SGLD algorithms.

\bibliographystyle{abbrvnat}
\bibliography{main_langevin.bib}


\onecolumn
\appendix
%


\section{Proof of Eq.~\eqref{eq:LipschitzGradBound} and Eq.~\eqref{eq:D2bound}}
\label{sec:ProofOfExampleSmoothBound}
If $\|\ell_i''\|_\infty \leq G$, then it is $G$-Lipschitz continuous. Therefore, it holds that  
\begin{align*}
& \|\nabla L(x) - \nabla L(x')\| \\
& \leq \frac{1}{\ntr}\sum_{i=1}^{\ntr} |\ell_i'(\langle x, \psi_\gamma(z)\rangle_{\cH}) - \ell_i'(\langle x', \psi_\gamma(z')\rangle_{\cH})| \|\psi_\gamma(z_i)\|_{\cH} + \lambda_0 \|x - x'\|_{\cH} \\
& \leq \frac{1}{\ntr}\sum_{i=1}^{\ntr} |\ell_i'(\langle x, \psi_\gamma(z)\rangle_{\cH}) - \ell_i'(\langle x', \psi_\gamma(z')\rangle_{\cH})| \sqrt{K_\gamma(z_i,z_i)} + \lambda_0 \|x - x'\|_{\cH} \\
& \leq \sup_z \sqrt{K_\gamma(z,z) }G \frac{1}{\ntr} \sum_{i=1}^{\ntr} \|\langle x - x', \psi_\gamma(z_i)\rangle_{\cH} \| + \lambda_0 \|x - x'\|_{\cH} \\
& \leq G \sup_z K_\gamma(z,z)   \| x - x' \|_{\cH} + \lambda_0 \|x - x'\|_{\cH}  \leq (G R_\gamma + \lambda_0) \| x - x' \|_{\cH}.
\end{align*}
This yields Eq.~\eqref{eq:LipschitzGradBound}.
As for the second order derivative (Eq.~\eqref{eq:D2bound}), 
first note that 
$$
D^2 L(x) \cdot (h,k)= \frac{1}{\ntr} \sum_{i=1}^{\ntr} \ell''(\langle x, \psi_\gamma(z_i)\rangle_{\cH}) \langle \psi_\gamma(z_i), h\rangle_{\cH}
\langle \psi_\gamma(z_i), k \rangle_{\cH} + \lambda_0 \langle h,k \rangle_{\cH}
$$
for $h,k\in \cH$.
Therefore, we have that 
\begin{align*}
&  |D^2 L(x) \cdot (h,k) - \lambda_0 \langle h,k\rangle_{\cH} | \\
& \leq 
\frac{1}{\ntr}\sum_{i=1}^{\ntr} |\ell''(\langle x,\psi_\gamma(z_i)\rangle_{\cH})| |\langle \psi_\gamma(z_i),h \rangle_{\cH}| \|\psi_\gamma(z_i)\|_{-\alpha}
\|k\|_{\alpha} \\
& \leq G  
\max_i \|\psi_\gamma(z_i)\|_{\cH}\|h\|_{\cH}  
\frac{1}{\ntr}\sum_{i=1}^{\ntr} \|\psi_\gamma(z_i)\|_{-\alpha} \|k\|_{\alpha}\\
& = G  \max_i \sqrt{K_\gamma(z_i,z_i)} \|h\|_{\cH}  
\frac{1}{\ntr} \sum_{i=1}^{\ntr} \sqrt{ K_{\gamma - 2\alpha}(z_i,z_i)} \|k\|_{\alpha}\\
& \leq G  \max_i \sqrt{K_\gamma(z_i,z_i)} \|h\|_{\cH}  
 \sqrt{ \frac{1}{\ntr} \sum_{i=1}^{\ntr} K_{\gamma - 2\alpha}(z_i,z_i)} \|k\|_{\alpha}\\
& \leq G  \max_i \sqrt{K_\gamma(z_i,z_i)} \|h\|_{\cH}  \sqrt{\sum_{k=0}^\infty \mu_k^{\gamma - 2\alpha}}\|k\|_{\alpha}
\leq G\sqrt{R_\gamma} \|h\|_{\cH}  \sqrt{\sum_{k=0}^\infty \mu_k^{\gamma - 2\alpha}} \|k\|_{\alpha}.
\end{align*}
$\square$

\subsection{Remark on existence of regularization term}
\label{sec:RemarkRegularization}
As an example of $L(x)$, it is useful to consider a setting where $L(x)$ can be expressed as $L(x) = \tilde{L}(x) + \frac{\lambda_0}{2}\|x\|^2$ for $\tilde{L}(x)$ that satisfies the assumptions listed in the main text and $\lambda_0 \geq 0$.
In this case, $L(x)$ does not satisfy the bounded gradient condition \Cref{assum:bounded_grad}.
However, by considering the following update rule, we can show the same error bound for $L(x)$: 
\begin{align}
  \begin{cases}
      X_0 = x_0 \in \cH  , \\
      X_{n + 1} = S'_\eta (X_n - \nabla \tilde{L}(X_n) + \sqrt{2 \tfrac{\eta}{\beta}} \varepsilon_n),
  \end{cases}
\end{align}
where $S'_\eta = \left[\Id + \eta ( \frac{\lambda_0}{2} \nabla \norm{\cdot}_{\cHK} +\frac{\lambda}{2} \nabla \norm{\cdot}_{\cH} )\right]^{-1}$.

\section{Proof of \Cref{prop:dissipative}}
\begin{proof}
Let us assume $\lambda  > M\mu_0$ (Strict Dissipativity).
  \Cref{assum:eigenvalue_cvg} implies, for $x = \sum_{k=0}^\infty \alpha_k f_k$,
  \begin{align}
    \langle Ax, x \rangle
        &= - \lambda \Big\langle  \sum_{k=0}^\infty \frac{\alpha_k}{\mu_k} f_k,
                              \sum_{k=0}^\infty \alpha_k f_k \Big\rangle  \nonumber \\
        &= - \lambda \sum_{k=0}^\infty \frac{\alpha_k^2}{\mu_k} \nonumber \\
        &\leq - \frac{\lambda}{\mu_0} \sum_{k=0}^\infty \alpha_k^2 = - \frac{\lambda}{\mu_0} \norm{x}^2   ,
  \end{align}
 and \Cref{assum:smoothness} implies
\begin{align}
     \scal{-\nabla L(x)}{x}
     &\leq M\norm{x - x^*}\norm{x} \nonumber\\
     &\leq M\norm{x}^2 + M\norm{x}\norm{x^*}  .
\end{align}
Hence,
\begin{equation*}
\scal{Ax - \nabla L(x)}{x} \leq\! -(\tfrac{\lambda}{\mu_0} - M) \norm{x}^2\! + M\norm{x}\norm{x^*}  .
\end{equation*}
Therefore, if $M < \frac{\lambda}{\mu_0}$, there exists $m, c>0$ such that \Cref{eq:dissipative} holds.
The proof when \Cref{assum:bounded_grad} holds is similar.
\end{proof}

\section{Proof of main result: \Cref{thm:informal_main_result} and \Cref{thm:main_result_SGLD}}
\label{sec:final_rate_proof}
In light of \Cref{sec:first_term,sec:second_term,sec:third_term}, we can now state our final result.
We introduce the following bounded test function:
\begin{equation}
    \phi(x) = \sigma(L(x) - L(x^*))~~~(x \in \cH),
\end{equation}
where $\sigma(u) = \frac{1}{1 + e^{-u}} - \tfrac{1}{2}~(u \in [0,\infty))$ is concave and takes values in $[0, 1)$.
In particular, $\norm{\phi(\cdot)}_V \leq 1$ for $V(x) = M\|x\| + 1$ and $\phi\in C^2_b(\cH)$, hence $\phi$ falls within the scope of \Cref{prop:geom_ergo,prop:scheme_cvg_general}.

First, we note that there exists a unique invariant measure $\mu_\eta$ for the discrete time dynamics $\{X_n\}_n$
and there also exists a unique invariant measure $\mu_{(N,\eta)}$ for the discrete time Garelkin approximated dynamics $\{X^N_n\}_n$ by \Cref{prop:gibbs_concentration}.
To obtain the result, we make use of Markov's inequality: for any $0 < \delta < 1$, 
\begin{align*}
& P(L(X_n) - L(x^*) > \delta)  \\
& \leq 
P(\phi(X_n) > \sigma(\delta)) \\
& \leq  \frac{\bbE[\phi(X_n)]}{\sigma(\delta)}~~~~~(\because \text{Markov's inequality}) \\
& = \frac{1}{\sigma(\delta)}\left( \bbE[\phi(X_n) - \phi(X^{\eta})] +  \bbE[\phi(X^{\eta}) - \phi(X^{\pi})] +  \bbE[\phi(X^\pi)] 
\right).
\end{align*}
The first term ($\bbE[\phi(X_n) - \phi(X^{\eta})]$) can be bounded by \Cref{prop:geom_ergo}.
The second term ($\bbE[\phi(X^{\eta}) - \phi(X^{\pi})]$) can be bounded by \Cref{prop:scheme_cvg_general}.
Next, we bound the third term.
Since $\sigma(u) \leq u$ for all $u \in [0,\infty)$ and $L(x) - L(x^*)\geq 0$ for all $x \in \cH$, it holds that 
\begin{align}
\bbE[\phi(X^{\pi})] \leq \bbE[L(X^\pi) - L(x^*)] = (\bbE[L(X^\pi)] - L(\tilde{x})) + (L(\tilde{x}) - L(x^*)).
\label{eq:LastGibbsConcBound}
\end{align}
Then, the first term ($\bbE[L(X^\pi)] - L(\tilde{x})$) in the right hand side is bounded by \Cref{prop:gibbs_concentration}.
Finally, we observe that $1/\sigma(\delta) \leq 5/\delta$ for all $\delta \in (0,1)$.
Combining all results, we obtain \Cref{thm:informal_main_result}.

As for the \Cref{thm:main_result_SGLD}, we use the following decomposition 
\begin{align*}
\bbE[\phi(X_n)]
= \bbE[\phi(Y^N_n) - \phi(X^N_n)] +  \bbE[\phi(X^N_n) - \phi(X^{\mu_{(N,\eta)}})] +  \bbE[\phi(X^{\mu_{(N,\eta)}}) - \phi(X^\pi)]
+ \bbE[\phi(X^\pi)].
\end{align*}
We apply \Cref{prop:SGLDdiscrepancy} to the first term $(\bbE[\phi(Y^N_n) - \phi(X^N_n)])$
and apply \Cref{prop:TimeAndSpaceApproxError} to the third term  $(\bbE[\phi(X^{\mu_{(N,\eta)}}) - \phi(X^\pi)])$.
As for the remaining terms, the same bound as \Cref{prop:geom_ergo} can be applied to the second term ($\bbE[\phi(X^N_n) - \phi(X^{\mu_{(N,\eta)}})]$), and the last term $\bbE[\phi(X^\pi)]$ can be bounded by \Cref{eq:LastGibbsConcBound} with \Cref{prop:gibbs_concentration}.
This yields \Cref{thm:main_result_SGLD}.

\section{Proof of \Cref{prop:z_boundedness}}
\label{sec:ProofOfZbounded}
\begin{proof}
This is proved in \citet{Brehier14} for $\beta = 1$. The $\beta > \eta$ assumption is necessary to ensure that $k(p)$ can be treated as a constant w.r.t $\beta$ and $\eta$ in the following.
We recall the main arguments of the proof. $\{Z_n\}_{n \in \bbN}$ is the semi-implicit approximation of the continuous Markov chain defined by:
  \begin{equation}\label{eq:def_z}
      \begin{cases}
        \dZ(t) = AZ(t) \dt + \sqrt{\tfrac{2}{\beta}} \dW(t) , \\
        Z(0) = 0 .
      \end{cases}
  \end{equation}
 Under \Cref{assum:eigenvalue_cvg}, it can be shown that $\sup_{t \geq 0}\bbE (\norm{Z(t)}^p) < \infty,~\forall p \geq 1$.
 Finally, $\{Z_n\}$ is a numerical scheme with strong order $\frac{1}{4}$ (\citet[Theorem 3.2]{printems_2001}), which implies the result.
\end{proof}

\section{Proof of \Cref{prop:norm_control}}
\label{sec:ProofOfNormControl}
\begin{proof}
%
The discrete chain $Y_n \eqdef X_n - Z_n, n \geq 0$ satisfies
\begin{equation*}
Y_{n+1} = S_\eta  Y_n  - \eta  S_\eta \nabla L(X_n) .
\end{equation*}
Hence, using \Cref{assum:smoothness} and the fact that $X_n = Y_n + Z_n$, we get
\begin{align*}
  \norm{Y_{n+1}} &\leq \opnorm{S_\eta}\norm{Y_n  - \eta  \nabla L(X_n)}\\
  &\leq \tfrac{1}{1 + \lambda\eta/\mu_0} ( (1 +  \eta M)\norm{Y_n} + \eta M (\norm{x^*}  +\!\norm{Z_n}) ).
\end{align*}
Taking the expectation and using $\bbE\norm{Z_n} \leq k(1)$ (\Cref{prop:z_boundedness}), this yields
\begin{equation*}
  \bbE\norm{Y_{n+1}} \leq \tfrac{1}{1 + \lambda\eta/\mu_0} ( (1 +\eta M)\bbE\norm{Y_n} + \eta M (\norm{x^*}  + k(1)) ),
 \end{equation*}
from which we deduce
\begin{equation}
    \bbE\norm{Y_n} \leq \rho^n \norm{x_0} + \tfrac{\eta  (1 - \rho^n) M}{(1 - \rho)(1 + \lambda\eta / \mu_0)}(\norm{x^*} + k(1)).
\end{equation}
Therefore,
\begin{equation}
\bbE_{x_0}\norm{X_n} \leq \rho^n \norm{x_0} +\frac{\eta M(\norm{x^*} + k(1))}{(1 - \rho)(1 + \lambda\eta / \mu_0)} + k(1).
\end{equation}
%
Finally, we conclude by observing that $\frac{\eta}{1 - \rho}  \frac{M}{1 + \eta\lambda/\mu_0}  = 1$. 

The proof with bounded gradients is similar. Since 
\begin{align*}
  \norm{Y_{n+1}} &\leq \opnorm{S_\eta}\norm{Y_n  - \eta  \nabla L(X_n)}\\
  &\leq \tfrac{1}{1 + \lambda\eta/\mu_0} ( \norm{Y_n} + \eta B ),
\end{align*}
we have 
$$
\norm{Y_n} \leq \rho^n \norm{x_0} + \frac{(1-\rho^n)}{1 - \rho} \frac{\eta B}{1 + \lambda \eta/\mu_0}
\leq \rho^n \norm{x_0} + \frac{\mu_0}{\lambda} B, 
$$
where $\rho = \tfrac{1}{1 + \lambda\eta/\mu_0}$.
Hence, noting $\|X_n\| \leq \|Y_n\| + \|Z_n\|$, we have that $\bbE[\norm{X_n}] \leq \rho^n \norm{x_0} + \frac{\mu_0}{\lambda} B + k(1)$
\end{proof}





\section{Proof of Proposition \ref{prop:geom_ergo}} 
\label{sec:ProofOfGeometricErgodicity}

\paragraph{Proof under the Strict Dissipativity Condition (\Cref{assum:strict_diss})}
First we prove the geometric ergodicity under \Cref{assum:strict_diss}.
To show that we first prove the exponential contraction:
\begin{align}
\|X_n - Y_n\|_{\cH} \leq \left(1 - \eta \frac{\frac{\lambda}{\mu_0} - M}{1 + \eta \frac{\lambda}{\mu_0}} \right)^n \|X_0 - Y_0\|_{\cH}.
\end{align}
Once we have shown this inequality, it is easy to show the geometric ergodicity.

According to the update rule, we have that 
\begin{align*}
X_{n+1} & = S_\eta\left(X_n - \eta \nabla L(X_n) + \sqrt{\frac{2 \eta}{\beta}} \epsilon_n \right), \\
Y_{n+1} & = S_\eta\left(Y_n - \eta \nabla L(Y_n) + \sqrt{\frac{2 \eta}{\beta}} \epsilon_n \right).
\end{align*}
Therefore, by taking difference, we obtain
$$
X_{n+1} - Y_{n+1} = S_{\eta}\left[ (X_n - Y_n) - \eta (L(x_n) - L(Y_n))\right].
$$
Then, by the triangular inequality, this yields
\begin{align*}
\|X_{n+1} - Y_{n+1}\|_{\cH}
& \leq \frac{1}{1 + \eta \frac{\lambda}{\mu_0}} ( \|X_n - Y_n\|_{\cH} + \eta \|L(X_n) - L(Y_n)\|_{\cH}) \\
& \leq \frac{1}{1 + \eta \frac{\lambda}{\mu_0}} ( \|X_n - Y_n\|_{\cH} + \eta M \|X_n - Y_n\|_{\cH} ) \\
& \leq \frac{1 + \eta M}{1 + \eta \frac{\lambda}{\mu_0}} \|X_n - Y_n\|_{\cH} \\ & 
\leq \left( 1 - \eta  \frac{\frac{\lambda}{\mu_0} - M}{1 + \eta \frac{\lambda}{\mu_0}} \right)\|X_n - Y_n\|_{\cH}
\leq \left( 1 - \eta  \frac{\frac{\lambda}{\mu_0} - M}{1 + \eta \frac{\lambda}{\mu_0}} \right)^n \|X_0 - Y_0\|_{\cH}.
\end{align*}
Now, we already know that there exists an invariant low $\mu_\eta$ under the strong dissipativity condition.
By assuming $Y_0 \sim \mu_\eta$ and $X_0 = x_0 \in \cH$, we can show the following geometric convergence:
$$
\bbE[\phi(X_n)] - \bbE_{X\sim \mu_\eta}[\phi(X)]
= 
\bbE[\phi(X_n)] - \bbE[\phi(Y_n)]
\leq 
M' \bbE[\|X_n - Y_n\|_{\cH}]
\leq 
M' \left( 1 - \eta  \frac{\frac{\lambda}{\mu_0} - M}{1 + \eta \frac{\lambda}{\mu_0}} \right)^n 
\bbE[\|X_0 - Y_0\|_{\cH}]. 
$$ 
Now, we see that 
$$
\bbE[\|X_0 - Y_0\|_{\cH}]
\leq \|x_0\|_{\cH} + \bbE[\|Y_0\|_{\cH}]
\leq \|x_0\|_{\cH} + b.
$$
In the last inequality, we used that
$\bbE[\|Y_0\|_{\cH}] = \bbE[\|Y_n\|_{\cH}] \leq \rho^n \bbE[\|Y_0\|_{\cH}] + b$ for all $n = 1,2,\dots$ by Proposition \ref{prop:norm_control}
and we took $n \to \infty$.
As a consequence, we obtain 
$$
\bbE[\phi(X_n)] - \bbE_{X\sim \mu_\eta}[\phi(X)]
\leq M' \exp\left( - n  \eta  \frac{\frac{\lambda}{\mu_0} - M}{1 + \eta \frac{\lambda}{\mu_0}}  \right) (\|x_0\|_{\cH} + b),
$$
where we used the relation $1-a \leq \exp(-a)$ for $a > 0$.
This yields the assertion.

\paragraph{Proof under the Bounded Gradient Condition (\Cref{assum:bounded_grad})}
Next, we prove the theorem under the bounded gradient case (\Cref{assum:bounded_grad}).
Under the strict dissipative condition, the statement can be immediately shown and thus we omit the proof.

We adopt the technique of Theorems 5.2 \& 5.3 from \cite{Goldys_Maslowski06}, and 
show the geometric ergodicity via Theorem 2.5 of \cite{mattingly2002}.
We note that Theorem 2.5 of \cite{mattingly2002} is shown for a finite dimensional setting, but it can be adopted for an infinite dimensional setting if the ``minorization condition'' (Lemma 2.3 of \cite{mattingly2002}) and ``Lyapunov condition'' (Assumption 2.2 of \cite{mattingly2002}) are satisfied. 

Since the Lyapunov condition is already shown by Proposition \ref{prop:norm_control}, we only need to show the minorization condition. 
Let $\mu_{k,\eta}^x$ be the law of
\begin{equation}
Z^x_{k,\eta} = S^k_\eta x + \sqrt{\frac{2\eta}{\beta}} \sum_{l = 0}^{k-1} S^{k-l}_\eta \varepsilon_l,
\end{equation}
and $\mu_{k,\eta}$ be the law of
\begin{equation}
Z_{k,\eta} = \sqrt{\frac{2\eta}{\beta}} \sum_{l = 0}^{k-1} S^{k-l}_\eta \varepsilon_l.
\end{equation}
Let $Q \eqdef \frac{2\eta}{\beta} \Id$, and 
$$ 
Q_k \eqdef \sum_{l=0}^{k-1} Q S_\eta^{2(k-l)},
$$
for $k=1,2,\dots$, and $Q_0 = 0$.
Then, $\mu_{k,\eta}^x$ is the Gaussian process on $\cH$ with mean $S^k_\eta x$ and covariance operator $Q_k$,
%
and $\mu_{k,\eta}$ is the centered Gaussian process on $\cH$ with the same covariance operator. By the Cameron-Martin formula, $\mu^x_{k,\eta}$ and $\mu_{k,\eta}$ are equivalent with density given by 
%
\begin{equation}
    \frac{\dd\mu^x_{k,\eta}}{\dd\mu_{k,\eta}}(y) = \exp\left\{\scal{ Q_k^{-1} S^k_\eta x}{y} - \frac{1}{2}\norm{Q_k^{-1/2} S^k_\eta x}^2\right\},
\end{equation}
(see \citet{da_prato_zabczyk_1996} for example).
We can easily check that $Q_k  \succeq k Q S_\eta^{2 k}$. 
Then, we have that 
\begin{align*}
\langle x, S_\eta^k Q_k^{-1} y\rangle
 - \frac{1}{2}\norm{Q_k^{-1/2} S^k_\eta x}^2
\geq - \frac{\beta}{2}\|x\|^2 - \frac{1}{2\beta}\|S_\eta^k Q_k^{-1} y\|^2 - \frac{\beta}{4 \eta k} \|x\|^2.
\end{align*}

and thus we have the following lower bound of the density:
\begin{align}
\frac{\dd\mu^x_{k,\eta}}{\dd\mu_{k,\eta}}(y) 
\geq \exp\left\{
- \frac{\beta}{2}\left(1 + \frac{1}{2k \eta}\right)\|x\|^2 - \frac{1}{2\beta}\|S_\eta^k Q_k^{-1} y\|^2
\right\}.
\label{eq:dmukdmukxratioLowerBound}
\end{align}

For a given $N$ (where $N$ will be determined later on), let 
$$ 
K_k \eqdef Q_k S_{\eta}^{N-k} Q_N^{-1/2},
$$
for $k = 0,\dots,N$.
Here, we define 
$$
\widehat{Z}_{k,\eta}^{x,y} \eqdef Z_{k,\eta}^x - K_k Q_N^{-1/2}(Z_{N,\eta}^x - y),
$$
for $x,y \in \cH$, and denote $\widehat{Z}_{k,\eta} \eqdef \widehat{Z}_{k,\eta}^{0,0}$. In particular, we notice that 
$$
\widehat{Z}_{k,\eta} = Z_{k,\eta} - K_k Q_N^{-1/2} Z_{N,\eta},
$$
by definition.
Let
$$
Y_k \eqdef \sum_{l=k}^{N-1} S_{\eta}^{N-l} Q^{1/2} \epsilon_l,
$$
$$
H_k \eqdef Q_{N-k}^{-1/2} S_{\eta}^{N-k} Q^{1/2}.
$$
By a simple calculation, we can show that 
$$
Y_k = Z_{N,\eta} - S^{N-k}_{\eta} Z_{k,\eta} = Q_{N-k} Q_N^{-1} Z_{N,\eta} - S_\eta^{N-k} \widehat{Z}_{k,\eta}.
$$

Finally, let 
$$
\alpha_k \eqdef Q_{N-k}^{-1/2} H_k Y_k = 
\underbrace{Q_{N-k}^{1/2} H_k Q_N^{-1}}_{\eqdef B_1(k)} Z_{N,\eta} - \underbrace{Q_{N-k}^{-1/2} H_k S_\eta^{N-k}}_{\eqdef B_2(k)} \widehat{Z}_{k,\eta},
$$
and accordingly, define
$$
\zeta_k \eqdef \epsilon_k - \alpha_k.
$$

Then, we can show that $(\widehat{Z}_{k,\eta})_k$ and $(\zeta_k)_k$ are independent of $Z_{N,\eta}$ by the same reasoning as \cite{Goldys_Maslowski06}.
To see this, we only have to show that their correlation is 0 because they are Gaussian process. 
First, we can show that\footnote{Here, for $x,y \in \cH$, the bounded linear operator $z \mapsto x \langle y,z\rangle$ is denoted by $xy^*$ for simplicity.} 
\begin{align*}
\bbE\left[\epsilon_k \alpha_{k'}^* \right] 
= Q_{N-k'}^{-1/2}H_{k'} \bbE\left[\epsilon_k Y_{k'}^*\right]
& = 
\begin{cases}
Q_{N-{k'}}^{-1/2}H_{k'}(Q^{1/2} S_\eta^{N-k} - S_\eta^{N-{k'}} Q^{1/2}S_\eta^{k' - k}) &(k' < k) \\
Q_{N-{k'}}^{-1/2}H_{k'}(Q^{1/2} S_\eta^{N-k} ) & (k' \geq k)
\end{cases} \\
&
=
\begin{cases}
0 &(k' < k) \\
Q_{N-{k'}}^{-1/2}H_{k'} Q_{N-k}^{1/2} H_k & (k' \geq k)
\end{cases}.
\end{align*}
For $k \leq k'$,
\begin{align*}
\bbE\left[\alpha_k \alpha_{k'}^* \right] 
& = Q_{N-k}^{-1/2}H_k \bbE\left[Y_k Y_{k'}^*\right] H_{k'}Q_{N-k'}^{-1/2} 
= Q_{N-k}^{-1/2}H_k \left(\sum_{l=k'}^{N-1} S_\eta^{2(N-l)} Q \right) H_{k'}Q_{N-k'}^{-1/2} \\
& = 
Q_{N-k}^{-1/2}H_k \left(\sum_{l=0}^{N-k'-1} S_\eta^{2(N-k'-l)} Q \right) H_{k'}Q_{N-k'}^{-1/2} \\
&=
Q_{N-k}^{-1/2}H_k Q_{N-k'} H_{k'}Q_{N-k'}^{-1/2} 
=
Q_{N-k}^{-1/2}H_k Q_{N-k'}^{1/2} H_{k'}.
\end{align*}
Hence, when $k < k'$, it holds that 
$$
\bbE[(\epsilon_k -\alpha_k)(\epsilon_{k'} -\alpha_{k'})^*]
=0,
$$
and when $k = k'$, we have that 
$$
\bbE[(\epsilon_k -\alpha_k)(\epsilon_{k} -\alpha_{k})^*] = \Id - H_k^2.
$$
Finally, we can see that 
\begin{align*}
\bbE[(\epsilon_k - \alpha_k)Z_{N,\eta}^*] 
& = Q^{1/2}S_\eta^{N-k} - \left\{ Q_{N-k}^{1/2}H_k Q_N^{-1}Q_N  
- Q_{N-k}^{-1/2}H_k S_\eta^{N-k}(Q_kS_\eta^{N-k} - K_kQ_N^{-1/2}Q_N) \right\}\\
& = Q^{1/2}S_\eta^{N-k} - Q^{1/2}S_\eta^{N-k} = 0,
\end{align*}
which indicates $\zeta_k = \epsilon_k - \alpha_k$ is independent of $Z_{N,\eta}$.
Furthermore, we have that 
\begin{align*}
& \bbE[Z_{N,\eta} (\widehat{Z}_{k,\eta}^{x,y} - \bbE[\widehat{Z}_{k,\eta}^{x,y}])^*] = 
\bbE[Z_{N,\eta} (\widehat{Z}_{k,\eta}^{x,y})^{*}] = 
\bbE[Z_{N,\eta}Z_{k,\eta}^*] - 
\bbE[Z_{N,\eta}Z_{N,\eta}^*Q_N^{-1/2}K_k] \\
& =Q \sum_{l=0}^{k-1} S_\eta^{k-l}S_\eta^{N-l} - Q_NQ_N^{-1/2}K_k
=Q_kS_\eta^{N-k} - Q_k S_\eta^{N-k} = 0.
\end{align*}
This also yields that $Z_{N,\eta}$ and $\widehat{Z}_{k,\eta}^{x,y}~(k=1,\dots,N-1)$ are independent.

As we have stated, we now show the minorization condition. 
Let $P^\eta_{n}(x,\cdot)$ be the probability measure of the law of $X_n$ with $X_0 = x$, 
then by the Girsanov's theorem, $P^\eta_{N}(x,\cdot)$ is absolutely continuous with respect to $\mu_{N,\eta}^x$ and the Radon-Nikodym density is given by 
$$
\frac{\dd P^\eta_N(x,\cdot)}{\dd \mu_{N,\eta}^x}(y)
= \bbE\left[ \exp\left\{\frac{\beta}{2\eta} \sum_{k=0}^{N-1} \left(\langle - \eta \nabla L(Z_{k,\eta}^x),\epsilon_k\rangle \sqrt{2\eta/\beta}
- \frac{\eta^2}{2} \|\nabla L(Z_{k,\eta}^x)\|^2  \right) \right\} \big| Z_{N,\eta}^x = y\right].
$$
The right hand side can be evaluated as 
\begin{align*}
& \bbE\left[ \exp\left\{\frac{\beta}{2\eta} \sum_{k=0}^{N-1} \left(\langle - \eta \nabla L(Z_{k,\eta}^x),\epsilon_k\rangle \sqrt{2\eta/\beta}
- \frac{\eta^2}{2} \|\nabla L(Z_{k,\eta}^x)\|^2  \right) \right\} \big| Z_{N,\eta} = y - S_\eta^N x \right] \\
=&
\bbE\left[ \exp\left\{\frac{\beta}{2\eta} \sum_{k=0}^{N-1} \left(\langle - \eta \nabla L(Z_{k,\eta}^x),\zeta_k\rangle \sqrt{2\frac{\eta}{\beta}} 
+ \langle - \eta \nabla L(Z_{k,\eta}^x), (B_1(k)Z_{N,\eta} - B_2(k) \widehat{Z}_{k,\eta} ) \rangle \sqrt{2\frac{\eta}{\beta}}  \right. \right. \right. \\
&
\left.\left.\left. - \frac{\eta^2}{2} \|\nabla L(Z_{k,\eta}^x)\|^2  \right) \right\} \big| Z_{N,\eta} = y - S_\eta^N x \right] \\
=&
\bbE\left[ \exp\left\{\frac{\beta}{2\eta} \sum_{k=0}^{N-1} \left(\langle - \eta \nabla L(\widehat{Z}_{k,\eta}^{x,y}),\zeta_k\rangle \sqrt{2\frac{\eta}{\beta}} \right. \right. \right. \\
& + \langle - \eta \nabla L(\widehat{Z}_{k,\eta}^{x,y}), B_1(k)(y - S^N_\eta x) - B_2(k) \widehat{Z}_{k,\eta}  \rangle \sqrt{\frac{2\eta}{\beta}}  
\left.\left.\left. - \frac{\eta^2}{2} \|\nabla L(\widehat{Z}_{k,\eta}^{x,y})\|^2  \right) \right\}  \right], 
\end{align*}
where we used the fact that $(\widehat{Z}_k)_k$ and $(\zeta_k)_k$ are independent of $Z_{N,\eta}$. 
Therefore, by Jensen's inequality, the right hand side is lower bounded by
\begin{align*}
 \exp\left\{\frac{\beta}{2\eta} \sum_{k=0}^{N-1} \left( 
  \bbE\left[\langle  - \eta \nabla L(\widehat{Z}_{k,\eta}^{x,y}), B_1(k)(y - S^N_\eta x) - B_2(k) \widehat{Z}_{k,\eta}  \rangle \right] \sqrt{\frac{2\eta }{\beta}}  
 - \frac{\eta^2}{2} \bbE[ \|\nabla L(\widehat{Z}_{k,\eta}^{x,y})\|^2]  \right) \right\}.
\end{align*}
Thus, by the assumption that $\|\nabla L(\cdot)\| \leq B$, the right hand side is lower bounded by 
\begin{align}
& \exp\left\{ - \sqrt{\frac{\beta}{2\eta}}   \sum_{k=0}^{N-1}
\left(\bbE\left[\langle  - \eta \nabla L(\widehat{Z}_{k,\eta}^{x,y}), B_1(k)(y - S^N_\eta x) \right]
+ \eta B \bbE[\|B_2(k) \widehat{Z}_{k,\eta}\|] \right)
- \frac{\beta \eta N}{2} B^2   \right\} \notag \\ 
& \geq \exp\left\{ 
\sqrt{\frac{\beta \eta}{2}} \sum_{k=0}^{N-1}
\bbE\left[\langle   \nabla L(\widehat{Z}_{k,\eta}^{x,y}), B_1(k)(y - S^N_\eta x) \right]
- \frac{\beta \eta N}{2}B^2  - 
\sum_{k=0}^{N-1} \bbE[\|B_2(k) \widehat{Z}_{k,\eta}\|^2]
- \frac{\beta \eta N}{2} B^2  \right\} \notag \\
& \geq \exp\left\{ 
\sqrt{\frac{\beta \eta}{2}} \sum_{k=0}^{N-1}
\bbE\left[\langle   \nabla L(\widehat{Z}_{k,\eta}^{x,y}), B_1(k)(y - S^N_\eta x) \right]
- \beta \eta N B^2  - 
 \sum_{k=0}^{N-1} \bbE[\|B_2(k) \widehat{Z}_{k,\eta}\|^2] \right\}.
\label{eq:dmukdPetakLowerBound}
\end{align}

For $z \in \cH$, we have 
\begin{align*}
& \sqrt{\frac{\beta}{2\eta}} \sum_{k=0}^{N-1}\bbE\left[\langle   \eta \nabla L(\widehat{Z}_{k,\eta}^{x,y}), B_1(k) z \rangle \right] \\
&=
\sqrt{\frac{\beta}{2\eta}} \sum_{k=0}^{N-1}\bbE\left[\langle   \eta \nabla L(0), B_1(k) z \rangle \right]
+
\sqrt{\frac{\beta}{2\eta}}  \sum_{k=0}^{N-1}\bbE\left[\langle   \eta (\nabla L(\widehat{Z}_{k,\eta}^{x,y}) - \nabla L(0)), B_1(k) z \rangle \right]
\\
&=
\sqrt{\frac{\beta \eta}{2}} \left\langle \left (\sum_{k=0}^{N-1} B_1(k)\right) \nabla L(0), z \right\rangle 
+
\sqrt{\frac{\beta \eta}{2}}  \sum_{k=0}^{N-1}\bbE\left[\langle    (\nabla L(\widehat{Z}_{k,\eta}^{x,y}) - \nabla L(0)), B_1(k) z \rangle \right].
\end{align*}
The first term of the right hand side can be lower bounded by
$$
- \frac{\beta \eta N}{4}
-
\frac{1}{2}
\sum_{k=0}^{N-1} \left\langle  B_1(k) \nabla L(0), z \right\rangle ^2
=
- \frac{\beta \eta N}{4}
-
\frac{1}{2}
\sum_{k=0}^{N-1} \left\langle  Q S_\eta^{N-k} Q_N^{-1} \nabla L(0), z \right\rangle ^2.
$$ 
The second term can be evaluated as
\begin{align*}
\sqrt{\frac{\beta \eta}{2}}  \sum_{k=0}^{N-1}\bbE\left[\langle   (\nabla L(\widehat{Z}_{k,\eta}^{x,y}) - \nabla L(0)), B_1(k) z \rangle \right]
=
\sqrt{\frac{\beta \eta}{2}}  \sum_{k=0}^{N-1}\bbE\left[\langle  (D^2 L(\widetilde{Z}_{k,\eta}^{x,y}) \cdot \widehat{Z}_{k,\eta}^{x,y}), B_1(k) z \rangle \right],
\end{align*}
where $\widetilde{Z}_{k,\eta}^{x,y}$ is an intermediate point between $\widehat{Z}_{k,\eta}^{x,y}$ and $0$, i.e., there exists $\theta \in [0,1]$ such that $\widetilde{Z}_{k,\eta}^{x,y}=\theta \widehat{Z}_{k,\eta}^{x,y}$.
By Assumption \ref{assum:C1_boundedness}, this can be further evaluated as 
\begin{align*}
&\sqrt{\frac{\beta \eta}{2}}  \sum_{k=0}^{N-1}\bbE\left[\langle  (D^2 L(\widetilde{Z}_{k,\eta}^{x,y}) \cdot \widehat{Z}_{k,\eta}^{x,y}), B_1(k) z \rangle \right]
\geq 
-\sqrt{\frac{\beta \eta}{2}}  \sum_{k=0}^{N-1}
C_{\alpha,2}
\bbE\left[\|\widehat{Z}_{k,\eta}^{x,y}\|_{\cH} \|B_1(k)z\|_\alpha\right] \\ 
&\geq
-\frac{\beta \eta }{4} C_{\alpha,2}^2 
\sum_{k=0}^{N-1}\bbE\left[\|\widehat{Z}_{k,\eta}^{x,y}\|_{\cH}^2\right]
-\frac{1}{2}
\sum_{k=0}^{N-1}\|B_1(k)z\|^2_\alpha \\
& =
-\frac{\beta \eta}{4} C_{\alpha,2}^2 
\sum_{k=0}^{N-1}\bbE\left[\|\widehat{Z}_{k,\eta}^{x,y}\|_{\cH}^2\right]
-\frac{1}{2}
\sum_{k=0}^{N-1}\|Q S_\eta^{N-k} Q_N^{-1} z\|^2_\alpha.
\end{align*}
Here, we have 
\begin{align*}
& \bbE\left[\|\widehat{Z}_{k,\eta}^{x,y}\|_{\cH}^2\right]
= \mathrm{Tr}[Q_k Q_{N-k} Q_N^{-1}] + \|(S_\eta^{k}  - K_k Q_N^{-1/2}S_\eta^N)x + K_kQ_N^{-1/2}y\|_{\cH}^2 \\
& \leq \mathrm{Tr}[Q S_\eta^2(\Id - S_\eta^{2N})^{-1}]
+ 2 \|x\|_{\cH}^2 + 2\|y\|_{\cH}^2,
\end{align*}
where we used $\|S_\eta^{k}  - K_k Q_N^{-1/2}S_\eta^N\| \leq 1$ and $\|K_kQ_N^{-1/2}\| \leq 1$.
Therefore, we obtain 
\begin{align*}
\sqrt{\frac{\beta}{2\eta}} \sum_{k=0}^{N-1}\bbE\left[\langle  \eta \nabla L(\widehat{Z}_{k,\eta}^{x,y}), B_1(k) z \rangle \right] 
\geq &
- \frac{\beta \eta N}{4}
-
\frac{\beta\eta}{4}C_{\alpha,2}^2
\sum_{k=0}^{N-1}
(\mathrm{Tr}[Q S_\eta^2(\Id - S_\eta^{2N})^{-1}]
+ 2 \|x\|_{\cH}^2 + 2\|y\|_{\cH}^2) \\
&
-
\frac{1}{2}
\sum_{k=0}^{N-1} 
(\left\langle  Q S_\eta^{N-k} Q_N^{-1} \nabla L(0), z \right\rangle ^2
+\|Q S_\eta^{N-k} Q_N^{-1} z\|^2_\alpha).
\end{align*}
Next we give another bound for $z = S_\eta^N x$. 
In this situation, thanks to the factor $S_\eta^N$, we have a simpler bound:
\begin{align*}
& \sqrt{\frac{\beta}{2\eta}} \sum_{k=0}^{N-1}\bbE\left[\langle   \eta \nabla L(\widehat{Z}_{k,\eta}^{x,y}), B_1(k) S^N_\eta x \rangle \right]  
 \geq - \sqrt{\frac{\beta \eta}{2}} 
\sum_{k=0}^{N-1}B \| B_1(k) S^N_\eta x \| 
\geq 
- \frac{\beta \eta N}{4} B^2 + 
\frac{1}{2}\sum_{k=0}^{N-1} \| B_1(k) S^N_\eta x \|^2.
\end{align*}
Notice that
\begin{align*}
& 
 \sum_{k=0}^{N-1} B_1(k)^2
= \sum_{k=0}^{N-1} (Q_{N-k}^{1/2} H_k Q_N^{-1})^2 = \sum_{k=1}^{N-1} Q_{N-k} H_k^2 Q_N^{-2}
=
\sum_{k=0}^{N-1} Q_{N-k} Q_{N-k}^{-1}S_\eta^{2(N-k)} Q Q_N^{-2} \\
& =
\sum_{k=0}^{N-1} S_\eta^{2(N-k)} Q Q_N^{-2}
=
Q_N^{-1}.
\end{align*}
Therefore, $\sum_{k=0}^{N-1}\|B_1(k) S^N_\eta x \|^2$ can be bounded as 
\begin{align*}
& 
\sum_{k=0}^{N-1}\|B_1(k) S^N_\eta x\|^2 =
 \|Q_N^{-1/2} S_\eta^{N} x\|^2  
\leq 
 \frac{1}{N}Q^{-1} \|x\|^2 
=
 \frac{\beta}{2N\eta} \|x\|^2,
\end{align*}
where we used $Q_N \succeq N Q S_\eta^{2 N}$ and $Q = \frac{2\eta}{\beta}\Id$.
Therefore, we have 
\begin{align*}
& \sqrt{\frac{\beta}{2\eta}} \sum_{k=0}^{N-1}\bbE\left[\langle   \eta \nabla L(\widehat{Z}_{k,\eta}^{x,y}), B_1(k) S^N_\eta x \rangle \right]  
 \geq - \frac{\beta \eta N}{4} B^2 - \frac{\beta}{4N\eta} \|x\|^2.
\end{align*}
%
%
\begin{align*}
& \sum_{k=0}^{N-1} \bbE[\|B_2(k) \widehat{Z}_{k,\eta}\|^2]
=
\sum_{k=0}^{N-1} \mathrm{Tr}[(Q_{N-k}^{-1} S_\eta^{2(N-k)}Q^{1/2})^2(Q_k - 2Q_k^2 Q_N^{-1}S_\eta^{2(N-k)} +Q_N)] \\
& 
\leq
\sum_{k=0}^{N-1} \mathrm{Tr}[(Q_{N-k}^{-1} S_\eta^{2(N-k)}Q^{1/2})^2(Q_k - 2Q_k Q_N^{-1}Q_N +Q_N)]
=
\sum_{k=0}^{N-1} \mathrm{Tr}[(Q_{N-k}^{-1} S_\eta^{2(N-k)}Q^{1/2})^2(Q_N - Q_k)]
\\
&
\leq 
\sum_{k=0}^{N-1} \mathrm{Tr}\left[Q_{N-k}^{-2} S_\eta^{4(N-k)}Q
\left(\sum_{l=0}^{N-k-1} S_\eta^{N-l}\right)\right] =\sum_{k=0}^{N-1} \mathrm{Tr}[Q_{N-k}^{-2} S_\eta^{4(N-k)}Q Q_{N-k} S_\eta^{2k}] \\
&
=\sum_{k=0}^{N-1} \mathrm{Tr}[Q_{N-k}^{-1} S_\eta^{2N} S_\eta^{2(N-k)}Q  ]
=\sum_{k=0}^{N-1} \mathrm{Tr}\{(S_\eta^{-2} - \Id)[Q(\Id - S_\eta^{2(N-k)})]^{-1} S_\eta^{2N} S_\eta^{2(N-k)}Q  \} \\
&
= \mathrm{Tr}\left[(S_\eta^{-2} - \Id)S_\eta^{2N} \sum_{k=0}^{N-1} (S_\eta^{-2(N-k)} - \Id)^{-1}   \right]
\leq 
\mathrm{Tr}\left[(S_\eta^{-2} - \Id)S_\eta^{2N} (S_\eta^{-2} - \Id)^{-1} \sum_{k=0}^{N-1} S_\eta^{2k}  \right] \\
&=
\mathrm{Tr}\left[(S_\eta^{-2} - \Id)S_\eta^{2N} (S_\eta^{-2} - \Id)^{-1} (S_\eta^{2N} - \Id)(S_\eta^{2} - \Id)^{-1}  \right] 
=
\mathrm{Tr}\left[S_\eta^{2N} (S_\eta^{2N} - \Id)(S_\eta^{2} - \Id)^{-1}  \right] \\
& \leq
\mathrm{Tr}\left[ (S_\eta^{4N} -S_\eta^{2N})(S_\eta^{2} - \Id)^{-1}  \right]
\leq
\mathrm{Tr}\left[ (S_\eta^{2N+2} -S_\eta^{2N})(S_\eta^{2} - \Id)^{-1}  \right] ~~(\because N \geq 1)\\
& \leq 
\mathrm{Tr}\left[ S_\eta^{2N}\right]
\leq \mathrm{Tr}\left[ (\Id + 2N \eta A)^{-1} \right].
\end{align*}
Therefore, we obtain, for all $y \in \mathrm{Im}(Q_N^{1/2})$, 
\begin{align*}
\frac{\dd P^\eta_N(x,\cdot)}{\dd \mu_{N,\eta}^x}(y)
\geq 
\exp  \Bigg\{ &
- \frac{\beta \eta N}{4}
-
\frac{\beta\eta}{4}C_{\alpha,2}^2
\sum_{k=0}^{N-1}
(\mathrm{Tr}[Q S_\eta^2(\Id - S_\eta^{2N})^{-1}]
+ 2 \|x\|_{\cH}^2 + 2\|y\|_{\cH}^2)  \\
&~~~
-
\frac{1}{2}
\sum_{k=0}^{N-1} 
(\left\langle  Q S_\eta^{N-k} Q_N^{-1} \nabla L(0), y \right\rangle ^2
+\|Q S_\eta^{N-k} Q_N^{-1} y\|^2_\alpha) \\
&~~~ - \frac{\beta \eta N}{4} B^2 - \frac{\beta}{4N\eta} \|x\|^2  \\
&~~~  - \beta \eta N B^2 -\mathrm{Tr}\left[ (\Id + 2N \eta A)^{-1} \right] \Bigg\} \\
\geq \exp\Bigg\{ & 
\underbrace{- \frac{\beta \eta N ( 1 + 5B^2)}{4}
- \frac{\beta\eta N }{4}C_{\alpha,2}^2 
\mathrm{Tr}[Q S_\eta^2(\Id - S_\eta^{2N})^{-1}]  -\mathrm{Tr}\left[ (\Id + 2N \eta A)^{-1} \right]}_{=: - C_{\eta,N,\beta}} \\
& \underbrace{- \left( \frac{\beta\eta N }{2}C_{\alpha,2}^2 + \frac{\beta}{4N\eta}\right) \|x\|^2}_{=: - \tilde{\Lambda}_{\mathrm{x}}(x)} \\
& 
\underbrace{- \frac{\beta\eta N }{2}C_{\alpha,2}^2 \|y\|^2 - 
\frac{1}{2}
\sum_{k=0}^{N-1} 
(\left\langle  Q S_\eta^{N-k} Q_N^{-1} \nabla L(0), y \right\rangle ^2
+\|Q S_\eta^{N-k} Q_N^{-1} y\|^2_\alpha)}_{=: - \tilde{\Lambda}_{\mathrm{y}}(y)} \Bigg\}.
\end{align*}
Combining the inequalities \eqref{eq:dmukdmukxratioLowerBound} and \eqref{eq:dmukdPetakLowerBound}, we finally obtain that
\begin{align}
&\frac{\dd P^\eta_N(x,\cdot)}{\dd \mu_{N,\eta}}(y)
= 
\frac{\dd P^\eta_N(x,\cdot)}{\dd \mu_{N,\eta}^x}(y)
\frac{\dd \mu_{N,\eta}^x}{\dd \mu_{N,\eta}}(y) \notag \\
& \geq 
\exp\left\{ 
- \frac{\beta}{2}\left(1 + \frac{1}{2\eta N}\right)\|x\|^2 - \frac{1}{4\beta}\|S_\eta^N Q_N^{-1} y\|^2
- C_{\eta,N,\beta} - \tilde{\Lambda}_{\mathrm{x}}(x) - \tilde{\Lambda}_{\mathrm{y}}(y)
\right\}.
\label{eq:PNetaLowerBoundBasic}
\end{align}
Fro now on, we give a lower bound of the right hand side.
To do so, we set $N = 1/\eta$.
Under this setting, let $\Lambda_{\mathrm{x}}(x) :=  \frac{\beta}{4}\|x\|^2 + \tilde{\Lambda}_{\mathrm{x}}(x)$ and $\Lambda_{\mathrm{y}}(y) :=  \frac{1}{4\beta}\|S_\eta^N Q_N^{-1} y\|^2 + \tilde{\Lambda}_{\mathrm{y}}(y)$, i.e.,
\begin{align}
\frac{\dd P^\eta_N(x,\cdot)}{\dd \mu_{N,\eta}}(y) \geq 
\exp\left\{ 
- C_{\eta,N,\beta} 
- \Lambda_{\mathrm{x}}(x)
- \Lambda_{\mathrm{y}}(y)
\right\}.
\end{align}
We evaluate the terms in the exponent in the right hand side one by one.

(i) (Bound of $C_{\eta,N,\beta}$): 
Note that 
\begin{align}
\|(\mathrm{Id} - S_\eta^{2N})^{-1}\|_{\cB(\cH)} & \leq 
[1 - (1 + \eta \lambda/\mu_0)^{-2N}]^{-1} 
\leq 
(1 + \eta \lambda/\mu_0)^{2N} [ (1 + \eta \lambda/\mu_0)^{2N} - 1]^{-1} \notag \\
& \leq 
\frac{\exp(2N \eta \lambda/\mu_0)}{2N \eta \lambda/\mu_0} = \frac{\exp(2\lambda/\mu_0)}{2\lambda/\mu_0},
\label{eq:LdSetaTwoBound}
\end{align}
and thus 
\begin{align*}
\mathrm{Tr}[QS_\eta^2(\mathrm{Id} - S_\eta^{2N})^{-1}] 
& = \frac{2 \eta}{\beta} \mathrm{Tr}[S_\eta^2(\mathrm{Id} - S_\eta^{2N})^{-1}] 
\leq 
\frac{2 \eta}{\beta} \mathrm{Tr}[S_\eta^2] \|S_\eta^2(\mathrm{Id} - S_\eta^{2N})^{-1}\|_{\cB(\cH)} \\
&\leq  \frac{2 \eta}{\beta}   \frac{\exp(2 \lambda/\mu_0)}{ 2 \lambda/\mu_0} \sum_{k=0}^\infty (1 + \eta \lambda/ \mu_k)^{-2} 
\leq C_\mu \frac{2 \eta}{\beta}   \frac{\exp(2 \lambda/\mu_0)}{ 2 \lambda/\mu_0} \sqrt{\frac{1}{\eta \lambda}} \\
& = C_\mu \frac{\sqrt{\eta}}{\beta} \frac{ \mu_0 \exp(2 \lambda/\mu_0)}{\lambda^{3/2}},  
\end{align*}
where $C_\mu$ is a constant depending on $(\mu_k)_{k=1}^\infty$ and we used $\mu_k \lesssim 1/k^2$ in the last inequality.
This converges to 0 as $\eta \to 0$ and $\beta \to \infty$, thus $\mathrm{Tr}[QS_\eta^2(\mathrm{Id} - S_\eta^{2N})^{-1}]  = O(1)$.
Consequently, we have 
$$ 
C_{\eta,N,\beta} = \frac{\beta(1 + 5B^2)}{4} + \frac{1}{4} C_{\alpha,2}^2 
 C_\mu \sqrt{\eta} \frac{ \mu_0 \exp(2 \lambda/\mu_0)}{\lambda^{3/2}}
+ \mathrm{Tr}\left[ (\Id + 2 A)^{-1}\right] = O(\beta).
$$
(ii) (Bound of $\Lambda_{\mathrm{x}}(x)$): 
By the definition of $\Lambda_{\mathrm{x}}(x)$, it holds that 
$$
\Lambda_{\mathrm{x}}(x) = \left(\frac{\beta}{2}\left(1 + \frac{1}{2\eta N}\right) + \frac{\beta\eta N }{2}C_{\alpha,2}^2 + \frac{\beta}{4N\eta} \right)\|x\|^2
=\left(\beta + \frac{\beta}{2}C_{\alpha,2}^2  \right)\|x\|^2 = O(\beta \|x\|^2).
$$
(ii) (Bound of $\Lambda_{\mathrm{y}}(y)$): Finally, we evaluate $\Lambda_{\mathrm{y}}(y)$. When $\eta = 1/N$, 
\begin{align*}
\Lambda_{\mathrm{y}}(y)
=
\frac{1}{4\beta}\|S_\eta^N Q_N^{-1} y\|^2 + 
\frac{\beta }{2}C_{\alpha,2}^2 \|y\|^2 +
\frac{1}{2}
\sum_{k=0}^{N-1} 
(\left\langle  Q S_\eta^{N-k} Q_N^{-1} \nabla L(0), y \right\rangle ^2
+\|Q S_\eta^{N-k} Q_N^{-1} y\|^2_\alpha).
\end{align*}
We can show that $\Lambda_{\mathrm{y}}(Z) < \infty$ for $Z \sim \mu_{N,\eta}$ almost surely, as follows. 
Since $0 \leq \Lambda_{\mathrm{y}}(y)$, we only have to evaluate $\bbE_{Z \sim \mu_{N,\eta}}[\Lambda_{\mathrm{y}}(Z)]$.
To do so, we note that $\mu_{N,\eta}$ is a Gaussian process in $\cH$ with mean $0$ and covariance $Q_N$, which can be easily checked by its definition. By using this, we evaluate the expectation of each term as follows.
\begin{align*}
& \bbE_{Z \sim \mu_{N,\eta}}\left[\frac{1}{4\beta}\|S_\eta^N Q_N^{-1} y\|^2 \right]
= 
\frac{1}{4\beta} \mathrm{Tr}[S_\eta^{2N} Q_N^{-2} Q_N]= \frac{1}{4\beta} \mathrm{Tr}[S_\eta^{2N} Q_N^{-1}] 
= 
\frac{1}{4\beta} \mathrm{Tr}[S_\eta^{2N}(S_\eta^2 + \dots + S_\eta^{2N})^{-1}Q^{-1}] \\
& = 
\frac{1}{4\beta} \mathrm{Tr}[Q^{-1} S_\eta^{2N}(\Id - S_\eta^2)(S_\eta^2(\Id - S_\eta^{2N}))^{-1}] 
\leq 
\frac{1}{4\beta} \mathrm{Tr}[Q^{-1} S_\eta^{2N}(\Id - S_\eta^2)(S_\eta^2)^{-1}] \frac{\exp(2 \lambda/\mu_0)}{2 \lambda/\mu_0}
~~~(\text{$\because$ \Cref{eq:LdSetaTwoBound}})
\\ 
& =
\frac{1}{4\beta} \mathrm{Tr}[Q^{-1} S_\eta^{2N} (S_\eta^{-2} - \Id)] \frac{\exp(2 \lambda/\mu_0)}{2 \lambda/\mu_0}
\leq 
\frac{1}{8\eta} \mathrm{Tr}[ S_\eta^{2N} (2 \eta A + \eta^2 A^2)] \frac{\exp(2 \lambda/\mu_0)}{2 \lambda/\mu_0} 
\\
& 
=
 \frac{1}{8}\frac{\exp(2 \lambda/\mu_0)}{2 \lambda/\mu_0}
\sum_{k=0}^\infty \frac{2 \lambda/\mu_k + \eta(\lambda/\mu_k)^2}{(1 + \eta \lambda/\mu_k)^{2N}}
=
 \frac{1}{8}\frac{\exp(2 \lambda/\mu_0)}{2 \lambda/\mu_0}
\sum_{k=0}^\infty  \frac{1}{(1 + \eta \lambda/\mu_k)^{N}}
\left( \frac{2 \lambda/\mu_k}{(1 + \eta \lambda/\mu_k)^{N}} 
+ 
\frac{\eta(\lambda/\mu_k)^2}{ (1 + \eta \lambda/\mu_k)^{\frac{N}{2}2}}
\right) \\
& \leq
 \frac{1}{8}\frac{\exp(2 \lambda/\mu_0)}{2 \lambda/\mu_0}
\sum_{k=0}^\infty  \frac{1}{(1 + \lambda/\mu_k)}
\left( \frac{2 \lambda/\mu_k}{(1 +  \lambda/\mu_k)} 
+ 
\frac{\eta(\lambda/\mu_k)^2}{ (1 + \frac{1}{2} \lambda/\mu_k)^{2}}
\right)
\leq 
 \frac{1}{8}\frac{\exp(2 \lambda/\mu_0)}{2 \lambda/\mu_0}
\sum_{k=0}^\infty  \frac{1}{(1 + \lambda/\mu_k)} (2 + 4\eta) \\
& \leq 
 \frac{1 + 2\eta}{4}\frac{\exp(2 \lambda/\mu_0)}{2 \lambda/\mu_0} \frac{C'_\mu}{\sqrt{\lambda}} = O(1),
\end{align*}
where $C'_\mu$ is a constant depending only on $(\mu_k)_k$ and we again used $\mu_k \lesssim 1/k^2$ in the last inequality.
\begin{align*}
& \bbE_{Z \sim \mu_{N,\eta}}\left[ \frac{\beta }{2}C_{\alpha,2}^2 \|Z\|^2 \right] = 
\frac{\beta }{2}C_{\alpha,2}^2 \mathrm{Tr}[Q_N] = 
\frac{\beta }{2}C_{\alpha,2}^2 \mathrm{Tr}[Q(S_\eta^2 - S_\eta^{2(N+1)})(\Id - S_\eta^2)^{-1}] \\
& = \eta C_{\alpha,2}^2 \mathrm{Tr}[(\Id - S_\eta^{2N})(S_\eta^{-2} - \Id)^{-1}]
= \eta C_{\alpha,2}^2 \sum_{k=0}^\infty (1 - (1 + \eta \lambda/\mu_k)^{-2N})(2 \eta \lambda/\mu_k + \eta^2 (\lambda/\mu_k)^2)^{-1} \\
& \leq \eta C_{\alpha,2}^2 \sum_{k=0}^\infty (2 \eta \lambda/\mu_k )^{-1}
=  \frac{C_{\alpha,2}^2}{2 \lambda} \sum_{k=0}^\infty \mu_k \leq \frac{C_{\alpha,2}^2}{2 \lambda} C''_\mu = O(1),
\end{align*}
where $C''_\mu$ is a constant depending only on $(\mu_k)_k$ and we again used $\mu_k \lesssim 1/k^2$ in the last inequality.
\begin{align*}
& \bbE_{Z \sim \mu_{N,\eta}} \left[\sum_{k=0}^{N-1} 
(\left\langle  Q S_\eta^{N-k} Q_N^{-1} \nabla L(0), Z \right\rangle ^2
+\|Q S_\eta^{N-k} Q_N^{-1} Z \|^2_\alpha) \right]  \\
& = 
\sum_{k=0}^{N-1} \left\{
\left\langle  Q S_\eta^{N-k} Q_N^{-1} \nabla L(0), Q_N Q S_\eta^{N-k} Q_N^{-1} \nabla L(0) \right\rangle 
+\mathrm{Tr}[ Q S_\eta^{N-k} Q_N^{-1} \sqrt{Q_N} \Id_\alpha  \sqrt{Q_N} Q S_\eta^{N-k} Q_N^{-1} ]
\right\}
\end{align*}
where $\Id_\alpha: \cH \to \cH$ is a linear operator defined by $\langle x, \Id_\alpha y\rangle = \sum_{k=0}^\infty (\mu_k)^{2\alpha}x_k y_k$
for $x = (x_k)_k, y = (y_k)_k \in \cH$.
The first term in the right hand side can be evaluated as 
\begin{align*}
\left\langle  Q S_\eta^{N-k} Q_N^{-1} \nabla L(0), Q_N Q S_\eta^{N-k} Q_N^{-1} \nabla L(0) \right\rangle 
= \left\langle  Q^2 S_\eta^{2(N-k)} Q_N^{-1} \nabla L(0),  \nabla L(0) \right\rangle,
\end{align*}
and its summation becomes
\begin{align*}
\sum_{k=0}^\infty \left\langle  Q^2 S_\eta^{2(N-k)} Q_N^{-1} \nabla L(0),  \nabla L(0) \right\rangle
= \left\langle  Q Q_N Q_N^{-1} \nabla L(0),  \nabla L(0) \right\rangle = 
\left\langle  Q \nabla L(0),  \nabla L(0) \right\rangle = \frac{2\eta}{\beta} \|L(0)\|^2 = O(\eta/\beta).
\end{align*}
The second term can be evaluated as 
\begin{align*}
& \sum_{k=0}^\infty 
\mathrm{Tr}[ Q S_\eta^{N-k} Q_N^{-1} \sqrt{Q_N} \Id_\alpha  \sqrt{Q_N} Q S_\eta^{N-k} Q_N^{-1} ]
=
\sum_{k=0}^\infty 
\mathrm{Tr}[Q^2 S_\eta^{2(N-k)} Q_N^{-1}\Id_\alpha] \\
= & 
\sum_{k=0}^\infty 
\mathrm{Tr}[Q Q_N Q_N^{-1}\Id_\alpha]
=
\sum_{k=0}^\infty 
\frac{2\eta }{\beta}\mathrm{Tr}[\Id_\alpha]
= 
\frac{2\eta }{\beta}\sum_{k=0}^\infty \mu_k^{2\alpha} = \frac{2\eta}{\beta} C_{\mu,\alpha} = O(\eta/\beta),
\end{align*}
where we used the assumption $\alpha > 1/4$ and $\mu_k \lesssim 1/k^2$.
Summarizing the above arguments, we obtain that 
\begin{align}
\bbE_{Z \sim \mu_{N,\eta}}[\Lambda_{\mathrm{y}}(Z)] \leq O(1).
\label{eq:ELambdaZisFinite}
\end{align}

(iv) (Combining all bounds (i), (ii), (iii)). Combining these bounds for $C_{\eta,N,\beta}, \Lambda_{\mathrm{x}}(x), \Lambda_{\mathrm{y}}(y)$, we may give a lower bound of $P^\eta_N(x,\Gamma)$ for a measurable set $\Gamma \subset \cH$
uniformly for all $x$ with norm smaller than a given $R$, which is required to show the minorization condition.
Let 
$$
c_R \eqdef  \exp\left(-C_{\eta,N,\beta} - \frac{\beta}{2}(2 + C_{\alpha,2}^2) R^2\right)
$$ 
for $R \geq \frac{3}{2} k(1)$ which will be determined later, 
then we have shown that for all $x \in \cH$ with $\|x\| \leq R$,
$$
\exp(-C_{\eta,N,\beta} - \Lambda_{\mathrm{x}}(x)) \geq c_R.
$$
%
By \Cref{eq:PNetaLowerBoundBasic}, this gives that 
\begin{equation*}
   P^\eta_{N}(x, \Gamma) \geq c_R\! \int_\Gamma\!  e^{-\Lambda_{\mathrm{y}} (z)} \mu_{N,\eta} (dz),
\end{equation*}
for all $x \in \cB_R$ and a measurable set $\Gamma \subset \cH$.
In particular, if we define 
\begin{equation*}
    \bar{\mu}(\Gamma) \eqdef \frac{1}{\bar{Z}} \int_{\Gamma\cap \cB_R} e^{-\Lambda_{\mathrm{y}} (z)} \mu_{N,\eta}(dz)
\end{equation*}
where $\bar{Z}= \int_{\cB_R} e^{-\Lambda_{\mathrm{y}} (z)} \mu_{N,\eta}(dz) $ so that $\bar{\mu}$ is a probability measure, then
\begin{align}
 P^\eta_{N}(x, \Gamma) 
& 
\geq c_R\! \int_{\Gamma \cap \cB_R}  e^{-\Lambda_{\mathrm{y}} (z)} \mu_{N,\eta} (dz) 
\geq \delta \bar{\mu}(\Gamma),
\label{eq:PnDeltamuLowerbound}
\end{align}
where $$\delta  \eqdef c_R \bar{Z}.$$
Here, we give a lower bound of $\delta$. By Proposition \ref{prop:z_boundedness}, 
\begin{align*}
\mu_{N,\eta}(\cB_R) \geq 1 - \frac{\bbE_{Z \sim \mu_{N,\eta}}[\|Z\|]]}{R} \geq 1 - \frac{1}{R} k(1) \geq \frac{1}{3}, 
\end{align*}
where we used $R \geq \frac{3}{2}k(1)$
and thus $\delta$  can be lower bounded as
    \begin{align*}
    \delta  &= c_R  \int_{\cB_R} e^{-\Lambda_{\mathrm{y}} (z)} \mu_{N,\eta}(dz) = 
c_R \mu_{N,\delta}(\cB_R)  \frac{1}{\mu_{N,\delta}(\cB_R)} \int_{\cB_R} e^{-\Lambda_{\mathrm{y}} (z)} \mu_{N,\eta}(dz) 
\\
    &
\geq 
c_R \mu_{N,\delta}(\cB_R) 
\exp\left( - \frac{1}{\mu_{N,\delta}(\cB_R)} \int_{\cB_R} \Lambda_{\mathrm{y}} (z) \mu_{N,\eta}(dz)  \right) \\
& 
\geq 
\frac{1}{3} c_R
\exp\left( - 2  \int_{\cH} \Lambda_{\mathrm{y}} (z) \mu_{N,\eta}(dz)  \right) \\
& 
\geq 
\frac{1}{3} c_R \exp\left( -  O(1)  \right),
    \end{align*}
where we used Eq.~\eqref{eq:ELambdaZisFinite} in the final inequality.
Therefore, we have shown that there exists a probability measure $\bar{\mu}$, with $\bar{\mu}(\cB_R) = 1$ and $\bar{\mu}(\cB_R^c) = 0$,
such that Eq.~\eqref{eq:PnDeltamuLowerbound} is satisfied for any $x \in \cB_R$ and a measurable set $\Gamma \in \mathbb{B}(\cH)$, where $\delta \geq \frac{1}{3} c_R \exp\left( -  O(1)  \right) \geq \frac{1}{3} \exp(-C_{\eta,N,\beta} - \Lambda_{\mathrm{x}}(x) -O(1)) \gtrsim \exp(- O(\beta))$.

    %
%
%


By Proposition \ref{prop:norm_control}, the following contraction condition holds for $\alpha_N = \rho^N = (\frac{1}{1 + \lambda\eta/\mu_0})^N 
\leq \exp(- \lambda/\mu_0) < 1$, $\bar{b} = \max\{\frac{\mu_0}{\lambda}B +k(1),1\}$ under the bounded gradient condition:
\begin{equation*}
\bbE_{x_0}\norm{X_N} \leq \alpha_N \norm{x_0} + \bar{b} \quad (\forall n\in\bbN).
\end{equation*}

Set $V(x) = \|x\| + 1$ and $\cC = \left\{x \in \cH \mid V(x) \leq \frac{2 \bar{b}}{\sqrt{(1+\alpha_N)/2} - \alpha_N}\right\}$, then
we have that $\cC = \cB_R$ for $R = \frac{2 \bar{b}}{\sqrt{(1+\alpha_N)/2} - \alpha_N} - 1$.
Here, we give lower and upper bounds of $R$. 
As for the lower bound, we can easily see that $R \geq \frac{5}{2} \bar{b} -1 \geq \frac{3}{2} \bar{b} \geq \frac{3}{2} k(1)$.
Next, we give an upper bound.
Jensen's inequality and the fact $0 < \alpha_N < 1$ yield $\sqrt{(1+\alpha_N)/2} - \alpha_N \geq \frac{1 + \sqrt{\alpha_N}}{2} - \sqrt{\alpha_N}
=\frac{1 - \sqrt{\alpha_N}}{2}$.
Here for $a > 0$, it is easy to see $(1+a)^{N/2} \geq 1 + aN/2$ and thus we have $1 - (1+a)^{-N/2} \geq 1 - (1 + aN/2)^{-1} = \frac{a N/2}{1 + a N/2}$. Substituting $a=\lambda\eta/\mu_0$, $\frac{1 - \sqrt{\alpha_N}}{2}\geq  \frac{N\lambda\eta/(2\mu_0)}{1 + N\lambda\eta/(2\mu_0)}$. Then, by using $\eta = 1/N$, we obtain that $\frac{2 \bar{b}}{\sqrt{(1+\alpha_N)/2} - \alpha_N} \leq \frac{4b \mu_0 (1 + \lambda/(2\mu_0))}{\lambda} =2 \bar{b} (1 + 2\frac{\mu_0}{\lambda}).$


Then, Theorem 2.5 of \cite{mattingly2002} asserts that there exits a invariant measure $\mu^\eta$ for the Markov chain $(X_{lN})_{l}$ and the chain satisfies the geometric ergodicity: for $\phi: \cH \to \mathbb{R}$ such that $|\phi(\cdot)| \leq V(\cdot)$, 
\begin{align}
\bbE[\phi(X_{l N})] - \bbE_{X\sim \mu^\eta}[\phi(X)] \leq 
\kappa [\bar{V} + 1] (1 - \delta)^{a l} + \sqrt{2} V(x_0) \gamma^{l} (\kappa [\bar{V} + 1])^{a l} \frac{1}{\sqrt{\delta}},
\label{eq:fXlNgeometric_firstbound}
\end{align}
where $\kappa = \bar{b} + 1$, $\bar{V} = 2 \sup_{x \in \cC} V(x) = \frac{4 \bar{b}}{\sqrt{(1+\alpha_N)/2} - \alpha_N}$, $\gamma = \sqrt{(\alpha_N + 1)/2}$ and $a \in (0,1)$ so that $\gamma (\kappa [\bar{V} + 1])^{a} \leq (1- \delta)^a$.
In particular, we may choose $a \in (0,1)$ as 
$$
a = \frac{\log(1/\gamma)}{\log(\kappa (\bar{V} + 1)/(1-\delta))}.
$$
Here, by noting that 
$$
\log(1/\gamma) 
= - \frac{1}{2} \log\left( \frac{1 + \alpha_N}{2}\right)
= - \frac{1}{2} \log\left(1  - \frac{1 - \alpha_N}{2}\right)
\geq 
\frac{1}{2}\left(\frac{1 - \alpha_N}{2}\right) \geq \frac{1}{4}\min\left(\frac{\lambda}{2 \mu_0}, \frac{1}{2} \right) = \Omega(\lambda/\mu_0),
$$
we may assume 
$$
a \geq \frac{\min\left(\frac{\lambda}{2 \mu_0}, \frac{1}{2} \right)}{4 \log(\kappa (\bar{V} + 1)/(1-\delta)) }.
$$
Then Eq.~\eqref{eq:fXlNgeometric_firstbound} is simplified to 
\begin{align}
\bbE[\phi(X_{l N})] - \bbE_{X\sim \mu^\eta}[\phi(X)] \leq 
\left(\kappa [\bar{V} + 1] + \frac{\sqrt{2} V(x_0)}{\sqrt{\delta}} \right)  (1 - \delta)^{a l}.
\end{align}
This shows the geometric ergodicity of the sequence $(X_{lN})_{l=1}^\infty$.
To extend this result to ``unsampled'' sequence $(X_n)_{n=1}^\infty$, 
we may apply the same argument to the sequence $(X_{lN + n})_{l=0}^\infty$ for each $n = 1,\dots, N-1$.
Applying Eq. \eqref{eq:fXlNgeometric_firstbound} where $x_0$ is replaced with $X_n$ and taking expectation with respect to $X_n$, we have 
\begin{align}
\bbE[\phi(X_{l N + n})] - \bbE_{X\sim \mu^\eta}[\phi(X)] 
& \leq 
\left(\kappa [\bar{V} + 1] + \frac{\sqrt{2} \bbE[V(X_n)]}{\sqrt{\delta}} \right)  (1 - \delta)^{a l} \notag \\
& \leq
\left(\kappa [\bar{V} + 1] + \frac{\sqrt{2} (\rho^n \|x_0\| + b + 1)}{\sqrt{\delta}} \right)  (1 - \delta)^{a l} ~~~(\because \text{Proposition \ref{prop:norm_control}})\notag \\
& \leq
\left(\kappa [\bar{V} + 1] + \frac{\sqrt{2} (V(x_0) + b)}{\sqrt{\delta}} \right)  (1 - \delta)^{a l}. 
\end{align}
Finally, we note that for $0 \leq n < N$, 
\begin{align*}
(1 - \delta)^{a l} & \leq (1 - \delta)^{a (l N + n - N)/N} \leq (1 - \delta)^{a [(l N + n)/N - 1]}
\leq \exp\left(-\delta a [(l N + n)/N - 1] \right) \\
& \leq \exp\left( - \Lambda_\eta^* [\eta (lN + n) - 1] \right),
\end{align*}
where we set 
$$
 \Lambda_\eta^* \eqdef a \delta \geq  \frac{\min\left(\frac{\lambda}{2 \mu_0}, \frac{1}{2} \right)}{4 \log(\kappa (\bar{V} + 1)/(1-\delta)) } \delta
= \Omega(\exp(-O(\beta))).
$$
This yields the assertion. $\square$

\section{Proof of \Cref{prop:gibbs_concentration}}
\label{sec:ProofOfThirdTerm}

\begin{lemma}[Gaussian correlation inequality]
\label{eq:GaussianCorrelationInfinite}
Let $\nu_\infty$ be the Gaussian measure in $\cH$ given by a random variable 
$\sum_{i=0}^\infty \xi_i \gamma_i f_i$ where $(\xi)_{i=0}^\infty$ is a sequence of i.i.d. standard normal variables and $(\gamma_i)_{i=0}^\infty$ is a sequence of real variables with $0 < \sum_{i=0}^\infty \gamma_i^2 < \infty$.
For two sets $\cC^1 = \{X = \sum_{i=0}^\infty \alpha_i f_i \in \cH \mid \sum_{i=0}^\infty \alpha_i^2 \mu_i^{(1)} \leq 1 \}$
and $\cC^2 = \{X = \sum_{i=0}^\infty \alpha_i f_i \in \cH  \mid | \sum_{i=0}^\infty \alpha_i
 \mu_i^{(2)} |\leq 1 \}$
where $(\mu_i^{(1)})_{i=1}^\infty$ is a fixed non-negative sequence and $(\mu_i^{(2)})_{i=1}^\infty$ is a fixed sequence of real numbers satisfying 
$\sum_{i=0}^\infty (\mu_i^{(2)})^2 < \infty$,
we have
$$
\nu_\infty(\cC^1 \cap \cC^2) \geq \nu_\infty(\cC^1) \nu_\infty(\cC^2).
$$
\end{lemma}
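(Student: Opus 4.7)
The plan is to reduce this infinite-dimensional Gaussian correlation inequality to Royen's finite-dimensional theorem \cite{royen2014simple,Latala2017} by truncation. For each $N \in \bbN$, I would introduce the cylinder sets $\cC^1_N = \{X = \sum_i \alpha_i f_i \in \cH : \sum_{i=0}^N \alpha_i^2 \mu_i^{(1)} \leq 1\}$ and $\cC^2_N = \{X \in \cH : |\sum_{i=0}^N \alpha_i \mu_i^{(2)}| \leq 1\}$, both depending only on the first $N+1$ coordinates $(\alpha_0, \ldots, \alpha_N)$ of $X$ in the basis $(f_i)$. Under $\nu_\infty$ these coordinates are independent centered Gaussians ($\alpha_i = \gamma_i \xi_i$), so the joint law of $(\alpha_0,\ldots,\alpha_N)$ is a centered Gaussian measure on $\bbR^{N+1}$. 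In this coordinate representation, $\cC^1_N$ pulls back from a symmetric ellipsoid and $\cC^2_N$ from a symmetric slab, both convex and centrally symmetric subsets of $\bbR^{N+1}$. Applying Royen's inequality in finite dimension then yields
\begin{equation*}
  \nu_\infty(\cC^1_N \cap \cC^2_N) \geq \nu_\infty(\cC^1_N)\,\nu_\infty(\cC^2_N).
\end{equation*}

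It remains to pass to the limit $N \to \infty$. For the ellipsoidal constraint, monotonicity $\cC^1_{N+1} \subseteq \cC^1_N$ together with $\bigcap_N \cC^1_N = \cC^1$ gives $\nu_\infty(\cC^1_N) \downarrow \nu_\infty(\cC^1)$ by continuity of measure from above. For the slab, the linear functional $S_\infty(X) = \sum_{i=0}^\infty \alpha_i \mu_i^{(2)}$ is well-defined $\nu_\infty$-almost surely: since $\sum_i \gamma_i^2 < \infty$ implies $\sup_i \gamma_i^2 < \infty$, and $\sum_i (\mu_i^{(2)})^2 < \infty$ by assumption, the series $\sum_i \gamma_i \mu_i^{(2)} \xi_i$ has summable variances and converges a.s.\ by Kolmogorov's two-series theorem. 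Excluding the trivial case $\sum_i \gamma_i^2 (\mu_i^{(2)})^2 = 0$, where $\cC^2 = \cH$ and the inequality is immediate, $S_\infty$ is a non-degenerate real Gaussian, hence $\nu_\infty(|S_\infty| = 1) = 0$. Thus the partial sums $S_N(X)$ converge to $S_\infty(X)$ $\nu_\infty$-a.s.\ and $\mathbf{1}_{\cC^2_N} \to \mathbf{1}_{\cC^2}$ $\nu_\infty$-a.e., so bounded convergence gives $\nu_\infty(\cC^2_N) \to \nu_\infty(\cC^2)$. The same a.e.\ convergence applied to the product of indicators yields $\nu_\infty(\cC^1_N \cap \cC^2_N) \to \nu_\infty(\cC^1 \cap \cC^2)$, and passing to the limit in the finite-$N$ inequality concludes the proof.

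The main obstacle is essentially bookkeeping rather than technical depth: Royen's theorem is stated on $\bbR^n$, so the truncation must be phrased so that cylinder sets in $\cH$ are faithfully identified with symmetric convex subsets of $\bbR^{N+1}$ equipped with the finite-dimensional marginal of $\nu_\infty$, and that this marginal is precisely the independent product Gaussian. Once this identification is made, the only analytic point requiring care is the nondegeneracy of $S_\infty$ needed to ensure $\nu_\infty(\partial \cC^2) = 0$; the remaining convergence steps are routine.
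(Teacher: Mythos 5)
Your proof is correct and follows the same overall strategy as the paper: truncate $\cC^1$ and $\cC^2$ to cylinder sets depending on finitely many coordinates, identify the finite-dimensional marginal of $\nu_\infty$ with a product Gaussian on $\bbR^{N+1}$, apply Royen's theorem to the symmetric convex sets (ellipsoid and slab), and pass to the limit. The only point of divergence is the limit passage for the slab $\cC^2$: the paper controls the tail $\sum_{i>N}\alpha_i\mu_i^{(2)}$ in $L^2$ via Markov's inequality and then sandwiches $\nu_\infty(\cC^2_N)$ between measures of $\epsilon$-enlarged and $\epsilon$-shrunk slabs $\cC^2_{(\pm\epsilon)}$, finally letting $\epsilon\to 0$ using continuity of the law of $S_\infty$; you instead establish $\nu_\infty$-a.s.\ convergence of $S_N\to S_\infty$ via Kolmogorov's theorem and invoke $\nu_\infty(|S_\infty|=1)=0$ so that $\mathbf{1}_{\cC^2_N}\to\mathbf{1}_{\cC^2}$ a.e., after which bounded convergence closes both $\nu_\infty(\cC^2_N)\to\nu_\infty(\cC^2)$ and $\nu_\infty(\cC^1_N\cap\cC^2_N)\to\nu_\infty(\cC^1\cap\cC^2)$ in one stroke. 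Both routes rely on the same underlying fact (convergence of the partial sums plus nondegeneracy of the limiting one-dimensional Gaussian so the boundary is null); yours packages it more cleanly and handles the intersection set directly rather than sandwiching it separately, so it is a slight streamlining rather than a different idea. One small remark: you correctly isolate the degenerate case $\sum_i\gamma_i^2(\mu_i^{(2)})^2=0$, where $S_\infty\equiv 0$ a.s.\ and the inequality is trivial, which the paper does not spell out.
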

\begin{proof}
Let $\cC^1_n$ an $\cC^2_n$ be the cylinder set that ``truncates'' $\cC^1$ an $\cC^2$ up to index $n$:
$\cC^1_n = \{X = \sum_{i=0}^\infty \alpha_i f_i \in \cH \mid \sum_{i=0}^n \alpha_i^2 \mu_i^{(1)} \leq 1 \}$
and $\cC^2_n = \{X = \sum_{i=0}^\infty \alpha_i f_i \in \cH \mid 
| \sum_{i=0}^n \alpha_i \mu_i^{(2)} | \leq 1 \}$.
By the Gaussian correlation inequality \cite{royen2014simple,Latala2017}, it holds that
$$
\nu_\infty(\cC^1_n \cap \cC^2_n) \geq \nu_\infty(\cC^1_n) \nu_\infty(\cC^2_n).
$$
We note that $(\cC^1_n)_{n}$ is a monotonically decreasing sequence, i.e., $\cC^1_n \subseteq \cC^1_m$ for $m < n$, and we see that $\cap_{n=1}^\infty \cC^1_n = \cC^1$.
By the continuity of probability measure, this yields that $\lim_{n \to \infty} \nu_\infty(\cC^1_n\backslash \cC^1) = 0$ and $\lim_{n \to \infty} \nu_\infty(\cC^1_n) = \nu(\cC^1)$.
On the other hand, for any $\epsilon > 0$, there exists $N$ such that $\sum_{i=N}^\infty (\gamma_i \mu_i^{(2)})^2 \leq \epsilon$ by the assumption ($\sum_{i=0}^\infty \gamma_i^2 < \infty$ and $\sum_{i=0}^\infty (\mu_i^{(2)})^2 < \infty$). Hence, it holds that 
$\bbE[(\sum_{i=N}^\infty \gamma_i \xi_i \mu_i^{(2)})^2] = \sum_{i=N}^\infty (\gamma_i \mu_i^{(2)})^2 \leq \epsilon$,
which indicates that, by Markov's inequality, 
$$
\textstyle
\nu_\infty\left( \right\{  \sum_{i=0}^\infty \alpha_i f_i \mid 
|\sum_{i=N}^\infty \alpha_i \mu_i^{(2)} | > \delta  \left\} \right) \leq \epsilon/\delta^2
$$
for any $\delta > 0$. 
If we set $\cC^2_{(\epsilon)} = \{\sum_{i=0}^\infty \alpha_i f_i \in \cH  \mid | \sum_{i=0}^\infty \alpha_i
 \mu_i^{(2)} |\leq 1 + \epsilon \}$, 
then this and the continuity of Gaussian measures (note that $\sum_{i=0}^\infty \xi_i \gamma_i \mu_i^{(2)}$ is a one dimensional Gaussian measure and has density with respect to the Lebesgue measure) yield that,  
for any $\epsilon > 0$, there exists $N$ such that for all $n \geq N$, it holds that 
\begin{align*}
& \nu_\infty(\cC^2_{(-\epsilon)}) - \epsilon \leq \nu_\infty(\cC^2_n) 
\leq \nu_\infty(\cC^2_{(\epsilon)}) + \epsilon, \\
& \nu_\infty(\cC^1 \cap\cC^2_{(-\epsilon)}) - \epsilon
\leq \nu_\infty(\cC^1 \cap \cC^2_n) \leq 
 \nu_\infty(\cC^1 \cap \cC^2_{(\epsilon)}) + \epsilon.
\end{align*}
Since $\lim_{n \to \infty} \nu_\infty(\cC^1_n \backslash \cC^1) = 0$, the second inequality also gives 
$$
\nu_\infty(\cC^1_n \cap\cC^2_{(-\epsilon)}) - 2 \epsilon
\leq \nu_\infty(\cC^1_n \cap \cC^2_n) \leq 
 \nu_\infty(\cC^1_n \cap \cC^2_{(\epsilon)}) + 2 \epsilon.
$$
for any $n \geq N'$ with sufficiently large $N'$.
Therefore, since 
$\lim_{\epsilon \to 0} \nu_\infty((\cC^2_{(\epsilon)} \backslash \cC^2) \cup (\cC^2 \backslash \cC^2_{(\epsilon)})) = 0$ by the continuity of Gaussian measures and $\lim_{n \to \infty} \nu_\infty(\cC^1_n\backslash \cC^1)$,
by taking the limit of $\epsilon$ and $n$ of this inequality, 
we have 
$$
\nu_\infty(\cC^1 \cap \cC^2)
= \lim_{n \to \infty} \nu_\infty(\cC^1_n \cap \cC^2_n).
$$
Hence, applying the Gaussian correlation inequality to the right hand side yields
\begin{align*}
& \nu_\infty(\cC^1 \cap \cC^2)
= \lim_{n \to \infty} \nu_\infty(\cC^1_n \cap \cC^2_n) \\
& \geq 
\lim_{n \to \infty} \nu_\infty(\cC^2_n ) \nu_\infty(\cC^2_n)
= \nu_\infty(\cC^2 ) \nu_\infty(\cC^2).
\end{align*}
\end{proof}

\begin{proof}[Proof of \Cref{prop:gibbs_concentration}]
The proof relies on comparing the stationary distribution $\pi$ of \Cref{eq:langevin_sde} to the Gaussian stationary distribution $\nu_\infty^{(\beta)}$ of \Cref{eq:def_z} (case $F = 0$). We then conclude by using the small ball probability theorem \citep{JFA:Kuelbs&Li:1993,SPTM:Li&Shao:2001} and \Cref{eq:GaussianCorrelationInfinite} on $\nu_\infty$.
First note that
\begin{align*}
& \int L(x) \dd \pi(x) - L(\tilde{x}) \\
& = - \tfrac{1}{\beta}\! \int\! \log\{ e^{- \beta(L(x) - L(\tilde{x}))}\} \dd \pi(x) \\
& = - \tfrac{1}{\beta}\!\! \int\! \log \{\tfrac{1}{\Lambda}e^{- \beta(L(x) - L(\tilde{x}))}\} \dd \pi(x) -  \tfrac{1}{\beta} \log \Lambda \\
& = - \tfrac{1}{\beta} \mathrm{KL}(\pi || \nu_\infty^{(\beta)}) - \tfrac{1}{\beta} \log(\Lambda), \numberthis
\end{align*}
%
where $\nu_\infty^{(\beta)}$ is the invariant distribution of \Cref{eq:def_z}, i.e., the centered Gaussian on $\cH$ with covariance operator $(-\beta A)^{-1}$, $\Lambda \eqdef \int \exp[- \beta(L(x) - L(\tilde{x}))]  \dd \nu_\infty^{(\beta)}(x)$,
and $\mathrm{KL}(\mu || \nu) \eqdef \int  \log(\dd \mu/\dd \nu)\dd \mu$ for probability measures $\mu$ and $\nu$ that are mutually absolutely continuous.
Since the KL-divergence $\mathrm{KL}(\pi || \nu_\infty^{(\beta)})$ is non-negative, the right hand side is upper bounded by
$- \frac{1}{\beta} \log(\Lambda)$.
By definition of $\tilde{x}$ (\Cref{eq:x_tilde}), it holds that
\begin{equation*}
    \nabla L (\tilde{x}) = - \frac{\lambda}{2}\nabla \norm{\tilde{x}}^2_{\cH_K} = - \lambda \sum_{k\geq 0} \tfrac{\scal{\tilde{x}}{f_k}}{\mu_k}f_k
     .
\end{equation*}
Hence, using the $M$-smoothness of $L$, we obtain
\begin{align*}
    & L(x) - L(\tilde{x}) \\ 
    &\leq \tfrac{1}{2}M\|x - \tilde{x}\|^2 + \lambda \langle \tilde{x}, x - \tilde{x} \rangle_{\cH_K} \\ 
    &\leq \tfrac{1}{2}M\|x - \tilde{x}\|^2 + \lambda \|\tilde{x}\|_{\cH_K} \left\langle \frac{\tilde{x}}{\|\tilde{x}\|_{\cH_K}}, x - \tilde{x} \right\rangle_{\cH_K}.
\end{align*}
Therefore, $\log(\Lambda)$ can be lower bounded by
\begin{align*}
    &\log(\Lambda) 
     \geq \log \int \exp\left\{- \beta
    \left[
    \tfrac{1}{2}M\|x - \tilde{x}\|^2  \right.\right.\\
     &\qquad + \left.\left. \lambda \|\tilde{x}\|_{\cH_K} \left\langle \frac{\tilde{x}}{\|\tilde{x}\|_{\cH_K}}, x - \tilde{x} \right\rangle_{\cH_K} 
    \right]
    \right\} \dd \nu_\infty^{(\beta)} (x) \\
    & \quad \geq 
    - \beta \Big[\tfrac{1}{2}M \varepsilon^2 
    +\lambda \|\tilde{x}\|_{\cHK} U
    \Big] \\ 
    & \qquad + \log[\nu_\infty^{(\beta)}(\{x \in \tilde{x} + \cC_{\varepsilon,U} \})],
\end{align*}
where $\cC_{\varepsilon,U} \eqdef \{x \in \cH \mid \| x\| \leq \varepsilon,
{\scriptstyle | \langle \frac{\tilde{x}}{\|\tilde{x}\|_{\cHK}},x\rangle_{\cHK}| } \leq U \}$ for arbitrary $\varepsilon > 0$ and $U > 0$ (if $\|\tilde{x}\|_{\cHK}=0$, then we treat $\frac{\tilde{x}}{\|\tilde{x}\|_{\cHK}} = 0$). Then, by Borell's inequality (\citet{borell1975brunn}, \citet[Lemma 5.2]{vaart2008}), we have
\begin{align*}
     \log[\nu_\infty^{(\beta)}(\{x \in \tilde{x} + \cC_{\varepsilon,U} \})]
    & \geq \log(\nu_\infty^{(\beta)}(\cC_{\varepsilon,U})) - \frac{\beta \lambda}{2} \|\tilde{x}\|_{\cHK}^2.
\end{align*}
Finally, we lower bound $\log(\nu_\infty^{(\beta)}(\cC_{\varepsilon,U}))$.
Let $\cC_{\varepsilon}^{(1)} \eqdef \{x \in \cH \mid \| x\| \leq \varepsilon \}$
and $\cC_{U}^{(2)} \eqdef \{x \in \cH \mid {\scriptstyle | \langle \frac{\tilde{x}}{\|\tilde{x}\|_{\cHK}},x\rangle_{\cHK}| } \leq U \}$
(that is, $\cC_{\epsilon,U} = \cC_{\varepsilon}^{(1)} \cap \cC_{U}^{(2)}$), then
by Lemma \ref{eq:GaussianCorrelationInfinite}, it holds that
\begin{align*}
\log(\nu_\infty^{(\beta)}(\cC_{\varepsilon,U}))
\geq \log(\nu_\infty^{(\beta)}(\cC_{\varepsilon}^{(1)})) + \log(\nu_\infty^{(\beta)}(\cC_{U}^{(2)})).
\end{align*}
By the small ball probability theorem \citep{JFA:Kuelbs&Li:1993,SPTM:Li&Shao:2001},
we can lower bound the first term of the left hand side as
\begin{equation*}
    - \log(\nu_\infty^{(\beta)}(\cC_{\varepsilon}^{(1)})) \lesssim
    (\sqrt{\beta \lambda}\varepsilon)^{-2}.
\end{equation*}

To evaluate $\nu_\infty^{(\beta)}(\cC_{U}^{(2)})$, we note that 
$$
\bbE_{x \sim \nu_\infty^{(\beta)}}
\left[\left\langle \frac{\tilde{x}}{\|\tilde{x}\|_{\cHK}},x\right\rangle_{\cHK}^2\right]
\leq \beta^{-1}.
$$
Therefore, by the Markov's inequality,
$$
\nu_\infty^{(\beta)}(\cC_{U}^{(2)}) \geq 1 - \frac{1}{\beta U^2}.
$$
By setting $U = \sqrt{2 / \beta}$, we also have
\begin{equation*}
    - \log(\nu_\infty^{(\beta)}(\cC_{U}^{(2)})) \leq \log(1/2).
\end{equation*}
Combining these inequalities, we finally arrive at
\begin{align*}
    & \int\!\! L \dd \pi - L(\tilde{x})\!   \leq  - \frac{1}{\beta} \log(\Lambda) \\
     &\quad \leq \tfrac{1}{2}M \varepsilon^2 + 
      \lambda \|\tilde{x}\|_{\cHK} \sqrt{\frac{2}{\beta}} + \frac{\lambda}{2} \|\tilde{x}\|_{\cHK}^2 \\
      & \quad \quad + \beta^{-1} [C(\sqrt{\beta \lambda}\varepsilon)^{-2} + \log(1/2)]  \\
     & \quad \lesssim 
     \frac{M}{2}\varepsilon^2 +
     \lambda\left( \|\tilde{x}\|_{\cHK} \beta^{-1/2} + \|\tilde{x}\|_{\cHK}^2\right) \\
     & \quad \quad + (\beta \sqrt{\lambda} \varepsilon)^{- 2} + \beta^{-1}
%
    \numberthis.
\end{align*}
Finally, differentiating the above w.r.t. $\varepsilon$, we get that the optimal bound is attained for $\varepsilon = \left(\frac{2}{M\beta^2\lambda}\right)^{1/4}$, and is then equal to
\begin{equation*}
     \frac{1}{\beta}\left(\sqrt{\frac{2M}{\lambda}} + 1\right) +
     \lambda\left( \|\tilde{x}\|_{\cHK} \beta^{-1/2} + \|\tilde{x}\|_{\cHK}^2\right).
\end{equation*}
%
\end{proof}


\section{Proof of time and space approximation error (\Cref{prop:scheme_cvg_general} and \Cref{prop:TimeAndSpaceApproxError})}
\label{sec:ProofOfTimeAndSpaceApprox}

In this section, we prove \Cref{prop:scheme_cvg_general} and \Cref{prop:TimeAndSpaceApproxError}. 
As we have noted, \Cref{prop:scheme_cvg_general} is obtained as a corollary of \Cref{prop:TimeAndSpaceApproxError} by taking the limit of $N \to \infty$. 
More strongly, we can show the following lemma. 
Let 
\begin{align}\label{eq:defcbeta}
\cbeta = 
\begin{cases} 1~~~& (\text{strict dissipativity condition: \Cref{assum:strict_diss})}), \\ \sqrt{\beta}~~~& (\text{bounded gradient condition: \Cref{assum:bounded_grad}}). 
\end{cases}
\end{align}

\begin{lemma}
    \label{lem:TimeAndSpaceErrorSum}
    Suppose $\|x_0\| \leq 1$. Under the same assumptions and notations as in \Cref{prop:scheme_cvg_general,prop:TimeAndSpaceApproxError}, it holds that:
    \begin{align}
        \begin{split}
            &\frac{1}{n}\sum_{k=0}^{n-1}\bbE\left[\phi\left(X^N_k\right)-\phi\left(X^\pi\right)\right]\\
            &\leq \frac{C_1}{\Lambda_0^*}\cbeta \left(1 + (n\eta')^{-1+\kappa}+(n\eta')^{-1}\right)\left(\eta^{1/2-\kappa}+\mu_{N+1}^{1/2-\kappa}+(n\eta')^{-1}\right).
        \end{split}
    \end{align}
\end{lemma}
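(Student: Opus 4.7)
The statement to prove is a Cesàro-averaged weak error bound combining spatial (Galerkin, rate $\mu_{N+1}^{1/2-\kappa}$), temporal (step size, rate $\eta^{1/2-\kappa}$) and ergodic ($(n\eta')^{-1}$) errors, with the explicit $\beta$-dependence captured through the factor $\cbeta/\Lambda_0^*$. The overall plan is to (i) invoke the fully-discrete weak approximation result of \citet{Brehier16} in the normalized regime $\beta=1$, and (ii) translate it to general $\beta$ by the rescaling device already used for \Cref{prop:scheme_cvg_general}, carefully tracking how constants depend on the Lipschitz constant of the drift and on the spectral gap of the continuous generator.

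First, I would set up the normalization: put $L' := \beta L$, $\lambda' := \beta \lambda$ and $\eta' := \eta/\beta$. As computed in the passage preceding \Cref{prop:scheme_cvg_general}, the Galerkin scheme $\{X_k^N\}$ driven by $(L,\lambda,\eta,\beta)$ coincides verbatim with the Galerkin scheme driven by $(L',\lambda',\eta',1)$; and the invariant measure $\pi$ of \eqref{eq:langevin_sde} is identical in the two parametrizations. So it suffices to prove the Cesàro-sum bound in the normalized regime $\beta=1$, with step size $\eta'$ and spectral gap $\Lambda_0^{*,\prime}$ of the rescaled continuous dynamics.

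Second, I would apply the main weak-error theorem of \citet{Brehier16} to the averaged test function $\frac{1}{n}\sum_{k=0}^{n-1} P_k^{N,\eta'}\phi - \int \phi\, d\pi$, where $P_k^{N,\eta'}$ is the transition kernel of the discrete Galerkin chain. Their proof, based on a Kolmogorov-type expansion of the solution $u$ of the Poisson equation $-\mathcal{L}u=\phi-\int\phi\,d\pi$ together with bounds on $\|D^ju\|$ in terms of $1/\Lambda_0^{*,\prime}$, gives in its cumulative form
$$\left| \tfrac{1}{n}\!\sum_{k=0}^{n-1}\!\bbE[\phi(X_k^N)] - \!\int\!\phi\, d\pi \right| \lesssim \frac{\|\phi\|_{0,2}}{\Lambda_0^{*,\prime}}\bigl(1+(n\eta')^{-1+\kappa}+(n\eta')^{-1}\bigr)\bigl(\eta'^{\,1/2-\kappa}+\mu_{N+1}^{1/2-\kappa}+(n\eta')^{-1}\bigr),$$
where the $1 + (n\eta')^{-1+\kappa} + (n\eta')^{-1}$ factor arises from bounding boundary terms in an Abel-type summation by parts of $\sum_k [P_k - \pi]$ against $u$, while the last $(n\eta')^{-1}$ inside the second bracket comes from the residual $u(X_0)-\bbE u(X_n)$ after summation.

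Finally, I would undo the rescaling and track how $\Lambda_0^{*,\prime}$ compares to $\Lambda_0^*$. Under strict dissipativity (\Cref{assum:strict_diss}) the constant $\lambda/\mu_0 - M$ is invariant under the rescaling $(L,\lambda)\mapsto (\beta L,\beta\lambda)$, so $\Lambda_0^{*,\prime}=\Lambda_0^*$ and one picks up $\cbeta=1$. Under the bounded-gradient case (\Cref{assum:bounded_grad}) the rescaled drift has Lipschitz constant $\beta M$ and bounded-gradient constant $\beta B$, which according to the expression for $\Lambda_\eta^*$ given in \Cref{prop:geom_ergo}(ii) degrades the spectral gap by an additional factor $\sqrt{\beta}$; this is exactly the $\cbeta=\sqrt{\beta}$ inflation. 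Substituting back $\eta'=\eta/\beta$ and absorbing factors of $\beta$ into $\cbeta$ and $\Lambda_0^*$ yields the stated bound.

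The main obstacle is the third step: precisely accounting for the $\beta$-dependence through the Poisson-equation bounds in \citet{Brehier16}, since their derivation of $\|D^ju\|\lesssim 1/\Lambda_0^*$ implicitly uses exponential contraction of the continuous semigroup, whose rate depends on $\beta$ via the dissipativity proposition. I would need to verify that after rescaling, the only place where $\beta$ enters the final constant is through $\cbeta/\Lambda_0^*$ (and not, say, through the $\mathrm{Tr}$-type constants coming from bounds on the noise covariance), which requires revisiting the proof of \Cref{prop:dissipative} and of the geometric ergodicity of the continuous dynamics in the two dissipativity regimes.
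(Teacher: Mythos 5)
Your high-level strategy matches the paper's: rescale to the $\beta=1$-like regime and push a Br\'ehier-style Poisson-equation argument through. The paper also rescales by $t' = 2t/\beta$, $L' = (\beta/2)L$, $A' = (\beta/2)A$, $\eta' = 2\eta/\beta$, and then decomposes the Ces\`aro average via the Poisson solution $\Psi^{N'}$, bounding the resulting terms $I_1,\dots,I_6$ with carefully $\beta$-tracked versions of the Br\'ehier/Kopec estimates (\Cref{essential}, \Cref{moments1}--\ref{moments2}, \Cref{norm-bound}, and the Malliavin estimates in \Cref{sec:MalliavinEvaluation}). So you're following essentially the same road, but you stop short of walking it.

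The genuine gap is that you cannot invoke Br\'ehier's theorem as a black box ``in its cumulative form.'' After rescaling, the drift $\nabla L'$ has Lipschitz constant $\beta M$ and bounded-gradient constant $\beta B$, and the noise covariance is $\Id$ (not $\frac{2}{\beta}\Id$): the constants in Br\'ehier's estimates would therefore hide uncontrolled powers of $\beta$. The whole technical content of the paper's proof consists in redoing that derivation with these dependencies explicit. In particular, your attribution of the $\cbeta=\sqrt{\beta}$ factor in the bounded-gradient case to ``degrading the spectral gap by $\sqrt{\beta}$'' via $\Lambda^*_\eta$ is wrong: in the paper the spectral gap after rescaling is $\beta\lambda^*$ (it scales linearly, not by $\sqrt{\beta}$, and this is already absorbed when one integrates $e^{-\beta\lambda^*t}$ in $t$ to get the $1/(\lambda^*\beta)$ prefactor in \Cref{essential}). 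The $\sqrt{\beta}$ factor instead originates from the Bismut--Elworthy--Li representation of $Du$ in \Cref{long-time}: under \Cref{assum:bounded_grad} the a.s.\ bounds on $\eta^{h,x}(t)$ are only available for $t\leq 1/\beta$, and the $1/\sqrt{t}$ in the BEL formula evaluated at $t=1/\beta$ produces $C\sqrt{\beta}$. Under \Cref{assum:strict_diss} the whole-time exponential contraction of $\eta^{h,x}$ in \Cref{lemm:StrongDissUconverge} avoids this, which is exactly why $\cbeta=1$ there. Your ``$\lambda/\mu_0 - M$ is invariant under $(L,\lambda)\mapsto(\beta L,\beta\lambda)$'' is also off: the rescaled gap is $\beta(\lambda/\mu_0-M)$, and the apparent invariance of the final constant is a cancellation against the rescaled time, not a literal invariance. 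So your final paragraph correctly names the obstacle (tracking $\beta$ through the Poisson estimates) but misdiagnoses where $\cbeta$ enters, and the proposal leaves the actual work --- proving \Cref{essential}, \Cref{u-bound}, and the $I_3,I_4,I_5$ lemmas with explicit $\beta$-dependence --- entirely undone.
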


\begin{proof}[Proof of \Cref{prop:scheme_cvg_general} and \Cref{prop:TimeAndSpaceApproxError}]
Once we obtain this lemma (\Cref{lem:TimeAndSpaceErrorSum}), then it is easy to show both propositions
by taking into account that 
\begin{align}
\lim_{n \to \infty}\frac{1}{n}\sum_{k=0}^{n-1} \bbE\left[\phi\left(X^N_k\right)\right] = \bbE[\phi(X^{\mu_{(N,\eta)}})]
\label{eq:GeomErgEachN}
\end{align}
where $\mu_{(N,\eta)}$ is the invariant measure of $(X_k^N)_k$ whose existence and uniqueness can be shown in the same manner as \Cref{prop:geom_ergo} (see also \citet{Brehier16} for this argument).
This gives  \Cref{prop:TimeAndSpaceApproxError} under the condition $\|x_0\| \leq 1$.
Since the invariant measure $\mu_{(N,\eta)}$ is independent of the initial solution $x_0$,
we may drop the condition $\|x_0\|\leq 1$. Then, we obtain \Cref{prop:TimeAndSpaceApproxError}.

We can see that the proof of \Cref{prop:geom_ergo} is valid to show the convergence of \Cref{eq:GeomErgEachN} uniformly over all $N$
and its convergence is uniform over all $N$.  
Moreover, it has been already shown (see \citet{Brehier14} for example)  that 
\begin{align*}
\lim_{N \to \infty} \bbE[\phi(X^N_k)] = \bbE[\phi(X_k)]~~~(\forall k \in \bbN).
\end{align*}
Consequently, we can exchange the order of limit, and by applying the geometric ergodicity $\lim_{k \to \infty} \bbE[\phi(X_k)] = \bbE[\phi(X^{\mu_\eta})]$  (\Cref{prop:geom_ergo}) again, we also have 
\begin{align*}
\lim_{N \to \infty} \bbE[\phi(X^{\mu_{(N,\eta)}})] = &
\lim_{N \to \infty} \lim_{n \to \infty}\frac{1}{n}\sum_{k=0}^{n-1} \bbE\left[\phi\left(X^N_k\right)\right] \\
= & \lim_{n \to \infty} \lim_{N \to \infty} \frac{1}{n}\sum_{k=0}^{n-1} \bbE\left[\phi\left(X^N_k\right)\right]
~~~(\because \text{uniformity of convergence})
\\
= &
\lim_{n \to \infty}  \frac{1}{n}\sum_{k=0}^{n-1} \bbE\left[\phi\left(X_k\right)\right]
=
\bbE[\phi(X^{\mu_\eta})].
\end{align*}
Therefore, \Cref{lem:TimeAndSpaceErrorSum} gives the proof of  \Cref{prop:scheme_cvg_general} by taking the limit of $n \to \infty$ and $N \to \infty$. 
Here again, we would like to note that the assumption $\|x\| \leq 1$ can be dropped in the limit because the invariant measure $\mu_\eta$ is independent of the initial solution.
\end{proof}

In the following, we prove \Cref{lem:TimeAndSpaceErrorSum}.
Our proof follows the line of \citet{Brehier16}. For lighter notation, our constants may differ from line to line.

\subsection{Preliminaries}
In this subsection, we prepare some notations and state lemmas necessary to prove the statement. 
Here, we introduce the continuous time dynamics with the Galerkin approximation as
\begin{align}
    \label{galerkin}
    \begin{cases}
         & X^N(0) = P_Nx_0\in\mathcal{H}_N,                                                                        \\
         & \mathrm{d}X^N(t) = (AX^N(t) - \nabla {L_N}(X^N(t)))\mathrm{d}t + \sqrt{\frac{2}{\beta}}P_N\mathrm{d}W(t).
    \end{cases}
\end{align}
Here, we denote by $X(t,x)$ to represent $X(t)$ with $X_0 = x$ and similarly we write $Y(t,x)$ for a continuous time process $\{Y(t)\}_{t}$ to indicate $Y(t)$ with $Y(0) = x$.
Notice that our constants should not depend on $\beta$ while \citet{Brehier16} sets $\beta=1$. Our technical contribution is to extend the work of \citet{Brehier16} to general case. To this end, we apply a change of variables with $t'\triangleq 2 t/\beta$. Accordingly, \Cref{eq:langevin_sde} transforms into
\begin{align}
    \begin{cases}
         X(0) & = x_0\in\mathcal{H},                                                   \\
         \mathrm{d}X(t')&  = -\nabla{\mathcal{L'}}(X(t'))\mathrm{d}t' + \mathrm{d}W(t') \\
         & = (A'X(t') - \nabla{L'}(X(t')))\mathrm{d}t' + \mathrm{d}W(t'),
    \end{cases}
\end{align}
where $A'\triangleq (\beta/2) A,\: L'\triangleq(\beta/2) L,\: \mathcal{L'}\triangleq(\beta/2)\mathcal{L}$. 
Accordingly, let the ``time re-scaled version'' of the process  $X(t)$ be 
$$
\Xhat(t') \eqdef X\left( {\textstyle \frac{\beta}{2}} t' \right),~~~\Xhat^N(t') \eqdef X^N\left( {\textstyle \frac{\beta}{2}} t' \right)~~~~~~(t' \geq 0).
$$
Similarly, the change of variables with $\eta'=\eta/(\beta/2)$ translates the time discretized Galerkin approximation scheme \eqref{eq:GalerkinTimeDiscrete} into
\begin{align}
    \begin{cases}
         & X^N_0  =P_Nx_0\in\mathcal{H}_N,                                                                \\
         & X^N_{n+1}  = X^N_n - \eta'(A'X^N_{n+1} - \nabla{L'_N}(X^N_n)) + \sqrt{\eta'}\epsilon_n         \\
         & ~~~~\Leftrightarrow X^N_{n+1}  = \Setap(X^N_n - \eta'\nabla{L'_N}(X_n^N)+\sqrt{\eta'}\epsilon_n),
    \end{cases}
\end{align}
where $L'_N\triangleq(\beta/2) L_N,\: \Setap=S_\eta$. Here we used the abuse of notation to let $A',\Setap$ indicate the map from $\cH_N$ to $\cH_N$
which is naturally defined by $A',\Setap:\cH \to \cH$ through the canonical imbedding $\iota: x \in \cH_N \hookrightarrow \cH$:
$A x \eqdef (A \circ \iota) x$ for $x \in \cH_N$ (the same argument is also applied to $\Setap$). 
Note that we may set $\Setap = S_\eta$ because $\eta' A' = \eta A$.
We write the (rescaled) continuous time corresponding to the $k$-th step as 
$$t_k\triangleq k\eta'.$$ 

Our approach is to follow the proofs of \citet{Brehier16,Kopec} and uncover the dependency of $\beta$ step by step. For completeness, we restate the results of \citet{Brehier16} in our notations.
\begin{proposition}
    \begin{enumerate}
        \item We have for any $N\in\mathbb{N},\: \gamma\in[-1/2,1/2]$, and $x\in\mathcal{H}$,
              \begin{align}
                  \norm{(-A')^\gamma P_N x} \leq \norm{(-A')^\gamma x}.
              \end{align}
        \item For $P_N$, we have the following error estimate:
              \begin{align}
                  \norm{(-A')^{s/2}(I-P_N)(-A')^{-r/2}}_{\mathcal{B}(\mathcal{H})}= \left(\frac{\mu'_{N+1}}{\beta/2}\right)^{(r-s)/2}\quad \forall 0\leq s\leq 1,\: s\leq r\leq 2,
              \end{align}
              where $$\mu'_i \eqdef \mu_i / \lambda.$$
    \end{enumerate}
\end{proposition}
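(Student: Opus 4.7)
The plan is to reduce everything to a spectral calculation in the eigenbasis $(f_k)_{k \geq 0}$, since all the operators in sight are simultaneously diagonalizable. Recall that $A f_k = -(\lambda/\mu_k) f_k$, so $-A' = -(\beta/2) A$ has eigenvalues $(\beta/2)\lambda/\mu_k = (\beta/2)/\mu'_k > 0$ on $f_k$. Consequently $(-A')^\gamma$ is defined via the functional calculus by $(-A')^\gamma f_k = ((\beta/2)/\mu'_k)^\gamma f_k$, and $P_N$, being the orthogonal projection onto $\mathrm{Span}\{f_0,\dots,f_N\}$, commutes with $(-A')^\gamma$ on the domain of the latter.

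For \textbf{part 1}, I would expand $x = \sum_{k \geq 0} \alpha_k f_k$ and write
\begin{equation*}
  \|(-A')^\gamma P_N x\|^2 = \sum_{k=0}^{N} \left(\tfrac{\beta/2}{\mu'_k}\right)^{2\gamma} |\alpha_k|^2 \;\leq\; \sum_{k=0}^{\infty} \left(\tfrac{\beta/2}{\mu'_k}\right)^{2\gamma} |\alpha_k|^2 = \|(-A')^\gamma x\|^2 ,
\end{equation*}
since every term in the sum is nonnegative. (Both sides may be $+\infty$ when $\gamma > 0$, in which case the inequality is trivial; when $\gamma \leq 0$ the operator is bounded so both norms are finite.) This is essentially Parseval plus monotonicity of sums of nonnegative terms.

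For \textbf{part 2}, I would compute the action of the composite operator on a basis vector:
\begin{equation*}
  (-A')^{s/2}(I - P_N)(-A')^{-r/2} f_k =
  \begin{cases}
    \left(\tfrac{\beta/2}{\mu'_k}\right)^{(s-r)/2} f_k , & k \geq N+1, \\
    0, & k \leq N.
  \end{cases}
\end{equation*}
Since this is a diagonal operator on an orthonormal basis, its operator norm equals the supremum of the absolute values of its eigenvalues, namely $\sup_{k \geq N+1} ((\beta/2)/\mu'_k)^{(s-r)/2} = \sup_{k \geq N+1} (\mu'_k/(\beta/2))^{(r-s)/2}$ (using $s \leq r$, so the exponent is nonnegative). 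Because $(\mu'_k)_{k \geq 0}$ is decreasing (inherited from the ordering of $(\mu_k)$), the supremum is attained at $k = N+1$, giving exactly $(\mu'_{N+1}/(\beta/2))^{(r-s)/2}$. The constraints $0 \leq s \leq 1$ and $s \leq r \leq 2$ are used only to ensure the resulting operator is a bounded operator on $\cH$ with a finite norm expression.

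There is no genuine obstacle here: once one notes that $P_N$, $A'$ and their fractional powers are simultaneously diagonalized by $(f_k)$, both statements are immediate spectral computations. The only minor subtlety to be careful about is the interpretation of $(-A')^\gamma x$ for $\gamma > 0$ (where $x$ may not lie in the domain); in that case both sides of part 1 should be read as elements of $[0,+\infty]$, and the inequality still holds term by term in the Parseval expansion.
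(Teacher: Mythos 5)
Your proof is correct. Both parts reduce to noting that $-A'$, its fractional powers, and $P_N$ are simultaneously diagonal in the Mercer basis $(f_k)_{k\ge 0}$ with $-A' f_k = (\beta/2)(\lambda/\mu_k) f_k = ((\beta/2)/\mu'_k) f_k$, after which part 1 is Parseval plus dropping nonnegative terms and part 2 is reading off the supremum of a decreasing sequence of eigenvalues over $k \ge N+1$. The handling of the domain issue for $\gamma > 0$ (interpreting both sides as elements of $[0,+\infty]$) is the right way to make part 1 unconditional.

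For comparison: the paper does not actually write out a proof of this proposition. It cites \citet{Brehier16} and \citet{Andersson2015}, which establish the corresponding error estimates for \emph{finite-element} discretizations, and then remarks (following \citet{Brehier16} and \citet{Kruse}) that the extension to spectral Galerkin projections is natural. In the finite-element setting the projection does not commute with $A$, so the argument there is genuinely harder. In the spectral Galerkin setting everything commutes, and your direct diagonal computation is precisely the ``natural extension'' the paper alludes to but does not spell out — it is the shortest self-contained proof. One small remark: the restriction $\gamma \in [-1/2,1/2]$ in part 1 is not needed for your argument (it works for all real $\gamma$ with the $[0,\infty]$ convention), and in part 2 the binding constraint is $s \le r$; the bounds $0\le s\le 1$, $r\le 2$ merely delimit the range of exponents used downstream.
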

    The corresponding result in \citet{Brehier16} is on finite element approximations; see \citet{Andersson2015} for more details. However, it can be naturally extended to spectral Galerkin projection as pointed out in \citet{Brehier16}. In fact, these two approximations are essentially the same; see \citet{Kruse}.
As a consequence, we get the following result.
\begin{proposition}
    \label{prop2}
    For any $\kappa > 0$, the linear operator on $\mathcal{H}$, $P_N (-A')^{-1/2-\kappa}P_N$ is continuous, self-adjoint and positive semi-definite. Moreover, there exists $C_\kappa>0$ such that for any $\beta>0$
    \begin{align}
        \sup_{N\in\mathbb{N}} \mathrm{Tr}\left(P_N (-A')^{-1/2-\kappa}P_N\right)< \frac{C_\kappa}{\beta^{1/2+\kappa}}.
    \end{align}
\end{proposition}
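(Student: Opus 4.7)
The operator $-A' = -(\beta/2)A$ is diagonal in the orthonormal basis $(f_k)_{k \geq 0}$ of $\cH$, with eigenvalues $\frac{\beta\lambda}{2\mu_k} = \frac{\beta}{2\mu_k'} > 0$. Since $P_N$ is the orthogonal projection onto $\mathrm{Span}(f_0,\dots,f_N)$, it is likewise diagonal in the same basis. My plan is to exploit this simultaneous diagonalization: it immediately makes all algebraic properties transparent and reduces the trace bound to a one-dimensional series estimate controlled by \Cref{assum:eigenvalue_cvg}.

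First I would write, using functional calculus on the common diagonal basis,
\begin{equation*}
    P_N(-A')^{-1/2-\kappa}P_N \;=\; \sum_{k=0}^{N} \Bigl(\tfrac{2\mu_k'}{\beta}\Bigr)^{1/2+\kappa} f_k \otimes f_k,
\end{equation*}
where $f_k \otimes f_k$ denotes the rank-one projection $x \mapsto \langle x, f_k\rangle f_k$. Self-adjointness and positive semi-definiteness are then immediate (it is a non-negative sum of orthogonal rank-one projections), and continuity follows because the operator norm equals $\max_{0\leq k\leq N}(2\mu_k'/\beta)^{1/2+\kappa} \leq (2\mu_0'/\beta)^{1/2+\kappa} < \infty$.

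Next, the trace is computed directly from the spectral expansion:
\begin{equation*}
    \mathrm{Tr}\bigl(P_N(-A')^{-1/2-\kappa}P_N\bigr) \;=\; \sum_{k=0}^{N} \Bigl(\tfrac{2\mu_k'}{\beta}\Bigr)^{1/2+\kappa} \;=\; \Bigl(\tfrac{2}{\beta}\Bigr)^{1/2+\kappa} \sum_{k=0}^{N} (\mu_k')^{1/2+\kappa}.
\end{equation*}
By \Cref{assum:eigenvalue_cvg}, $\mu_k \sim 1/k^2$, hence $(\mu_k')^{1/2+\kappa} = O(k^{-1-2\kappa})$, and since $1+2\kappa > 1$ for any $\kappa > 0$, the series $\sum_{k \geq 0}(\mu_k')^{1/2+\kappa}$ converges to a finite constant $\tilde C_\kappa$ depending only on $\kappa$, $\lambda$, and the decay rate of $\mu_k$. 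Setting $C_\kappa \eqdef 2^{1/2+\kappa}\tilde C_\kappa$ yields the uniform-in-$N$ bound $C_\kappa/\beta^{1/2+\kappa}$, which is independent of $\beta$.

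There is no serious obstacle: the entire argument rests on the diagonal structure and on summability of the $\mu_k^{1/2+\kappa}$ series, which is exactly what \Cref{assum:eigenvalue_cvg} guarantees. The one point to keep track of is the scaling by $\beta$ through $-A' = (\beta/2)(-A)$, so that the explicit $\beta^{-1/2-\kappa}$ factor comes out correctly and the remaining constant absorbs $\lambda$ (which is treated as fixed throughout the paper).
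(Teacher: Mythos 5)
Your proof is correct and is the natural direct argument: since $-A' = (\beta/2)(-A)$ and $P_N$ are simultaneously diagonal in the $(f_k)$ basis, $P_N(-A')^{-1/2-\kappa}P_N = \sum_{k=0}^N (2\mu_k'/\beta)^{1/2+\kappa} f_k\otimes f_k$, from which self-adjointness, positivity, and boundedness are immediate, and the trace factors out the $\beta^{-1/2-\kappa}$ scaling while $\sum_k (\mu_k')^{1/2+\kappa} \lesssim \sum_k k^{-1-2\kappa} < \infty$ by \Cref{assum:eigenvalue_cvg} since $\kappa > 0$. The paper itself does not spell out these details (it cites \citet{Kopec} for the $\beta=1$ case and notes the rescaling), so there is no divergence of approach to report; yours makes explicit exactly what the cited result plus the $A' = (\beta/2)A$ scaling yield.
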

This is an extension of Proposition 3.4 in \citet{Kopec}, where $\beta$ is fixed to 1. The following fundamental inequality is important for the proof of \Cref{prop2}.
\begin{proposition}
    For $M, N \in \mathcal{B}(\mathcal{H})$ such that $M$ is symmetric and positive semi-definite,
    \begin{align}
        |\mathrm{Tr}(MN)|\leq \norm{M}_{\mathcal{B}(\mathcal{H})}|\mathrm{Tr}(N)|.
    \end{align}
\end{proposition}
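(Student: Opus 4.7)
The plan is to exploit the spectral decomposition of the symmetric positive semi-definite operator $M$. By the spectral theorem for bounded self-adjoint operators on $\mathcal{H}$, there exists an orthonormal system $\{e_i\}_{i \in I}$ of eigenvectors of $M$ with eigenvalues $\lambda_i \in [0,\|M\|_{\mathcal{B}(\mathcal{H})}]$. Computing the trace in this basis, and using the self-adjointness of $M$ to shift it onto the left-hand vector, yields
\[
\mathrm{Tr}(MN) = \sum_{i \in I} \langle e_i, MN e_i\rangle = \sum_{i \in I} \langle M e_i, N e_i\rangle = \sum_{i \in I} \lambda_i \langle e_i, N e_i\rangle.
\]

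Taking absolute values, applying the triangle inequality, and bounding each $\lambda_i$ by $\|M\|_{\mathcal{B}(\mathcal{H})}$ immediately gives
\[
|\mathrm{Tr}(MN)| \;\leq\; \|M\|_{\mathcal{B}(\mathcal{H})}\, \sum_{i \in I} |\langle e_i, N e_i\rangle|.
\]

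The one delicate step is to pass from $\sum_i |\langle e_i, N e_i\rangle|$ to $|\mathrm{Tr}(N)|$, since the triangle inequality naively goes the wrong way. This is however harmless in the intended application (namely $N = (-A')^{-1/2-\kappa}$ inside the proof of \Cref{prop2}), because that $N$ is itself symmetric and positive semi-definite, so every diagonal entry $\langle e_i, N e_i\rangle$ is non-negative and consequently $\sum_i |\langle e_i, N e_i\rangle| = \mathrm{Tr}(N) = |\mathrm{Tr}(N)|$. More generally, one may read $|\mathrm{Tr}(N)|$ as the Schatten-$1$ norm $\|N\|_1 = \mathrm{Tr}(|N|)$; factoring $N = U|N|$ via the polar decomposition and redoing the eigenbasis computation with the PSD factor $|N|$ in place of $N$ then recovers the classical H\"older-type bound $|\mathrm{Tr}(MN)| \leq \|M\|_{\mathcal{B}(\mathcal{H})} \|N\|_1$. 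Thus the only real work is the spectral computation above; the main (mild) obstacle is simply choosing the correct interpretation of $|\mathrm{Tr}(N)|$ so that the stated inequality is meaningful in the generality required by \Cref{prop2}.
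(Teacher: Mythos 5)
The paper states this proposition without proof, so there is nothing to compare against directly; what matters is whether your argument is sound. It is, and in fact you have caught a genuine imprecision in the statement itself: as literally written, the inequality is \emph{false} for general bounded $N$. In $\mathbb{R}^2$, take $M = \mathrm{diag}(1,0)$ (symmetric, PSD, norm $1$) and $N = \mathrm{diag}(1,-1)$: then $\mathrm{Tr}(MN)=1$ while $\|M\|_{\mathcal{B}(\mathcal{H})}\,|\mathrm{Tr}(N)| = 0$. Your two suggested readings are exactly the right repairs: either one restricts $N$ to be positive semi-definite (which is the case in every application in the paper, e.g.\ $N = P_N(-A')^{-1/2-\kappa}P_N$ in \Cref{prop2} and all the trace estimates in the geometric ergodicity proof), or one reads $|\mathrm{Tr}(N)|$ as the Schatten--$1$ norm $\|N\|_1 = \mathrm{Tr}(|N|)$, recovering the standard trace H\"older inequality. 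Under either reading the rest of your computation is correct.

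One technical caveat deserves mention. You invoke ``the spectral theorem for bounded self-adjoint operators'' to produce an orthonormal system of \emph{eigenvectors} $\{e_i\}$ of $M$. That step is only legitimate when $M$ is compact (or, as in this paper, simultaneously diagonal in the fixed basis $(f_k)$); a generic bounded self-adjoint operator, such as multiplication by $x$ on $L^2([0,1])$, has purely continuous spectrum and no eigenvectors at all, so the identity $\mathrm{Tr}(MN) = \sum_i \lambda_i \langle e_i, Ne_i\rangle$ would have no meaning. A cleaner route that sidesteps this entirely, for $N$ PSD and trace-class, is to use cyclicity and sandwich with $N^{1/2}$: in any orthonormal basis $(f_k)$,
\begin{align*}
\mathrm{Tr}(MN) = \mathrm{Tr}\bigl(N^{1/2}MN^{1/2}\bigr) = \sum_k \bigl\langle N^{1/2}f_k,\, M\,N^{1/2}f_k\bigr\rangle \le \|M\|_{\mathcal{B}(\mathcal{H})}\sum_k \bigl\|N^{1/2}f_k\bigr\|^2 = \|M\|_{\mathcal{B}(\mathcal{H})}\,\mathrm{Tr}(N),
\end{align*}
which requires only that $M$ be bounded and self-adjoint (positive semi-definiteness of $M$ is not even needed for the one-sided bound). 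In the present paper's setting $M$ is always diagonal in the $(f_k)$ basis, so your version works too, but the sandwich argument is the one that matches the stated generality.
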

Our next step is to extend Lemma 3.7 of \citet{Kopec} to our case.
\begin{lemma}
    \label{norm-bound}
    For any $0\leq\kappa\leq 1,\: N\in\mathbb{N},\: \beta\geq\eta_0$, and $j\geq 1$,
    \begin{align}
        \norm{(-A')^{1-\kappa}\Setap^j P_N}_{\mathcal{B}(\mathcal{H})}\leq\frac{(\beta/2)^{1-\kappa}}{(j\eta)^{1-\kappa}}\frac{1}{(1+\eta/\mu'_0)^{j\kappa}}
=
\frac{1}{t_j^{1-\kappa}}\frac{1}{(1+\eta/\mu'_0)^{j\kappa}}.
    \end{align}
    Moreover,
    \begin{align}
        \forall\: \gamma\geq 1\quad \exists\: C_\gamma > 0\quad \forall\: j\geq\gamma\quad
        \norm{(-A')^{\gamma}\Setap^j P_N}_{\mathcal{B}(\mathcal{H})}\leq\frac{C_\gamma}{(j\eta)^\gamma}(\beta/2)^\gamma=\frac{C_\gamma}{t_j^\gamma},
    \end{align}
    and for any $0\leq \gamma\leq 1$,
    \begin{align}
        \norm{(-A')^{-\gamma}(\Setap-I)P_N}_{\mathcal{B}(\mathcal{H})}\leq 2\frac{\eta^\gamma}{(\beta/2)^\gamma} = 2 {\eta'}^\gamma.
    \end{align}
\end{lemma}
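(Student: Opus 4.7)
\textbf{Proof plan for Lemma \ref{norm-bound}.} All three operators in the lemma are diagonal in the Mercer basis $(f_k)_{k\ge 0}$, so my first step is to reduce each operator-norm bound to a scalar supremum over $k \le N$. Concretely, since $A f_k = -(\lambda/\mu_k) f_k$, we have $A' f_k = -a_k' f_k$ with $a_k' \eqdef (\beta/2)\lambda/\mu_k > 0$, and accordingly $\Setap f_k = (1+\eta' a_k')^{-1} f_k$; moreover $a_k'$ is increasing in $k$, so $a_k' \ge a_0' = (\beta/2)/\mu_0'$ and in particular $\eta' a_k' \ge \eta' a_0' = \eta/\mu_0'$. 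Combined with the fact that $P_N$ is the orthogonal projection onto $\mathrm{span}\{f_0,\dots,f_N\}$, each of the three operators is diagonal on $\cH_N$, and the operator norm equals $\sup_{k\le N}$ of the absolute value of the eigenvalue. The whole proof then reduces to three elementary one-variable estimates in $u \eqdef \eta' a_k' \ge \eta/\mu_0'$.

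For the first bound, the eigenvalues are $(a_k')^{1-\kappa}/(1+\eta' a_k')^j = (\eta')^{-(1-\kappa)} \cdot u^{1-\kappa}/(1+u)^j$. I would factor this as
\[
\frac{u^{1-\kappa}}{(1+u)^j} = \left(\frac{u}{(1+u)^j}\right)^{1-\kappa} \cdot \frac{1}{(1+u)^{j\kappa}},
\]
bound the first factor using $u/(1+u)^j \le 1/j$ for all $u \ge 0$, $j \ge 1$ (a short calculus exercise: the maximum of $u \mapsto u/(1+u)^j$ is attained at $u^*=1/(j-1)$ and equals $\tfrac{1}{j}(1-1/j)^{j-1} \le 1/j$), and bound the second factor by using $u \ge \eta/\mu_0'$. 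After multiplying by $(\eta')^{-(1-\kappa)}$ this yields exactly $t_j^{-(1-\kappa)}(1+\eta/\mu_0')^{-j\kappa}$.

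For the second bound, the eigenvalues are $(\eta')^{-\gamma} u^\gamma/(1+u)^j$. I would directly maximize $g(u)=u^\gamma/(1+u)^j$ over $u \ge 0$: setting $g'=0$ gives $u^* = \gamma/(j-\gamma)$ (well-defined when $j>\gamma$, and one handles $j=\gamma$ as a boundary limit), at which
\[
g(u^*) = \gamma^\gamma \cdot \frac{(j-\gamma)^{j-\gamma}}{j^j} = \frac{\gamma^\gamma}{j^\gamma}(1-\gamma/j)^{j-\gamma} \le \frac{\gamma^\gamma}{j^\gamma}.
\]
Setting $C_\gamma \eqdef \gamma^\gamma$ (or any convenient constant absorbing the $j=\gamma$ edge case) and multiplying by $(\eta')^{-\gamma}$ produces $C_\gamma/t_j^\gamma$.

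For the third bound, the eigenvalues of $(-A')^{-\gamma}(\Setap-I)P_N$ are $(a_k')^{-\gamma}\bigl[(1+\eta' a_k')^{-1}-1\bigr] = -\eta'(a_k')^{1-\gamma}/(1+\eta' a_k')$, whose modulus rewrites as $(\eta')^\gamma \cdot u^{1-\gamma}/(1+u)$. I would bound $h(u) \eqdef u^{1-\gamma}/(1+u)$ by $1$ (hence by the claimed $2$) by splitting on $u \le 1$ (where $h(u) \le u^{1-\gamma} \le 1$) and $u \ge 1$ (where $h(u) \le u^{-\gamma} \le 1$); alternatively one can compute the maximum exactly at $u^*=(1-\gamma)/\gamma$ and verify $\gamma^\gamma(1-\gamma)^{1-\gamma}\le 1$. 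Multiplying by $(\eta')^\gamma$ yields the bound $2\eta'^\gamma$ (in fact $\eta'^\gamma$).

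There is no real obstacle: all three estimates are one-variable calculus problems, and the only delicate point is keeping track of how $\beta$ enters through the rescaled eigenvalue $a_k'$. The choice of factorization in the first bound (isolating the geometric decay factor $(1+\eta/\mu_0')^{-j\kappa}$ from the polynomial decay $t_j^{-(1-\kappa)}$) is the one nontrivial structural choice, and it is dictated by the shape of the target inequality.
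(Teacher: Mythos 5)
Your proposal is correct and takes the same route the paper implicitly takes (the paper omits the proof, deferring to Lemma~3.7 of Kopec with the remark that the substitutions $\eta' = \eta/(\beta/2)$, $A' = (\beta/2)A$ track the $\beta$-dependence). Since $A'$, $\Setap$, and $P_N$ are simultaneously diagonal in the Mercer basis $(f_k)$, each operator norm is the supremum over $k \le N$ of a scalar, and with $u = \eta' a_k' = \eta/\mu_k' \ge \eta/\mu_0'$ the three bounds are exactly the one-variable estimates you give: $u/(1+u)^j \le 1/j$, $\sup_u u^\gamma/(1+u)^j \le \gamma^\gamma/j^\gamma$ for $j \ge \gamma$, and $u^{1-\gamma}/(1+u) \le 1$. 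Your structural factorization $u^{1-\kappa}/(1+u)^j = \bigl(u/(1+u)^j\bigr)^{1-\kappa}(1+u)^{-j\kappa}$ for the first inequality is precisely the step that separates the $t_j^{-(1-\kappa)}$ decay from the geometric $(1+\eta/\mu_0')^{-j\kappa}$ factor, and the $\beta$-bookkeeping via $(\eta')^{-(1-\kappa)} = (\beta/2)^{1-\kappa}/\eta^{1-\kappa}$ etc.\ is handled correctly; the only cosmetic difference is that your third bound yields the constant $1$ rather than the claimed $2$, which is of course fine.
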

The proof is almost the same as in Lemma 3.7 of \citet{Kopec}, and thus we omit the proof. The relationship that $\eta'=\eta/(\beta/2),\: A'=(\beta/2) A$ specifies the dependence on $\beta$.

As in \citet{Brehier16}, we have the following expression of $X_k^N$:
\begin{align}
     & X_k^N = \Setap^kP_Nx - \eta' \sum_{l=0}^{k-1}\Setap^{k-l}P_N\nabla{L}(X_l^N)+\sqrt{\eta'}\sum_{l=0}^{k-l}\Setap^{k-l}P_N\epsilon_{l+1}, \\
     & \sqrt{\eta'}\sum_{l=0}^{k-l}\Setap^{k-l}P_N\epsilon_{l+1}=\int_0^{t_k}\Setap^{k-l_s}P_N\mathrm{d}W(s),
\end{align}
where $l_s\triangleq \floor{\frac{s}{\eta'}}$ with the notation $\floor{\cdot}$ is the floor function. The advantage of this expression is that we can handle each term by simple estimates.

We introduce the following interpolation processes: for $0\leq k\leq m-1$ and $t_k\leq t\leq t_{k+1}$, it holds that:
\begin{align}
    \label{tilde-X}
    \tilde{X}^N(t) = X^N_k + \int_{t_k}^t\Setap\left[A'X^N_k - P_N\nabla L'(X^N_k) \right] \mathrm{d}s + \int_{t_k}^t \Setap P_N\mathrm{d}W(s).
\end{align}
The process $\{\tilde{X}^N(t)\}_{t \geq 0}$ is a natural interpolation of the discrete scheme $\{X_k^N\}_{k\in\mathbb{N}}$: $\{\tilde{X}^N(t_k)\}_{k\in\mathbb{N}}$ and $\{X_k^N\}_{k\in\mathbb{N}}$ have the same joint distribution.

\subsection{Bounds on Moments}
In this subsection, we give a few bounds on moments of $\{X(t)\}_{t\geq 0},\: \{X^N(t)\}_{t\geq 0},\: \{X^N_k\}_{k\in\mathbb{N}}$.
Note that the constants are uniform with respect to $N\in\mathbb{N},\: 0<\eta\leq\eta_0$ and $\beta\geq\eta_0$.
\begin{lemma}
    \label{moments1}
    For any $p\geq 1$, there exists a constant $C_p>0$ such that for every $N\in\mathbb{N},\: t\geq 0,\: \beta \geq \eta_0$ and $x\in\mathcal{H}$,
    \begin{align}
        \bbE\left[\norm{X(t,x)}^p\right],\:,\bbE\left[\norm{\Xhat(t,x)}^p\right],\: \bbE\left[\norm{X^N(t, x)}^p\right],\:\bbE\left[\norm{\Xhat^N(t, x)}^p\right] \leq C_p(1+\norm{x_0}^p).
    \end{align}
\end{lemma}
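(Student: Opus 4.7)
The plan is to bound all four moments by the same Ornstein--Uhlenbeck decomposition argument already used in \Cref{prop:z_boundedness,prop:norm_control}, extended from the first moment to arbitrary $p$-th moments and from the chain $X_n$ to its continuous-time and Galerkin variants. For the continuous process I would write $X(t,x)=Y(t)+Z(t)$, where $Z$ solves $\dd Z=AZ\,\dt+\sqrt{2/\beta}\,\dd W$ with $Z(0)=0$ (an infinite-dimensional OU process whose $L^p$-moments are finite uniformly in $t$ under \Cref{assum:eigenvalue_cvg}, by the continuous-time counterpart of \Cref{prop:z_boundedness}), while $Y$ solves the random evolution equation $\dot Y(t)=AY(t)-\nabla L(Y(t)+Z(t))$, $Y(0)=x$. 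The whole stochastic content of $X$ is thereby confined to $Z$, which is essential because one cannot apply It\^o's formula directly to $\|X\|^p$ when the cylindrical noise $Q=(2/\beta)\,\mathrm{Id}$ is not trace class.

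To control $Y$, I would compute $\tfrac{\dd}{\dt}\|Y\|^p=p\|Y\|^{p-2}\scal{Y}{AY-\nabla L(Y+Z)}$, use the diagonal bound $\scal{Ay}{y}\leq-(\lambda/\mu_0)\|y\|^2$ coming from \Cref{assum:eigenvalue_cvg}, the Lipschitz estimate on $\nabla L$ (\Cref{assum:smoothness}), and Young's inequality to obtain
\begin{equation*}
\tfrac{\dd}{\dt}\|Y(t)\|^p \;\leq\; -pc'\,\|Y(t)\|^p + C_p\bigl(\|Z(t)\|^p+1\bigr),
\end{equation*}
for constants $c',C_p>0$ independent of $\beta\geq\eta_0$ (strict dissipativity gives this directly; under bounded gradients one instead dominates $\|\nabla L(Y+Z)\|$ by $B$). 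Gronwall then yields $\|Y(t)\|^p\leq e^{-pc't}\|x\|^p+C_p\int_0^t e^{-pc'(t-s)}(\|Z(s)\|^p+1)\,\dd s$; taking expectations and invoking $\sup_{s\geq 0}\bbE\|Z(s)\|^p<\infty$ gives $\bbE\|X(t,x)\|^p\leq C_p(1+\|x\|^p)$ uniformly in $t$.

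The remaining three cases are minor adaptations. For $\Xhat(t')=X(\beta t'/2,x)$ the bound is immediate since this is only a time reparametrization. For $X^N(t,x)$ I would decompose $X^N=Y^N+Z^N$ with $Z^N$ the projected OU process driven by $P_N\,\dd W$ and repeat the above argument; because $P_N$ commutes with $A$, is a contraction on $\cH$, and $L_N=L\circ P_N$ inherits smoothness and dissipativity with the same constants, the bounds are uniform in $N$, and $\Xhat^N$ then follows by time rescaling. The discrete chain $X_n^N$ (which appears here implicitly through $\Xhat^N$) is handled by the analogous subtraction $X_n^N=Y_n^N+Z_n^N$ as in \Cref{prop:norm_control}, but applied to $\|Y_n^N\|^p$: using the contraction $\|S_\eta\|_{\mathcal{B}(\cH)}=(1+\eta\lambda/\mu_0)^{-1}<1$, a discrete Gronwall recursion, and H\"older's inequality to convert the sum $\sum_{k<n}\rho^{n-1-k}\|Z_k^N\|$ into a geometric series of $\bbE\|Z_k^N\|^p$, one gets the desired uniform-in-$(N,n,\eta,\beta)$ bound on $\bbE\|Y_n^N\|^p$, combined with \Cref{prop:z_boundedness} for $\bbE\|Z_n^N\|^p$.

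The chief technical obstacle is uniformity in $\beta\geq\eta_0$: all $\beta$-dependence is funneled into moments of $Z$, whose stationary covariance $(-\beta A)^{-1}$ has trace $\beta^{-1}\sum_k\mu_k/\lambda$, finite by \Cref{assum:eigenvalue_cvg} and bounded for $\beta\geq\eta_0$; the dissipative constants $c',C_p$ above depend only on $\lambda,\mu_0,M,B$, so uniformity is preserved. A secondary subtlety, already mentioned, is that the trace of $Q$ is infinite, which rules out a direct It\^o expansion of $\|X\|^p$; the $X=Y+Z$ splitting is exactly the device that bypasses this difficulty by handing off all noise-related moment estimates to the tractable OU component.
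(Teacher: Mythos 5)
Your proposal is correct and follows essentially the same route as the paper: decompose the process into the stochastic convolution (an Ornstein--Uhlenbeck component, whose $L^p$-moments are controlled uniformly in $t$ under \Cref{assum:eigenvalue_cvg}) plus a residual that solves a random evolution equation, then control the residual via dissipativity and Gronwall. The paper writes this as $\Xhat(t)=Z(t)+W^{A'}(t)$, bounds $\|Z(t)\|^2$ pathwise under the bounded-gradient condition, and merely cites Br\'ehier for strict dissipativity; your version works directly with $\tfrac{\dd}{\dt}\|Y(t)\|^p$ and keeps $\|Z(s)\|^p$ inside the Gronwall integral, which is a slightly more uniform way to organize the estimate and covers both sub-cases of \Cref{assum:dissipative} with the same bookkeeping, but it is the same underlying argument.
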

\begin{lemma}
    \label{moments2}
    For any $p\geq 1, \eta_0>0$, there exists a constant $C_p$ such that for every $N\in\mathbb{N},\: 0<\eta\leq \eta_0,\: \beta\geq\eta_0,\: k\in\mathbb{N},\: t\geq 0$ and $x\in\mathcal{H}$,
    \begin{align}
        \bbE\left[\norm{X^N_k}^p\right],\: \bbE\left[\norm{\tilde{X^N}(t)}^p\right]\leq C_p(1+\norm{x_0}^p).
    \end{align}
\end{lemma}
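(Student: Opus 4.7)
The plan is to use the explicit representation of $X_k^N$ given earlier in the excerpt,
\[
X_k^N = S_\eta^k P_N x_0 - \eta'\sum_{l=0}^{k-1}S_\eta^{k-l} P_N \nabla L'(X_l^N) + \sqrt{\eta'}\sum_{l=0}^{k-1}S_\eta^{k-l}P_N\varepsilon_{l+1},
\]
and bound each of the three summands in $L^p$ uniformly. The deterministic term is $\leq \|x_0\|$ since $\|S_\eta\|_{\mathcal{B}(\cH)}<1$. For the stochastic convolution $Z_k^N := \sqrt{\eta'}\sum_l S_\eta^{k-l} P_N \varepsilon_{l+1}$, computing the variance mode by mode gives
\[
\bbE\|Z_k^N\|^2 = \eta'\sum_{i=0}^N\sum_{m=1}^k (1+\eta\lambda/\mu_i)^{-2m} \leq \eta'\sum_{i=0}^\infty \frac{1}{(1+\eta\lambda/\mu_i)^2 - 1} \leq \frac{1}{\beta\lambda}\sum_{i=0}^\infty \mu_i,
\]
which is finite by \Cref{assum:eigenvalue_cvg}. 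Since $Z_k^N$ is Gaussian in $\cH$, Fernique's theorem upgrades this to $\sup_{k,N}\bbE\|Z_k^N\|^p \leq C_p$ for every $p\geq 1$, uniformly in $\eta\leq \eta_0$ and $\beta\geq \eta_0$.

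For the drift term, I would set $Y_k^N := X_k^N - Z_k^N$, which satisfies the deterministic recursion $Y_{k+1}^N = S_\eta Y_k^N - \eta S_\eta P_N \nabla L(Y_k^N + Z_k^N)$ with $Y_0^N = P_N x_0$. By Minkowski combined with either $\|\nabla L(x)\|\leq M(\|x\|+\|x^*\|)$ (under \Cref{assum:strict_diss}) or $\|\nabla L\|\leq B$ (under \Cref{assum:bounded_grad}), I obtain a contractive one-step bound
\[
\|Y_{k+1}^N\|_{L^p}\leq \rho\, \|Y_k^N\|_{L^p} + C_1\bigl(\|Z_k^N\|_{L^p}+1\bigr),
\]
with $\rho<1$ given exactly as in \Cref{prop:norm_control}. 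Iterating and using the uniform bound on $\|Z_k^N\|_{L^p}$ above yields $\|Y_k^N\|_{L^p}\leq \rho^k\|x_0\| + C_2$, hence $\|X_k^N\|_{L^p}\leq \|x_0\|+C_3$ after re-adding $Z_k^N$; raising to the $p$-th power gives $\bbE\|X_k^N\|^p\leq C_p(1+\|x_0\|^p)$. For the interpolation $\tilde{X^N}(t)$ with $t\in[t_k,t_{k+1}]$, I split $\tilde{X^N}(t)-X_k^N$ into (i) the dissipative increment $(t-t_k)S_\eta A' X_k^N$ whose norm is $\leq 2\|X_k^N\|$ via the identity $\eta' S_\eta A' = S_\eta - \mathrm{Id}$, (ii) the drift $(t-t_k)S_\eta P_N\nabla L'(X_k^N)$ bounded by $\eta M(\|X_k^N\|+\|x^*\|)$ or $\eta B$ after using $\eta' \nabla L' = \eta \nabla L$, and (iii) the short-time stochastic integral $\int_{t_k}^t S_\eta P_N\,\mathrm{d}W(s)$ whose variance is $(t-t_k)\mathrm{Tr}(S_\eta^2 P_N)\leq \eta'\mathrm{Tr}(S_\eta^2 P_N)$, once again Fernique-controlled by the same mode-wise argument as for $Z_k^N$.

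The main obstacle is ensuring constants are uniform in $N$ and in $\beta\geq\eta_0$. The naive bound $\mathrm{Tr}(S_\eta^2 P_N) = \sum_{i\leq N}(1+\eta\lambda/\mu_i)^{-2}$ can diverge as $N\to\infty$; uniformity is only recovered after absorbing $\eta'$ in the time summation, which transforms the divergent sum of $1$'s into a convergent sum of $\mu_i$'s thanks to \Cref{assum:eigenvalue_cvg}. A parallel subtlety appears in the interpolation's stochastic integral, again resolved by the regularizing power of $S_\eta$ combined with the eigenvalue decay. Finally, one must verify that the rescaling $A'=(\beta/2)A$, $L'=(\beta/2)L$, $\eta'=2\eta/\beta$ leaves the products $\eta' A' = \eta A$, $\eta' \nabla L' = \eta\nabla L$, and $\eta' \cdot (\beta/2)$ invariant, so that no hidden $\beta$ dependence leaks into $C_p$.
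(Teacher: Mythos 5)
Your proof is correct and follows the same core decomposition the paper uses: write $X_k^N$ as (stochastic convolution) $+$ (drift remainder), control the Gaussian part uniformly, and control the remainder through the $\|S_\eta\|_{\mathcal{B}(\cH)}<1$ contraction plus the bounded-gradient/Lipschitz estimate. The differences are in the details of execution. For the Gaussian part, the paper leans on \Cref{prop:z_boundedness} (which in turn cites the continuous-time bound and the strong order $1/4$ of the scheme from \citealt{printems_2001}), whereas you give a more self-contained, elementary argument: a direct mode-wise variance computation yielding $\bbE\|Z_k^N\|^2 \le \frac{1}{\beta\lambda}\sum_i\mu_i$, then Gaussian hypercontractivity/Fernique to get all $p$-th moments with a universal constant. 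Your argument has the virtue of making the $\beta\ge\eta_0$ uniformity and the role of \Cref{assum:eigenvalue_cvg} completely explicit. You also give an explicit treatment of the interpolation process $\tilde{X}^N(t)$, splitting $\tilde{X}^N(t)-X_k^N$ into the three natural increments and re-using the identity $\eta' S_\eta A'=S_\eta-\mathrm{Id}$ and the bound $\eta'\mathrm{Tr}(S_\eta^2 P_N)\le \frac{1}{\beta\lambda}\sum_i\mu_i$; the paper's proof of \Cref{moments2} silently omits this part. One small notational slip: in the one-step drift estimate $\|Y_{k+1}^N\|_{L^p}\le \rho\|Y_k^N\|_{L^p}+C_1(\|Z_k^N\|_{L^p}+1)$, the constant $C_1$ must carry a factor of $\eta$ (it comes from $\eta\nabla L$), otherwise the geometric-sum factor $\frac{1}{1-\rho}\sim\eta^{-1}$ would not cancel; your conclusion $\|Y_k^N\|_{L^p}\le\rho^k\|x_0\|+C_2$ implicitly assumes this, so the intent is clearly correct and the gap is purely cosmetic.
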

Intuitively, these lemmas hold thanks to dissipativity, a kind of boundedness of a global optimum.
\begin{proof}[Proof of \Cref{moments1} and \Cref{moments2}]
The proof is very similar to that of \Cref{prop:norm_control}.
    We only prove the statement for the bounded gradient condition. For the strict dissipativity condition, see Proposition 3.2 of \citet{brehier2016high}. 
    We prove the statement following the same line as Lemma 4.1 and 4.2 of \citet{Brehier14}. There is no essentially new ingredient, but we need to take care of the effect of $\beta$. We define $Z(t)=\Xhat(t)-W^{A'}(t)$ where $W^{A'}(t)=\int^t_0e^{(t-s)A'}\mathrm{d}W(s)$.
    It holds that $W^{A'}(t/(\beta/2))=W^A(t)/\sqrt{\beta/2}$. (2.6) in \citet{Kopec} implies:
    \begin{align*}
        \bbE\sup_{t' \geq 0}\norm{W^{A'}(t')}^p=\bbE\sup_{t\geq 0}\norm{W^{A'}\left(\frac{t}{\beta/2}\right)}^p=\bbE\sup_{t\geq 0}\norm{\frac{W^A(t)}{\sqrt{\beta/2}}}^p< \frac{C_p}{(\beta/2)^{p/2}}< C'_p,
    \end{align*}
    where $C_p, C'_p>0$ are constants independent from $\beta$.
    
    Then, we study $\norm{Z(t)}$. We have $Z(0)=\Xhat(0)=x_0$,
    \begin{align*}
        \dv{Z(t)}{t}=\frac{\beta}{2}(AZ(t)-\nabla L(\Xhat(t))),
    \end{align*}
    and by \Cref{prop:dissipative},
    \begin{align*}
        \frac{1}{2}\dv{\norm{Z(t)}^2}{t} & = \frac{\beta}{2}\langle AZ(t)-\nabla L(\Xhat(t)),Z(t)\rangle                                                        \\
                                         & = \frac{\beta}{2}\langle AZ(t)-\nabla L(Z(t)),Z(t)\rangle + \frac{\beta}{2}\langle \nabla L(Z(t))-\nabla L(\Xhat(t)),Z(t)\rangle \\
                                         & \leq \frac{\beta}{2}(-m\norm{Z(t)}^2 + c + \norm{\nabla L}_\infty\norm{Z(t)})                                    \\
                                         & \leq \frac{\beta}{2}(-m'\norm{Z(t)}^2 + C'),
    \end{align*}
	where $m'$ and $C'$ are positive constants depending only on $m,c,B$.
    Thus, we have for any $t\geq 0$
    \begin{align*}
       &  |\norm{Z(t)}^2 - C'/m'| \leq \exp(- \beta m' t) |\norm{x_0}^2 - C'/m'| \\
      \Longrightarrow~~& \norm{Z(t)}^2 \leq \exp(- \beta m' t) | \norm{x_0}^2 - C'/m'|  + C'/m'  \leq C (\norm{x_0}^2 + 1), 
    \end{align*}
    for a constant $C > 0$, which concludes the proof of \Cref{moments1}, since the estimates do not depend on the dimension parameter $N$.
    
    Similarly, we introduce $Z_k=X_k-w_k$, where $\{w_k\}_k$ is the numerical approximation of $W^{A'}$ defined by
    \begin{align*}
        w_{k+1}=\Setap w_k+\sqrt{\eta'}\Setap\xi_{k+1}.
    \end{align*}
    The same argument yields
    \begin{align}
        \bbE\norm{w_k}^2\leq \frac{C}{\beta}\leq C'.
    \end{align}
    Now we have $Z_0=X_0=x_0$,
    \begin{align*}
        Z_{k+1}=\Setap Z_k-\eta'\Setap\nabla L'(X_k),
    \end{align*}
    since $\norm{\Setap}_{\mathcal{B}(\mathcal{H})}\leq \frac{1}{1+\eta/\mu'_0}$, we obtain the almost sure estimates
    \begin{align*}
        \norm{Z_{k+1}}\leq \frac{1}{1+\eta/\mu'_0}\norm{Z_k}+C\eta',
    \end{align*}
    and therefore for $\beta \geq 1$
    \begin{align*}
        \norm{Z_k}\leq C(1+\norm{x_0}),
    \end{align*}
    which concludes the proof of \Cref{moments2}.
\end{proof}

\subsection{The Rate of Convergence to the Invariant Measure}
Our focus in this subsection is just to state the convergence result to an invariant measure. For the existence and uniqueness of the invariant measure of the continuous time dynamics, see \citet{DebusscheBSDE2011}, \citet{Goldys_Maslowski06} and \citet{Brehier16}.
We have the following result thanks to a coupling argument presented in \citet{DebusscheBSDE2011}.
\begin{proposition}
\label{prop:geometric_ergodicity_continuous_time}
Under \Cref{assum:eigenvalue_cvg,assum:smoothness,assum:C2_boundedness,assum:dissipative}.
    There exist the ``spectral gap'' $\lambda^*$ and a constant $C>0$ such that for any bounded test function $\phi:\mathcal{H}\rightarrow\mathbb{R}$, $t\geq 0,\: N\in\mathbb{N},\: \beta\geq\eta_0$ and $x_1,x_2\in\mathcal{H}_N$,
    \begin{align}
        \left|\bbE[\phi(X^N(t,x_1))]-\bbE[\phi(X^N(t,x_2))]\right|\leq C\norm{\phi}_\infty (1 + \norm{x_1}^2 + \norm{x_2}^2)e^{-\lambda^*t}.
    \end{align}
This also implies 
    \begin{align}
        \left|\bbE[\phi(\Xhat^N(t,x_1))]-\bbE[\phi(\Xhat^N(t,x_2))]\right|\leq C\norm{\phi}_\infty (1 + \norm{x_1}^2 + \norm{x_2}^2)e^{-\beta \lambda^*t}.
    \end{align}
\end{proposition}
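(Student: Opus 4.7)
The plan is to use a synchronous coupling to obtain Wasserstein contraction, and then upgrade to the stated $\|\phi\|_\infty$ bound by invoking the smoothing property of the non-degenerate Galerkin noise on $\cH_N$. Concretely, under strict dissipativity (\Cref{assum:strict_diss}), I drive two copies $X^{N,i}(t) = X^N(t, x_i)$ with the \emph{same} cylindrical Brownian motion $W$; the difference $Y(t) = X^{N,1}(t) - X^{N,2}(t)$ then solves the deterministic evolution
\begin{equation*}
\dot Y(t) = A Y(t) - \bigl(\nabla L_N(X^{N,1}(t)) - \nabla L_N(X^{N,2}(t))\bigr),
\end{equation*}
and taking the inner product with $Y$, combined with $\langle A y, y\rangle \leq -(\lambda/\mu_0)\|y\|^2$ and the $M$-Lipschitzness of $\nabla L_N$, gives $\|Y(t)\| \leq e^{-\lambda^* t} \|x_1 - x_2\|$ with $\lambda^* = \lambda/\mu_0 - M > 0$.

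For the bound to hold with $\|\phi\|_\infty$ (rather than a Lipschitz norm), the plan is to split the Markov semigroup as $P^N_t \phi = P^N_1 (P^N_{t-1}\phi)$ and use a Bismut--Elworthy--Li type estimate $\|\nabla P^N_1 \phi(x)\| \leq C (1 + \|x\|)\|\phi\|_\infty$, which holds for the finite-dimensional Galerkin SDE because $P_N\,\mathrm{d}W$ is non-degenerate on $\cH_N$ with constants depending only on the spectrum of $A$ (hence uniform in $N$). Applying the Wasserstein contraction to $P^N_{t-1}\phi$ in place of $\phi$, combining with this gradient estimate, and using the elementary inequality $(1+\|x_i\|)\|x_1-x_2\| \lesssim 1 + \|x_1\|^2 + \|x_2\|^2$ then yields the claimed bound. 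Under the bounded-gradient condition (\Cref{assum:bounded_grad}), synchronous coupling no longer contracts; the plan is instead to invoke a Harris-type theorem using the Lyapunov function $V(x) = 1 + \|x\|^2$ supplied by \Cref{moments1} together with a minorization condition on sublevel sets of $V$, the latter obtained by a Girsanov change of measure applied to the non-degenerate Galerkin noise, where the bounded drift assumption guarantees that the exponential Girsanov weights have $N$-uniform moments.

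The main obstacle is ensuring uniformity of all constants in the projection dimension $N$: the synchronous contraction step is already dimension-free, but the Bismut--Elworthy--Li estimate and the minorization must be carried out with constants depending only on $M$, $\lambda$, $B$, and the spectrum of $A$, never on $N$. This is precisely why the moment bounds in \Cref{moments1} are stated with $N$-independent constants. Finally, the statement for $\hat X^N$ with rate $\beta \lambda^*$ follows immediately from the time-rescaling identity $\hat X^N(t') = X^N((\beta/2)t')$, so that contraction at rate $\lambda^*$ in $t$-time becomes contraction at rate $(\beta/2) \lambda^*$ in $t'$-time (the factor $1/2$ being absorbed into the definition of $\lambda^*$).
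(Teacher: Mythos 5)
The paper itself does not prove this proposition: right after the statement it cites \citet{DebusscheBSDE2011} for the $\beta=1$ case (a BSDE-based argument) and says the extension to general $\beta$ is routine, deferring the identification of $\lambda^*$ to \Cref{rem:LambdaStarIdentification}, which obtains the continuous-time rate as the $\eta\to 0$ limit of the \emph{discrete-time} geometric ergodicity of \Cref{prop:geom_ergo}. Your proposal is therefore a genuinely different, self-contained route: synchronous coupling under strict dissipativity, BEL smoothing to convert a Lipschitz contraction into a $\|\phi\|_\infty$ bound, and a Harris/Girsanov minorization argument under the bounded-gradient condition. The latter is in fact closely analogous to the paper's own proof strategy for the discrete chain in \Cref{sec:ProofOfGeometricErgodicity}, adapted to continuous time, and the rescaling step at the end (with the harmless factor of $1/2$) is correct.

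There is, however, a concrete bug in the strict-dissipativity branch: the semigroup split is written the wrong way round. To get decay in $t$, one must write $P^N_t\phi = P^N_{t-1}\bigl(P^N_1\phi\bigr)$, i.e.\ smooth \emph{first} (at time $1$) so that $\psi := P^N_1\phi$ is Lipschitz with $\mathrm{Lip}(\psi)\lesssim\sqrt{\beta}\,\|\phi\|_\infty$ by BEL, and then apply the almost-sure synchronous contraction $\|X^N(t-1,x_1)-X^N(t-1,x_2)\|\leq e^{-\lambda^*(t-1)}\|x_1-x_2\|$ to the Lipschitz function $\psi$. With the decomposition $P^N_1\bigl(P^N_{t-1}\phi\bigr)$ that you state, the BEL step is applied last and the only information you have about $P^N_{t-1}\phi$ is that $\|P^N_{t-1}\phi\|_\infty\leq\|\phi\|_\infty$, which gives a Lipschitz bound on $P^N_t\phi$ with \emph{no} exponential decay. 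Once the order is corrected the rest of your sketch goes through, and the $(1+\|x_1\|^2+\|x_2\|^2)$ prefactor comes from $\|x_1-x_2\|\lesssim 1+\|x_1\|^2+\|x_2\|^2$ (not from BEL, which here is uniform on $\cH_N$ because the noise $P_N\,\mathrm{d}W$ has full rank on $\cH_N$ with variance $2/\beta$, independent of $N$). Note also that the BEL constant picks up a $\sqrt{\beta}$ factor, so the resulting $C$ depends on $\beta$; this is consistent with how the paper tracks $\beta$-dependence separately through $\cbeta$ and $\Lambda^*_0$ rather than through this proposition.
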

    A proof of this result in case $\beta=1$ can be found in \citet{DebusscheBSDE2011}. We can easily see the statement holds if $\beta$ is arbitrary but we have to notice the convergence rate $\lambda^*$ can be varied depending on $\beta$. 
More concrete characterization of $\lambda^*$ will be given in \Cref{rem:LambdaStarIdentification}.
As pointed out in \citet{Raginsky_Rakhlin_Telgarsky2017}, this spectral gap is supposed to decrease exponentially with respect to $\beta$.

\begin{corollary}
    \label{cor:GeomErgCont}
    For any $N\in\mathbb{N}$, the process $X^N$ admits a unique invariant probability measure $\pi^N$ and satisfies the following bound:
    \begin{align}
        \begin{split}
            &\exists\: c,C,\lambda^*>0,\:\forall\: \phi:\mathcal{H}\rightarrow\mathbb{R},\: t\geq 0,\:x\in\mathcal{H}_N,\\
            &\left|\bbE[\phi(X^N(t,x))]-\int_{\mathcal{H}_N}\phi\mathrm{d}\pi^N\right|\leq C\norm{\phi}_\infty (1+\norm{x}^2)e^{-\lambda^* t}.
        \end{split}
    \end{align}
\end{corollary}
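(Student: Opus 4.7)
My plan is to derive Corollary \ref{cor:GeomErgCont} as a fairly direct consequence of Proposition \ref{prop:geometric_ergodicity_continuous_time} (geometric contraction between two trajectories) combined with the uniform moment bound of Lemma \ref{moments1}. The three things to establish are: existence of $\pi^N$, uniqueness, and the quantitative convergence bound.

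For existence, I would invoke the Krylov--Bogoliubov criterion, which the authors already cite in the remark after Proposition~2 of the main text. Concretely, fix $x \in \cH_N$ and consider the empirical time-averages $\bar\nu_T^N \eqdef \frac{1}{T}\int_0^T \cL(X^N(t,x))\, \dd t$, where $\cL$ denotes the law. By Lemma~\ref{moments1}, $\sup_{t\geq 0} \bbE \|X^N(t,x)\|^2 \leq C(1+\|x\|^2)$, so Markov's inequality gives tightness of $\{\bar\nu_T^N\}_{T>0}$ on $\cH_N$ (which is finite-dimensional, hence tightness is just uniform boundedness of moments). Any weak limit is an invariant measure $\pi^N$ for the Markov semigroup $(P_t^N)$. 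Moreover, by Fatou applied along the limit, $\int \|y\|^2 \dd\pi^N(y) \leq C(1+\|x\|^2)$, and since the bound must be independent of the choice of $x$, one obtains $\int \|y\|^2 \dd\pi^N(y) \leq C_N'$ for some constant (taking $x=0$ suffices).

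For uniqueness and the quantitative rate, I would apply Proposition \ref{prop:geometric_ergodicity_continuous_time} with $x_1 = x$ fixed and $x_2 = y$ integrated against $\pi^N$. By invariance, $\int \phi \dd\pi^N = \int \bbE[\phi(X^N(t,y))] \dd\pi^N(y)$, hence
\begin{align*}
\left| \bbE[\phi(X^N(t,x))] - \int_{\cH_N} \phi \, \dd\pi^N \right|
& \leq \int \left| \bbE[\phi(X^N(t,x))] - \bbE[\phi(X^N(t,y))] \right| \dd\pi^N(y) \\
& \leq C\|\phi\|_\infty \left(1 + \|x\|^2 + \int \|y\|^2 \dd\pi^N(y) \right) e^{-\lambda^* t} \\
& \leq C'\|\phi\|_\infty (1 + \|x\|^2) e^{-\lambda^* t},
\end{align*}
which is the desired bound (with a possibly enlarged constant $C'$ absorbing the moment of $\pi^N$). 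Uniqueness is then immediate: if $\tilde\pi^N$ were a second invariant measure, integrating the same inequality against $\tilde\pi^N$ in $x$ and letting $t\to\infty$ yields $\int \phi \, \dd\tilde\pi^N = \int \phi \, \dd\pi^N$ for all bounded $\phi$.

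The proof is essentially mechanical once Proposition~\ref{prop:geometric_ergodicity_continuous_time} is in hand; the only mildly delicate point is the second-moment bound on $\pi^N$. Because the Krylov--Bogoliubov construction gives a weak limit of time-averages, passing the uniform bound $\sup_t \bbE\|X^N(t,x)\|^2 \leq C(1+\|x\|^2)$ to the invariant measure requires either Fatou's lemma (using the lower semi-continuity of $y \mapsto \|y\|^2$ truncated at level $R$ and then letting $R\to\infty$) or the fact that $\|\cdot\|^2$ is continuous and we work on a finite-dimensional space $\cH_N$. Either route gives the required moment bound uniformly enough in $N$ to plug into the final estimate.
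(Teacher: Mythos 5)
Your proof is correct and follows the standard route that the paper points to via its citation to \citet{DebusscheBSDE2011} rather than writing out: Krylov--Bogoliubov for existence (tightness of time-averages from the uniform second moment in Lemma~\ref{moments1}, which is immediate on finite-dimensional $\cH_N$), then the two-point contraction of Proposition~\ref{prop:geometric_ergodicity_continuous_time} integrated in the second argument against $\pi^N$ to get both the quantitative rate and uniqueness. One small notational slip: you write $\int\|y\|^2\,\dd\pi^N(y)\leq C_N'$, but since the constant $C_2$ in Lemma~\ref{moments1} is explicitly uniform over $N$, this moment bound is $N$-free (as you in fact acknowledge at the very end); keeping the subscript $N$ is harmless for the corollary but slightly understates what you have.
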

    These results naturally extend to an infinite dimensional scheme by similar arguments.

\begin{remark}[Characterization of $\lambda^*$]\label{rem:LambdaStarIdentification}
\citet[Theorem 1.1]{Brehier14} showed that 
$$
\lim_{\eta \to 0} |\bbE[\phi(X_{\floor{t/\eta}})] - \bbE[\phi(X(t))]| = 0.
$$
In addition to that we have shown in \Cref{prop:geom_ergo} that the discrete time dynamics satisfies the geometric ergodicity:
$$
| \bbE[\phi(X_{n})] - \bbE[\phi(X^{\mu_\eta})] \leq C (1 + \|x\|) \exp(-\Lambda^*_{\eta}(n \eta -1 )) (\leq C' (1 + \|x\|^2) \exp(-\Lambda^*_{\eta}(n \eta ))),
$$ 
where we used a fact that we may set $\Lambda^*_\eta \leq 1$ (if this is not satisfied, we may set $\Lambda^*_\eta \leftarrow \min\{\Lambda^*_\eta, 1\}$).
Moreover, \citet[Corollary 1.2]{Brehier14} gives that 
$$
\lim_{\eta \to 0} |\bbE[\phi(X^{\mu_\eta}) - \bbE[\phi(X^\pi)]| = 0.
$$
Combining these arguments, we see that
$$
|\bbE[\phi(X(t))] - \bbE[\phi(X^\pi)]| = \lim_{\eta \to 0} |\bbE[\phi(X_{\floor{t/\eta}})] - \bbE[\phi(X^{\mu_\eta})]|
\leq  \lim_{\eta \to 0} C (1 + \|x\|^2) \exp(-\Lambda^*_{\eta}(n \eta )).
$$
Finally, we note that \Cref{prop:geometric_ergodicity_continuous_time} and \Cref{cor:GeomErgCont} are used only for $\phi:\cH \to \bbR$ satisfying $\|\phi\|_\infty \geq c$ for a positive constant $c > 0$.
Hence, we may set $\lambda^* = \lim_{\eta \to 0} \Lambda^*_\eta = \Lambda^*_0$.

The same argument is also applied to $\{X_k^N\}_k$, $\{X^N(t)\}_t$ and $\{\Xhat^N(t)\}_t$ with the same value of $\Lambda^*_\eta$.
In the following, we use the notation $\lambda^*$ to indicate $\Lambda^*_0$.
\end{remark}

\begin{lemma}
    \label{convergence}
    For any bounded test function $\phi:\mathcal{H}\rightarrow\mathbb{R}$, we have
    \begin{align}
        \lim_{N\to\infty}\bar{\phi}_N:=\lim_{N \to \infty}\int_{\mathcal{H}_N}\phi\mathrm{d}\pi^N=\int_{\mathcal{H}}\phi\mathrm{d}\pi=:\bar{\phi}.
    \end{align}
\end{lemma}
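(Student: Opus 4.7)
The plan is to apply a standard three-step triangle inequality that separates finite-time Galerkin approximation from ergodic convergence. Pick an arbitrary initial point $x \in \cH_{N_0}$ for some fixed $N_0$ (so $x$ lies in $\cH_N$ for all $N \geq N_0$) and, for any $t \geq 0$ and $N \geq N_0$, decompose
\begin{align*}
|\bar{\phi}_N - \bar{\phi}| \leq & \,\, \Bigl|\bar{\phi}_N - \bbE[\phi(X^N(t,x))]\Bigr| \\
& + \Bigl|\bbE[\phi(X^N(t,x))] - \bbE[\phi(X(t,x))]\Bigr| \\
& + \Bigl|\bbE[\phi(X(t,x))] - \bar{\phi}\Bigr|.
\end{align*}

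First I would control the first and third terms using geometric ergodicity. By \Cref{cor:GeomErgCont}, the first term is at most $C \|\phi\|_\infty (1+\|x\|^2) e^{-\lambda^* t}$, uniformly in $N$; the same argument applied in the infinite-dimensional setting (noting that $\pi$ is the unique invariant law of $X(t)$) bounds the third term by $C \|\phi\|_\infty (1+\|x\|^2) e^{-\lambda^* t}$ as well. Given $\varepsilon > 0$, choose $t = t_\varepsilon$ large enough that both of these are at most $\varepsilon/3$.

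Next I would handle the middle term at the now-fixed time $t_\varepsilon$. For bounded continuous $\phi$ (which is the sense in which the convergence should be read), the finite-time convergence of the spectral Galerkin scheme $X^N(t,x) \to X(t,x)$ in probability as $N \to \infty$ — which follows from classical results on Galerkin approximation of semilinear SPDEs under \Cref{assum:smoothness,assum:eigenvalue_cvg} (see for instance \citet{da_prato_zabczyk_1996, Brehier14}) — combined with uniform integrability coming from the moment bound $\sup_{N} \bbE[\|X^N(t_\varepsilon,x)\|^2] < \infty$ of \Cref{moments1}, yields $\bbE[\phi(X^N(t_\varepsilon,x))] \to \bbE[\phi(X(t_\varepsilon,x))]$. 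Hence there exists $N_\varepsilon \geq N_0$ such that for all $N \geq N_\varepsilon$ the middle term is at most $\varepsilon/3$. Combining the three bounds gives $|\bar{\phi}_N - \bar{\phi}| \leq \varepsilon$ for all $N \geq N_\varepsilon$, which proves the claim.

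The main obstacle I anticipate is the middle step, namely justifying the finite-time weak convergence $\bbE[\phi(X^N(t,x))] \to \bbE[\phi(X(t,x))]$ for merely bounded measurable $\phi$ — if $\phi$ is only assumed bounded (not continuous), the argument requires either strengthening to bounded continuous $\phi$ (which is the natural reading in this context, since $\pi^N$ is viewed as a measure on $\cH$ via the embedding $\cH_N \hookrightarrow \cH$ and weak convergence is the intended mode) or invoking absolute continuity / Feller-type smoothing properties of the transition semigroup. The other constants are routine: the spectral gap $\lambda^*$ and the prefactor $C$ in \Cref{cor:GeomErgCont} are uniform in $N$, so the first and third terms pose no difficulty once $t_\varepsilon$ is fixed.
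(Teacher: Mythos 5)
Your proof is correct and uses essentially the same three-term decomposition as the paper (geometric ergodicity uniform in $N$ for the first and third terms via \Cref{cor:GeomErgCont}, and finite-time Galerkin convergence from \citet{Brehier14} for the middle term), differing only in presentation (fixing $t$ first and then choosing $N$, versus the paper's $\limsup_N$ followed by $t\to\infty$). The caveat you raise about merely bounded versus bounded continuous $\phi$ is a legitimate subtlety that the paper's one-line citation to \citet{Brehier14} also glosses over.
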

\begin{proof}
    For any $t\geq 0$ and any fixed initial condition $x\in\mathcal{H}$, we have
    \begin{align*}
        \bar\phi_N-\bar\phi = & \bar\phi_N-\bbE\phi(X^N(t))            \\
                              & +\bbE\phi(X^N(t))-\bbE\phi(X(t)) \\
                              & +\bbE\phi(X(t))-\bar\phi.
    \end{align*}
    Since $\lim_{N \to \infty} \bbE\phi(X^N(t))-\bbE\phi(X(t)) = 0$ \cite{Brehier14}, we get that for any $t\geq 0$
    \begin{align*}
        \limsup_{N\rightarrow\infty}|\bar\phi_N-\bar\phi|\leq Ce^{-\lambda^* t},
    \end{align*}
    and then we may take $t\rightarrow\infty$. Notice that the constants are independent of the dimensionality $N$.
\end{proof}

\subsection{Proof of \Cref{lem:TimeAndSpaceErrorSum}}
In this subsection, we present a technical proof procedure of \Cref{lem:TimeAndSpaceErrorSum}.
As in \citet{Brehier16}, we will use the following decomposition:
\begin{align*}
    \begin{split}
        \frac{1}{n}\sum_{k=0}^{n-1}\bbE\phi(X_k^N)-\bar{\phi}=&\frac{1}{n}\sum_{k=0}^{n-1}\bbE\phi(P_{N'}X^N_k)-\bar{\phi}_{N'}\\
        &+\bar\phi_{N'}-\bar\phi+\frac{1}{n}\sum_{k=0}^{n-1}\left(\bbE\phi(X^N_k)-\bbE\phi(P_{N'}X^N_k)\right).
    \end{split}
\end{align*}
Our aim is to derive a $N'$-free bound of each term of this decomposition and to take $N'\rightarrow\infty$. It is obvious the last two terms converges to 0 as $N'\rightarrow+\infty$ thanks to \Cref{convergence} and $P_{N'}X_k^N = X_k^N$ if $N' \geq N$.

It remains to bound the first term. We decompose the term by the solution of the Poisson equation defined in the following. Let $N'\in\mathbb{N},\: \phi\in C^2_b(\mathcal{H})$. We define $\Psi^{N'}$ as the unique solution of the Poisson equation
\begin{align}
    \label{poisson}
    \mathcal{L}^{N'}\Psi^{N'}=\phi \circ P_{N'} - \bar\phi_{N'} \:\text{and}\: \int_{\mathcal{H}_{N'}}\Psi^{N'}\mathrm{d}\pi^{N'} = 0,
\end{align}
where $\mathcal{L}^{N'}$ is the infinitesimal generator of the SPDE\footnote{Note that from here we also use the notation $t$ to indicate $t'$ for notational simplicity.}:
\begin{align*}
    \begin{cases}
         & \Xhat^{N'}(0) = P_{N'}x_0\in\mathcal{H}_{N'},                                                    \\
         & \mathrm{d}\Xhat^{N'}(t) = (A'\Xhat^{N'}(t) - \nabla{L'_{N'}}(\Xhat^{N'}(t)))\mathrm{d}t + P_{N'}\mathrm{d}W(t),
    \end{cases}
\end{align*}
defined for $C^2$ functions $\psi:\mathcal{H}\rightarrow\mathbb{R}$ and $x\in\mathcal{H}$ by
\begin{align*}
    \mathcal{L}^{N'}\psi(x)=\langle A'P_{N'}x - P_{N'}\nabla{L'}(x),D\psi(x)\rangle+\frac{1}{2}\mathrm{Tr}(P_{N'}D^2\psi(x)).
\end{align*}
The following proposition is essential for our result. This is an extension of Proposition 6.1 in \citet{Brehier16} in that dependence on $\beta$ is specified.

\begin{proposition}
    \label{essential}
    Let $N'\in\mathbb{N}$ and $\phi\in C^2_b(\mathcal{H})$. The function $\Psi^{N'}$ defined for any $x\in\mathcal{H}_{N'}$ by
    \begin{align*}
        \Psi^{N'}(x) = \int_0^\infty \bbE\left[\phi(\Xhat^{N'}(t, x))-\bar\phi_{N'}\right]\mathrm{d}t,
    \end{align*}
    is of class $C^2_b$ and the unique solution of \Cref{poisson}. Moreover, we have the following estimates: for any $0\leq\epsilon,\gamma<1/2$ there exist $C,C_\epsilon,C_{\epsilon,\gamma}$, which are independent of $N'$ and $\beta$, such that for any $x\in\mathcal{H}_{N'}$
    \begin{align*}
        \norm{\Psi^{N'}(x)}                                                        & \leq \frac{C}{\lambda^* \beta}(1+\norm{x}^2)\norm{\phi}_\infty,                \\
        \norm{(-A')^{\epsilon}D\Psi^{N'}(x)} & \leq \frac{C_\epsilon}{\lambda^* \beta} \cbeta \beta^\epsilon (1+\norm{x}^2)\norm{\phi}_{0,1},         \\
        \norm{(-A')^\epsilon D^2\Psi^{N'}(x)(-A')^\gamma}_{\mathcal{B}(\mathcal{H}_M)} & \leq \frac{C_{\epsilon,\gamma} }{\lambda^* \beta} \cbeta^2 \beta^{\epsilon + \gamma}(1+\norm{x}^2)\norm{\phi}_{0,2},
    \end{align*}
    where $\norm{\phi}_{0,i}\triangleq \max \left\{\max_{0 < j \leq i }\|\phi\|_{(j)},\norm{\phi}_\infty\right\}$ for $\|\phi\|_{(1)} := \sup_{x \in \cH}\|\nabla \phi(x)\|$ and $\|\phi\|_{(2)} := \sup_{x \in \cH}\|D^2 \phi(x)\|_{\mathcal{B}(\cH)}$.
\end{proposition}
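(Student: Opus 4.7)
The plan is to build on the strategy of \citet{Brehier16,Kopec}, which prove essentially the same statement in the case $\beta = 1$, and to track carefully how each power of $\beta$ enters after the time rescaling $t' = 2t/\beta$, $A' = (\beta/2)A$, $L' = (\beta/2)L$. First I would verify that $\Psi^{N'}$ is well-defined: by the time-rescaled version of \Cref{cor:GeomErgCont} (as indicated in \Cref{rem:LambdaStarIdentification}), the rescaled process $\Xhat^{N'}$ satisfies
\begin{equation*}
\bigl|\bbE[\phi(\Xhat^{N'}(t,x))] - \bar\phi_{N'}\bigr| \leq C\|\phi\|_\infty(1+\|x\|^2)e^{-\beta\lambda^* t},
\end{equation*}
so the defining integral converges absolutely and already gives the first bound $\|\Psi^{N'}(x)\| \leq \tfrac{C}{\lambda^*\beta}(1+\|x\|^2)\|\phi\|_\infty$. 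That $\Psi^{N'} \in C_b^2$ and solves the Poisson equation is then standard: the $C^2$ regularity of $x \mapsto \bbE[\phi(\Xhat^{N'}(t,x))]$ is inherited from $\phi \in C_b^2$ and the smoothness in the initial condition of the flow, and Itô's formula yields $\partial_t \bbE[\phi(\Xhat^{N'}(t,\cdot))] = \mathcal{L}^{N'}\bbE[\phi(\Xhat^{N'}(t,\cdot))]$, so integrating and using the ergodic limit gives $\mathcal{L}^{N'}\Psi^{N'} = \phi\circ P_{N'} - \bar\phi_{N'}$.

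For the first-derivative bound I would differentiate under the integral and the expectation,
\begin{equation*}
D\Psi^{N'}(x)\cdot h = \int_0^\infty \bbE\bigl[D\phi(\Xhat^{N'}(t,x))\cdot \eta^h(t)\bigr]\,dt,
\end{equation*}
with $\eta^h(t) \eqdef D_x\Xhat^{N'}(t,x)\cdot h$ the first variation, and split the integral at a fixed $T_0 = 1$. On $[0,T_0]$ I would isolate the leading contribution $e^{tA'}h$ and absorb the prefactor $(-A')^{\epsilon}$ using the smoothing estimate of \Cref{norm-bound} (continuous-time version), which gives $\|(-A')^{\epsilon} e^{tA'}\|_{\mathcal{B}(\cH)} \leq C_\epsilon t^{-\epsilon}$; the identity $(-A')^\epsilon = (\beta/2)^\epsilon (-A)^\epsilon$ accounts for the $\beta^\epsilon$ factor. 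On $[T_0,\infty)$ I would reduce back to exponential decay at rate $\beta\lambda^*$: under \Cref{assum:strict_diss} this is immediate from contraction of $\eta^h$, whereas under \Cref{assum:bounded_grad} the contraction fails and I would apply a Bismut--Elworthy--Li integration-by-parts formula to transfer the derivative from $\phi$ to the Brownian motion. The cost of that transfer is exactly the factor $\sqrt{\beta}$ that distinguishes the two cases and yields the constant $\cbeta$ in the statement.

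The second-order bound is the main obstacle and I expect it to absorb most of the work. The relevant first identity is
\begin{equation*}
D^2\Psi^{N'}(x)\cdot(h,k) = \int_0^\infty \bbE\bigl[D^2\phi(\Xhat^{N'}(t,x))\cdot(\eta^h(t),\eta^k(t)) + D\phi(\Xhat^{N'}(t,x))\cdot \zeta^{h,k}(t)\bigr]\,dt,
\end{equation*}
where $\zeta^{h,k}$ is the second variation, which satisfies a linear equation forced by $D^2 L'_{N'}(\Xhat^{N'})\cdot(\eta^h,\eta^k)$ (and its third derivative when one differentiates further), and here \Cref{assum:C1_boundedness} and \Cref{assum:C2_boundedness} enter to control these nonlinear terms in the appropriate $\|\cdot\|_{\alpha}$ and $\|\cdot\|_{\alpha'}$ norms. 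I would then repeat the short-time/long-time split on both the $\epsilon$ side and the $\gamma$ side: short-time smoothing on each side produces the $\beta^{\epsilon+\gamma}$ factor together with an integrable singularity $t^{-(\epsilon+\gamma)}$ (which is integrable precisely because $\epsilon+\gamma < 1$), while the long-time piece requires two applications of the contraction/Bismut--Elworthy argument, producing $\cbeta^2$. The ergodic time integration gives the final $1/(\lambda^*\beta)$. The hardest part is keeping the two derivative slots symmetric and ensuring the Bismut--Elworthy weights, the smoothing losses $t^{-\epsilon}, t^{-\gamma}$, and the quadratic growth in $\|x\|$ propagated from \Cref{moments1} all remain compatible at the cross-over time $T_0$; this is essentially where the strict restriction $\epsilon,\gamma < 1/2$ becomes necessary.
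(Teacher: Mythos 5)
Your high-level architecture matches the paper's: you correctly state the Poisson-equation identification, the short-time/long-time split, the Bismut--Elworthy--Li (BEL) transfer for the bounded-gradient case, the nested BEL for the second derivative, and the origin of the $\epsilon + \gamma < 1$ constraint. However, there is a genuine gap in the crossover scale. You propose to ``split the integral at a fixed $T_0 = 1$'' in the rescaled time, but with $L' = (\beta/2) L$ the first-variation equation $\dot{\eta}^h = A'\eta^h - D^2 L'(\Xhat)\cdot\eta^h$ has Gronwall factor $e^{c\beta M t}$ under the bounded-gradient assumption, so on $[0,1]$ you get a factor $e^{c\beta M}$ that destroys the claimed $\beta$-dependence. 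The paper splits at $t = 1/\beta$, where this Gronwall factor is $O(1)$; this is also precisely where the $\sqrt{\beta}$ you attribute to the BEL formula actually comes from: applying $Dv(t,x)\cdot h = \tfrac{1}{t}\bbE[\int_0^t \langle\eta^{h,x}(s),\dW(s)\rangle\,\Phi(\Xhat^{N'}(t,x))]$ at $t = 1/\beta$ gives prefactor $\beta$ times an It\^o factor $\sqrt{1/\beta}$, yielding $\sqrt{\beta}$, whereas applying it at $t = 1$ gives prefactor $1$ but an It\^o factor containing the exponentially large $\|\eta^h\|$. Consequently your estimate ``the cost of that transfer is exactly $\sqrt{\beta}$'' is not compatible with the stated split; you need to move the short/long crossover to $1/\beta$, bound $\eta^h,\zeta^{h,k}$ on $[0,1/\beta]$ by Gronwall (cf.\ \Cref{finite-time}), and then use the Markov property $u(t,x) = \bbE[u(t - 1/\beta, \Xhat^{N'}(1/\beta, x))]$ together with \Cref{cor:GeomErgCont} for $t > 1/\beta$ (cf.\ \Cref{long-time}). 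One additional remark: the paper handles the strict-dissipativity case with a separate direct Gronwall argument valid for all $t>0$ without any split (\Cref{lemm:StrongDissUconverge}), which is what gives the improved $\cbeta = 1$ there; you gesture at this but it deserves a full argument since the exponent one obtains ($\beta\lambda^*$ rather than $\beta(\lambda/\mu_0 - M)$) has to be tracked.
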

We give the proof of this proposition in \Cref{eq:ProofOfPsiNbound}.

To show the proof, we prepare more theoretical tools.
We define the function $\tilde\Psi^{N'}$ for $x\in\mathcal{H}$ by
\begin{align*}
    \tilde\Psi^{N'}(x) = \Psi^{N'}(P_{N'} x).
\end{align*}
It can be interpreted as an extension of $\Psi^{N'}$ to the entire domain $\cH$. 
Then we have for any $x\in\mathcal{H}$ and $h,k \in \cH$,
\begin{align*}
     & \langle D\tilde\Psi^{N'}(x),h \rangle = \langle D\Psi^{N'}(P_{N'}x), h P_{N'}\rangle,      \\
     & D^2\tilde\Psi^{N'}(x)\cdot (h,k)= D^2\Psi^{N'}(P_{N'}x) \cdot (P_{N'}h,P_{N'}k).
\end{align*}
\Cref{essential} can be also applied to $\tilde\Psi^{N'}$ by these equations.

Then we define the generator $\mathcal{L}^{\eta',k,N}$, discrete-time version of $\mathcal{L}^{N'}$, for all $k\in\mathbb{N}$ as
\begin{align*}
    \begin{split}
        &\text{for}\; x_0 \in\mathcal{H}_N,\: \phi\in\mathcal{B}(\mathcal{H}),\\
        &\mathcal{L}^{\eta',k,N}\phi(x) = \langle \Setap (A' X^N_k - P_N \nabla L'(X^N_k)),D\phi(x)\rangle + \frac{1}{2}\mathrm{Tr}(\Setap S^*_{\eta'} P_ND^2\phi(x)).        
    \end{split}
\end{align*}
Thanks to the It\^o formula and \Cref{essential}, we have
\begin{align*}
    \bbE\tilde\Psi^{N'}(X^N_{k+1})-\bbE\tilde\Psi^{N'}(X^N_k)=\int^{t_{k+1}}_{t_k}\bbE\mathcal{L}^{\eta',k,N}\tilde\Psi^{N'}(\tilde{X}^N(s))\mathrm{d}s.
\end{align*}
Similarly, we define the generator $\mathcal{L}^N$ of $X^N$ by
\begin{align*}
    \mathcal{L}^N\phi(x)=\langle A'x - P_N\nabla L'(x),D\phi(x)\rangle + \frac{1}{2}\mathrm{Tr}(P_N D^2\phi(x)).
\end{align*}
Putting all of the operators defined above, we have the following decomposition:
\begin{align*}
    \begin{split}
        \bbE\tilde\Psi^{N'}(X^N_{k+1})-\bbE\tilde\Psi^{N'}(X^N_k)=&\int^{t_{k+1}}_{t_k}\bbE\left(\mathcal{L}^{\eta',k,N}-\mathcal{L}^N\right)\tilde\Psi^{N'}(\tilde X^N(s))\mathrm{d}s\\
        &+\int^{t_{k+1}}_{t_k}\bbE\left(\mathcal{L}^{N}-\mathcal{L}^{N'}\right)\tilde\Psi^{N'}(\tilde X^N(s))\mathrm{d}s\\
        &+\int^{t_{k+1}}_{t_k}\bbE\mathcal{L}^{N'}\tilde\Psi^{N'}(\tilde X^N(s))\mathrm{d}s.
    \end{split}
\end{align*}
Furthermore, the following equality for $x\in\mathcal{H}$
\begin{align*}
    \mathcal{L}^{N'}\tilde\Psi^{N'}(x)=\mathcal{L}^{N'}\Psi^{N'}(x)+\langle -P_{N'}\nabla L'(x) + P_{N'} \nabla L'(P_{N'}x),D\Psi^{N'}(P_{N'}x)\rangle,
\end{align*}
and the definition of $\Psi^{N'}$ yields
\begin{align*}
    \begin{split}
        &\bbE\tilde\Psi^{N'}(X^N_{k+1})-\bbE\tilde\Psi^{N'}(X^N_k)\\
        =&\int_{t_k}^{t_{k+1}}\bbE\left(\mathcal{L}^{\eta',k,N}-\mathcal{L}^{N}\right)\tilde\Psi^{N'}(\tilde{X}^N(s))\mathrm{d}s\\
        &+\int_{t_k}^{t_{k+1}}\bbE\left(\mathcal{L}^{N}-\mathcal{L}^{N'}\right)\tilde\Psi^{N'}(\tilde{X}^N(s))\mathrm{d}s\\
        &+\eta'\left(\bbE\phi(P_{N'}X_k^N)-\bar\phi_{N'}\right)\\
        &+\int_{t_k}^{t_{k+1}}\bbE\left[\phi(P_{N'}\tilde{X}^N(s))-\phi(P_{N'}X^N_k)\right]\mathrm{d}s\\
        &+\int_{t_k}^{t_{k+1}}\bbE\langle P_{N'}\left(-\nabla L'(\tilde{X}^N(s))+\nabla L'(P_{N'}\tilde{X}^N(s))\right),D\Psi^{N'}(P_{N'}\tilde{X}^N(s))\rangle\mathrm{d}s,
    \end{split}
\end{align*}
and therefore
\begin{align*}
    \begin{split}
        &\frac{1}{n}\sum_{k=0}^{n-1}\bbE\phi(P_{N'}X^N_k)-\bar\phi_{N'}\\
        =&\frac{1}{n\eta'}\bbE\left[\Psi^{N'}(P_{N'}X^N_k)-\Psi^{N'}(P_{N'}X^N_1)\right]\\
        &+\frac{1}{n}\left(\phi(P_{N'}x)-\bar\phi_{N'}\right)\\
        &+\frac{1}{n\eta'}\sum_{k=0}^{n-1}\int_{t_k}^{t_{k+1}}\bbE\left(\mathcal{L}^{N'}-\mathcal{L}^N\right)\tilde\Psi^{N'}(\tilde{X}^N(s))\mathrm{d}s\\
        &+\frac{1}{n\eta'}\sum_{k=0}^{n-1}\int_{t_k}^{t_{k+1}}\bbE\left(\mathcal{L}^N-\mathcal{L}^{\eta',k,N}\right)\tilde\Psi^{N'}(\tilde{X}^N(s))\mathrm{d}s\\
        &-\frac{1}{n\eta'}\sum_{k=0}^{n-1}\int_{t_k}^{t_{k+1}}\bbE\left[\phi(P_{N'}\tilde{X}^N(s))-\phi(P_{N'}X^N_k)\right]\mathrm{d}s\\
        &+\frac{1}{n\eta'}\sum_{k=0}^{n-1}\int_{t_k}^{t_{k+1}}\bbE\langle P_{N'}\left(\nabla L'(\tilde{X}^N(s))-\nabla L'(P_{N'}\tilde{X}^N(s))\right),D\Psi^{N'}(P_{N'}\tilde{X}^N(s))\rangle\mathrm{d}s\\
        =:& I_1 + I_2 + I_3 + I_4 + I_5 + I_6.
    \end{split}
\end{align*}
As in \citet{Brehier16}, the fact that $\nabla L'$ is Lipschitz, \Cref{essential} and \Cref{moments2} yield
\begin{align*}
    \lim_{N' \rightarrow\infty}I_6 = 0,
\end{align*}
and \Cref{essential} and \Cref{moments2} yield for $0<\eta\leq\eta_0$ and $\beta\geq\eta_0$,
\begin{align*}
    |I_1+I_2|\leq \frac{C}{\lambda^* \beta n\eta'}(1+\norm{x_0}^2).
\end{align*}
The remaining three terms are controlled by the following lemmas, whose proofs we omit for the sake of conciseness. However, they can be shown by carefully tracing the proof line of \citet{Brehier16,Kopec} with the estimates in \Cref{essential}, \Cref{norm-bound} and \Cref{sec:MalliavinEvaluation}.
\begin{lemma}[The control of $I_3$; space discretization]
    \label{I3}
    For any $0<\kappa<1/2$ and $\eta_0$, there exists a constant $C>0$ such that for any $\phi\in C^2_b(\mathcal{H}),\: x\in\mathcal{H},\: \beta\geq\eta_0$ and $0<\eta\leq\eta_0$
    \begin{align}
        \begin{split}
            &\limsup_{N'\rightarrow\infty} \frac{1}{n\eta'}\sum_{k=0}^{n-1}\int_{t_k}^{t_{k+1}}\bbE\left(\mathcal{L}^{N'}-\mathcal{L}^N\right)\Psi^{N'}(\tilde{X}^N(s))\mathrm{d}s\\
            &\leq \frac{C}{\lambda^*}(1+\norm{x_0}^3)\norm{\phi}_{0,2} \cbeta  \mu_{N+1}^{1/2-\kappa}(1+(n\eta')^{-1}).
        \end{split}
    \end{align}
\end{lemma}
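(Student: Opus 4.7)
\textbf{Proof plan for Lemma \ref{I3}.}

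The idea is to expand the difference of generators $(\mathcal{L}^{N'}-\mathcal{L}^N)$ applied to $\tilde\Psi^{N'}$ into three explicit pieces and bound each one by combining Proposition \ref{essential} (regularity of $\Psi^{N'}$) with the fractional-power decay $\|(-A')^{s/2}(I-P_N)(-A')^{-r/2}\|_{\mathcal{B}(\mathcal{H})}\lesssim(\mu'_{N+1}/(\beta/2))^{(r-s)/2}$ for $0\leq s\leq 1\leq r\leq 2$, following the scheme of \citet{Brehier16} but tracking the dependence on $\beta$. Since $D\tilde\Psi^{N'}(x)=P_{N'}D\Psi^{N'}(P_{N'}x)$ and $D^2\tilde\Psi^{N'}(x)=P_{N'}D^2\Psi^{N'}(P_{N'}x)P_{N'}$, and since $A'$ and $P_{N'}$ commute (both diagonal in $(f_k)$), for $N'>N$ I would write
\begin{align*}
(\mathcal{L}^{N'}-\mathcal{L}^N)\tilde\Psi^{N'}(x)
&= \langle A'(P_{N'}-I)x,\,P_{N'}D\Psi^{N'}(P_{N'}x)\rangle\\
&\quad + \langle (P_N-P_{N'})\nabla L'(x),\,P_{N'}D\Psi^{N'}(P_{N'}x)\rangle\\
&\quad + \tfrac{1}{2}\mathrm{Tr}\bigl((P_{N'}-P_N)P_{N'}D^2\Psi^{N'}(P_{N'}x)P_{N'}\bigr)\\
&=: T_1(x)+T_2(x)+T_3(x).
\end{align*}

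For $T_1$, I would factorize $(-A')(I-P_{N'})=(-A')^{(1+2\kappa)/2}(I-P_{N'})(-A')^{(1-2\kappa)/2}$, apply Cauchy-Schwarz, and use the operator-norm bound with $s=1+2\kappa$, $r=2$ to pull out $(\mu'_{N+1}/(\beta/2))^{1/2-\kappa}$; the remaining $\|(-A')^{1-\kappa}P_{N'}D\Psi^{N'}(P_{N'}x)\|$ is controlled by Proposition \ref{essential} with $\epsilon=1-\kappa$, giving $\lambda^{*-1}\beta^{-1}\cbeta\beta^{1-\kappa}(1+\|x\|^2)\|\phi\|_{0,1}$; after combining, the $\beta$-powers cancel to leave $\cbeta\,\mu_{N+1}^{1/2-\kappa}/\lambda^*$. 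For $T_2$, I would use boundedness of $\nabla L'$ (Assumption \ref{assum:bounded_grad} or the Lipschitz property combined with Lemma \ref{moments2}) and the operator bound with $s=0$, $r=1-2\kappa$, paired with Proposition \ref{essential} with $\epsilon=1/2-\kappa$. For $T_3$, cyclicity of the trace and the factorization
\begin{equation*}
T_3(x) = \tfrac{1}{2}\mathrm{Tr}\!\left[(-A')^{1/4+\kappa/2}P_{N'}(I-P_N)D^2\Psi^{N'}(P_{N'}x)P_{N'}(-A')^{1/4+\kappa/2}\cdot P_{N'}(-A')^{-1/2-\kappa}P_{N'}\right]
\end{equation*}
reduce the bound to Proposition \ref{prop2} (giving a trace bound $\lesssim \beta^{-1/2-\kappa}$) multiplied by the operator-norm estimate on $(-A')^{1/4+\kappa/2}(I-P_N)(-A')^{-3/4+\kappa/2}$ giving $(\mu'_{N+1}/(\beta/2))^{1/2-\kappa}$ and by the Hessian bound of Proposition \ref{essential} with $\epsilon=\gamma=1/4+\kappa/2$. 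The same cancellation of $\beta$ exponents occurs here.

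Once the three terms are bounded pointwise by $C\lambda^{*-1}\cbeta\,\mu_{N+1}^{1/2-\kappa}(1+\|x\|^{3})\|\phi\|_{0,2}$ (uniformly in $N'$), I would take expectations, invoke Lemma \ref{moments2} to replace $\bbE\|\tilde X^N(s)\|^3$ by $C(1+\|x_0\|^3)$, integrate on $[t_k,t_{k+1}]$, sum in $k$, and divide by $n\eta'$. The $(1+(n\eta')^{-1})$ factor arises from separating the first time step: for $s\in[0,t_1]$ the approximation $\tilde X^N$ inherits the initial condition $P_Nx_0$ directly, yielding an $O(\eta')$ boundary contribution that becomes $O((n\eta')^{-1})$ after the $1/(n\eta')$ rescaling. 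Finally, taking $\limsup_{N'\to\infty}$ is legitimate since all upper bounds are uniform in $N'\geq N$, and in this limit $\mu'_{N'+1}\to 0$ in the pieces that carry $P_{N'}-I$, so only the residual $P_N-I$ survives, producing the announced $\mu_{N+1}^{1/2-\kappa}$.

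The main obstacle will be the bookkeeping of the $\beta$-powers: Proposition \ref{essential} contributes $\cbeta\beta^{\epsilon}$ or $\cbeta^2\beta^{\epsilon+\gamma}$, the operator-norm bound contributes $(2/\beta)^{(r-s)/2}$, and the trace bound contributes $\beta^{-1/2-\kappa}$. Getting the final expression to carry exactly $\cbeta$ (not $\cbeta^2$ or any residual $\beta$) requires picking the fractional exponents so that these contributions cancel: roughly, the Hessian term demands $\cbeta^2/\beta$, which is indeed $\cbeta$ in both regimes defined by \eqref{eq:defcbeta}.
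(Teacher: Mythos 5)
The paper omits the proof of Lemma~\ref{I3}, referring the reader to \citet{Brehier16,Kopec}; against that intended route, your high-level plan (decompose the generator difference, use the fractional-power estimates, \Cref{essential}, and \Cref{moments2}, then average in time) is aligned. Two remarks are minor: first, since $\tilde X^N(s)\in\cH_N\subset\cH_{N'}$ for $N'\ge N$, your $T_1(x)=\langle A'(P_{N'}-I)x,\cdot\rangle$ vanishes identically, so the $T_1$ analysis is vacuous; second, $\cbeta^2/\beta$ equals $1/\beta$ under strict dissipativity and $1$ under bounded gradient, so it is \emph{not} equal to $\cbeta$ in either regime --- the correct statement is only $\cbeta^2/\beta\le\cbeta$ for $\beta\ge 1$, which still gives the claimed inequality.

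The real gap is in your $T_3$ bound. You propose to split $\mathrm{Tr}\bigl[(P_{N'}-P_N)D^2\Psi^{N'}\bigr]$ by inserting $(-A')^{\pm(1/4+\kappa/2)}$ and invoking \Cref{prop2} for the trace of $P_{N'}(-A')^{-1/2-\kappa}P_{N'}$, then extracting $\mu_{N+1}^{1/2-\kappa}$ from the operator norm of the remaining bounded factor via $(-A')^{1/4+\kappa/2}(I-P_N)(-A')^{-3/4+\kappa/2}$. But that split leaves the companion factor $(-A')^{3/4-\kappa/2}D^2\Psi^{N'}(-A')^{1/4+\kappa/2}$, which requires \Cref{essential} with $\epsilon=3/4-\kappa/2>1/2$ for $\kappa<1/2$ --- out of the valid range. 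This is not a cosmetic issue: writing the exponent budget abstractly, if you cancel the trace-class factor $(-A')^{-1/2-\kappa}$ with insertions $a+b=1/2+\kappa$, need $c-a=1/2-\kappa$ to get $\mu_{N+1}^{1/2-\kappa}$, and need $c<1/2$, $b<1/2$ for \Cref{essential}, then $c<1/2$ forces $a<\kappa$ while $b<1/2$ forces $a>\kappa$ --- a contradiction for every choice. Therefore the $\mu_{N+1}$ decay simply cannot be produced \emph{after} trading the trace for the full $\beta^{-1/2-\kappa}$ bound of \Cref{prop2}. The fix is to keep the projection inside the trace: writing
\begin{equation*}
\bigl|\mathrm{Tr}\bigl[(P_{N'}-P_N)D^2\Psi^{N'}\bigr]\bigr|
\le \bigl\|(-A')^{\epsilon}D^2\Psi^{N'}(-A')^{\gamma}\bigr\|_{\mathcal{B}(\cH)}\;
\mathrm{Tr}\bigl[(P_{N'}-P_N)(-A')^{-\epsilon-\gamma}\bigr],
\end{equation*}
and then evaluating the partial trace $\sum_{k>N}(2\mu'_k/\beta)^{\epsilon+\gamma}\lesssim\beta^{-(\epsilon+\gamma)}\mu_{N+1}^{\epsilon+\gamma-1/2}$ using $\mu_k\sim k^{-2}$. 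With $\epsilon=\gamma=(1-\kappa)/2<1/2$, the Hessian factor contributes $\frac{\cbeta^2}{\lambda^*\beta}\beta^{1-\kappa}(1+\|x\|^2)\|\phi\|_{0,2}$ and the partial trace contributes $\beta^{-(1-\kappa)}\mu_{N+1}^{1/2-\kappa}$; the $\beta$-powers cancel and $\cbeta^2/\beta\le\cbeta$ closes the argument. \Cref{prop2} is therefore not the tool for $T_3$: its full trace carries no $N$-dependence, and the $\mu_{N+1}$ decay must come from the restricted sum over modes $k>N$.
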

\begin{lemma}[The control of $I_4$; time discretization]
    \label{I4}
    For any $0<\kappa<1/2$ and $\eta_0$, there exists a constant $C>0$ such that for any $\phi\in C^2_b(\mathcal{H}),\: N'\in\mathbb{N},\: x\in\mathcal{H},\: \beta\geq\eta_0$ and $0<\eta\leq\eta_0$
    \begin{align*}
        \begin{split}
            &\left|\frac{1}{n\eta'}\sum_{k=0}^{n-1}\int_{t_k}^{t_{k+1}}\bbE\left(\mathcal{L}^N-\mathcal{L}^{\eta',k,N}\right)\tilde\Psi^{N'}(\tilde{X}^N(s))\mathrm{d}s \right|\\
            &\leq \frac{C}{\lambda^*}\norm{\phi}_{0,2}(1+\norm{x_0}^3)\cbeta \eta^{1/2-\kappa}(1+(n\eta')^{-1+\kappa}+(n\eta')^{-1}).
        \end{split}
    \end{align*}
\end{lemma}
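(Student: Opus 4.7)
The plan is to exploit the structural decomposition of the generator difference $\mathcal{L}^N - \mathcal{L}^{\eta',k,N}$ and bound each piece by combining the semigroup regularization of \Cref{norm-bound}, the weighted smoothness of the Poisson solution $\Psi^{N'}$ from \Cref{essential}, and the uniform moment bounds of \Cref{moments2}. The most delicate piece will require a Malliavin integration-by-parts argument, in the spirit of \citet{Brehier16,Kopec}.

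Writing $y = \tilde X^N(s)$ for compactness, the integrand naturally splits into five pieces:
\begin{align*}
(\mathcal{L}^N - \mathcal{L}^{\eta',k,N})\tilde\Psi^{N'}(y)
&= \langle A'(y - X_k^N), D\tilde\Psi^{N'}(y)\rangle + \langle (\Id - \Setap) A' X_k^N, D\tilde\Psi^{N'}(y)\rangle \\
&\quad - \langle P_N(\nabla L'(y) - \nabla L'(X_k^N)), D\tilde\Psi^{N'}(y)\rangle \\
&\quad - \langle (\Id - \Setap)P_N \nabla L'(X_k^N), D\tilde\Psi^{N'}(y)\rangle \\
&\quad + \tfrac12 \mathrm{Tr}\bigl((P_N - \Setap S^*_{\eta'} P_N)D^2\tilde\Psi^{N'}(y)\bigr).
\end{align*}
For the two terms carrying the factor $(\Id - \Setap)$, I would use $\|(-A')^{-\gamma}(\Id - \Setap)P_N\|_{\mathcal{B}(\mathcal{H})} \le 2\eta'^\gamma$ from \Cref{norm-bound}, pairing it with the weighted first-derivative bound $\|(-A')^\epsilon D\tilde\Psi^{N'}(y)\| \lesssim (\cbeta\beta^\epsilon/(\lambda^*\beta))(1+\|y\|^2)\|\phi\|_{0,1}$ of \Cref{essential}; choosing $\gamma + \epsilon \approx 1/2 - \kappa$ and converting via $\eta' = 2\eta/\beta$ produces $\cbeta\eta^{1/2-\kappa}/\lambda^*$, with expectations taken using \Cref{moments2}. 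The Lipschitz term reduces through $M$-smoothness of $L'$ to $\bbE\|y - X_k^N\|$, of order $\sqrt{\eta'}$ by the explicit representation \eqref{tilde-X}. The trace term is split as $(P_N - \Setap S^*_{\eta'} P_N) = (\Id - \Setap)P_N + \Setap(\Id - S^*_{\eta'})P_N$ and then controlled by combining the trace bound of \Cref{prop2} with the weighted second-derivative estimate of \Cref{essential}.

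The main obstacle is the unbounded-drift term $\langle A'(y-X_k^N), D\tilde\Psi^{N'}(y)\rangle$: pairing $\|A'(y-X_k^N)\|$ directly with $\|D\tilde\Psi^{N'}\|$ diverges since $A'$ is not bounded on $\mathcal{H}$. To bypass this, I would use
\begin{equation*}
y - X_k^N = (s - t_k)\Setap[A'X_k^N - P_N\nabla L'(X_k^N)] + \int_{t_k}^s \Setap P_N\, dW(r),
\end{equation*}
treating the deterministic contribution by pairing $(-A')^{1-\epsilon}$ with $(-A')^\epsilon D\tilde\Psi^{N'}$ (finite for $\epsilon \in (0,1/2)$), and handling the stochastic part by Malliavin integration-by-parts as in \citet{Brehier16,Kopec}, which transfers the unbounded $A'$ onto $\tilde\Psi^{N'}$ and produces a $(-A')^\alpha D^2\tilde\Psi^{N'}(-A')^\gamma$ expression with $\alpha + \gamma < 1$. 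The weighted second-derivative bound of \Cref{essential} and \Cref{norm-bound} then combine with $\eta' = 2\eta/\beta$ to give the required $\cbeta\eta^{1/2-\kappa}/\lambda^*$ rate; crucially, one factor of $\cbeta$ is absorbed by the $1/\beta$ prefactor of the $D^2\tilde\Psi^{N'}$ estimate, explaining why the final bound has $\cbeta$ and not $\cbeta^2$. Finally, the correction factors $(n\eta')^{-1+\kappa}$ and $(n\eta')^{-1}$ arise from the Malliavin weights $1/\sqrt{s-t_k}$ and $1/(s-t_k)$: these are integrable for $k \ge 1$ but concentrate the initial-layer ($k=0$) contribution, which after dividing by $n\eta'$ produces the claimed subleading terms. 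Routine Cauchy--Schwarz and summation in $k$, together with the uniform-in-$k$ moment control of $\tilde X^N(s)$, finish the estimate.
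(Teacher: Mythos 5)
Your five-term decomposition of $(\mathcal{L}^N - \mathcal{L}^{\eta',k,N})\tilde\Psi^{N'}$ is algebraically correct, and you rightly identify the three main ingredients the paper invokes (the $\Setap$-regularization estimates of \Cref{norm-bound}, the weighted Poisson-equation bounds of \Cref{essential}, and the Malliavin integration-by-parts of \Cref{sec:MalliavinEvaluation}). Since the paper deliberately omits the proof and only points to \citet{Brehier16,Kopec}, the comparison has to be at the level of whether your sketch actually closes. Most of it does, but there is one genuine gap.

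The gap is in your treatment of the drift term $\langle A'(y - X_k^N), D\tilde\Psi^{N'}(y)\rangle$: you propose to handle the ``deterministic contribution'' $(s-t_k)\Setap[A'X_k^N - P_N\nabla L'(X_k^N)]$ by pairing $(-A')^{1-\epsilon}$ against $(-A')^\epsilon D\tilde\Psi^{N'}$ with $\epsilon<1/2$. Unfolding this puts $(-A')^{1-\epsilon}\Setap A' X_k^N = -(-A')^{2-\epsilon}\Setap X_k^N$ on the left factor, and a single $\Setap$ cannot absorb a power $2-\epsilon > 3/2$: \Cref{norm-bound} gives $\|(-A')^\gamma \Setap^j P_N\|\lesssim t_j^{-\gamma}$ only for $j\geq\gamma$, hence $j\geq 2$ here. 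The same obstruction hits your piece $\langle(\Id-\Setap)A'X_k^N, D\tilde\Psi^{N'}(y)\rangle$ after moving $(\Id-\Setap)$ across the bracket, since $X_k^N\notin D((-A')^{1-\epsilon})$ for $\epsilon<1/2$ (the accumulated noise $\sqrt{\eta'}\sum_l\Setap^{k-l}\varepsilon_l$ only admits $(-A')^\gamma$ moments for $\gamma$ below $1/2$, with $\eta'$-dependent blow-up). Closing these two pieces is precisely where \citet{Brehier16,Kopec} substitute the explicit representation of $X_k^N$ (or equivalently perform a second Malliavin integration by parts), so that the accumulated $\Setap^{k-l}$ factors produce integrable $t_{k-l}^{-\gamma}$ weights — these, time-averaged, are what actually generate the $(n\eta')^{-1+\kappa}$ and $(n\eta')^{-1}$ corrections, rather than the $1/\sqrt{s-t_k}$ and $1/(s-t_k)$ Malliavin weights you attribute them to (those are integrable on each cell $[t_k,t_{k+1}]$ uniformly in $k$). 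The rest of your sketch — the $(\Id-\Setap)P_N\nabla L'$ piece, the Lipschitz increment, the trace term, and the $\cbeta$ bookkeeping via $\eta'=2\eta/\beta$ against the $\beta^{\epsilon+\gamma}$ factors in \Cref{essential} — is sound.
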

\begin{lemma}[The control of $I_5$; more time discretization]
    \label{I5}
    For any $0<\kappa <1/4$ and $\eta_0$, there exists a constant $C,c'>0$ such that for any $\phi\in C^2_b(\mathcal{H}),\: N'\in\mathbb{N},\: x\in\mathcal{H},\: \beta\geq\eta_0$ and $0<\eta\leq\eta_0$
    \begin{align*}
        \begin{split}
            &\left|\frac{1}{n\eta'}\sum_{k=0}^{n-1}\int_{t_k}^{t_{k+1}}\bbE\left[\phi(P_{N'}\tilde{X}^N(t))-\phi(P_{N'}X^N_k)\right]\mathrm{d}t\right|\\
            &\leq C\norm{\phi}_{0,2} \cbeta \eta^{1/2-2\kappa}\left(1+\frac{\norm{x_0}}{(n\eta')^{1-\kappa}}\right).
        \end{split}
    \end{align*}
\end{lemma}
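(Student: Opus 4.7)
The plan is, for each $k$ and $t\in[t_k,t_{k+1}]$, to Taylor-expand
\begin{align*}
\phi(P_{N'}\tilde X^N(t))-\phi(P_{N'}X^N_k) = \langle D\phi(P_{N'}X^N_k),\,P_{N'}\Delta_k(t)\rangle + R_k(t),
\end{align*}
where $\Delta_k(t)\eqdef \tilde X^N(t)-X^N_k=(t-t_k)\Setap[A'X^N_k-P_N\nabla L'(X^N_k)]+\int_{t_k}^t\Setap P_N\,\mathrm{d}W(s)$ and the remainder obeys $|R_k(t)|\le \tfrac12\|\phi\|_{0,2}\|\Delta_k(t)\|^2$. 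Since $D\phi(P_{N'}X^N_k)$ is $\mathcal F_{t_k}$-measurable while the It\^o integral in $\Delta_k(t)$ is independent of $\mathcal F_{t_k}$, taking expectation annihilates the noise contribution in the linear term, leaving only the deterministic drift $(t-t_k)\bbE\langle D\phi(P_{N'}X^N_k),P_{N'}\Setap[A'X^N_k-P_N\nabla L'(X^N_k)]\rangle$ together with $\bbE R_k(t)$.

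The main obstacle is controlling $\Setap A'X^N_k$, since $\|\Setap A'\|_{\mathcal B(\mathcal H)}\sim 1/\eta'$ so that the crude bound on $(t-t_k)\Setap A'X^N_k$ is only $O(1)$. The resolution uses the algebraic identity $\Setap A'=(\Setap-I)/\eta'$ (immediate from $\Setap(I-\eta'A')=I$) coupled with the fractional-smoothing estimate of \Cref{norm-bound}, namely $\|(-A')^{-\gamma}(\Setap-I)P_N\|_{\mathcal B(\mathcal H)}\le 2(\eta')^\gamma$. Dualising against $D\phi$ and pairing with a fractional moment $\bbE\|(-A')^\gamma X^N_k\|$ (finite thanks to $\mu_k\sim k^{-2}$ combined with \Cref{moments2}-type energy estimates) converts the dangerous $(t-t_k)/\eta'$ factor into $(\eta')^\gamma$ at the price of a loss $\kappa=1/2-\gamma$. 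The $\nabla L'$ piece is harmless: under bounded gradients $\|\nabla L'\|\le (\beta/2)B$ produces the $\cbeta=\sqrt\beta$ factor once the rescaling $\eta'=2\eta/\beta$ is undone, whereas under strict dissipativity the linear growth of $\nabla L$ together with the moment bound on $X^N_k$ instead yields $\cbeta=1$.

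For the Taylor remainder, one bounds $\bbE\|\Delta_k(t)\|^2\lesssim (\eta')^2\|\Setap[A'X^N_k-P_N\nabla L'(X^N_k)]\|^2+(t-t_k)\mathrm{Tr}(P_N\Setap\Setap^*P_N)$; the trace is finite under \Cref{assum:eigenvalue_cvg} (the same decay $\mu_k\sim k^{-2}$ that makes $(-A')^{-1/2-\kappa}$ trace-class in \Cref{prop2}), and the deterministic part is handled by the same $\Setap A'=(\Setap-I)/\eta'$ trick, producing a further $(\eta')^\kappa$ loss. Summing over $k=0,\ldots,n-1$, integrating in $t$, and dividing by $n\eta'$ yields a uniform constant contribution; inserting $\bbE\|X^N_k\|^p\le C_p(1+\rho^{pk}\|x_0\|^p)$ from \Cref{prop:norm_control} and trading off the $\ell^\infty$- and $\ell^1$-bounds of the geometric transient $(\rho^k\|x_0\|)_k$ produces the correction $\|x_0\|/(n\eta')^{1-\kappa}$. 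The principal difficulty throughout is reconciling the unboundedness of $A'$ with the mere $C^2_b$ regularity of $\phi$: each fractional-calculus trade-off costs a $\kappa$ in the exponent, and the two such trades needed (one in the linear part, one in the remainder) together yield the total loss $2\kappa$ that produces the final rate $\eta^{1/2-2\kappa}$.
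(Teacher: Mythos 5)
Your Taylor-expansion framework is the right starting point, and you correctly note that the increment $\int_{t_k}^t\Setap P_N\,\dd W(s)$ is independent of $\mathcal F_{t_k}$ and hence drops out of the first-order term in expectation; the identity $\Setap A' = (\Setap-\Id)/\eta'$ together with the smoothing estimate $\|(-A')^{-\gamma}(\Setap-\Id)P_N\|_{\mathcal B(\mathcal H)}\le 2(\eta')^\gamma$ of \Cref{norm-bound} are indeed the right algebraic tools. The gap is in the first-order drift term. You convert $\tfrac{t-t_k}{\eta'}(\Setap-\Id)X^N_k$ into an $O(\eta^\gamma)$ quantity by pairing $(-A')^{-\gamma}(\Setap-\Id)$ against $\bbE\|(-A')^\gamma X^N_k\|$, implicitly with $\gamma = 1/2-\kappa$. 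But this fractional moment is only finite for $\gamma<1/4$. Write $X^N_k = \Setap^k P_N x_0 - \eta'\sum_l\Setap^{k-l}P_N\nabla L'(X^N_l) + Z^N_k$ with $Z^N_k=\sqrt{\eta'}\sum_l\Setap^{k-l}P_N\epsilon_l$; the first two pieces accept $\gamma$ up to $1$, but for the stochastic convolution
\begin{equation*}
\bbE\bigl\|(-A)^\gamma Z^N_k\bigr\|^2 = \eta'\sum_{j=1}^{k}\mathrm{Tr}\bigl((-A)^{2\gamma}\Setap^{2j}\bigr)\ \xrightarrow[k\to\infty]{}\ \eta'\,\mathrm{Tr}\bigl((-A)^{2\gamma}\Setap^2(\Id-\Setap^2)^{-1}\bigr)\ \asymp\ \frac{1}{\beta}\sum_i (\lambda/\mu_i)^{2\gamma-1},
\end{equation*}
and under \Cref{assum:eigenvalue_cvg} this series converges iff $2(1-2\gamma)>1$, i.e.\ $\gamma<1/4$ --- the discrete counterpart of $(-A)^{2\gamma-1}$ being trace class. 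For $\kappa\le 1/8$ the choice $\gamma=1/2-\kappa$ is therefore inadmissible, and the best the linear term yields via your route is $O(\eta^{1/4-\varepsilon})$, strictly worse than the claimed $\eta^{1/2-2\kappa}$. Also note that \Cref{moments2} only controls $\bbE\|X^N_k\|^p$; appealing to ``\Cref{moments2}-type energy estimates'' does not deliver the fractional moment.

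The missing idea is the Malliavin integration-by-parts formula of \Cref{sec:MalliavinEvaluation}, which the paper explicitly cites, alongside \Cref{essential} and \Cref{norm-bound}, as the third ingredient in the proofs of Lemmas~\ref{I3}--\ref{I5}. For the stochastic-convolution part of the linear term one writes
\begin{equation*}
\bbE\Bigl\langle D\phi(P_{N'}X^N_k),\ \tfrac{t-t_k}{\eta'}(\Setap-\Id)Z^N_k\Bigr\rangle = \tfrac{t-t_k}{\eta'}\int_0^{t_k}\bbE\,\mathrm{Tr}\Bigl[\bigl((\Setap-\Id)\Setap^{k-l_s}P_N\bigr)^{*}D^2\phi(P_{N'}X^N_k)\,\mathcal D_s X^N_k\Bigr]\dd s,
\end{equation*}
trading the unboundedness of $A'$ for the uniformly bounded $D^2\phi$ and the controlled Malliavin derivative $\mathcal D_s X^N_k$; this is what recovers $\eta^{1/2-2\kappa}$ on the full range $0<\kappa<1/4$. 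Your treatment of the second-order remainder is fine: with $\gamma=1/4-\kappa$ the drift part gives $\eta^{2\gamma}=\eta^{1/2-2\kappa}$, and the noise contribution $(t-t_k)\mathrm{Tr}(\Setap\Setap^*P_N)\lesssim\sqrt\eta/\beta$ is even smaller. It is specifically the first-order pairing of $D\phi$ against the stochastic part of $X^N_k$ that cannot be closed without the Malliavin calculus. A smaller inaccuracy: the factor $\cbeta=\sqrt\beta$ under \Cref{assum:bounded_grad} does not arise from $\|\nabla L'\|\le(\beta/2)B$ as you suggest, since $(t-t_k)\|\Setap\nabla L'\|\le\eta'\cdot\beta B/2=\eta B$ carries no power of $\beta$.
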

Putting them together, we get the main result (\Cref{lem:TimeAndSpaceErrorSum}).

\subsection{A Malliavin Integration by Parts Formula}\label{sec:MalliavinEvaluation}
In the proofs of \Cref{I3}, \ref{I4} and \ref{I5}, an integration by parts formula issued from Malliavin calculus is necessary to transform irregular stochastic integral terms into controllable ones; see \citet{Nualart,Sanz-Sole}.
Therefore, we restate the statement in this subsection.
The notations are the same as in \citet{Brehier16,debussche2011weak}.
\begin{lemma}
    Let $N' \in \mathbb{N}$. For any $G\in\mathbb{D}^{1,2}(\mathcal{H}_{N'}),\: u\in C^2_b(\mathcal{H}_{N'})$ and $\Psi\in L^2(\Omega \times [0, T],\mathcal{L}_2(\mathcal{H}_{N'}))$, an adapted process,
    \begin{align*}
        \bbE\left[Du(G).\int_0^T\Psi(s)\mathrm{d}W^{N'}(s)\right]=\bbE\left[\int_0^T \mathrm{Tr}(\Psi(s)^*D^2u(G)\mathcal{D}_sG)\mathrm{d}s\right],
    \end{align*}
    where $\mathcal{D}_s G:x \in\mathcal{H}\mapsto \mathcal{D}^x_sG\in\mathcal{H}_{N'}$ stands for th Malliavin derivative of $G$, and $\mathbb{D}^{1,2}(\mathcal{H}_{N'})$ is the set of $\cH_{N'}$-valued random variables $G=\sum_{i\leq N'}G_if_i$, with $G_i\in\mathbb{D}^{1,2}$ the domain of the Malliavin derivative for $\mathbb{R}$-valued random variables for any $i$.
\end{lemma}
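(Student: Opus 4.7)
The plan is to reduce the statement to the classical scalar Malliavin duality (Skorohod adjoint) formula by exploiting that $\mathcal{H}_{N'}$ is finite dimensional. First I would decompose $G = \sum_{i \leq N'} G_i f_i$ with each $G_i \in \mathbb{D}^{1,2}$, and write the linear action of $Du(G)$ as $Du(G) \cdot v = \sum_i \partial_i u(G) \langle v, f_i\rangle$, where $\partial_i u$ denotes the directional derivative of $u$ along $f_i$. Expanding the stochastic integral in the basis, $\int_0^T \Psi(s) \mathrm{d}W^{N'}(s) = \sum_j \int_0^T \Psi(s) f_j \, \mathrm{d}W^j(s)$, leads to
\[
Du(G) \cdot \int_0^T \Psi(s) \mathrm{d}W^{N'}(s) = \sum_{i,j} \partial_i u(G) \int_0^T \langle \Psi(s)^* f_i, f_j\rangle \, \mathrm{d}W^j(s).
\]

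Next, since $\partial_i u(G) \in \mathbb{D}^{1,2}$ (by the chain rule applied to the smooth map $\partial_i u$ and the Malliavin-differentiable vector $G$) and each integrand $\langle \Psi(s)^* f_i, f_j\rangle$ is adapted, I would apply the scalar Malliavin duality formula coordinate-wise to each $(i,j)$ pair:
\[
\bbE\Bigl[\partial_i u(G) \int_0^T \langle \Psi(s)^* f_i, f_j\rangle \, \mathrm{d}W^j(s)\Bigr] = \bbE\Bigl[\int_0^T \mathcal{D}_s^{f_j}(\partial_i u(G)) \, \langle \Psi(s)^* f_i, f_j\rangle \, \mathrm{d}s\Bigr].
\]
Then, invoking the chain rule for the Malliavin derivative, $\mathcal{D}_s^{f_j}(\partial_i u(G)) = \sum_k (\partial_k \partial_i u)(G)\, \mathcal{D}_s^{f_j} G_k$, re-expresses the integrand in terms of $D^2 u(G)$ (whose matrix in the basis $(f_i)$ has entries $\partial_k \partial_i u(G)$) and the components of $\mathcal{D}_s G$.

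Finally, I would reassemble the multiple sum as a trace. Summing $(\partial_k \partial_i u)(G)\, \langle \Psi(s)^* f_i, f_j\rangle\, \mathcal{D}_s^{f_j} G_k$ over $i, k$ recognizes the pairing $\langle D^2 u(G)\, \Psi(s)^* f_j,\, \mathcal{D}_s^{f_j} G\rangle_{\mathcal{H}_{N'}}$, and then summing over $j$ produces exactly $\mathrm{Tr}(\Psi(s)^* D^2 u(G) \mathcal{D}_s G)$ by the basis-independent definition of the trace in $\mathcal{H}_{N'}$. The main obstacle is not analytic but notational: choosing a consistent convention that reads $\mathcal{D}_s G$ as a Hilbert--Schmidt operator and checking that each product appearing inside the trace lies in the trace class (which follows from $\Psi \in L^2(\Omega \times [0,T]; \mathcal{L}_2)$, boundedness of $D^2 u$, and $G \in \mathbb{D}^{1,2}$). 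Once the scalar duality formula is granted, the argument is a purely coordinate-wise bookkeeping that carries the one-dimensional identity to its $\mathcal{H}_{N'}$-valued counterpart.
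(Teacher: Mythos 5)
The paper does not actually give a proof of this lemma: it invokes it as a known fact from Malliavin calculus and refers the reader to Nualart and Sanz-Sol\'e, so there is no internal argument to compare against. Your proposal is therefore a genuine contribution here, and the route you take---reduce the $\mathcal{H}_{N'}$-valued identity to the scalar Skorohod--It\^o duality by picking an orthonormal basis and applying the one-dimensional formula coordinate-wise, then use the chain rule to turn $\mathcal{D}^{f_j}_s(\partial_i u(G))$ into $\sum_k (\partial_k\partial_i u)(G)\,\mathcal{D}^{f_j}_s G_k$, then resummm into a trace---is exactly the standard and correct way of establishing this kind of vector-valued integration-by-parts formula in finite dimension. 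The key prerequisites you flag are the right ones: $\partial_i u(G)\in\mathbb{D}^{1,2}$ follows from $u\in C^2_b$, $G_k\in\mathbb{D}^{1,2}$ and the chain rule for $C^1_b$ compositions, and the adaptedness of $s\mapsto\langle\Psi(s)^*f_i,f_j\rangle$ together with $\Psi\in L^2(\Omega\times[0,T];\mathcal{L}_2(\mathcal{H}_{N'}))$ lets the Skorohod integral collapse to the It\^o integral for each coordinate.

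One small bookkeeping slip that is worth noting but does not affect the conclusion: the inner sum $\sum_{i,k}(\partial_k\partial_i u)(G)\langle\Psi(s)^*f_i,f_j\rangle\,\mathcal{D}^{f_j}_s G_k$ is $\langle D^2u(G)\,\Psi(s) f_j,\ \mathcal{D}^{f_j}_s G\rangle$ (with $\Psi(s)$, not $\Psi(s)^*$, acting on $f_j$), since $\langle\Psi(s)^*f_i,f_j\rangle=\langle f_i,\Psi(s) f_j\rangle$. Summing over $j$ then gives $\mathrm{Tr}\bigl((\mathcal{D}_sG)^*D^2u(G)\Psi(s)\bigr)$, which coincides with the claimed $\mathrm{Tr}\bigl(\Psi(s)^*D^2u(G)\mathcal{D}_sG\bigr)$ because $D^2u(G)$ is symmetric and $\mathrm{Tr}(A)=\mathrm{Tr}(A^{\top})$. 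So the proof stands once the transpose is tracked correctly.
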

In the proof of \Cref{I3}, \ref{I4} and \ref{I5}, we use the following estimates; see \cite{Brehier16,Brehier14,Kopec} for details.
\begin{lemma}
    For any $0\leq\gamma<1$ and $\eta_0>0$, there exists a constant $C>0$ such that for every $h\in(0,1),\: k\geq 1,\: 0<\eta\leq\eta_0,\: \beta>\eta_0$ and $s\in[t_k - 1/\beta,t_k]$
    \begin{align}
        \norm{(-A')^\gamma\mathcal{D}_s^x X^{N'}_k}_{\cH_N} 
\leq C(1+M\eta)^{k-l_s}\left(\beta^\gamma+\frac{1}{(1+\eta/\mu'_0)^{(1-\gamma)(k-l_s)}t^\gamma_{k-l_s}}\right) \|x\|_{\cH_{N'}},
    \end{align}
	for all $x \in \cH_{N'}$.
    Moreover, if $t_k\leq t<t_{k+1}$, we have
    \begin{align}
        \norm{(-A')^\gamma \mathcal{D}^x_s\tilde{X}^{N'}(t)}_{\mathcal{H}_{N'}}\leq C\norm{(-A')^\gamma \mathcal{D}^x_sX^{N'}_k}_{\mathcal{H}_{N'}},
    \end{align}
for $x \in \cH_{N'}$.
\end{lemma}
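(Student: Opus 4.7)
The strategy is standard Malliavin calculus for the discrete scheme combined with semigroup estimates, but the essential novelty (relative to \citet{Brehier16,Kopec}, where $\beta=1$) is to track how $\beta$ enters the bound, and in particular to extract the additive $\beta^{\gamma}$ term that reflects the scaling $(-A')^{\gamma}=(\beta/2)^{\gamma}(-A)^{\gamma}$.

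\emph{Step 1 (Malliavin recursion).} Differentiating the rescaled scheme $X^{N'}_{n+1}=\Setap\bigl(X^{N'}_{n}-\eta'\nabla L'_{N'}(X^{N'}_{n})+\sqrt{\eta'}\epsilon_{n}\bigr)$ with respect to the noise at time $s\in[t_{l_{s}},t_{l_{s}+1})$ yields $\mathcal{D}^{x}_{s}X^{N'}_{n}=0$ for $n\le l_{s}$, $\mathcal{D}^{x}_{s}X^{N'}_{l_{s}+1}=\Setap P_{N'}x$, and
\[
\mathcal{D}^{x}_{s}X^{N'}_{n}=\Setap\bigl(I-\eta' D^{2}L'_{N'}(X^{N'}_{n-1})\bigr)\,\mathcal{D}^{x}_{s}X^{N'}_{n-1},\qquad n\ge l_{s}+2.
\]
Iterating gives $\mathcal{D}^{x}_{s}X^{N'}_{k}=U_{k,l_{s}}P_{N'}x$ where $U_{k,l_{s}}=\prod_{j=l_{s}+1}^{k-1}\Setap T_{j}\cdot \Setap$ and $T_{j}=I-\eta' D^{2}L'_{N'}(X^{N'}_{j})$, so $\|T_{j}\|_{\mathcal{B}(\mathcal{H})}\le 1+\eta M$ (using $\eta'(\beta/2)M=\eta M$).

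\emph{Step 2 (Discrete Duhamel).} A straightforward induction on $k$ using $T_{j}=I-\eta' D^{2}L'_{N'}(X^{N'}_{j})$ yields the Duhamel-type identity
\[
U_{k,l_{s}}=\Setap^{k-l_{s}}-\eta'\sum_{j=l_{s}+1}^{k-1}\Setap^{k-j}\,D^{2}L'_{N'}(X^{N'}_{j})\,U_{j,l_{s}}.
\]
The first term will drive the main estimate; the sum carries the nonlinearity and will be controlled by a discrete Gronwall argument.

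\emph{Step 3 (Refined semigroup bound).} I would refine Lemma \ref{norm-bound} to the form
\[
\|(-A')^{\gamma}\Setap^{n}P_{N'}\|_{\mathcal{B}(\mathcal{H})}\le C_{\gamma}\Bigl[\beta^{\gamma}+\tfrac{1}{t_{n}^{\gamma}(1+\eta/\mu'_{0})^{n(1-\gamma)}}\Bigr],
\]
by computing $\sup_{\lambda\ge\lambda^{A}_{0}}\lambda^{\gamma}/(1+\eta'\lambda)^{n}$ (where $\lambda^{A}_{0}=(\beta/2)\lambda/\mu_{0}$ is the smallest eigenvalue of $-A'$): the unconstrained maximizer is $\lambda^{*}=\gamma/[(n-\gamma)\eta']$, giving the bound $C/t_{n}^{\gamma}$ when $\lambda^{*}\ge\lambda^{A}_{0}$ (which forces $(1+\eta/\mu'_{0})^{n(1-\gamma)}\le C$), and when $\lambda^{*}<\lambda^{A}_{0}$ the sup is attained at $\lambda^{A}_{0}$, producing $(\lambda^{A}_{0})^{\gamma}/(1+\eta/\mu'_{0})^{n}\lesssim \beta^{\gamma}$. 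Both regimes combine into the displayed bound. This is where the $\beta^{\gamma}$ floor in the target estimate originates.

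\emph{Step 4 (Gronwall).} Applying $(-A')^{\gamma}$ to the Duhamel identity, using Step 3 with $G_{n}\eqdef \beta^{\gamma}+1/[t_{n}^{\gamma}(1+\eta/\mu'_{0})^{n(1-\gamma)}]$, the crude bound $\|U_{j,l_{s}}\|_{\mathcal{B}}\le (1+\eta M)^{j-l_{s}-1}$, and $\eta'\|D^{2}L'_{N'}\|_{\mathcal{B}}\le \eta M$:
\[
\|(-A')^{\gamma}U_{k,l_{s}}\|_{\mathcal{B}}\le C\,G_{m}+C\eta M\sum_{i=1}^{m-1}G_{m-i}(1+\eta M)^{i-1},\qquad m=k-l_{s}.
\]
Splitting the sum into the $\beta^{\gamma}$-piece (which geometrically sums to $C\beta^{\gamma}(1+\eta M)^{m}$) and the $1/[t_{j}^{\gamma}(1+\eta/\mu'_{0})^{j(1-\gamma)}]$-piece (which is dominated by its largest term at $j=m$, up to an absolutely convergent factor), and pulling out $(1+\eta M)^{m}$ yields the announced bound. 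The sum-comparison is the most delicate routine computation; it exploits the exponential decay in $j$ provided by the factor $(1+\eta/\mu'_{0})^{-j(1-\gamma)}$ beyond the transition point $j\sim \mu'_{0}/\eta$.

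\emph{Step 5 (Interpolation).} For $t\in[t_{k},t_{k+1})$, \eqref{tilde-X} gives
\[
\mathcal{D}^{x}_{s}\tilde{X}^{N'}(t)=\mathcal{D}^{x}_{s}X^{N'}_{k}+(t-t_{k})\Setap\bigl(A'-P_{N'}D^{2}L'(X^{N'}_{k})\bigr)\mathcal{D}^{x}_{s}X^{N'}_{k}.
\]
Using the key identity $A'\Setap=(\Setap-I)/\eta'$, the term $(t-t_{k})\Setap A'=[(t-t_{k})/\eta'](\Setap-I)$ is a bounded operator of norm $\le 2$ that commutes with $(-A')^{\gamma}$. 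The remaining term $(t-t_{k})(-A')^{\gamma}\Setap P_{N'}D^{2}L'$ is handled by Step 3 (with $n=1$) combined with $\|D^{2}L'\|\le(\beta/2)M$, $\eta'=2\eta/\beta$, and the inclusion $\|\cdot\|\le(\lambda^{A}_{0})^{-\gamma}\|(-A')^{\gamma}\cdot\|\lesssim\beta^{-\gamma}\|(-A')^{\gamma}\cdot\|$, which produces an $O(\eta^{1-\gamma}M)$ constant that is bounded for $\eta\le\eta_{0}$.

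The main obstacle is Step 4: carefully justifying the Gronwall-style estimate so that no extraneous $\beta$-factors or $\eta$-factors leak in. Steps 1, 2 and 5 are bookkeeping; the technical heart is the eigenvalue splitting of Step 3 and its integration through the Duhamel sum in Step 4.
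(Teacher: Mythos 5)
Your Steps 1--2 (Malliavin recursion and the discrete Duhamel identity) match the paper's proof exactly, and Step 5 (interpolation via $\eta'A'\Setap=\Setap-I$ together with a bound on the nonlinear contribution) is also along the same lines. The divergence is in how the $\beta^{\gamma}$ term is extracted, and here your argument has a real gap.

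Step 3 is a detour rather than a refinement: the paper's \Cref{norm-bound} already gives the \emph{sharper} estimate $\norm{(-A')^{\gamma}\Setap^{n}P_{N'}}\le t_n^{-\gamma}(1+\eta/\mu_0')^{-n(1-\gamma)}$ for all $n\ge 1$ (the eigenvalue floor $\lambda_0^A$ is already absorbed; Bernoulli's inequality $\left(1+\eta/\mu_0'\right)^n\ge 1+n\eta/\mu_0'>n\eta/\mu_0'$ covers your ``regime 2'' without adding $\beta^{\gamma}$). Replacing this by the weaker $G_n=\beta^{\gamma}+1/[t_n^\gamma(1+\eta/\mu_0')^{n(1-\gamma)}]$ obscures where the $\beta^\gamma$ really comes from: it is \emph{not} a semigroup floor but an artefact of summing the Duhamel tail. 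The paper therefore applies \Cref{norm-bound} directly, obtaining $\tfrac{1}{t_{k-l_s}^{\gamma}(1+\eta/\mu_0')^{(1-\gamma)(k-l_s)}}\norm{x}+M\eta\sum_{i}\tfrac{(1+M\eta)^{i-l_s}}{t_{k-i}^{\gamma}(1+\eta/\mu_0')^{(1-\gamma)(k-i)}}\norm{x}$, and then extracts $\beta^{\gamma}$ from the second term alone.

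The genuine gap is your treatment of that sum in Step 4. The claim that the $1/[t_j^{\gamma}(1+\eta/\mu_0')^{j(1-\gamma)}]$-piece ``is dominated by its largest term at $j=m$, up to an absolutely convergent factor'' is false as stated: only $j=1,\dots,m-1$ appear in the sum (not $j=m$), and the summands are \emph{decreasing} in $j$, so the largest term sits at $j=1$. Moreover, the contribution is not concentrated at any single index; the sum behaves like $\eta'{}^{-\gamma}\sum_{j\le m}j^{-\gamma}\sim \eta'{}^{-\gamma}m^{1-\gamma}$ when $m\eta\lesssim 1$. The correct bookkeeping is the Riemann-sum comparison the paper uses,
\begin{equation*}
\eta\sum_{i=l_s+1}^{k-1}\frac{1}{t_{k-i}^{\gamma}(1+\eta/\mu_0')^{(1-\gamma)(k-i)}}\le \beta C\int_0^\infty \frac{t^{-\gamma}}{(1+\eta/\mu_0')^{(1-\gamma)t/\eta'}}\,\mathrm{d}t\le C\beta^{\gamma},
\end{equation*}
where the $\beta$ in front arises from $\eta=(\beta/2)\eta'$ and the final $\beta^{\gamma-1}$ from the substitution in the Gamma-integral. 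You should also make explicit that the hypothesis $s\in[t_k-1/\beta,t_k]$ forces $k-l_s\lesssim 1/\eta$, hence $(1+M\eta)^{k-l_s}\le C$; this is what lets the Gronwall factor $(1+M\eta)^{i-l_s}$ be pulled out as a bounded constant rather than as an exponentially large prefactor.
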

Note that the constant $C>0$ is uniform with respect to ${N'}\in\mathbb{N},\: \beta>\eta_0$.
\begin{proof}
    The proof is almost the same as that of Lemma 6.5 in \citet{Kopec}.
    
    The second inequality is a consequence of the following equality for $s\leq t_k\leq t<t_{k+1}$, thanks to \eqref{tilde-X}:
    \begin{align*}
        \mathcal{D}_s^x \tilde{X}^{N'}(t)=\mathcal{D}_s^x X_k^{N'}+(t-t_k)(A'\Setap\mathcal{D}_s^x X_k^{N'}-\Setap D(P_{N'}\nabla L')(X_k^{N'})\cdot \mathcal{D}_s^x X_k^{N'}),
    \end{align*}
    and the conclusion follows since
    \begin{align*}
        \sup_{{N'}\in\mathbb{N}}\norm{\eta'A'\Setap}_{\mathcal{B}(\mathcal{H}_{N'})}\leq C,
    \end{align*}
    where $C$ is a constant that does not depend on $\beta$ and the norm $\|\cdot\|_{\cB(\cH_{N'})}$ is taken as a linear map from $\cH_{N'}$ to $\cH_{N'}$.
    
    Then we prove the first estimate. For any $k\geq 1,\: x \in\mathcal{H}_N$, and $s\in[t_k - 1/\beta,t_k]$, we have
    \begin{align*}
        \mathcal{D}_s^x X_k^{N'}=\Setap^{k-l_s}x -\eta'\sum_{i=l_s+1}^{k-1}\Setap^{k-i}D(P_{N'}\nabla L')(X_i^{N'}).\mathcal{D}_s^x X_i^{N'}.
    \end{align*}
    We recall that $l_s=\floor{s/\eta'}$, so that when $i\leq l_s$ we have $\mathcal{D}_s^x X_i^{N'}=0$.
    
    As a consequence, the discrete Gronwall's inequality ensures that for $k\geq l_s+1$ and a constant $C > 0$,
    \begin{align*}
        \norm{\mathcal{D}^x_sX_k^{N'}}_{\cH_{N'}}\leq (1+ M \eta)^{k-l_s}\norm{x}_{\cH_{N'}},
    \end{align*}
where we used $\eta' L' = \eta L$ and the Lipchitz continuity of $\nabla L$.
    Now using \Cref{norm-bound}, we have
    \begin{align*}
        \norm{(-A')^{\gamma}\mathcal{D}^x_sX_k^{N'}}_{\cH_{N'}} \leq \frac{1}{(1+\eta/\mu'_0)^{(1-\gamma)(k-l_s)}t_{k-l_s}^{\gamma}}\norm{x}_{\cH_{N'}}+M \eta\sum_{i = l_s+1}^{k-1}\frac{(1+M\eta)^{i-l_s}}{(1+\eta/\mu'_0)^{(1-\gamma)(k-i)}t_{k-i}^\gamma}\norm{x}_{\cH_{N'}}.
    \end{align*}
    Note that $k - l_s \leq 1/(\eta' \beta) \leq 1/\eta$ yields $(1 + M\eta)^{k - l_s} \leq C$. To conclude, we see that when $0<\eta\leq\eta_0$, it holds that for a constant $c_0$ (could be dependent on $\eta_0,\mu_0'$),
    \begin{align*}
       &  \eta\sum_{i=l_s+1}^{k-1}\frac{1}{(1+\eta/\mu'_0)^{(1-\gamma)(k-i)}t_{k-i}^\gamma}
\leq \beta C\int_0^\infty \frac{t^{-\gamma}}{(1+\eta/\mu'_0)^{(1-\gamma)t/\eta'}}\mathrm{d}t \\
& \leq \beta C\int_0^\infty t^{-\gamma} \exp\left[ - c_0(1-\gamma)(t/\eta')( \eta/\mu'_0) \right]\mathrm{d}t \\
& \leq \beta C\int_0^\infty t^{-\gamma} \exp\left[ - \frac{\beta}{2} c_0(1-\gamma) t/\mu'_0 \right]\mathrm{d}t \\
& \leq  C \beta^\gamma.
\end{align*}
\end{proof}

\subsection{Proof of \texorpdfstring{\Cref{essential}}{Proposition A.9}}
\label{eq:ProofOfPsiNbound}
In this subsection, we prove \Cref{essential}. Our argument follows the same line as \citet{Brehier16}.
Let $\phi\in C^2_b(\cH)$. For lighter notation, we assume $\bar\phi = 0$ in this section. We define the function $u$ for any $t>0$ and $x\in\mathcal{H}_{N'}$ by
\begin{align}
    \label{u}
    u(t,x) = \bbE\left[\phi(\Xhat^{N'}(t,x))\right],
\end{align}
which is the solution of a finite dimensional Kolmogorov equation associated with \eqref{galerkin} where $N = N'$:
\begin{align*}
    \dv{u}{t}(t,x)=Lu(t,x)=\frac{1}{2}\mathrm{Tr}(D^2u(t,x))+\langle A'x - \nabla
 L'_{N'}(x),Du(t,x)\rangle.
\end{align*}
To prove \Cref{essential}, we only need to show that $u\in C^2$ and that $u$ and its two first derivatives have estimates which are integrable with respect to $t$. Specifically we prove the following proposition.
\begin{proposition}
    \label{u-bound}
    Let $\phi\in C^2_b$ such that $\bar\phi=0$ and $u$ defined by \eqref{u}. 
Remember that $\cbeta$ is defined in \Cref{eq:defcbeta} as $$\cbeta = \begin{cases} 1~~~& (\text{strict dissipativity condition: \Cref{assum:strict_diss})}), \\ \sqrt{\beta}~~~& (\text{bounded gradient condition: \Cref{assum:bounded_grad}}). \end{cases}$$ 
There exist constant $c,\: C>0$ such that for any $0\leq \epsilon, \gamma<1/2$ there exist constants $C_\epsilon$ and $C_{\epsilon,\gamma}$, which is independent of $\beta$, such that for any $t>0$ and $x\in\mathcal{H}_{N'}$,
    \begin{align}
        \label{bound0}
        \norm{u(t,x)}\leq Ce^{-\beta \lambda^* t}(1+\norm{x}^2)\norm{\phi}_\infty,
    \end{align}
    \begin{align}
        \label{bound1}
        \norm{(-A')^{\epsilon}Du(t,x)}\leq C_\epsilon \cbeta \beta^{\epsilon} (1+\frac{1}{(\beta t)^\epsilon})e^{-\beta \lambda^* t}(1+\norm{x}^2)\norm{\phi}_{0,1},
    \end{align}
    \begin{align}
        \label{bound2}
        \norm{(-A')^\epsilon D^2u(t,x)(-A')^\gamma}_{\mathcal{B}(\mathcal{H})}\leq C_{\epsilon,\gamma} \cbeta^2 \beta^{\epsilon + \gamma} \left(1+\frac{1}{(\beta t)^{\alpha'}}+\frac{1}{(\beta t)^{\epsilon+\gamma}}\right)e^{-\beta \lambda^* t}(1+\norm{x}^2)\norm{\phi}_{0,2},
    \end{align}
    where $\lambda^*>0$  is the spectral gap introduced in \Cref{rem:LambdaStarIdentification} (see also \Cref{prop:geometric_ergodicity_continuous_time}) and $\alpha'\in[0,1]$ is the constant introduced in \Cref{assum:C2_boundedness}.
\end{proposition}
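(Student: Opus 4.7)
The plan is to follow the strategy of \citet{Brehier16,Kopec}, establishing the three estimates in turn via the variation processes of $\hat X^{N'}$ and splitting the analysis at the natural threshold $t = 1/\beta$. For bound~\eqref{bound0}, I would apply \Cref{prop:geometric_ergodicity_continuous_time} with $y$ sampled from the invariant measure $\pi^{N'}$: since $\bar\phi_{N'}=0$, one obtains $|u(t,x)| = |\bbE\phi(\hat X^{N'}(t,x)) - \bar\phi_{N'}| \leq Ce^{-\beta\lambda^* t}(1+\|x\|^2)\|\phi\|_\infty$, after using \Cref{moments1} to handle the second moment of the auxiliary initial condition.

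For \eqref{bound1}, I would use the representation $Du(t,x)\cdot h = \bbE[D\phi(\hat X^{N'}(t,x))\cdot \eta^h(t,x)]$, where the first variation $\eta^h$ solves the mild equation $\eta^h(t) = e^{tA'}h - \int_0^t e^{(t-s)A'} D^2 L'_{N'}(\hat X^{N'}(s))\,\eta^h(s)\,ds$. By self-adjointness of $(-A')^\epsilon$ and $P_{N'}$, it suffices to bound $\bbE\|\eta^{(-A')^\epsilon h}(t,x)\|$. For short times $t\leq 1/\beta$, combine the analytic-semigroup smoothing $\|e^{tA'}(-A')^\epsilon\|_{\mathcal{B}(\cH)}\leq C_\epsilon t^{-\epsilon}$ (whose constant is $\beta$-independent, as in \Cref{norm-bound}) with a Gronwall argument; the factor $\|D^2 L'_{N'}\|\lesssim \beta M$ is absorbed into $e^{C\beta t} = O(1)$ on this range, producing the singularity $t^{-\epsilon} = \beta^\epsilon(\beta t)^{-\epsilon}$. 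For long times $t\geq 1/\beta$, apply the Markov decomposition $u(t,x) = \bbE[u(t - 1/\beta,\, \hat X^{N'}(1/\beta, x))]$, differentiate in $x$, and combine the short-time bound at $s=1/\beta$ with the exponential decay of $u$ from \eqref{bound0}. The factor $\cbeta$ tracks whether strict dissipativity supplies a $\beta$-independent contraction of $\eta^h$, or whether only the nonlinear ergodicity is available, the latter costing an extra $\sqrt{\beta}$.

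Bound \eqref{bound2} is treated analogously, starting from $D^2 u(t,x)\cdot(h,k) = \bbE[D^2\phi\cdot(\eta^h,\eta^k) + D\phi\cdot\zeta^{h,k}]$, where the second variation $\zeta^{h,k}$ satisfies $\zeta^{h,k}(t) = -\int_0^t e^{(t-s)A'}\bigl[D^2 L'_{N'}\,\zeta^{h,k}(s) + D^3 L'_{N'}\cdot(\eta^h(s),\eta^k(s))\bigr]\,ds$. Inserting $(-A')^\epsilon$ on $h$ and $(-A')^\gamma$ on $k$, the first summand contributes the singularity $t^{-\epsilon}t^{-\gamma} = \beta^{\epsilon+\gamma}(\beta t)^{-(\epsilon+\gamma)}$ via the same short-time analysis. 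The $\zeta$-summand is where \Cref{assum:C2_boundedness} is essential: its two inequalities control $D^3 L'_{N'}$ in the $\|\cdot\|_{\alpha'}$ and dual $\|\cdot\|_{-\alpha'}$ norms, and combined with the smoothing $\|(-A')^{\alpha'}e^{sA'}\|\leq C_{\alpha'}s^{-\alpha'}$ produce the singularity $(\beta t)^{-\alpha'}$. Large-time decay is again patched in via the Markov property and \eqref{bound0}, so that every term carries the factor $e^{-\beta\lambda^* t}$.

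The hard part will be tracking the $\beta$ dependence uniformly through this short/long-time splitting, particularly in the bounded-gradient regime (\Cref{assum:bounded_grad}), where $\eta^h$ is not intrinsically contractive and one must lean on the nonlinear ergodicity of \Cref{prop:geometric_ergodicity_continuous_time} to recover exponential decay; this is precisely the origin of the $\sqrt{\beta}$ in $\cbeta$. Once \Cref{u-bound} is in hand, the bounds on $\Psi^{N'}$ stated in \Cref{essential} follow by setting $\Psi^{N'}(x) = \int_0^\infty u(t,x)\,dt$: the short-time singularities $t^{-\epsilon},\, t^{-\gamma},\, t^{-\alpha'}$ are integrable near $0$ since $\epsilon,\gamma<1/2$ and $\alpha'<1$, while the exponential tail $e^{-\beta\lambda^* t}$ integrates to $1/(\beta\lambda^*)$, producing the prefactor $1/(\lambda^*\beta)$ appearing in \Cref{essential}.
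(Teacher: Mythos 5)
Your overall structure — first and second variation processes, the short/long-time split at $t=1/\beta$ in the bounded-gradient regime, the Gronwall argument with analytic smoothing for $t\leq 1/\beta$, and the role of \Cref{assum:C2_boundedness} in controlling $D^3 L'$ through the $\zeta^{h,k,x}$ term — does match the paper's strategy (\Cref{finite-time}, \Cref{long-time}, \Cref{lemm:StrongDissUconverge}). The integration-to-$\Psi^{N'}$ step at the end is also described correctly.

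However, there is a genuine gap in the long-time regime ($t\geq 1/\beta$, bounded gradient). You propose to ``apply the Markov decomposition $u(t,x) = \bbE[u(t-1/\beta, \hat X^{N'}(1/\beta,x))]$, differentiate in $x$, and combine the short-time bound at $s=1/\beta$ with the exponential decay of $u$ from \eqref{bound0}.'' Differentiating that decomposition directly gives $Du(t,x)\cdot h = \bbE[Du(t-1/\beta,\hat X^{N'}(1/\beta,x))\cdot\eta^{h,x}(1/\beta)]$, which requires a uniform-in-time bound on $Du$ — precisely the thing you are trying to prove, so the argument is circular. The short-time bound you established (your analogue of \Cref{finite-time}) controls $Du(s,\cdot)$ in terms of $\|D\phi\|_\infty$, not $\|\phi\|_\infty$; and $\|D u(t-1/\beta,\cdot)\|_\infty$ carries no time decay from \eqref{bound0}. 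The paper closes this gap with the Bismut--Elworthy--Li formula: writing $v(s,x)=\bbE\Phi(\hat X^{N'}(s,x))$ with $\Phi(\cdot)=u(t-1/\beta,\cdot)-\bar\phi_{N'}$, the identity
\begin{equation*}
Dv(s,x)\cdot h = \frac{1}{s}\,\bbE\!\left[\int_0^{s}\langle\eta^{h,x}(r),\mathrm dW(r)\rangle\,\Phi(\hat X^{N'}(s,x))\right]
\end{equation*}
bounds $Dv$ via $M(\Phi)$, i.e.\ via the sup-norm of $u(t-1/\beta,\cdot)$ (which carries the factor $e^{-\beta\lambda^*(t-1/\beta)}$ from \eqref{bound0}), without needing any a priori control of $D\Phi$. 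This is what produces the $\sqrt{\beta}$ (from the $1/\sqrt{s}$ at $s=1/\beta$) and hence $\cbeta=\sqrt{\beta}$ in the bounded-gradient case. Without invoking Bismut--Elworthy--Li (or an equivalent smoothing identity that trades a derivative for a stochastic integral), the Markov-property step you describe cannot be completed.

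Two minor remarks: your assertion that ``strict dissipativity supplies a $\beta$-independent contraction of $\eta^h$'' is correct in spirit, but the paper does not use the short/long split there at all; instead \Cref{lemm:StrongDissUconverge} runs a single Gronwall argument exploiting the sign of $\lambda/\mu_0 - M$ in the exponent, valid for all $t>0$, which is why $\cbeta=1$. Also, note that the derivative estimate \eqref{bound2} contributes a term $(\beta t)^{-\alpha'}$ which must be integrable near $t=0$ when forming $\Psi^{N'}$; this requires $\alpha'<1$, consistent with \Cref{assum:C2_boundedness}, and is worth stating explicitly.
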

    In fact the estimation \eqref{bound1} is true for $\alpha<1$.
    The proof is a slight modification of the proof of Proposition 8.1 in \cite{Kopec}. Since $\phi\in C^2$, bounded and with bounded derivatives, $u\in C^2$ and the derivatives can be calculated in the following way:
    \begin{itemize}
        \item For any $h\in\mathcal{H}_{N'}$, we have
              \begin{align}
                  \label{eta}
                  Du(t,x).h=\bbE\left[D\phi(\Xhat^{N'}(t,x)).\eta^{h,x}(t)\right],
              \end{align}
              where $\eta^{h,x}(t)$ is the solution of
              \begin{align*}
                  \dv{\eta^{h,x}(t)}{t}=A'\eta^{h,x}(t)-D^2 L'_{N'}(\Xhat^{N'}(t,x)).\eta^{h,x}(t),
              \end{align*}
              \begin{align*}
                  \eta^{h,x}(0)=h.
              \end{align*}
        \item For any $h,k\in\mathcal{H}_{N'}$, we have
              \begin{align}
                  \label{zeta}
                  D^2u(t,x).(h,k)=\bbE\left[D^2\phi(\Xhat^{N'}(t,x)).(\eta^{h,x}(t),\eta^{k,x}(t))+D\phi(\Xhat^{N'}(t,x)).\zeta^{h,k,x}(t)\right],
              \end{align}
              where $\zeta^{h,k,x}$ is the solution of
              \begin{align*}
                  \dv{\zeta^{h,k,x}}{t}=A'\zeta^{h,k,x}(t)-D^2 L'(\Xhat^{N'}(t,x)).\zeta^{h,k,x}(t)-D^3L'(\Xhat^{N'}(t,x)).(\eta^{h,x}(t),\eta^{k,x}(t)),
              \end{align*}
              \begin{align*}
                  \zeta^{h,k,x}(0)=0.
              \end{align*}
              Moreover, we already have the inequality \eqref{bound0} thanks to \Cref{cor:GeomErgCont}.
    \end{itemize}
The proof requires several steps. First in \Cref{finite-time} below we prove estimates for $0<t\leq 1/\beta$ and general $0\leq\alpha,\gamma<1/2$; then in \Cref{long-time} we study the long-time behavior in case $\alpha=\gamma=0$; we finally conclude with the proofs of \Cref{u-bound}.

\begin{lemma}
    \label{finite-time}
Assume \Cref{assum:bounded_grad} (bounded gradient condition). For any $0\leq \epsilon,\gamma <1/2$, there exist constants $C_\epsilon, C_{\epsilon,\gamma}$ such that for any $x\in\mathcal{H}_{N'}$, and any $0<t\leq 1/\beta$,
    \begin{align*}
         \norm{(-A')^{\epsilon}Du(t,x)} & \leq \frac{C_\epsilon}{t^\epsilon}\norm{D\phi}_\infty, \\
         \norm{(-A')^\epsilon D^2u(t,x)(-A')^\gamma}_{\mathcal{B}(\mathcal{H}_{N'})}& \leq C_{\epsilon,\gamma}\beta^{\epsilon + \gamma} \left(\frac{1}{(\beta t)^{\alpha'}}+\frac{1}{(\beta t)^{\epsilon + \gamma}}\right)\left(\norm{D\phi}_\infty+\norm{D^2\phi}_\infty\right),
    \end{align*}
    where $\alpha'$ is defined in \Cref{assum:C2_boundedness}.
\end{lemma}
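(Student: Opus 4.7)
}

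The plan is to reduce both estimates to controlled moment bounds on the first- and second-order derivatives of the flow, $\eta^{h,x}(t)$ and $\zeta^{h,k,x}(t)$, exploiting the representations \eqref{eta}--\eqref{zeta}. For the first estimate, since $|Du(t,x)\cdot h| \leq \|D\phi\|_\infty \bbE\|\eta^{h,x}(t)\|$ and $(-A')^{\epsilon}$ is self-adjoint and diagonal in the basis $(f_k)$, duality yields
\[
\|(-A')^{\epsilon}Du(t,x)\| \;=\; \sup_{\|g\|\leq 1}\bigl|Du(t,x)\cdot (-A')^{\epsilon} g\bigr| \;\leq\; \|D\phi\|_\infty \sup_{\|g\|\leq 1} \bbE \bigl\|\eta^{(-A')^{\epsilon} g,\,x}(t)\bigr\|,
\]
so the target reduces to bounding $\bbE\|\eta^{h,x}(t)\|$ when $h=(-A')^{\epsilon} g$ with $\|g\|\leq 1$. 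Analogously, for the second bound I would split via Cauchy--Schwarz into an $\eta$-$\eta$ contribution (controlled by $L^2$ moments of $\eta^{h,x},\eta^{k,x}$) and a $\zeta$ contribution.

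For the bound on $\eta^{h,x}$ I would apply the variation-of-constants formula
\[
\eta^{h,x}(t) \;=\; e^{tA'} h \;-\; \int_0^t e^{(t-s)A'}\bigl(D^2 L'_{N'}(\Xhat^{N'}(s,x)) \cdot \eta^{h,x}(s)\bigr)\dd s.
\]
With $h=(-A')^{\epsilon}g$, the semigroup estimate $\|(-A')^{\epsilon} e^{tA'}\|_{\cB(\cH)} \leq C_\epsilon/t^{\epsilon}$ (which is uniform in $\beta$ by the scale-invariance argument already used in \Cref{norm-bound}) gives $\|e^{tA'}h\|\leq C_\epsilon t^{-\epsilon}\|g\|$. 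Since \Cref{assum:smoothness} yields $\|D^2 L'_{N'}\|_{\cB(\cH)} \leq (\beta/2) M$, a singular-kernel Gronwall argument produces
\[
\bbE\|\eta^{h,x}(t)\| \;\leq\; C_\epsilon t^{-\epsilon} + C_\epsilon M\beta\, e^{CM\beta t}\, t^{1-\epsilon}.
\]
The restriction $t\leq 1/\beta$ makes both $e^{CM\beta t}$ and $\beta\, t^{1-\epsilon}= (\beta t)\,t^{-\epsilon}\leq t^{-\epsilon}$ uniformly controlled, giving $\bbE\|\eta^{h,x}(t)\|\leq C_\epsilon' t^{-\epsilon}$. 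Squaring the Volterra equation and applying the same argument yields the $L^2$-analogue $\bbE\|\eta^{h,x}(t)\|^{2}\leq C_\epsilon t^{-2\epsilon}$, which by Cauchy--Schwarz bounds the $D^2\phi$ contribution to $D^2 u$ by $\|D^2\phi\|_\infty/t^{\epsilon+\gamma}=\|D^2\phi\|_\infty\,\beta^{\epsilon+\gamma}/(\beta t)^{\epsilon+\gamma}$.

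For $\zeta^{h,k,x}$, the variation-of-constants formula gives
\[
\zeta^{h,k,x}(t) \;=\; -\!\int_0^t \! e^{(t-s)A'}\!\bigl[D^2 L'_{N'}(\Xhat^{N'}(s))\cdot \zeta^{h,k,x}(s) + D^3 L'_{N'}(\Xhat^{N'}(s))\cdot(\eta^{h,x}(s),\eta^{k,x}(s))\bigr]\!\dd s.
\]
Using \Cref{assum:C2_boundedness} and the $\beta$-rescaling $L'_{N'}=(\beta/2)L_{N'}$, the inhomogeneous term satisfies
\[
\bigl\|D^3 L'_{N'}\cdot(\eta^{h,x}(s),\eta^{k,x}(s))\bigr\|_{\alpha'} \;\lesssim\; \beta\,\|\eta^{h,x}(s)\|\,\|\eta^{k,x}(s)\|.
\]
Combining with the norm equivalence $\|y\|_{\alpha'}\sim \beta^{\alpha'}\|(-A')^{-\alpha'}y\|$ and the analytic-semigroup bound $\|e^{(t-s)A'}(-A')^{\alpha'}\|_{\cB(\cH)}\leq C/(t-s)^{\alpha'}$, taking expectations and applying Cauchy--Schwarz with the $L^2$-estimate for $\eta$ leads to
\[
\bbE\bigl\|e^{(t-s)A'}\,D^3 L'_{N'}\cdot(\eta^{h,x}(s),\eta^{k,x}(s))\bigr\| \;\leq\; C\,\beta^{\,1-\alpha'}\,(t-s)^{-\alpha'}\,s^{-\epsilon-\gamma}.
\]
Integrating via the Beta function yields $C\beta^{1-\alpha'} t^{1-\alpha'-\epsilon-\gamma}$, which for $t\leq 1/\beta$ is dominated by $\beta^{\epsilon+\gamma-\alpha'}t^{-\alpha'}=\beta^{\epsilon+\gamma}/(\beta t)^{\alpha'}$ (using $(\beta t)^{1-\epsilon-\gamma}\leq 1$). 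The remaining Gronwall correction coming from the $D^2L'_{N'}\!\cdot\zeta$ term is handled by the same singular-kernel Gronwall argument as for $\eta$, and again absorbs into a constant because $\beta t\leq 1$. A final duality argument on $(-A')^{\epsilon} D^2 u (-A')^{\gamma}$ combines the $\eta$-$\eta$ and $\zeta$ contributions into the stated sum.

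The main obstacle I expect is bookkeeping the $\beta$-dependence through the singular Gronwall step and through the three identifications between (i) the weighted norm $\|\cdot\|_{\alpha'}$, (ii) fractional powers $(-A')^{\alpha'}$, and (iii) the rescaled Hilbertian norm, since all of these differ by explicit factors of $\beta$. The cheap route is to systematically factor every estimate as $\beta^{a}\cdot (\beta t)^{b}\cdot t^{c}$ and then use $\beta t\leq 1$ to turn $(\beta t)^{b}$ with positive exponent into a constant, leaving the sharp singular profile in $t$ alone.
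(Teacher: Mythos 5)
Your proof is correct and follows essentially the same route as the paper: variation-of-constants for $\eta^{h,x}$ and $\zeta^{h,k,x}$, the analytic-semigroup bound $\|(-A')^{\epsilon}e^{tA'}\|\lesssim t^{-\epsilon}$, the rescaling $D^{j}L'=(\beta/2)D^{j}L$ together with $\|(-A')^{-\alpha'}y\|\sim\beta^{-\alpha'}\|y\|_{\alpha'}$, the Beta-function integral, and Gronwall with $\beta t\le 1$ absorbing both the exponential and the excess power $(\beta t)^{1-\epsilon-\gamma}$. The only cosmetic difference is that you bound $\bbE\|\eta^{h,x}(t)\|$ and invoke $L^2$/Cauchy--Schwarz, whereas the paper notes that the Gronwall bounds on $\eta^{h,x}$ and $\zeta^{h,k,x}$ are pathwise (the coefficient $D^2L'_{N'}(\cdot)$ is uniformly bounded), so the expectation and Cauchy--Schwarz steps are not needed.
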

\begin{proof}
    Owing to \eqref{eta} and \eqref{zeta}, we only need to prove the following almost sure estimates for some constants - which may vary from line to line below: for any $0<t\leq 1/\beta$
    \begin{align*}
        \begin{split}
            \norm{\eta^{h,x}(t)}&\leq \frac{C_\epsilon}{(\beta t)^\epsilon}\norm{h}_{\epsilon},\\
            \norm{\zeta^{h,k,x}(t)}&\leq C_{\epsilon, \gamma}\beta^{\epsilon + \gamma} \left(\frac{1}{(\beta t)^{\alpha'}} + \frac{1}{(\beta t)^{\epsilon + \gamma}}\right)\norm{h}_{\epsilon}\norm{k}_{\gamma}.
        \end{split}
    \end{align*}
    To show these inequalities, first note that 
\begin{align}
\norm{e^{tA'} h} & = \norm{t^{-\epsilon} (-t A')^{\epsilon}e^{tA'} (-A')^{-\epsilon}h}
= t^{-\epsilon} \norm{(-t A')^{\epsilon}e^{tA'}}_{\cB(\cH)} \norm{(-A')^{-\epsilon}h} \notag \\
& \leq t^{-\epsilon} \sup_{x \geq 0}\{x^\epsilon e^{-x}\}\norm{(-A')^{-\epsilon}h} = \frac{C_\epsilon}{t^\epsilon}\norm{(-A')^{-\epsilon}h}
\label{eq:etAhbound}
\end{align}
where $C_\epsilon \eqdef \sup_{x \geq 0}\{x^\epsilon e^{-x}\}$.
From this, we deduce that
    \begin{align*}
        \norm{\eta^{h,x}(t)} & = \norm{e^{tA'}h-\int^t_0 e^{(t-s)A'}D^2L'(\Xhat(s,x)).\eta^{h,x}(s)\mathrm{d}s}            \\
                             & \leq \frac{C_\epsilon}{t^\epsilon}\norm{(-A')^{-\epsilon}h}+C\int_0^t \beta \norm{\eta^{h,x}(s)}\mathrm{d}s.
    \end{align*}
    and by the Gronwall's inequality and $t \leq 1/\beta$, we get the result.
    
    For the second-order derivative, we moreover use the properties of $L$ to get 
    \begin{align*}
        \norm{\zeta^{h,k,x}(t)} = & \left|\int^t_0 e^{(t-s)A'}D^2L'(\Xhat(s,x)).\zeta^{h,k,x}(s)\mathrm{d}s\right.                                                                                                                          \\
                                  & \left.+\int^t_0 e^{(t-s)A'}D^3L'(\Xhat(s,x)).(\eta^{h,x}(s),\eta^{k,x}(s))\mathrm{d}s\right|                                                                                                            \\
        \leq                      & C\int_0^t \beta \norm{\zeta^{h,k,x}(s)}\mathrm{d}s + \int^t_0 \frac{C_{\alpha'}\beta^{1-\alpha'}}{(t-s)^{\alpha'}} \norm{\eta^{h,x}(s)}\norm{\eta^{k,x}(s)}\mathrm{d}s                                 \\
        \leq                      & C\int_0^t \beta \norm{\zeta^{h,k,x}(s)}\mathrm{d}s + C_{\alpha',\epsilon,\gamma}\norm{(-A')^{-\epsilon}h}\norm{(-A')^{-\gamma}k}\beta^{\epsilon + \gamma}(\beta t)^{1-\alpha'-\epsilon-\gamma}\int_0^1 \frac{1}{(1-s)^{\alpha'} s^{\epsilon+\gamma}}\mathrm{d}s.
    \end{align*}
    The Gronwall's inequality yields the conclusion
 since for any $0<\beta t\leq 1$ we have $(\beta t)^{1-\alpha'-\epsilon-\gamma}<(\beta t)^{-\alpha'}$ due to the assumption $\epsilon + \gamma < 1$. 
\end{proof}
\begin{lemma}
    \label{long-time}
	Assume \Cref{assum:bounded_grad} (bounded gradient condition). 
    There exist constants $C,c>0$ such that for any $t\geq 0$, and any $x\in\mathcal{H}$,
    \begin{align*}
        \norm{Du(t,x)}\leq C \sqrt{\beta}e^{-\beta \lambda^* t}(1+\norm{x}^2)\norm{\phi}_\infty,
    \end{align*}
    and
    \begin{align*}
        \norm{D^2u(t,x)}_{\mathcal{B}(\mathcal{H})}\leq C\beta e^{-\beta \lambda^* t}\left(1+\frac{1}{(\beta t)^{\alpha'}}\right)(1+\norm{x}^2)\norm{\phi}_\infty.
    \end{align*}
\end{lemma}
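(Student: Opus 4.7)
The plan is to deduce Lemma \ref{long-time} by combining the Bismut–Elworthy–Li (BEL) integration-by-parts formula for the finite-dimensional Kolmogorov semigroup generated by \eqref{galerkin} with the geometric ergodicity bound of Corollary \ref{cor:GeomErgCont} and the short-time variation-process estimates of Lemma \ref{finite-time}. Since the noise in the rescaled SDE has identity covariance on $\cH_{N'}$ and $\bar\phi=0$, invariance of $\pi^{N'}$ under the semigroup $P_t$ implies that every intermediate test function $v=P_s\phi$ also satisfies $\int v\,d\pi^{N'}=0$, so Corollary \ref{cor:GeomErgCont} will yield a pointwise decay $|v(y)|\le Ce^{-\beta\lambda^* s}(1+\|y\|^2)\|\phi\|_\infty$. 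It suffices to treat $t\ge 1/\beta$: below that threshold, Lemma \ref{finite-time} already controls the derivatives.

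For the first-derivative bound, set $\tau=1/\beta$ and use the Markov property $u(t,x)=[P_\tau v](x)$ with $v=u(t-\tau,\cdot)$. The BEL identity,
\[
D[P_\tau v](x)\cdot h \;=\; \frac{1}{\tau}\,\bbE\!\left[v(\Xhat^{N'}(\tau,x))\int_0^\tau\langle\eta^{h,x}(s),dW^{N'}(s)\rangle\right],
\]
combined with Cauchy–Schwarz, the It\^o isometry, the bound $\|\eta^{h,x}(s)\|\le C\|h\|$ for $s\in[0,\tau]$ coming from Lemma \ref{finite-time} with $\epsilon=0$, the decay $|v(y)|\le Ce^{-\beta\lambda^*(t-\tau)}(1+\|y\|^2)\|\phi\|_\infty$ from Corollary \ref{cor:GeomErgCont}, and the moment control $\bbE\|\Xhat^{N'}(\tau,x)\|^4\le C(1+\|x\|^4)$ of Lemma \ref{moments1}, produces
\[
|Du(t,x)\cdot h|\;\le\; \frac{C\|h\|}{\sqrt{\tau}}\,e^{-\beta\lambda^*(t-\tau)}(1+\|x\|^2)\|\phi\|_\infty \;\le\; C'\sqrt{\beta}\,e^{-\beta\lambda^* t}(1+\|x\|^2)\|\phi\|_\infty,
\]
with the $\sqrt{\beta}$ arising directly from $1/\sqrt{\tau}$ and the $e^{\beta\lambda^*\tau}=e^{\lambda^*}$ absorbed into $C'$.

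For the second-derivative bound, choose $\tau_1=\tau_2=1/\beta$ (assuming $t\ge 2/\beta$, the case $t<2/\beta$ being covered by Lemma \ref{finite-time}), and iterate the semigroup: $u(t,x)=P_{\tau_1}(P_{\tau_2}\tilde v)(x)$ with $\tilde v=u(t-\tau_1-\tau_2,\cdot)$ of zero $\pi^{N'}$-mean. Differentiating twice and applying BEL once per semigroup, using the independence of the Wiener increments on $[0,\tau_1]$ and $[\tau_1,\tau_1+\tau_2]$, yields a representation
\[
D^2u(t,x)\cdot(h,k) \;=\; \frac{1}{\tau_1\tau_2}\,\bbE\!\left[\tilde v(\Xhat^{N'}(\tau_1+\tau_2,x))\,I^h_{\tau_1}\,J^k_{\tau_1+\tau_2}\right] + R(h,k),
\]
where $I^h_{\tau_1},\,J^k_{\tau_1+\tau_2}$ are It\^o integrals of $\eta^{h,\cdot},\eta^{k,\cdot}$ against $dW^{N'}$ on the two disjoint intervals and $R$ is the cross term coming from differentiating $\eta^{h,\cdot}$ in $x$ (giving rise to $\zeta^{h,k,\cdot}$). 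The leading term is bounded by Cauchy–Schwarz, It\^o's isometry, Lemma \ref{finite-time} (giving $\|\eta^{h,x}(s)\|\le C\|h\|$) and Corollary \ref{cor:GeomErgCont}; this contributes $\tau_1^{-1/2}\tau_2^{-1/2}=\beta\cdot e^{-\beta\lambda^*(t-2/\beta)}$. The remainder $R$ is handled by the $\zeta$-bound of Lemma \ref{finite-time}, which introduces exactly the singular factor $(\beta t)^{-\alpha'}$. Assembling the two bounds produces
\[
\|D^2u(t,x)\|_{\cB(\cH)}\;\le\; C\beta\,\Big(1+\tfrac{1}{(\beta t)^{\alpha'}}\Big)e^{-\beta\lambda^* t}(1+\|x\|^2)\|\phi\|_\infty.
\]

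The main obstacle is making the two-step BEL decomposition for $D^2u$ precise and verifying that the only residual singularity is the $(\beta t)^{-\alpha'}$ one: one must track the cross term $R$, which involves the Malliavin derivative of $\eta^{h,x}$ and hence the third derivative of $L$ via $\zeta^{h,k,x}$, and confirm via Lemma \ref{finite-time} that no worse blow-up occurs. A secondary bookkeeping issue is that both BEL steps contribute a $\sqrt{\beta}$, whose product $\beta$ matches the stated prefactor under the bounded-gradient assumption of Lemma \ref{long-time}; ergodicity of $\tilde v$ then supplies the $e^{-\beta\lambda^* t}$ factor uniformly in the splitting.
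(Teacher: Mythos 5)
Your overall strategy matches the paper's: apply the Markov property $u(t,x)=P_\tau u(t-\tau,\cdot)(x)$ with $\tau\sim1/\beta$, use Bismut--Elworthy--Li to trade one derivative for a $1/\sqrt{\tau}$ loss, and invoke \Cref{cor:GeomErgCont} to supply the exponential decay of the intermediate function $\Phi_t=u(t-\tau,\cdot)-\bar\phi$. The paper packages the BEL step as a standalone estimate --- the bound \eqref{v-bound}, $\|Dv(\tau,x)\|\lesssim\tau^{-1/2}M(\Phi)(1+\|x\|^2)$ and $\|D^2v(\tau,x)\|\lesssim\tau^{-1}M(\Phi)(1+\|x\|^2)$ --- and then plugs in at $\tau=1/\beta$, whereas you unpack this into a two-step BEL on the disjoint intervals $[0,\tau_1]$, $[\tau_1,\tau_1+\tau_2]$. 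These are the same argument written in two orders.

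There is, however, a genuine slip in your account of where the factor $(\beta t)^{-\alpha'}$ comes from. You claim the remainder $R$ (the $\zeta$-term produced by differentiating $\eta^{h,\cdot}$) ``introduces exactly the singular factor $(\beta t)^{-\alpha'}$.'' But in the regime $t\ge 2/\beta$ the $\zeta$-process only runs over $s\in[0,\tau_1]$ with $\tau_1=1/\beta$, so \Cref{finite-time} gives $\|\zeta^{h,k,x}(s)\|\lesssim\bigl(1+(\beta s)^{-\alpha'}\bigr)\|h\|\|k\|$ with $\beta s\le 1$. After the It\^o isometry and the prefactor $1/\tau_1=\beta$, this contributes a $\beta$-dependent constant of order $\sqrt{\beta}$, \emph{not} a factor $(\beta t)^{-\alpha'}$ with $t$ the global time. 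In fact $(\beta t)^{-\alpha'}\le 1$ for $\beta t\ge 1$, so that factor is entirely harmless in the long-time analysis and plays no role there; it appears in the statement only to accommodate the short-time regime handled by \Cref{finite-time}. Your final bound still holds (since $R$ is dominated by the leading $\beta$-term and the extra $(\beta t)^{-\alpha'}$ only makes the right-hand side larger), but your explanation of the singularity's provenance is incorrect, and the ``verify no worse blow-up occurs'' step you flag should instead observe that $R$ is simply subdominant for $\beta t\ge 1$. A related point: your opening remark that ``below the threshold $t<1/\beta$, \Cref{finite-time} already controls the derivatives'' glosses over the fact that those estimates are stated in terms of $\|D\phi\|_\infty,\|D^2\phi\|_\infty$, while the target bound is in terms of $\|\phi\|_\infty$; reconciling the two (as the paper also does rather tersely in its last line) requires keeping the BEL-derived $1/\sqrt{t}$, $1/t$ factors visible for small $t$.
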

\begin{proof}[Proof of \Cref{long-time}]
    As in \citet{Kopec}, we use the Bismut-Elworthy-Li formula to get for $\Phi : \mathcal{H}_{N'} \rightarrow \mathbb{R}$ which belongs to class $C^2$ with bounded derivative and with at most quadratic growth, i.e.,
    \begin{align*}
        \exists\: M(\Phi)>0, \: \forall\: x\in\mathcal{H}_{N'},\: \norm{\Phi(x)}\leq M(\Phi)(1+\norm{x}^2),
    \end{align*}
    and $v(t,x)\triangleq\bbE\Phi(\Xhat^{N'}(t,x))$, we have two following formula:
    \begin{align*}
        \begin{split}
            Dv(t,x).h & =\frac{1}{t}\bbE\left[\int_0^t \langle \eta^{h,x}(s),\mathrm{d}W(s)\rangle\Phi(\Xhat^{N'}(t,x))\right].
		\end{split}
\end{align*}
Moreover, by the Markov property $v(t,x)=\bbE v(t/2,\Xhat^{N'}(t/2,x))$, we obtain
    \begin{align*}
        \begin{split}
           Dv(t,x).h & =\frac{2}{t}\bbE\left[\int_0^{t/2}\langle \eta^{h,x}(s),\mathrm{d}W(s)\rangle v(t/2,\Xhat^{N'}(t/2,x))\right].
        \end{split}
    \end{align*}
    and thus
    \begin{align*}
        \begin{split}
            D^2v(t,x).(h,k) = & \frac{2}{t}\bbE\left[\int_0^{t/2}\langle \zeta^{h,k,x}(s),\mathrm{d}W(s)\rangle v(t/2,\Xhat^{N'}(t/2,x))\right]                \\
            & +\frac{2}{t}\bbE\left[\int_0^{t/2}\langle \eta^{h,x}(s),\mathrm{d}W(s)\rangle Dv(t/2,\Xhat^{N'}(t/2,x)).\eta^{k,x}(t/2)\right].
        \end{split}
    \end{align*}
    We then see, using \Cref{moments1} and \Cref{finite-time} with $\epsilon=\gamma=0$ that there exists $C>0$ such that for any $0<t\leq 1/\beta,\: x,h,k\in\mathcal{H}_{N'}$,
    \begin{align}
        \label{v-bound}
        \begin{split}
            \norm{Dv(t,x).h}&\leq \frac{C}{\sqrt{t}}M(\Phi)(1+\norm{x}^2)\norm{h},\\
            \norm{D^2v(t,x).(h,k)}&\leq \frac{C}{t}M(\Phi)(1+\norm{x}^2)\norm{h}\norm{k}.
        \end{split}
    \end{align}
    Indeed, to see the first inequality, the Cauchy-Schwartz inequality gives 
\begin{align*}
Dv(t,x).h & =\frac{1}{t}\bbE\left[\int_0^t \langle \eta^{h,x}(s),\mathrm{d}W(s)\rangle\Phi(\Xhat^{N'}(t,x))\right] \\
& \leq \frac{1}{t} \sqrt{\bbE\left[\left(\int_0^t \langle \eta^{h,x}(s),\mathrm{d}W(s)\rangle \right)^2\right] } \sqrt{\bbE[\Phi(\Xhat^{N'}(t,x))^2]},
\end{align*}
and the isometry property of Ito integral and \Cref{finite-time} give a bound of the first term as 
$$
\sqrt{\bbE\left[\left(\int_0^t \langle \eta^{h,x}(s),\mathrm{d}W(s)\rangle \right)^2\right] } 
=
\sqrt{ \int_0^t \|\eta^{h,x}(s)\|^2 \dd s } \leq C \sqrt{t} \|h\|,
$$
for $t \leq 1/\beta$ and \Cref{moments1} gives a bound of the second term as 
$$
 \sqrt{\bbE[\Phi(\Xhat^{N'}(t,x))^2]} \leq C M(\Phi)(1 + \|x\|^2).
$$
    Now when $\beta t\geq 1$ the Markov property implies that $u(t,x)=\bbE[u(t-1/\beta,\Xhat^{N'}(1/\beta,x))]$ and by \Cref{cor:GeomErgCont}, we have
    \begin{align*}
        \norm{u(t-1/\beta,x)-\int_{\mathcal{H}_{N'}}\phi\mathrm{d}\bar\mu}\leq Ce^{- \beta\lambda^*(t-1/\beta)}(1+\norm{x}^2)\norm{\phi}_\infty.
    \end{align*}
    If we choose $\Phi_t(x)=u(t-1/\beta,x)-\int_{\mathcal{H}}\phi\mathrm{d}\bar\mu$, we have $u(t,x)=\bbE\Phi_t(\Xhat^{N'}(1/\beta,x))+\int_{\mathcal{H}}\phi\bar\mu$, with $M(\Phi_t)\leq Ce^{-\beta \lambda^*(t-1/\beta)}\norm{\phi}_\infty$. With \eqref{v-bound} at $t=1/\beta$, we obtain for $t\geq 1/\beta$,
    \begin{align*}
        \norm{Du(t,x).h}       & \leq C \sqrt{\beta} \norm{\phi}_\infty e^{-\beta \lambda^* (t-1/\beta)}(1+\norm{x}^2)\norm{h},        \\
        \norm{D^2u(t,x).(h,k)} & \leq C\beta \norm{\phi}_\infty e^{-\beta \lambda^*(t-1/\beta)}(1+\norm{x}^2)\norm{h}\norm{k}.
    \end{align*}
    We have a control when $0\leq t\leq 1/\beta$ in \Cref{finite-time}, so with a change of constants we get the result.
\end{proof}
Next we show a corresponding lemma for the strict dissipativity condition in the following lemma.
\begin{lemma}
\label{lemm:StrongDissUconverge}
Assume \Cref{assum:strict_diss} (strict dissipativity condition).
For any $0\leq \epsilon,\gamma <1/2$, there exist constants $C_\epsilon, C_{\epsilon,\gamma}$ such that for any $x\in\mathcal{H}_{N'}$, and any $0<t$,
    \begin{align*}
        \norm{(-A')^{\epsilon}Du(t,x)} & \leq C_\epsilon \beta^\epsilon  \left(1 + \frac{1}{(\beta t)^\epsilon}\right)  e^{- t \beta \lambda^*}  \norm{D\phi}_\infty, \\
        \norm{(-A')^\epsilon D^2u(t,x)(-A')^\gamma}_{\mathcal{B}(\mathcal{H}_{N'})}& \leq C_{\epsilon,\gamma}\beta^{\epsilon + \gamma} \left(1 + \frac{1}{(\beta t)^{\alpha'}}+\frac{1}{(\beta t)^{\epsilon + \gamma}}\right) e^{ -t\beta \lambda^*} \left(\norm{D\phi}_\infty+\norm{D^2\phi}_\infty\right),
    \end{align*}
    where $\alpha'$ is defined in \Cref{assum:C2_boundedness}.
\end{lemma}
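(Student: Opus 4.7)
The plan is to mirror the structure of \Cref{finite-time,long-time}, but to exploit \Cref{assum:strict_diss} to obtain pathwise exponential contraction of the variational processes $\eta^{h,x}$ and $\zeta^{h,k,x}$. This lets me bypass the Bismut--Elworthy--Li plus Markov step of \Cref{long-time} and collapses its finite-time and long-time arguments into a single estimate. Recalling that $A' = (\beta/2)A$ and $L' = (\beta/2)L$, \Cref{assum:smoothness,assum:strict_diss} give
\begin{equation*}
\tfrac{1}{2}\tfrac{\dd}{\dd t}\|\eta^{h,x}(t)\|^2 = \langle A'\eta^{h,x}(t) - D^2L'_{N'}(\Xhat^{N'}(t,x))\cdot\eta^{h,x}(t),\eta^{h,x}(t)\rangle \leq -\beta\lambda^*\|\eta^{h,x}(t)\|^2,
\end{equation*}
with $\lambda^* = (\lambda/\mu_0 - M)/2 > 0$ (consistent with \Cref{rem:LambdaStarIdentification}), so $\|\eta^{h,x}(t)\| \leq e^{-t\beta\lambda^*}\|h\|$ unconditionally in $t$ and $x$ (this is why the $(1+\|x\|^2)$ factor of \Cref{u-bound} disappears here).

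For the weighted first-derivative bound I would combine this pathwise decay with the smoothing estimate
\begin{equation*}
\|(-A')^{\epsilon} e^{tA'}\|_{\mathcal{B}(\cH)} \leq C_\epsilon \beta^{\epsilon} (\beta t)^{-\epsilon} e^{-t\beta c_0},
\end{equation*}
which I derive by splitting $(-A')^{\epsilon} e^{tA'} = ((-A')^{\epsilon} e^{tA'/2})\cdot e^{tA'/2}$, using $\sup_{u\geq 0} u^\epsilon e^{-u/2}<\infty$ for the first factor and the spectral-gap decay $\|e^{tA'/2}\|_{\mathcal B}\leq e^{-t\beta\lambda/(4\mu_0)}$ for the second. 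Applying Duhamel to $\eta^{h,x}$, replacing $\|e^{tA'}h\|$ by $C_\epsilon\beta^\epsilon(\beta t)^{-\epsilon}e^{-t\beta c_0}\|(-A')^{-\epsilon}h\|$, and closing the $\int_0^t e^{(t-s)A'}D^2L'_{N'}\cdot\eta^{h,x}(s)\dd s$ contribution with a Gronwall inequality on $e^{t\beta\lambda^*}\|\eta^{h,x}(t)\|$ yields
\begin{equation*}
\|\eta^{h,x}(t)\| \leq C_\epsilon \beta^{\epsilon}\bigl(1 + (\beta t)^{-\epsilon}\bigr) e^{-t\beta\lambda^*}\|(-A')^{-\epsilon}h\|.
\end{equation*}
Plugging into \eqref{eta} and using the duality $\|(-A')^\epsilon Du(t,x)\| = \sup_h |Du(t,x)\cdot h|/\|(-A')^{-\epsilon}h\|$ gives the first inequality of the lemma.

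The same programme applied to $\zeta^{h,k,x}$ via its Duhamel form
\begin{equation*}
\zeta^{h,k,x}(t) = -\!\int_0^t\! e^{(t-s)A'}\!\bigl[D^2L'_{N'}(\Xhat^{N'})\!\cdot\!\zeta^{h,k,x}(s) + D^3L'_{N'}(\Xhat^{N'})\!\cdot\!(\eta^{h,x}(s),\eta^{k,x}(s))\bigr]\dd s
\end{equation*}
handles the Hessian: the two inequalities of \Cref{assum:C2_boundedness} applied to the forcing term, paired with the smoothing of $(-A')^{\alpha'}e^{(t-s)A'}$, produce the $(\beta t)^{-\alpha'}$ singularity; the already-established weighted bounds on $\eta^{h,x},\eta^{k,x}$ with exponents $\epsilon$ and $\gamma$ supply the $(\beta t)^{-\epsilon-\gamma}$ singularity; the linear $D^2L'_{N'}\cdot\zeta$ term is absorbed by Gronwall against $e^{t\beta\lambda^*}$. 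Substituting this together with the pathwise contraction of $\eta^{h,x}\otimes\eta^{k,x}$ into \eqref{zeta} yields the stated bound on $\|(-A')^\epsilon D^2u(t,x)(-A')^\gamma\|_{\mathcal B}$.

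The main obstacle I foresee is the $\beta$-bookkeeping through the Duhamel/Gronwall step: every occurrence of $L'=(\beta/2)L$ contributes a factor of $\beta$ that must be paired with a factor of $t$ inside the integral to avoid spoiling the exponential decay rate, and the conversions between $\|\cdot\|_\epsilon$, $\|(-A')^{-\epsilon}\cdot\|$, and $\|(-A)^{-\epsilon}\cdot\|$ must be tracked carefully to produce exactly the $\beta^\epsilon$ and $\beta^{\epsilon+\gamma}$ prefactors stated. Unlike the bounded-gradient case, no second Markov-property step is required, because exponential contraction is available pathwise directly from strict dissipativity; this is what makes the same estimate simultaneously valid in the short-time and long-time regimes.
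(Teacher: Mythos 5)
Your proposal follows essentially the same route as the paper's proof: Duhamel representations for $\eta^{h,x}$ and $\zeta^{h,k,x}$, the smoothing estimate $\|(-A')^\epsilon e^{tA'}\|\lesssim t^{-\epsilon}e^{-c\beta t}$, a Gronwall argument weighted by $e^{t\beta\lambda^*}$, and \Cref{assum:C2_boundedness} for the third-derivative forcing term — with the key structural observation, which the paper also makes, that strict dissipativity gives pathwise exponential contraction of the variational processes and therefore removes any need for the Bismut--Elworthy--Li plus Markov-property step used in \Cref{long-time}. One small point in your favor: your energy identity gives $\tfrac12\tfrac{\dd}{\dd t}\|\eta^{h,x}\|^2 \leq -(\beta/2)(\lambda/\mu_0-M)\|\eta^{h,x}\|^2$, hence $\lambda^*=(\lambda/\mu_0-M)/2$, whereas the paper's Gronwall bookkeeping drops a factor of $1/2$ coming from $A'=(\beta/2)A$ and $L'=(\beta/2)L$ and ends with $\lambda^*=\lambda/\mu_0-M$; this is a harmless constant but your value is the one that actually follows from the computation.
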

\begin{proof}
From the definition of $\eta^{h,x}$, we have that
\begin{align*}
\norm{\eta^{h,x}(t)} & = \norm{e^{tA'}h-\int^t_0 e^{(t-s)A'}D^2L'(\Xhat^{N'}(s,x)).\eta^{h,x}(s)\mathrm{d}s}            \\
                  & \leq \norm{e^{tA'}h} +  \int^t_0 e^{-(t-s)\lambda/\mu_0} M \beta \norm{\eta^{h,x}(s)} \mathrm{d}s. 
\end{align*}
As in \Cref{eq:etAhbound}, for any $0 \leq c_0 < 1$, the first term can be bounded by 
\begin{align}
\norm{e^{tA'} h} & = \norm{t^{-\epsilon} (-t A')^{\epsilon}e^{c_0 tA'} (-A')^{-\epsilon}e^{(1 - c_0)t A'} h}
= t^{-\epsilon} \norm{(-t A')^{c_0 \epsilon}e^{tA'}}_{\cB(\cH)} \norm{(-A')^{-\epsilon}e^{(1 - c_0)t A'} h} \notag \\
& \leq t^{-\epsilon} \sup_{x \geq 0}\{x^\epsilon e^{-c_0 x}\}\norm{(-A')^{-\epsilon}h} = \frac{C_{\epsilon,c_0}}{t^\epsilon}\norm{(-A')^{-\epsilon} e^{(1 - c_0)t A'} h}
\notag 
\end{align}
where $C_{\epsilon,c_0} \eqdef \sup_{x \geq 0}\{x^\epsilon e^{-c_0 x}\}$.
Then, Gronwall's inequality gives 
\begin{align*}
& e^{ t \beta \lambda/\mu_0} \norm{\eta^{h,x}(t)} \leq 
\frac{C_{\epsilon,c_0}}{t^\epsilon}e^{c_0 t \beta \lambda/\mu_0}\norm{(-A')^{-\epsilon} h} + \int_0^t \beta M e^{s\beta \lambda/\mu_0 } \norm{\eta^{h,x}(s)} \dd s \\
\Rightarrow~
& e^{ t \beta \lambda/\mu_0} \norm{\eta^{h,x}(t)} \leq 
\frac{C_{\epsilon,c_0}}{t^\epsilon} e^{c_0 t \beta \lambda/\mu_0} \norm{(-A')^{-\epsilon} h} 
+ \int_0^t \beta M e^{s \beta \lambda/\mu_0 } \norm{\eta^{h,x}(s)} \dd s \\
\Rightarrow~  & e^{t \beta \lambda/\mu_0} \norm{\eta^{h,x}(t)} \leq 
\frac{C_{\epsilon,c_0}}{t^\epsilon} e^{c_0 t \beta \lambda/\mu_0} \norm{(-A')^{-\epsilon} h} + C_{\epsilon,c_0} 
\int_0^t \frac{e^{c_0 s \beta \lambda/\mu_0} }{s^\epsilon} \beta M \exp((t-s) \beta M)  \dd s 
\norm{(-A')^{-\epsilon} h} \\
& ~~~~~~~~~~~~~~~~~~~~~~~~~~~~~~~~~~~~\leq 
C_{\epsilon,c_0} \norm{(-A')^{-\epsilon} h}  \left[ \frac{1}{t^\epsilon} e^{c_0 t \beta \lambda/\mu_0}  
+  \beta^\epsilon M e^{t \beta M} \int_0^\infty \frac{e^{\tau (c_0 \lambda/\mu_0 - M) } }{\tau^\epsilon}  \dd \tau \right]
\\
\Rightarrow~ &
 \norm{\eta^{h,x}(t)} \leq  \frac{C_{\epsilon,c_0}}{t^\epsilon} \left(e^{-(1-c_0) t \beta \lambda/\mu_0 }+ (\beta t M)^\epsilon  \int_0^\infty \frac{e^{\tau (c_0 \lambda/\mu_0 - M) } }{(M \tau)^\epsilon}  M \dd \tau e^{- t \beta  (\lambda/\mu_0 - M) } \right).
\end{align*}
Therefore, if we choose $c_0$ as $c_0 = (\lambda/\mu_0)^{-1} M/2 $, 
then $0 \leq c_0 < 1$ by the strict dissipativity assumption 
and we obtain
\begin{align}
\norm{\eta^{h,x}(t)} 
&\leq C \frac{1+(t\beta M)^\epsilon}{t^\epsilon}  \exp[- t \beta (\lambda/\mu_0 - M)]  \norm{(-A')^{-\epsilon} h} \notag \\
&= C \frac{1+(t\beta M)^\epsilon}{t^\epsilon}  \exp[- t \beta \lambda^*]  \norm{(-A')^{-\epsilon} h}, \label{eq:etahkStDisBound}
\end{align}
where we used $\lambda^* = \lambda/\mu_0 - M~(>0)$.
Applying this to \Cref{eta}, we have the first inequality.

The second inequality is also shown in the same way as \Cref{finite-time}.
Notice that by the Lipschitz continuity of $\nabla L$, we have  
    \begin{align*}
        \norm{\zeta^{h,k,x}(t)} 
        \leq                      & \int_0^t e^{-(t-s)\beta \lambda/\mu_0}\beta M \norm{\zeta^{h,k,x}(s)}\mathrm{d}s + \int^t_0 \frac{C_{\alpha',c_0}\beta^{1-\alpha'}}{(t-s)^{\alpha'}} e^{-(1-c_0)(t-s)\beta\lambda/\mu_0}  \norm{\eta^{h,x}(s)}\norm{\eta^{k,x}(s)}\mathrm{d}s                                 \\
        \leq                      & \int_0^t e^{-(t-s)\beta\lambda/\mu_0} \beta M \norm{\zeta^{h,k,x}(s)}\mathrm{d}s \\
& + C'_{\alpha',\epsilon, \gamma}\norm{(-A')^{-\epsilon}h}\norm{(-A')^{-\gamma}k}
\beta^{1-\alpha'} \int_0^t  \frac{(1+(M\beta s)^{\epsilon + \gamma}) }{(t-s)^{\alpha'} s^{\epsilon + \gamma}} e^{- 2 s \beta (\lambda/\mu_0 - M)} 
e^{-(1-c_0)(t-s)\beta\lambda/\mu_0}
\dd s.
\end{align*}
From this inequality, we have 
\begin{align*}
& e^{t \beta\lambda/\mu_0}       \norm{\zeta^{h,k,x}(t)} \\
\leq                      & \int_0^t  \beta M  e^{s\beta\lambda/\mu_0}\norm{\zeta^{h,k,x}(s)}\mathrm{d}s \\
& + C_{\alpha',\epsilon, \gamma}\norm{(-A')^{-\epsilon}h}\norm{(-A')^{-\gamma}k}
\beta^{1-\alpha'} 
e^{t \beta\lambda/\mu_0 -t\beta \min\{2(\lambda/\mu_0 - M),(1-c_0)\lambda/\mu_0\}} 
\int_0^t  \frac{(1+(M\beta s)^{\epsilon + \gamma}) }{(t-s)^{\alpha'} s^{\epsilon + \gamma}} 
\dd s
\\
\leq & \int_0^t  \beta M  e^{s\beta\lambda/\mu_0}\norm{\zeta^{h,k,x}(s)}\mathrm{d}s \\
& + C_{\alpha',\epsilon, \gamma}\norm{(-A')^{-\epsilon}h}\norm{(-A')^{-\gamma}k}
\beta^{1-\alpha'} 
e^{t \beta\lambda/\mu_0 -t\beta \min\{2(\lambda/\mu_0 - M),(1-c_0)\lambda/\mu_0\}} 
\times \\
& (1+(M\beta t)^{\epsilon + \gamma}) t^{1-\alpha'-\epsilon - \gamma} \int_0^1  \frac{1 }{(1-\tilde{s})^{\alpha'} \tilde{s}^{\epsilon + \gamma}} 
\dd \tilde{s} \\
        \leq                     
         & \int_0^t  \beta M  e^{s\beta\lambda/\mu_0}\norm{\zeta^{h,k,x}(s)}\mathrm{d}s \\
& + C'_{\alpha',\epsilon, \gamma}\norm{(-A')^{-\epsilon}h}\norm{(-A')^{-\gamma}k}
\beta^{1-\alpha'} 
(1+(M\beta t)^{\epsilon + \gamma}) t^{1-\alpha'-\epsilon - \gamma}
e^{t \beta\lambda/\mu_0 -t\beta \min\{2(\lambda/\mu_0 - M), (1-c_0) \lambda/\mu_0\}}. 
\end{align*}
Here, we set $c_0 = (\lambda/\mu_0)^{-1}M/2$, 
then we further obtain
\begin{align*}
& e^{t \beta\lambda/\mu_0}       \norm{\zeta^{h,k,x}(t)} \\
\leq &     
 C'_{\alpha',\epsilon, \gamma}\norm{(-A')^{-\epsilon}h}\norm{(-A')^{-\gamma}k}
\beta^{1-\alpha'}  \times \\
& \Big( 
e^{t\beta M}
\int_0^t 
M \beta (1+(M\beta s)^{\epsilon+\gamma}) s^{1-\alpha'-\epsilon - \gamma}
e^{ - s \beta \min\{ \lambda/\mu_0 -M, M/2\}} \dd s
+ \\
& 
~~~~~(1+(M\beta t)^{\epsilon+\gamma}) t^{1-\alpha'-\epsilon - \gamma}
e^{t\beta M}e^{ - t \beta \min\{ \lambda/\mu_0 -M, M/2\}}
 \Big)
 \\
= &
 C'_{\alpha',\epsilon, \gamma}\norm{(-A')^{-\epsilon}h}\norm{(-A')^{-\gamma}k}
\beta^{1-\alpha'}  e^{t \beta M} \times \\
& ~~~
\left(
\int_0^t 
M \beta
(1+(M \beta s)^{\epsilon + \gamma}) s^{1-\alpha'-\epsilon - \gamma} e^{- s \beta \min\{ \lambda/\mu_0 -M, M/2\}} \dd s 
+ 
(1+(M\beta t)^{\epsilon + \gamma}) t^{1-\alpha'-\epsilon - \gamma}e^{- t \beta \min\{ \lambda/\mu_0 -M, M/2\}}
\right)
\\
\leq
& 
 C''_{\alpha',\epsilon, \gamma}\norm{(-A')^{-\epsilon}h}\norm{(-A')^{-\gamma}k}
\beta^{1-\alpha'}  e^{t \beta M}
 [\beta^{-(1-\alpha' - \epsilon - \gamma)} +(1+(M\beta t)^{\epsilon + \gamma})  t^{1-\alpha'-\epsilon - \gamma} e^{- t \beta \min\{ \lambda/\mu_0 -M, M/2\}}]
%
\end{align*}    
By multiplying both terms by $e^{-t\beta\lambda/\mu_0}$, we obtain 
\begin{align*}
  \norm{\zeta^{h,k,x}(t)}
& \leq     
 C'_{\alpha',\epsilon, \gamma}\norm{(-A')^{-\epsilon}h}\norm{(-A')^{-\gamma}k}
\beta^{\epsilon + \gamma}
 [1 +  (\beta t)^{1-\alpha'-\epsilon - \gamma}]
e^{ -t\beta (\lambda/\mu_0 - M)},
\end{align*}
where we used that 
$\sup_{t > 0} (1+(M\beta t)^{\epsilon + \gamma})  e^{- t \beta \min\{ \lambda/\mu_0 -M, M/2\}} < C$ (bounded by a constant independent of $\beta$).
Since $1 - \epsilon -\gamma > 0$ and $1 - \alpha' > 0$, 
it holds that 
$(\beta t)^{1-\alpha'-\epsilon - \gamma} \leq (\beta t)^{-\alpha'} + (\beta t)^{-\epsilon - \gamma}$.
Then, we finally obtain
\begin{align*}
\norm{\zeta^{h,k,x}(t)}
\leq     
 C'_{\alpha',\epsilon, \gamma}\norm{(-A')^{-\epsilon}h}\norm{(-A')^{-\gamma}k}
\beta^{\epsilon + \gamma} 
\left[ 1 + (\beta t)^{-\alpha'} +  + (\beta t)^{-\epsilon - \gamma} \right]
e^{ -t\beta \lambda^*},
\end{align*}
where we used $\lambda^* = \lambda/\mu_0 - M~(>0)$.
Applying this inequality and \Cref{eq:etahkStDisBound} to \Cref{zeta}, we obtain the second inequality.
\end{proof}

\begin{remark}
Note that \Cref{lemm:StrongDissUconverge} for the strict dissipativity condition does not require the restriction $t \leq 1/\beta$ while 
\Cref{finite-time} is for the bounded gradient condition.
This is advantageous to show better dependency on $\beta$ under the strict dissipativity condition than the bounded gradient condition.
\end{remark}

We can finally prove \Cref{u-bound}. The proof is again in line with \citet{Kopec}.
\begin{proof}[Proof of \Cref{u-bound}.]
First, we show the assertion for the bounded gradient condition.
    By the Markov property and \Cref{long-time}, for any $t\geq 1/\beta$, we have
    \begin{align*}
        \norm{Du(t,x).h} & \leq C\sqrt{\beta}\norm{\phi}_\infty e^{-\beta \lambda^*(t-1/\beta)}\bbE\left[(1+\norm{\Xhat^{N'}(1/\beta,x)}^2)\norm{\eta^{h,x}(1/\beta)}\right] \\
                         & \leq C\sqrt{\beta}\norm{\phi}_\infty e^{-\beta \lambda^*(t-1/\beta)}(1+\norm{x}^2)\beta^\epsilon \norm{(-A')^{-\epsilon}h},
    \end{align*}
    where the last estimate comes from \Cref{moments1} and \Cref{finite-time}. Combining this estimate and \Cref{finite-time}, we obtain \Cref{bound1}.
    We can easily see \Cref{bound2} follows from the similar argument.

As for the strict dissipativity condition, \Cref{lemm:StrongDissUconverge} directly gives the assertion.
\end{proof}


\section{Proof of SGLD convergence rate (\Cref{prop:SGLDdiscrepancy})}
\label{sec:ProofOfSGLD}
In this chapter, we prove \Cref{prop:SGLDdiscrepancy}.
Before that, we need to prepare the following lemmas to bound $\bbE [L(Y_k^N)-L(X_k^N)]$.
For lighter notation, our constants may differ from line to line.

\begin{lemma}
    \label{sg-error-bound}
    For any $x\in\mathcal{H}_N$, it holds that
    \begin{align*}
        \bbE \norm{\nabla L(x) - g_k(x)}^2 \leq \frac{C(\ntr-\nbch)}{\nbch(\ntr-1)},
    \end{align*}
    where $\nbch$ is the mini-batch size and $C>0$ is some constant.
\end{lemma}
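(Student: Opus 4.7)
The plan is to recognize the left-hand side as the variance of a sample mean obtained by drawing $\nbch$ items uniformly \emph{without replacement} from the population of $\ntr$ per-example gradients, so the factor $(\ntr - \nbch)/(\nbch(\ntr-1))$ will appear as the standard finite-population correction.

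First I would observe that since $I_k \subset \{1,\dots,\ntr\}$ is drawn uniformly among size-$\nbch$ subsets, each index $i$ appears in $I_k$ with probability $\nbch/\ntr$, so $\bbE[g_k(x)] = \nabla L(x)$; hence $\bbE\|\nabla L(x) - g_k(x)\|^2$ equals the variance of $g_k(x)$. Writing $v_i \eqdef \nabla \ell_i(x) - \nabla L(x)$, which satisfies $\frac{1}{\ntr}\sum_{i=1}^\ntr v_i = 0$, I would then expand
\begin{align*}
\bbE\|\nabla L(x)-g_k(x)\|^2
= \frac{1}{\nbch^2}\sum_{i,j=1}^\ntr \bbP(i,j\in I_k)\,\langle v_i,v_j\rangle
\end{align*}
and use the elementary identities $\bbP(i\in I_k)=\nbch/\ntr$ and $\bbP(i,j\in I_k)=\nbch(\nbch-1)/(\ntr(\ntr-1))$ for $i\neq j$. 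A short algebraic simplification, combined with $\sum_{i\neq j}\langle v_i,v_j\rangle = -\sum_i \|v_i\|^2$ (since $\sum_i v_i = 0$), yields the standard sampling-without-replacement formula
\begin{align*}
\bbE\|\nabla L(x)-g_k(x)\|^2
= \frac{\ntr-\nbch}{\nbch\,(\ntr-1)}\cdot\frac{1}{\ntr}\sum_{i=1}^\ntr \|\nabla\ell_i(x)-\nabla L(x)\|^2.
\end{align*}

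Finally I would bound the per-sample variance by a constant independent of $x$ and $N$ by invoking \Cref{ass:SGLD_cond}, under which each $\ell_i$ satisfies the bounded-gradient condition \Cref{assum:bounded_grad} uniformly, i.e. $\|\nabla \ell_i(x)\| \leq B$ for all $i$ and $x$. This gives $\|\nabla \ell_i(x) - \nabla L(x)\| \leq 2B$, so $\frac{1}{\ntr}\sum_i \|\nabla\ell_i(x)-\nabla L(x)\|^2 \leq 4B^2$, and the lemma follows with $C = 4B^2$. The argument is essentially routine finite-population variance computation; no real obstacle arises, the only thing to be careful about is the without-replacement sampling (which is why the $\ntr-1$ appears rather than simply $\ntr$), and the fact that the uniform-in-$i$ bound on $\|\nabla \ell_i\|$ makes $C$ independent of $x\in\cH_N$ and of the dimension $N$.
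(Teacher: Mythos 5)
Your proof is correct: the unbiasedness computation, the $\bbP(i,j\in I_k)$ identities for uniform sampling without replacement, the $\sum_{i\neq j}\langle v_i,v_j\rangle=-\sum_i\|v_i\|^2$ trick, and the resulting finite-population-correction formula are all right, and the uniform bound $\|\nabla\ell_i(x)\|\le B$ from \Cref{ass:SGLD_cond} does make $C=4B^2$ independent of $x$ and $N$. The paper itself states this lemma without proof, so there is nothing to compare against — you are supplying the omitted routine argument, and it is exactly the standard one.
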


We can prove the following bound similarly to \Cref{moments1} and \ref{moments2} thanks to \Cref{ass:SGLD_cond}. 
\begin{lemma}
    \label{moments3}
    For any $p\geq 1$, there exists a constant $C_p$ such that for every $N\in\mathbb{N}$, $\beta\geq \eta_0$, and $x\in\mathcal{H}_N$,
    \begin{align*}
        E\norm{Y_k^N}^p\leq C_p(1+\norm{x_0}^p).
    \end{align*}
\end{lemma}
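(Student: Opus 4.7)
The plan is to adapt the pathwise decomposition used in \Cref{prop:norm_control} and \Cref{moments1,moments2} to the stochastic-gradient recursion. Write
\[
Y_{n+1}^N = S_\eta\bigl(Y_n^N - \eta\, g_{n,N}(Y_n^N)\bigr) + \sqrt{\tfrac{2\eta}{\beta}}\,S_\eta P_N\varepsilon_n,
\]
and split $Y_n^N = \tilde Y_n^N + Z_n^N$, where $\{Z_n^N\}$ solves the pure-noise recursion $Z_{n+1}^N = S_\eta Z_n^N + \sqrt{2\eta/\beta}\,S_\eta P_N\varepsilon_n$ with $Z_0^N = 0$, and $\{\tilde Y_n^N\}$ absorbs the drift part. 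First I would show that $\sup_{n,N}\bbE\|Z_n^N\|^p < \infty$ for every $p\ge 1$; this is the Galerkin analogue of \Cref{prop:z_boundedness} and follows by the same $\beta>\eta_0$ argument, noting that $P_N$ is a contraction so the $N$-truncated noise has no worse moments than the untruncated one.

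Next, I would control the deterministic drift part. Under \Cref{ass:SGLD_cond}, each $\ell_i$ satisfies the bounded-gradient condition uniformly, so $\|g_{n,N}(y)\| \le \|P_N\| \cdot B \le B$ almost surely for every realization of the mini-batch $I_n$. Applying the operator-norm bound $\opnorm{S_\eta}\le (1+\lambda\eta/\mu_0)^{-1}$ exactly as in the proof of \Cref{prop:norm_control}, this gives
\[
\|\tilde Y_{n+1}^N\| \le \rho\,\|\tilde Y_n^N\| + \rho\,\eta B, \qquad \rho = \tfrac{1}{1+\lambda\eta/\mu_0} < 1,
\]
and iterating yields $\|\tilde Y_n^N\| \le \rho^n\|x_0\| + \mu_0 B/\lambda$, a pointwise (non-random) bound uniform in $n,N,\beta\ge\eta_0,$ and $0<\eta\le\eta_0$.

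Finally, combining the two parts by the triangle inequality and $(a+b)^p \le 2^{p-1}(a^p+b^p)$,
\[
\bbE\|Y_n^N\|^p \le 2^{p-1}\bigl(\|\tilde Y_n^N\|^p + \bbE\|Z_n^N\|^p\bigr) \le C_p\bigl(1 + \|x_0\|^p\bigr),
\]
with $C_p$ depending only on $p$, $B$, $\lambda$, $\mu_0$, $\eta_0$ and the uniform moment bound on $Z_n^N$. The one subtlety to check carefully is that the bounded-gradient constant for $g_{n,N}$ does not inflate with the mini-batch size or $N$; this is guaranteed by the phrase ``uniform over all $\ell_i$'' in \Cref{ass:SGLD_cond} together with the contractivity of $P_N$. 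No step is genuinely delicate: the only thing worth noting is that because we use $g_{n,N}$ rather than $\nabla L_N$, we must take the bound $\|g_{n,N}\|\le B$ a.s., rather than in expectation, to avoid having to square-integrate the noise together with the stochastic gradient.
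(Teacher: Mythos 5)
Your proof is correct and follows essentially the same route the paper intends: the paper declines to spell out the argument, saying only that Lemma~\ref{moments3} can be shown ``similarly to'' Lemmas~\ref{moments1} and~\ref{moments2}, whose proofs (under the bounded-gradient condition) use exactly the decomposition $Y_n^N = \tilde Y_n^N + Z_n^N$ into a noise chain with uniformly bounded moments and a drift chain contracted by $\opnorm{S_\eta} < 1$ and damped by $\|\nabla L\| \le B$. Your observation that $\|g_{n,N}\| \le B$ a.s.\ by the uniformity in \Cref{ass:SGLD_cond} and the contractivity of $P_N$, and that $Z_n^N = P_N Z_n$ so the Galerkin noise inherits the moment bound of Proposition~\ref{prop:z_boundedness}, are exactly the two ingredients needed to transplant the argument.
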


\begin{lemma}
    \label{log-exp-bound}
    It holds that:
    \begin{align*}
        \begin{split}
            \exists\:& C_1,\:C_2 > 0,\: \forall\: \beta > \frac{2\mu'_0}{2+\eta/\mu'_0},\\
            & \log\bbE \left[\exp(\norm{X_k^N}^2)\right]\leq \norm{x_0}^2+C_1/\beta+C_2, 
        \end{split}
    \end{align*}
    where $C_1,C_2>0$ is an constant.
\end{lemma}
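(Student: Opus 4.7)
The plan is to set up a one-step recursion on $\phi_k := \log \bbE[\exp(\|X_k^N\|^2)]$ and iterate. I would first condition on $X_k^N$ and write $X_{k+1}^N = U_k + V_k$, where $U_k := S_\eta(X_k^N - \eta \nabla L_N(X_k^N))$ is $\sigma(X_k^N)$-measurable and $V_k := \sqrt{2\eta/\beta}\,S_\eta P_N \varepsilon_k$ is an independent centered Gaussian on $\cH_N$. Because $S_\eta$ and $P_N$ are simultaneously diagonal in $(f_i)$, the coordinates $(V_k)_i$ are independent $\mathcal{N}(0,\sigma_i^2)$ with
\begin{align*}
\sigma_i^2 \;=\; \frac{2\eta/\beta}{(1+\eta\lambda/\mu_i)^2}, \qquad \sigma_i^2 \leq \sigma_0^2 \leq \frac{2\mu'_0}{\beta(2+\eta/\mu'_0)},
\end{align*}
and a one-dimensional Gaussian quadratic MGF computation yields, as long as $2\sigma_i^2 < 1$ (guaranteed by the stated lower bound on $\beta$, possibly strengthened by a constant factor),
\begin{align*}
\bbE\!\left[\exp(\|X_{k+1}^N\|^2) \,\middle|\, X_k^N\right] \;=\; \prod_{i=0}^N \frac{1}{\sqrt{1-2\sigma_i^2}}\, \exp\!\left(\frac{(U_k)_i^2}{1-2\sigma_i^2}\right).
\end{align*}

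Using $1/(1-2\sigma_i^2) \leq 1/(1-2\sigma_0^2)$ in the exponent together with the bounded-gradient hypothesis $\|\nabla L\| \leq B$ (via \Cref{ass:SGLD_cond}) and $\|S_\eta\|_{\cB(\cH)} = \rho := 1/(1+\eta/\mu'_0) < 1$, I would deduce
\begin{align*}
\log \bbE\!\left[\exp(\|X_{k+1}^N\|^2) \,\middle|\, X_k^N\right] \leq -\tfrac12\sum_{i=0}^N \log(1-2\sigma_i^2) + \frac{\rho^2}{1-2\sigma_0^2}\,(\|X_k^N\|+\eta B)^2.
\end{align*}
Splitting $(\|X_k^N\|+\eta B)^2 \leq (1+\epsilon)\|X_k^N\|^2 + (1+\epsilon^{-1})(\eta B)^2$ and picking $\epsilon>0$ such that $\gamma := \rho^2(1+\epsilon)/(1-2\sigma_0^2) < 1$ produces a one-step contraction $\log\bbE[\exp(\|X_{k+1}^N\|^2)\mid X_k^N] \leq \gamma \|X_k^N\|^2 + D$. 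The trace term $-\tfrac12\sum_i \log(1-2\sigma_i^2)$ is controlled via $-\log(1-2\sigma_i^2) \leq 2\sigma_i^2/(1-2\sigma_0^2)$ combined with $\sigma_i^2 \lesssim \mu'_i/\beta$ and the Mercer decay $\mu_i \sim i^{-2}$ from \Cref{assum:eigenvalue_cvg}, which yields a contribution of order $\beta^{-1}$; together with the $(1+\epsilon^{-1})(\eta B)^2$ term this gives $D = C_1/\beta + C_2$ with $C_1,C_2$ independent of $k$ and $\beta$.

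To remove the conditioning, take unconditional expectation and apply Jensen's inequality to the concave map $y \mapsto y^\gamma$ (valid for $\gamma \in (0,1)$), giving $\bbE[\exp(\|X_{k+1}^N\|^2)] \leq e^D (\bbE[\exp(\|X_k^N\|^2)])^\gamma$, that is $\phi_{k+1} \leq \gamma \phi_k + D$. Iterating from $\phi_0 = \|P_N x_0\|^2 \leq \|x_0\|^2$ produces $\phi_k \leq \|x_0\|^2 + D/(1-\gamma)$, which is exactly the desired uniform-in-$k$ bound of the form $\|x_0\|^2 + C_1/\beta + C_2$. The main obstacle is checking that $\gamma$ can be chosen strictly less than $1$ uniformly in $\eta \in (0,\eta_0]$ under the stated threshold on $\beta$: this amounts to verifying $\rho^2 + 2\sigma_0^2 < 1$ (up to the $\epsilon$ slack), which one reduces to a comparison between $1-\rho^2 = (\eta/\mu'_0)(2+\eta/\mu'_0)/(1+\eta/\mu'_0)^2$ and $2\sigma_0^2 = 4\mu'_0/[\beta(2+\eta/\mu'_0)]$; the stated condition $\beta > 2\mu'_0/(2+\eta/\mu'_0)$ delivers the qualitative integrability, and the quantitative contraction then follows once $\beta$ exceeds this threshold by an $\eta$-dependent constant, which is absorbed into the final constants $C_1,C_2$.
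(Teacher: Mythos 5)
Your setup (conditioning on $X_k^N$, computing the Gaussian moment generating function coordinatewise, absorbing the drift via the bounded-gradient hypothesis, and iterating) is the same as the paper's. The genuine gap is the \emph{aggregation}: after collapsing $\sum_i (U_k)_i^2/(1-2\sigma_i^2)$ to $\|U_k\|^2/(1-2\sigma_0^2)$ and contracting via $\|S_\eta\|\le\rho$, your one-step recursion reads $\phi_{k+1}\le\gamma\phi_k+D$ with $1-\gamma=O(\eta)$ and $D=\tfrac12\sum_i\log(1-2\sigma_i^2)^{-1}+O(\eta)\lesssim\sqrt{\eta}/\beta+\eta$. Iterating then gives $\phi_k\le\|x_0\|^2+D/(1-\gamma)\lesssim\|x_0\|^2+1/(\sqrt{\eta}\,\beta)+O(1)$, which is \emph{not} of the stated form $\|x_0\|^2+C_1/\beta+C_2$: the factor $1/(1-\gamma)\sim 1/\eta$ is $\eta$-dependent and cannot be absorbed into $C_1,C_2$. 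The mistake is that you divide a trace term summed over all modes by the \emph{worst} (slowest) contraction rate.

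The paper avoids this by keeping the per-coordinate structure through the weight matrices $S',S^{(k)}$: mode $j$ has noise level $\sigma_j^2\approx Qp_j$ but also its own contraction rate $\alpha_j\gtrsim\eta/\mu_j'$, and the cumulative trace contribution is the \emph{matched} sum $\sum_j \sigma_j^2/\alpha_j\lesssim\sum_j\mu_j'/\beta\lesssim 1/\beta$, which is summable without any $1/\eta$ blow-up precisely because high modes dissipate much faster than the worst mode. Your scalar Jensen argument throws this matching away. (One can also check this against the ground truth: with $\nabla L=0$ the stationary law has coordinate variances $\sim\mu_j'/\beta$, so $\log\bbE\exp\|Z_\infty\|^2\sim\sum_j\mu_j'/\beta\lesssim 1/\beta$, independent of $\eta$ — your bound is provably loose.) A correct fix requires either working coordinatewise with geometrically decaying weights as in the paper, or an equivalent device that pairs each mode's noise with its own decay rate; the scalar $y\mapsto y^\gamma$ Jensen trick cannot do this.
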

\begin{remark}
    Note that our estimate is not subject to ``the curse of dimensionality'' which explicitly appears in Lemma C.7 in \citet{Xu_Chen_Zou_Gu18}.
\end{remark}
\begin{proof}
    The proof is similar to that of Lemma C.7 in \citet{Xu_Chen_Zou_Gu18}. The main difference lies in the existence of regularizer in our scheme and the absence of dissipativity assumption of $L_N$. 
Instead, we assume \Cref{ass:SGLD_cond}. 

    Let $Q=\frac{2 \eta}{\beta}$ and $p_j = \frac{1}{(1+\eta/\mu'_j)^2}$.
    Let $S' := \diag((q_j)_{j=0}^N)$ for $q_j > 0~(j=0,\dots,N)$ and $1 > q_0 \geq q_1 \geq \dots \geq q_N$, then 
    we have
    \begin{align*}
    & \bbE\left[\exp \|X_{k+1}^{N}  \|_{S'}^2\right] \\
    = & \bbE\left[\exp \left\| S_\eta (X_{k}^{N} - \eta \nabla L_N(X_k^N) + \sqrt{\frac{2\eta}{\beta}} \epsilon_k^N  \right\|_{S'}^2\right],
    \end{align*}
    where $\epsilon_k^N \sim \mathcal{N}(0, I_N)$. Let $x_i, \epsilon_i$ denote the $i$-th component of $X_k^N-\eta\nabla L_N(X_k^N),$ and $\epsilon_k^N$ respectively, which corresponds to the coefficient of $f_i$, the $i$-th eigenfunction of $T_K$ defined in \Cref{eq:integral_operator}. Under this notation, we have the following estimate:

    \begin{align*}
        & \bbE \left[\exp\norm{S_\eta(X_k^N-\eta\nabla L_N(X_k^N)+\sqrt{\frac{2 \eta}{\beta}}\epsilon_k^N)}_{S'}^2\right]                                                                                                                                                          \\
        & =\bbE \left[\bbE \left[\exp\norm{S_\eta(X_k^N-\eta\nabla L_N(X_k^N)+\sqrt{\frac{2 \eta}{\beta}}\epsilon_k^N)}_{S'}^2 \biggl{|} X_k^N\right]\right]                                                                                                                  \\
        & =\bbE \left[\prod_{i=0}^N \int\exp\left( p_i q_i\left(x_i^2 + 2\sqrt{\frac{2 \eta}{\beta}}x_i\epsilon_i + \frac{2 \eta}{\beta}\epsilon_i^2\right) \right)\frac{1}{\sqrt{2\pi}}\exp\left(-\frac{\epsilon_i^2}{2}\right)\mathrm{d}\epsilon_i \right]          \\
        & =\bbE \prod_{i=0}^N\frac{1}{\sqrt{1-p_i q_i Q}}\exp\left(\frac{x_i^2}{\frac{1}{p_i q_i}-2Q}\right)                                                                                                                                  \\
        & \leq \exp\left( \sum_{j=0}^N  \frac{Q p_j q_j}{1 - 2 Q p_0 q_0 }   \right)\bbE \left[\exp\left(\sum_{i=0}^N\frac{x_i^2}{\frac{1}{p_i q_i}-2Q}\right)\right],
   \end{align*}
   thanks to the formula of Gaussian integral, $\mu'_0\geq \mu'_i$ and $\log(1-x)\geq -x/(1-x)$.

   Then, we have
    \begin{align*}
    & \exp\left( \sum_{j=0}^N  \frac{Q p_j q_j}{1 - 2 Q p_0 q_0 }   \right)\bbE \left[\exp\left(\sum_{i=0}^N\frac{x_i^2}{\frac{1}{p_i q_i}-2Q}\right)\right]\\
    &\leq \exp\left( \sum_{j=0}^N  \frac{Q p_j q_j}{1 - 2 Q p_0 q_0 }   \right)\bbE \left[\exp\left(\sum_{i=0}^N\frac{1}{\frac{1}{p_i q_i}-2Q}\left({X_{k,i}^N}^2 
    -2\eta X_{k,i}^N\nabla L_{N,i}(X_k^N) + \eta^2\nabla L_{N,i}(X_k^N)^2\right)\right)\right],
    \end{align*}
    where $X_{k,i}^N,\: L_{N,i}(X_k^N)$ denotes the $i$-th component of $X_k^N,\: L_N(X_k^N)$ respectively.

    Then \Cref{ass:SGLD_cond} implies
    \begin{align*}
    & \exp\left( \sum_{j=0}^N  \frac{Q p_j q_j}{1 - 2 Q p_0 q_0 }   \right)\bbE \left[\exp\left(\sum_{i=0}^N\frac{1}{\frac{1}{p_i q_i}-2Q}\left({X_{k,i}^N}^2 
-2\eta X_{k,i}^N\nabla L_{N,i}(X_k^N) + \eta^2\nabla L_{N,i}(X_k^N)^2\right)\right)\right],\\
    &\leq \exp\left( \sum_{j=0}^N  \frac{Q p_j q_j}{1 - 2 Q p_0 q_0 }   \right)\bbE \left[\exp\left(\sum_{i=0}^N\frac{1}{\frac{1}{p_i q_i}-2Q}\left({X_{k,i}^N}^2 
+2\eta B |X_{k,i}^N| + B^2 \eta^2\right)\right)\right],\\
    &\leq \exp\left( \sum_{j=0}^N  \frac{Q p_j q_j}{1 - 2 Q p_0 q_0 }   \right)\bbE \left[\exp\left(\sum_{i=0}^N\frac{1}{\frac{1}{p_i q_i}-2Q}\left((1+\kappa){X_{k,i}^N}^2+C\left(1+\frac{1}{\kappa}\right)\eta^2\right)
    \right)\right]                                                                                                                                                           \\
    &\leq \exp\left( \sum_{j=0}^N  \frac{Q p_j q_j}{1 - 2 Q p_0 q_0 }   \right)\bbE\left\{\exp\left[\sum_{i=0}^N\left( \frac{1+\kappa}{\frac{1}{p_iq_i} - 2Q}(X_{k,i}^N)^2 + 
    \frac{C p_i q_i (1 + \frac{1}{\kappa})}{1 -2 Q p_0 q_0} \eta^2 \right)  \right]\right\} \\
    & = \bbE\left\{\exp\left[ \|X_{k}^N\|_{S^{(k)}}^2  
    + \sum_{j=0}^N \left(  \frac{Q p_j q_j}{1 - 2 Q p_0 q_0 }
    + \frac{C p_j q_j (1 + \frac{1}{\kappa})}{1 -2 Q p_0 q_0} \eta^2 \right)  \right]\right\},
    \end{align*}
    where $S^{(k)} := \diag\left(\left(\frac{1+\kappa}{\frac{1}{p_jq_j} - 2Q}\right)_{j=0}^N\right)$.

    Since $q_0 \leq 1$, it holds that 
    $$
    \frac{Q p_j q_j}{1 - 2 Q p_0 q_0 } \leq \frac{Q p_j q_j}{1 - 2 Q p_0}.
    $$
    If we have chosen $\kappa$ so that $\frac{1 + \kappa}{\frac{1}{p_0 q_0} - 2Q} < 1$, then
    $$
    q_j^{(k)} := \frac{1+\kappa}{\frac{1}{p_jq_j} - 2Q} \leq 
    \frac{1+\kappa}{\frac{1}{p_0 q_0} - 2Q} < 1,
    $$
    and we also have $q_0^{(k)} \geq q_1^{(k)} \geq \dots \geq q_N^{(k)}$.
    Here again, since $q_j\leq 1$, it holds that
    $$
    q_j^{(k)} = \frac{1+\kappa}{\frac{1}{p_jq_j} - 2Q} \leq 
    \frac{1+\kappa}{ (p_j^{-1}-2Q)q_j^{-1} }.
    $$
    Let $\kappa = \frac{1}{2} (2 \eta/\mu_0' + (\eta/\mu_0')^2 - 2\eta/\beta)$, then
    it holds that 
    $$
    \frac{1+\kappa}{ p_j^{-1}-2Q}
    = \frac{1 + \frac{1}{2}[2 \eta/\mu_0' + (\eta/\mu_0')^2 - 2\eta/\beta]}{1 + 2 \eta/\mu_j' + (\eta/\mu_j')^2 - 2\eta/\beta}
    \leq \frac{1}{1+ \frac{1}{4}[2 \eta/\mu_j' + (\eta/\mu_j')^2 - 2\eta/\beta]} =: \frac{1}{1 + \alpha_j} < 1.
    $$
    Therefore, we obtain the following evaluation for $q_j^{(k)}$:
    $$
    q_j^{(k)} \leq \frac{q_j}{1 + \alpha_j},
    $$
    which implies $\norm{\cdot}_{S^{(k)}} \leq \norm{\cdot}_{S'}$.
    Hence, by noticing $p_j \leq (1+\alpha_j)^{-1}$, a recursive argument yields
    \begin{align*}
    \bbE\left[\exp\left(\|X_{k+1}^N\|_{S'}^2 \right)\right]
    \leq 
    \exp\left[\|x_0\|^2 +\frac{Q+C\left(1+1/\kappa\right)\eta^2}{1-2Q p_0q_0} \sum_{j=0}^N \sum_{i=0}^k \frac{q_j}{(1 + \alpha_j)^{k+1-i}} \right].
    \end{align*}
    The second term in the right hand side can be evaluated as 
    $$
    \sum_{i=0}^k\frac{1}{(1 + \alpha_j)^{k+1-i}} =  \frac{(1+\alpha_j)^{-1} - (1+\alpha_j)^{-(k+2)}}{1-(1+\alpha_j)^{-1}}
    \leq  \frac{1}{\alpha_j}. 
    $$
    Finally, if we set $q_j = 1$, then by observing that 
    $$
    \sum_{j=0}^N \frac{1}{\alpha_j} \lesssim 
    \int_1 ^\infty \frac{1}{\eta x^2}\dd x \lesssim \frac{1}{\eta},
    $$
    we have 
    $$
    \frac{Q+C\left(1+1/\kappa\right)\eta^2}{1-2Q p_0q_0} \sum_{j=0}^N \sum_{i=0}^k \frac{1}{(1 + \alpha_j)^{k+1-i}} q_j 
    \lesssim 
    \frac{Q+\left(1+1/\kappa\right)\eta^2}{\eta} = \frac{1}{\beta}+\left(1+\frac{1}{\alpha_0}\right)\eta.
    $$
    Since $\alpha_0 = O(\eta)$, the second term in the right hand side can be evaluated 
    $$
    \left(1+\frac{1}{\alpha_0}\right)\eta \lesssim 1.
    $$
    Combining all arguments, we obtain that 
    $$
    \bbE\left[\exp\left(\|X_{k+1}^N\|^2 \right)\right]
    \leq 
    \exp\left(\|x_0\|^2 + \frac{C_1}{\beta}+C_2\right).
    $$
\end{proof}

The following two lemmas are used to prove Theorem 3.6 in \cite{Xu_Chen_Zou_Gu18}. These results can only be applied to finite dimensional spaces. However, our schemes $Y_k^N, X_k^N$ are no longer infinite dimensional, which means we can follow the same argument in \cite{Xu_Chen_Zou_Gu18}.
\begin{lemma}[\citet{Polyanskiy,Raginsky_Rakhlin_Telgarsky2017,Xu_Chen_Zou_Gu18}]
    \label{wasserstein-bound}
    For any two probability density functions $\mu, \nu$ with bounded second moments, let $g:\mathbb{R}^d\rightarrow\mathbb{R}$ be a $C^1$ function such that
    \begin{align*}
        \norm{\nabla g(x)}_2\leq C_1\norm{x}_2+C_2,\: \forall\: x\in\mathbb{R}^d,
    \end{align*}
    for some constants $C_1,C_2\geq 0$. Then
    \begin{align*}
        \left|\int_{\mathbb{R}^d}g\mathrm{d}\mu-\int_{\mathbb{R}^d}g\mathrm{d}\nu\right|\leq (C_1\sigma+C_2)\mathcal{W}_2(\mu,\nu),
    \end{align*}
    where $\mathcal{W}_2$ is the 2-Wasserstein distance and $\sigma^2=\max\left\{\int_{\mathbb{R}^d}\norm{x}_2^2\mu(\mathrm{d}x),\int_{\mathbb{R}^d}\norm{x}_2^2\nu(\mathrm{d}x)\right\}$.
\end{lemma}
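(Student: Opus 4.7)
The plan is to exploit the coupling (Kantorovich) characterization of the 2-Wasserstein distance: for any coupling $\pi$ of $\mu$ and $\nu$, $\int g \, d\mu - \int g\, d\nu = \int (g(x) - g(y))\, d\pi(x,y)$, so it suffices to bound $|g(x)-g(y)|$ in a form amenable to Cauchy--Schwarz.

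First, I would write $g(x) - g(y) = \int_0^1 \langle \nabla g(tx + (1-t)y),\, x - y\rangle\, dt$ by the fundamental theorem of calculus along the line segment. Using the assumed growth $\norm{\nabla g(z)}_2 \leq C_1 \norm{z}_2 + C_2$ together with the triangle inequality $\norm{tx + (1-t)y}_2 \leq t\norm{x}_2 + (1-t)\norm{y}_2$, and integrating the factor $t$ over $[0,1]$, I obtain the pointwise bound
\begin{align*}
|g(x) - g(y)| \leq \tfrac{C_1}{2}\bigl(\norm{x}_2 + \norm{y}_2\bigr)\norm{x-y}_2 + C_2 \norm{x-y}_2.
\end{align*}

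Next, for an arbitrary coupling $\pi$ of $\mu$ and $\nu$, I integrate against $\pi$ and apply Cauchy--Schwarz termwise. The $\norm{x}_2 \norm{x-y}_2$ piece is controlled by $\bigl(\int \norm{x}_2^2\, d\mu\bigr)^{1/2} \bigl(\int \norm{x-y}_2^2\, d\pi\bigr)^{1/2} \leq \sigma \bigl(\int \norm{x-y}_2^2\, d\pi\bigr)^{1/2}$ by definition of $\sigma$, and symmetrically for the $\norm{y}_2 \norm{x-y}_2$ piece using $\nu$. The $C_2$ term is bounded via $\int \norm{x-y}_2\, d\pi \leq \bigl(\int \norm{x-y}_2^2\, d\pi\bigr)^{1/2}$ by Jensen. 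Combining gives
\begin{align*}
\left|\int g\,d\mu - \int g\,d\nu\right| \leq (C_1 \sigma + C_2) \left(\int \norm{x-y}_2^2\, d\pi\right)^{1/2}.
\end{align*}
Taking the infimum over all couplings $\pi$ of $(\mu,\nu)$ turns the right-hand side into $(C_1\sigma + C_2)\,\mathcal{W}_2(\mu,\nu)$, yielding the claim.

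There is no serious obstacle here: the finiteness of the second moments guarantees that all Cauchy--Schwarz terms are finite, and the estimate on $|g(x)-g(y)|$ is a one-line consequence of the fundamental theorem of calculus. This is a classical bound whose only subtlety is arranging the integration over $t$ so that the constant in front of $C_1$ is $\sigma$ rather than $2\sigma$; that comes for free from the symmetric averaging $\tfrac{1}{2}(\norm{x}_2 + \norm{y}_2)$ produced by the $t$-integration.
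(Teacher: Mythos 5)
Your proof is correct, and it is the standard argument behind this lemma. The paper itself does not prove the statement — it is cited directly from \citet{Polyanskiy,Raginsky_Rakhlin_Telgarsky2017,Xu_Chen_Zou_Gu18} — so there is no in-text proof to compare against; your route (fundamental theorem of calculus along the segment to obtain the pointwise bound $|g(x)-g(y)| \leq \bigl(\tfrac{C_1}{2}(\|x\|_2+\|y\|_2)+C_2\bigr)\|x-y\|_2$, then Cauchy--Schwarz against an arbitrary coupling $\pi$ using that the marginals are $\mu$ and $\nu$, then infimum over couplings) is exactly the one used in those references, and the $t$-averaging that produces $\tfrac12(\|x\|_2+\|y\|_2)$ is what yields the sharp prefactor $C_1\sigma$ rather than $2C_1\sigma$.
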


\begin{lemma}{(Corollary 2.3 in \citet{Bolley})}
    \label{kl-divergence-bound}
    Let $\nu$ be a probability measure on $\mathbb{R}^d$. Assume that there exist $x_0$ and a constant $\alpha>0$ such that $\int\exp(\alpha\norm{x-x_0}_2)\nu(\mathrm{d}x)< \infty$. Then for any probability measure $\mu$ on $\mathbb{R}^d$, it satisfies
    \begin{align*}
        \mathcal{W}_2(\mu,\nu)\leq C_\nu(\mathrm{D}(\mu||\nu)^{1/2}+\mathrm{D}(\mu||\nu)^{1/4}),
    \end{align*}
    where $C_\nu$ is defined as
    \begin{align*}
        C_\nu = \inf_{x_0\in\mathbb{R}^d,\alpha>0}\sqrt{\frac{1}{\alpha}\left(\frac{3}{2}+\log\int\exp(\alpha\norm{x-x_0}_2^2)\nu(\mathrm{d}x)\right)}.
    \end{align*}
\end{lemma}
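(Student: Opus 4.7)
This lemma is (a version of) Corollary 2.3 of Bolley–Villani, and the proof I would follow mirrors their argument, whose heart is a weighted Csiszár--Kullback--Pinsker (CKP) inequality combined with a coupling-based estimate of $W_2^2$.

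The plan is to proceed in three conceptual steps. First, I would reduce the $W_2$ distance to a weighted signed-measure integral. The standard device is to exhibit an explicit coupling: writing $\mu\wedge\nu$ for the common part and $\mu-\mu\wedge\nu$, $\nu-\mu\wedge\nu$ for the singular parts, one constructs $\pi$ which keeps mass on the diagonal whenever possible and otherwise transports between the singular parts. Bounding transport distances by the triangle inequality at the reference point $x_0$ yields
\begin{equation*}
    W_2^2(\mu,\nu)\;\le\;2\int \|x-x_0\|_2^{2}\,\mathrm{d}|\mu-\nu|(x).
\end{equation*}
Thus everything reduces to controlling a weighted total-variation expression with weight $w(x)=\|x-x_0\|_2^2$.

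Second, I would establish the weighted CKP bound: under the exponential moment hypothesis, there is a constant $C_\nu$ such that
\begin{equation*}
    \int w\,\mathrm{d}|\mu-\nu|\;\le\; C_\nu\Bigl(\sqrt{\mathrm{D}(\mu\|\nu)}+\mathrm{D}(\mu\|\nu)^{1/2}\cdot(\text{correction})\Bigr).
\end{equation*}
The engine here is the Donsker--Varadhan variational formula $\mathrm{D}(\mu\|\nu)=\sup_\phi\{\int\phi\,\mathrm{d}\mu-\log\int e^\phi\,\mathrm{d}\nu\}$. Applying this with $\phi=\lambda(\varphi-\int\varphi\,\mathrm{d}\nu)$ for a measurable $\varphi$ and optimizing over $\lambda>0$ gives, together with a symmetrization argument involving $|\mu-\nu|$, an inequality of the shape $\int\varphi\,\mathrm{d}(\mu-\nu)\le \tfrac{1}{\lambda}[\mathrm{D}(\mu\|\nu)+\log\int e^{\lambda\varphi}\,\mathrm{d}\nu]$. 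Plugging in $\varphi=\|x-x_0\|_2^2$ and using the exponential moment hypothesis to ensure finiteness of the log-moment-generating function produces the desired weighted Pinsker.

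Third, I would take the square root and optimize over the free parameter $\lambda$. Because $W_2^2$ is bounded by the weighted TV, taking square roots converts $\mathrm{D}(\mu\|\nu)$ on the right into $\mathrm{D}(\mu\|\nu)^{1/2}$. A careful optimization of the parameter $\lambda$ separates two regimes: when $\mathrm{D}(\mu\|\nu)$ is small, the dominant term scales as $\mathrm{D}^{1/2}$ (once we square-root), and when $\mathrm{D}(\mu\|\nu)$ is large, the balance between $\mathrm{D}/\lambda$ and $\log\int e^{\lambda\varphi}\,\mathrm{d}\nu/\lambda$ forces $\lambda\propto \mathrm{D}^{-1/2}$, producing the $\mathrm{D}^{1/4}$ term after the outer square root. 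The constant obtained by this optimization is exactly $C_\nu=\inf_{x_0,\alpha>0}\sqrt{\alpha^{-1}(\tfrac{3}{2}+\log\int e^{\alpha\|x-x_0\|_2^2}\,\mathrm{d}\nu)}$.

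The main obstacle is the coexistence of the two regimes: one has to track the $\lambda$ optimization carefully enough that both the $\mathrm{D}^{1/2}$ and $\mathrm{D}^{1/4}$ terms appear with the same constant $C_\nu$ (as opposed to two different constants). A second subtlety concerns the mismatch between the linear exponential moment used to certify finiteness and the quadratic exponential moment appearing in $C_\nu$; for the argument to work with the quadratic weight $\|x-x_0\|_2^2$, the quadratic moment is what is actually required, and I would adopt that hypothesis in place of the (apparently weaker) stated one.
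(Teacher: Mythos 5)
The paper itself offers no proof of this lemma: it is quoted verbatim as Corollary 2.3 of \citet{Bolley} and used as a black box in the proof of \Cref{prop:SGLDdiscrepancy}. So the only meaningful comparison is with the original Bolley--Villani argument, and at the level of architecture your outline does follow it: the reduction $\mathcal{W}_2^2(\mu,\nu)\le 2\int\|x-x_0\|_2^2\,\mathrm{d}|\mu-\nu|(x)$, via the coupling that leaves the common mass in place and routes the excess mass through $x_0$, is exactly their first step, and your observation that the hypothesis should involve the \emph{squared} norm in the exponent (the linear moment stated in the lemma is a typo; the quadratic moment is what both the proof and the constant $C_\nu$ require) is correct.

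The genuine gap is in your second step. Applying Donsker--Varadhan with $\phi=\lambda(\varphi-\int\varphi\,\mathrm{d}\nu)$ bounds only the \emph{signed} integral $\int\varphi\,\mathrm{d}(\mu-\nu)$, whereas the coupling reduction requires the weighted total variation $\int\varphi\,\mathrm{d}|\mu-\nu|$, and these can differ drastically (the signed integral can even be negative while $\mathcal{W}_2$ is large). The ``symmetrization argument involving $|\mu-\nu|$'' you invoke cannot repair this: exchanging the roles of $\mu$ and $\nu$ would require control of $\mathrm{D}(\nu\|\mu)$ or of $\log\int e^{\lambda\varphi}\,\mathrm{d}\mu$, neither of which is available. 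The actual engine in Bolley--Villani (their Theorem 2.1) is a weighted Csisz\'ar--Kullback--Pinsker inequality obtained pointwise: writing $f=\mathrm{d}\mu/\mathrm{d}\nu$ and $h(t)=t\log t-t+1$, one uses an elementary inequality of the form $|f-1|\le\sqrt{c\,(f+2)\,h(f)}$, applies Cauchy--Schwarz with the weight $\varphi$, and then controls the resulting term $\int\varphi^2\,\mathrm{d}\mu$ by the change-of-measure inequality $\int\psi\,\mathrm{d}\mu\le\mathrm{D}(\mu\|\nu)+\log\int e^{\psi}\,\mathrm{d}\nu$ applied to $\psi=\alpha\|x-x_0\|_2^2$; this is where the variational formula legitimately enters. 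That is also where the two powers come from: the weighted TV bound carries both a $\mathrm{D}^{1/2}$ term and a $\mathrm{D}$ term, and the outer square root (via $\sqrt{a+b}\le\sqrt a+\sqrt b$) turns them into the $\mathrm{D}^{1/2}$ and $\mathrm{D}^{1/4}$ terms with the single constant $C_\nu$, so no two-regime optimization in $\lambda$ of the kind you describe is needed. As written, your step two would not deliver the weighted TV bound, and the proof does not go through without replacing it by this pointwise CKP argument.
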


\begin{proof}[Proof of \Cref{prop:SGLDdiscrepancy}]
    Let $P_k, Q_k$ denote the probability measures for GLD scheme $X_k^N$ and SGLD scheme $Y_k^N$ respectively. Applying \Cref{wasserstein-bound}, \Cref{moments3} and \Cref{moments2} yields
    \begin{align}
        |\bbE \left[L(Y_k^N)\right] - \bbE \left[L(X_k^N)\right]| \leq C(1+\norm{x_0})\mathcal{W}_2(Q_k, P_k),
    \end{align}
    where $C>0$ are absolute constants. We further apply \Cref{kl-divergence-bound} to bound Wasserstein distance and get the following bound:
    \begin{align}
        |\bbE \left[L(Y_k^N)\right] - \bbE \left[L(X_k^N)\right]| \leq C(1+\norm{x_0})\Lambda(\mathrm{D}(Q_k || P_k)^{1/2}+\mathrm{D}(Q_k || P_k)^{1/4}),
    \end{align}
    where $\Lambda=\sqrt{3/2+\log\bbE \left[\exp\norm{X_k^N}^2\right]}$.
    Moreover, \Cref{log-exp-bound} yields
    \begin{align}
        \Lambda \leq \sqrt{\frac{3}{2}+\norm{x_0}^2+\frac{C_1}{\beta}+C_2},
    \end{align}
    where $C_1,C_2>0$ is some constants. To bound KL-divergence between $P_k$ and $Q_k$, we use the following decomposition:
    \begin{align*}
        \mathrm{D}(Q_k || P_k) & \leq \mathrm{D}(Q_k || P_k) + \mathrm{D}(Q_{1:k-1} | Q_k || P_{1:k-1} | P_k)= \mathrm{D}(Q_{1:k} || P_{1:k}) \\
                               & = \mathrm{D}(Q_1 || P_1) + \sum_{i=2}^{k}\mathrm{D}(Q_i | Q_{1:i-1} || P_i | P_{1:i-1})                      \\
                               & = \sum_{i=1}^{k}\mathrm{D}(Q_i | Q_{i-1} || P_i | P_{i-1}),
    \end{align*}
    where $P_{1:k}, Q_{1:k}$ denotes joint distribution of $(X_1^N,\cdots, X_k^N)$ and $(Y_1^N,\cdots, Y_k^N)$ respectively and $Q_i | Q_{i-1}$ denotes the conditional distribution of $X_i^N$ given $X_{i-1}^N$.
    The first inequality is based on non-negativity of KL-divergence and the final equality comes from the fact that $Q_0, P_0$ are deterministic and that $X_i^N$ and $(X_1^N,\cdots, X_{i-2}^N)$ are conditionally independent given $X_{i-1}^N$. For clarity, we write down the definition of conditional KL-divergence in the following line:
    \begin{align*}
        \mathrm{D}(F_2 | F_1 || G_2 | G_1) = \int f(x_1, x_2)\log\frac{f(x_2 | x_1)}{g(x_2 | x_1)}\mathrm{d}x_1\mathrm{d}x_2.
    \end{align*}
    Now that $Q_i | Q_{i-1}$ and $P_i | P_{i-1}$ are both gaussian, that is,
    \begin{align*}
        X_i^N | X_{i-1}^N=x & \sim \mathcal{N}(S_\eta(x-\eta\nabla L_N(x)),\frac{\eta}{\beta}S_\eta^{T}S_\eta), \\
        Y_i^N | Y_{i-1}^N=x & \sim \mathcal{N}(S_\eta(x-\eta g_{i-1}(x)),\frac{\eta}{\beta}S_\eta^{T}S_\eta),
    \end{align*}
    we can calculate each conditional KL-divergence as below:
    \begin{align*}
         & \mathrm{D}(Q_i | Q_{i-1} || P_i | P_{i-1}) = \bbE _Q\left[\log\dv{Q_i|Q_{i-1}}{P_i|P_{i-1}}\right] \\
         & = \frac{\beta}{2\eta}\bbE _{(x, y)\sim Q_{i-1:i}}\left[
            \norm{S_\eta^{-1}y - (x-\eta\nabla L_N(x))}^2 - \norm{S_\eta^{-1}y - (x-\eta g_{i-1}(x))}^2
            \right]                                                                                                \\
         & = \frac{\beta}{2\eta}\bbE _{(x, y)\sim Q_{i-1:i}}\left[
        2\eta\langle S_\eta^{-1}y-x, \nabla L_N(x)-g_{i-1}(x)\rangle
        + \eta^2(\norm{\nabla L_N(x)}^2 - \norm{g_{i-1}(x)}^2)
        \right]                                                                                                    \\
         & = \frac{\beta}{2\eta}\bbE _{x\sim Q_{i-1}}\left[
        2\eta\langle -\eta g_{i-1}(x), \nabla L_N(x)-g_{i-1}(x)\rangle
        + \eta^2(\norm{\nabla L_N(x)}^2 - \norm{g_{i-1}(x)}^2)
        \right]                                                                                                    \\
         & = \frac{\beta\eta}{2}\bbE _{x\sim Q_{i-1}}\left[
            \norm{\nabla L_N(x) - g_{i-1}(x)}^2
            \right]                                                                                                \\
         & \leq \frac{\beta\eta}{2}\bbE _{x\sim Q_{i-1}}\left[\frac{C(\ntr-\nbch)}{\nbch(\ntr-1)}\right]                    \\
         & \leq C\frac{\beta\eta(\ntr-\nbch)}{\nbch(\ntr-1)},
    \end{align*}
    thanks to \Cref{sg-error-bound} and \ref{moments3}. Therefore, we finally get the following bound:
    \begin{align}
        \mathrm{D}(Q_k || P_k) & \leq C\frac{\beta\eta k(\ntr-\nbch)}{\nbch(\ntr-1)}.
    \end{align}
    Combining all of the above yields the claim.
\end{proof}

\end{document}